\def\namedlabel#1#2{\begingroup
    #2%
    \def\@currentlabel{#2}%
    \phantomsection\label{#1}\endgroup
}
\newcommand{\pushright}[1]{\ifmeasuring@#1\else\omit\hfill$\displaystyle#1$\fi\ignorespaces}
\newcommand{\pushleft}[1]{\ifmeasuring@#1\else\omit$\displaystyle#1$\hfill\fi\ignorespaces}
\newcommand\C{\mathbb{C}}
\newcommand\Z{\mathbb{Z}}
\newcommand\Q{\mathbb{Q}}
\newcommand\R{\mathbb{R}}
\newcommand\N{\mathbb{N}}
\newcommand\fsl{\mathfrak{sl}}
\newcommand\cA{\mathcal{A}}
\newcommand\cC{\mathcal{C}}
\newcommand\cM{\mathcal{M}}
\newcommand\cP{\mathcal{P}}
\newcommand\cV{\mathcal{V}}
\newcommand\sA{\mathscr{A}}
\newcommand\sC{\mathscr{C}}
\newcommand\sH{\mathscr{H}}
\newcommand\sM{\mathscr{M}}
\newcommand\sU{\mathscr{U}}
\newcommand\ba{\mathbf{a}}
\newcommand\bA{\mathbf{A}}
\newcommand\bB{\mathbf{B}}
\newcommand\bF{\mathbf{F}}
\newcommand\bc{\mathbf{c}}
\newcommand\bfr{\mathbf{r}}
\newcommand\bi{\mathbf{i}}
\newcommand\bj{\mathbf{j}}
\newcommand\bS{\mathbf{S}}
\newcommand\bT{\mathbf{T}}
\newcommand\bzero{\mathbf{0}}
\newcommand\tr{\mathrm{tr}}
\newcommand\rR{\mathrm{R}}
\newcommand\rL{\mathrm{L}}
\newcommand\se{\mathsf{e}}
\newcommand\sE{\mathsf{E}}
\newcommand\sF{\mathsf{F}}
\newcommand\sQ{\mathsf{Q}}
\newcommand{\md}{\textup{-mod}}
\newcommand{\op}{\textup{op}}
\newcommand{\trunc}{\textup{\normalfont tr}}
\newcommand\redcircle[1]{\filldraw[fill=white, draw=red] #1 circle (3pt)}
\newcommand\regionlabel[1]{$\scriptstyle{#1}$}
\newcommand\strandlabel[1]{$\scriptstyle{#1}$}
\tikzset{anchorbase/.style={>=stealth,baseline={([yshift=-0.5ex]current bounding box.center)}}}
\tikzstyle directed=[postaction={decorate,decoration={markings,
    mark=at position #1 with {\arrow{>}}}}]
\tikzstyle rdirected=[postaction={decorate,decoration={markings,
    mark=at position #1 with {\arrow{<}}}}]
\DeclareMathOperator{\1Mor}{1Mor}
\DeclareMathOperator{\2Mor}{2Mor}
\DeclareMathOperator{\Mor}{Mor}
\DeclareMathOperator{\Hom}{Hom}
\DeclareMathOperator{\End}{End}
\DeclareMathOperator{\Span}{Span}
\DeclareMathOperator{\id}{id}
\DeclareMathOperator{\Sym}{Sym}
\DeclareMathOperator{\Ob}{Ob}
\DeclareMathOperator{\Kar}{Kar}
\newtheorem{theo}{Theorem}[section]
\newtheorem{prop}[theo]{Proposition}
\newtheorem{lem}[theo]{Lemma}
\newtheorem{cor}[theo]{Corollary}
\theoremstyle{definition}
\newtheorem{defin}[theo]{Definition}
\newtheorem{rem}[theo]{Remark}
\newtheorem{eg}[theo]{Example}
\numberwithin{equation}{section}
  \newcommand{\details}[1]{
      \ \\
      {\color{OliveGreen}
        \textbf{Details:} #1
      }
      \ \\
  }
  \newcommand{\details}[1]{}
\begin{document}
%

\title[Truncations of categorified quantum groups and Heisenberg categories]{An equivalence between truncations of categorified quantum groups and Heisenberg categories}

\author{Hoel Queffelec}
\address{H.~Queffelec: Institut Montpelli\'erain Alexander Grothendieck, Universit\'e de Montpellier, CNRS, Montpellier, France}
\urladdr{\url{http://www.math.univ-montp2.fr/~queffelec/}}
\email{hoel.queffelec@umontpellier.fr}

\author{Alistair Savage}
\address{A.~Savage: Department of Mathematics and Statistics, University of Ottawa, Canada}
\urladdr{\url{http://alistairsavage.ca}}
\email{alistair.savage@uottawa.ca}

\author{Oded Yacobi}
\address{O.~Yacobi: School of Mathematics and Statistics, University of Sydney, Australia}
\urladdr{\url{http://www.maths.usyd.edu.au/u/oyacobi/}}
\email{oded.yacobi@sydney.edu.au}

\begin{abstract}
  We introduce a simple diagrammatic 2-category $\sA$ that categorifies the image of the Fock space representation of the Heisenberg algebra and the basic representation of $\fsl_\infty$.  We show that $\sA$ is equivalent to a truncation of the Khovanov--Lauda categorified quantum group $\sU$ of type $A_\infty$, and also to a truncation of Khovanov's Heisenberg 2-category $\sH$.  This equivalence is a categorification of the principal realization of the basic representation of $\fsl_\infty$.

  As a result of the categorical equivalences described above, certain actions of $\sH$ induce actions of $\sU$, and vice versa.  In particular, we obtain an explicit action of $\sU$ on representations of symmetric groups.  We also explicitly compute the Grothendieck group of the truncation of $\sH$.

  The 2-category $\sA$ can be viewed as a graphical calculus describing the functors of $i$-induction and $i$-restriction for symmetric groups, together with the natural transformations between their compositions.  The resulting computational tool is used to give simple diagrammatic proofs of (apparently new) representation theoretic identities.
\end{abstract}

\subjclass[2010]{Primary 17B10; Secondary 17B65, 20C30, 16D90}
\keywords{Categorification, Heisenberg algebra, Fock space, basic representation, principal realization, symmetric group}


\maketitle
\thispagestyle{empty}

\tableofcontents

%
\section{Introduction}
%

Affine Lie algebras play a key role in many areas of representation theory and mathematical physics.  One of their prominent features is that their highest-weight irreducible representations have explicit realizations.  In particular, constructions of the so-called basic representation involve deep mathematics from areas as diverse as algebraic combinatorics (symmetric functions), number theory (modular forms), and geometry (Hilbert schemes).

Two of the most well-studied realizations of the basic representation are the \emph{homogeneous} and \emph{principal} realizations (see, for example \cite[Ch.~14]{Kac90}).  The homogeneous realization in affine types ADE has been categorified in \cite{CL11}.  In the current paper we focus our attention on the principal realization in type $A_\infty$.  The infinite-dimensional Lie algebra $\fsl_\infty$ behaves in many ways like an affine Lie algebra, and in particular, it has a basic representation with a principal realization coming from a close connection to the infinite-rank Heisenberg algebra $H$.

The Heisenberg algebra $H$ has a natural representation on the space $\Sym$ of symmetric functions (with rational coefficients), called the Fock space representation.  The universal enveloping algebra $U = U(\fsl_\infty)$ also acts naturally on $\Sym$, yielding the basic representation.  So we have algebra homomorphisms
\begin{equation} \label{eq:principal-maps}
  H \xrightarrow{r_H} \End_\Q \Sym \xleftarrow{r_U} U.
\end{equation}
Consider the vector space decomposition
\[
  \Sym = \bigoplus_{\lambda \in \cP} \Q s_\lambda,
\]
where the sum is over all partitions $\cP$ and $s_\lambda$ denotes the Schur function corresponding to $\lambda$. Let $1_\lambda \colon \Sym \to \Q s_\lambda$ denote the natural projection.  While the images of the representations $r_H$ and $r_U$ are not equal, we have an equality of their idempotent modifications:
\begin{equation} \label{eq:principal-images}
  \bigoplus_{\lambda,\mu \in \cP} 1_\mu r_H(H) 1_\lambda
  = \bigoplus_{\lambda,\mu \in \cP} 1_\mu r_U(U) 1_\lambda.
\end{equation}
This observation is an $\fsl_\infty$ analogue of the fact that the basic representation of $\widehat{\fsl}_n$ remains irreducible when restricted to the principal Heisenberg subalgebra---a fact which is the crucial ingredient in the {principal realization of the basic representation.  We view \eqref{eq:principal-images} as an additive $\Q$-linear category $\cA$ whose set of objects is the free monoid $\N[\cP]$ on $\cP$ and with
\[
  \Mor_\cA(\lambda,\mu)
  = 1_\mu r_H(H) 1_\lambda
  = 1_\mu r_U(U) 1_\lambda
  = \Hom_\Q (\Q s_\lambda, \Q s_\mu).
\]

In \cite{Kho14}, Khovanov introduced a monoidal category, defined in terms of planar diagrams, whose Grothendieck group contains (and is conjecturally isomorphic to) the Heisenberg algebra $H$.  Khovanov's category has a natural 2-category analogue $\sH$.  On the other hand, in \cite{KL3}, Khovanov and Lauda introduced a 2-category, which we denote $\sU$, that categorifies quantum $\fsl_n$ and can naturally be generalized to the $\fsl_\infty$ case (see \cite{CL15}).  A related construction was also described by Rouquier in \cite{Rou2}.  These categorifications have led to an explosion of research activity, including generalizations, and applications to representation theory, geometry, and topology.  It is thus natural to seek a connection between the 2-categories $\sH$ and $\sU$ that categorifies the principal embedding relationship between $H$ and $U$ discussed above.  This is the goal of the current paper.

We define a 2-category $\sA$ whose 2-morphism spaces are given by planar diagrams modulo isotopy and local relations.  The local relations of $\sA$ are exceedingly simple and we show that $\sA$ categorifies $\cA$.  We then describe precise relationships between $\sA$ and the 2-categories $\sH$ and $\sU$.  Our first main result is that $\sA$ is equivalent to a degree zero piece of a truncation of the categorified quantum group $\sU$.  More precisely, recalling that the objects of $\sU$ are elements of the weight lattice of $\fsl_\infty$, we consider the truncation $\sU^\trunc$ of $\sU$ where we kill weights not appearing in the basic representation.  Specifically, we quotient the 2-morphism spaces by the identity 2-morphisms of the identity 1-morphisms of such weights.  (This type of truncation has appeared before in the categorification literature, for example, in \cite{MSV2,QR}.)  The resulting 2-morphism spaces of $\sU^\trunc$ are nonnegatively graded, and we show that the degree zero part $\sU_0$ of $\sU^\trunc$ is equivalent to the 2-category $\sA$ (Theorem~\ref{theo:sA-tsU-equivalence}).

Our next main result is that $\sA$ is also equivalent to a summand of an idempotent completion of a truncation of the Heisenberg 2-category $\sH$.  More precisely, recalling that the objects of $\sH$ are integers, we consider the truncation ${\sH^\trunc}'$ of $\sH$ obtained by killing objects corresponding to negative integers.  We then take an idempotent completion $\sH^\trunc$ of ${\sH^\trunc}'$, show that we have a natural decomposition $\sH^\trunc \cong \sH_\epsilon \bigoplus \sH_\delta$, and that $\sA$ is equivalent to the summand $\sH_\epsilon$ (Theorem~\ref{theo:sU-sHepsilon-equivalence}).  This summand can be obtained from $\sH^\trunc$ by imposing one extra local relation (namely, declaring a clockwise circle in a region labeled $n$ to be equal to $n$).  We note that the idempotent completion we consider in the above construction is larger than the one often appearing in the categorification literature since we complete with respect to both idempotent 1-morphisms and 2-morphisms (see Definition~\ref{def:idem-completion} and Remark~\ref{rem:idem-completion}).  As a result, the idempotent completion has more objects, with the object $n$ splitting into a direct sum of objects labeled by the partitions of $n$.

We thus have 2-functors
\[
  \sH \xrightarrow{\text{truncate}} \sH^\trunc \xrightarrow{\text{summand}} \sH_\epsilon \cong \sA
  \cong \sU_0 \xleftarrow{\text{summand}} \sU^\trunc \xleftarrow{\text{truncate}} \sU.
\]
that can be thought of as a categorification of \eqref{eq:principal-maps}.  The equivalence $\sH_\epsilon \cong \sU_0$ is a categorification of the isomorphism~\eqref{eq:principal-images} and yields a categorical analog of the principal realization of the basic representation of $\fsl_\infty$.  In particular, any action of $\sH$ factoring through $\sH^\trunc$ (which is true of any action categorifying the Fock space representation) induces an explicit action of $\sU$.  Conversely, any action of $\sU$ factoring through $\sU^\trunc$ (which is true of any action categorifying the basic representation) induces an explicit action of $\sH$.  See Section~\ref{subsec:induced-actions}.

In \cite{Kho14}, Khovanov described an action of his Heisenberg category on modules for symmetric groups.  This naturally induces an action of the 2-category $\sH$ factoring through $\sH^\trunc$.  Applying the categorical principal realization to this action we obtain an explicit action of the Khovanov--Lauda categorified quantum group $\sU$ on modules for symmetric groups, relating our work to \cite{BK09,BK09b}.  See Section~\ref{sec:action-cat-quantum-group}.

By computations originally due to Chuang and Rouquier in \cite[\S7.1]{CR08}, one can easily deduce that there is a categorical action of $\fsl_\infty$ on modules for symmetric groups.  This action is constructed using $i$-induction and $i$-restriction functors, and thus is closely related to Khovanov's categorical Heisenberg action.  The equivalence $\sH_\epsilon \cong \sU_0$ gives the precise diagrammatic connection between these actions on the level of 2 categories.  In particular, the 2-category $\sA$ yields a graphical calculus for describing $i$-induction and $i$-restriction functors, together with the natural transformations between them  (see Proposition~\ref{prop:sU-sM-equivalence}).  This provides a computational tool for proving identities about the representation theory of the symmetric groups.  See Section~\ref{subsec:diagrammatic-computation} for some examples of identities that, to the best of our knowledge, are new.

One of the most important open questions about Khovanov's Heisenberg category is the conjecture that it categorifies the Heisenberg algebra (see \cite[Conj.~1]{Kho14}).  In the framework of 2-categories, this conjecture is the statement that the Grothendieck group of $\sH$ is isomorphic to $\bigoplus_{m \in \Z} H$.  (The presence of the infinite sum here arises from the fact that, in a certain sense, the 2-category $\sH$ contains countably many copies of the monoidal Heisenberg category defined in \cite{Kho14}.)  We prove the analog of Khovanov's conjecture for the truncated category $\sH^\trunc$, namely that the Grothendieck group of $\sH^\trunc$ is isomorphic to $\bigoplus_{m \in \N} \cA$.  See Corollary~\ref{cor:K(Htrunc)}.

We now give an overview of the contents of the paper.  In Section~\ref{sec:prelims} we recall some basic facts about the basic representation and define the category $\cA$.  We also set some category theoretic notation and conventions.  In Section~\ref{sec:Sn-modules} we recall some facts about modules for symmetric groups, discuss eigenspace decompositions with respect to Jucys--Murphy elements, and prove some combinatorial identities that will be used elsewhere in the paper.  Then, in Section~\ref{sec:sA-def}, we introduce the 2-category $\sA$ and show that it is equivalent to $\sU_0$.  We also prove some results about the structure of $\sA$ and prove that it categorifies $\cA$.  We turn our attention to the Heisenberg 2-category in Section~\ref{sec:sH-def}.  In particular, we introduce the truncated Heisenberg 2-category $\sH^\trunc$, describe the decomposition $\sH^\trunc \cong \sH_\epsilon \oplus \sH_\delta$, and prove that $\sH_\epsilon$ is equivalent to $\sA$.  In Section~\ref{sec:action} we discuss how our results yield categorical Heisenberg actions from categorified quantum group actions and vice versa.  In particular, we describe an explicit action of the Khovanov--Lauda 2-category on modules for symmetric groups.  Finally, in Section~\ref{sec:applications} we give an application of our results to diagrammatic computation and discuss some possible directions for further research.

\iftoggle{detailsnote}{
\medskip

\paragraph{\textbf{Note on the arXiv version}} For the interested reader, the tex file of the \href{https://arxiv.org/abs/1701.08654}{arXiv version} of this paper includes hidden details of some straightforward computations and arguments that are omitted in the pdf file.  These details can be displayed by switching the \texttt{details} toggle to true in the tex file and recompiling.
}{}

\subsection*{Acknowledgements}

The authors would like to thank C.~Bonnaf\'e, M.~Khovanov, A.~Lauda, A.~Licata, A.~Molev, E.~Wagner, and M.~Zabrocki for helpful conversations.  H.Q.\ was supported by a Discovery Project from the Australian Research Council and a grant from the Universit\'e de Montpellier. A.S.\ was supported by a Discovery Grant from the Natural Sciences and Engineering Research Council of Canada.  O.Y.\ was supported by a Discovery Early Career Research Award from the Australian Research Council.

%
\section{Algebraic preliminaries} \label{sec:prelims}
%

\subsection{Bosonic Fock space and the category $\cA$} \label{subsec:Fock-space}

Let $\cP$ denote the set of partitions and write $\lambda \vdash n$ to denote that $\lambda=(\lambda_1, \lambda_2, \dotsc)$, $\lambda_1 \geq \lambda_2 \geq \dotsb$, is a partition of $n \in \N$.  Let $\Sym$ be the algebra of symmetric functions with rational coefficients.  Then we have
\[
  \Sym = \bigoplus_{\lambda \in \cP} \Q s_\lambda,
\]
where $s_\lambda$ denotes the Schur function corresponding to the partition $\lambda$.  For $\lambda \in \cP$, we let $1_\lambda \colon \Sym \to \Q s_\lambda$ denote the corresponding projection.

Let $\N[\cP]$ be the free monoid on the set of partitions.  Define $\cA$ to be the additive $\Q$-linear category whose set of objects is $\N[\cP]$, where we denote the zero object by $\bzero$.  The morphisms between generating objects are
\[
  \Mor_{\cA}(\lambda,\mu)
  = 1_\mu (\End_\Q \Sym) 1_\lambda
  = \Hom_\Q(\Q s_\lambda, \Q s_\mu),\quad \lambda,\mu \in \cP.
\]
If $\cV$ denotes the category of finite-dimensional $\Q$-vector spaces, then we have an equivalence of categories
\begin{equation} \label{eq:rhoA-def}
  \bfr \colon \cA \to \cV,\quad \lambda \mapsto \Q s_\lambda.
\end{equation}

Let $\langle \cdot, \cdot \rangle$ be the inner product on $\Sym$ under which the Schur functions are orthonormal.  For $f \in \Sym$, let $f^*$ denote the operator on $\Sym$ adjoint to multiplication by $f$:
\[
  \langle f^*(g), h \rangle = \langle g, fh \rangle \quad \text{for all } f,g,h \in \Sym.
\]
The \emph{Heisenberg algebra} $H$ is the subalgebra of $\End_\Q \Sym$ generated by the operators $f$ and $f^*$, $f \in \Sym$.  The tautological action of $H$ on $\Sym$ is called the \emph{(bosonic) Fock space representation}.

For $\lambda, \mu \in \cP$, we have
\[
  1_\mu H 1_\lambda = \Hom_\Q(\Q s_\lambda, \Q s_\mu) = \Mor_\cA(\lambda,\mu),
\]
where the first equality follows from the fact that $s_\mu s_\lambda^*1_\lambda$ is the map $s_\nu \mapsto \delta_{\lambda,\nu}s_\mu$.  Thus, $\cA$ may be viewed as an idempotent modification of $H$.

\subsection{The basic representation} \label{subsec:basic-rep}

Let $\fsl_\infty$ denote the Lie algebra of all trace zero infinite matrices $a=(a_{ij})_{i,j \in \Z}$ with rational entries such that the number of nonzero $a_{ij}$ is finite, with the usual commutator bracket.  Set
\[
  e_i=E_{i,i+1},\quad f_i=E_{i+1,i},\quad h_i=[e_i,f_i] = E_{i,i} - E_{i+1,i+1},
\]
where $E_{i,j}$ is the matrix whose $(i,j)$-entry is equal to one and all other entries are zero.  Let $U=U(\fsl_\infty)$ denote the universal enveloping algebra of $\fsl_\infty$.

To a partition $\lambda=(\lambda_1, \dotsc, \lambda_n)$, we associate the Young diagram with rows numbered from top to bottom, columns numbered left to right, and which has $\lambda_1$ boxes in the first row, $\lambda_2$ boxes in the second row, etc.  A box in row $k$ and column $\ell$ has \emph{content} $\ell-k\in \Z$.  A Young diagram will be said to have an \emph{addable} $i$-box if one can add to it a box of content $i$ and get a Young diagram. Similarly, a Young diagram has a \emph{removable} $i$-box if there is a box of content $i$ that can be removed yielding another Young diagram.  If $\lambda \vdash n$ has an addable $i$-box we let $\lambda \boxplus i$ be the partition of $n+1$ obtained from $\lambda$ by adding the box of content $i$, and similarly define $\lambda \boxminus i$.

\begin{eg}
  Let $\lambda=(3,2)\vdash 5$.  Then we have
  \[
    \lambda = {\tiny\yng(3,2)}, \qquad
    \lambda\boxplus 3 = {\tiny\yng(4,2)}, \qquad
    \lambda\boxminus 0 ={\tiny\yng(3,1)}.
  \]
\end{eg}

For $\lambda\in \cP$, define
\begin{gather*}
  B^+(\lambda) = \{i \mid \lambda \text{ has an addable $i$-box}\}
  \quad \text{and} \quad
  B^-(\lambda) = \{i \mid \lambda \text{ has a removable $i$-box}\}.
\end{gather*}
Note that, for all $\lambda \in \cP$, we have $B^+(\lambda) \cap B^-(\lambda) = \varnothing$.
If $i\notin B^+(\lambda)$ (respectively $i\notin B^-(\lambda)$), then we consider $\lambda \boxplus i = \bzero$ (respectively $\lambda \boxminus i=\bzero$) when viewing partitions as objects in $\cA$.

Consider the action of $U$ on $\Sym$ given by
\begin{equation} \label{eq:sl-action-on-Sym}
  e_i\cdot s_\lambda = s_{\lambda \boxminus i}, \quad
  f_i\cdot s_\lambda = s_{\lambda \boxplus i},
\end{equation}
where, by convention, $s_\bzero = 0$.  This defines an irreducible representation of $U$ on $\Sym$ known as the \emph{basic representation}.  In fact, one can write explicit expressions for the action of the generators $e_i$ and $f_i$ in terms of the action of the Heisenberg algebra $H$ on $\Sym$.  This construction is known as the \emph{principal realization}.  We refer the reader to \cite[\S\S 14.9--14.10]{Kac90} for details.  The element $s_\lambda$ spans the weight space of weight
\begin{equation} \label{eq:omega_lambda-def}
  \omega_\lambda := \Lambda_0 - \sum_{i \in C(\lambda)} \alpha_i,
\end{equation}
where the sum is over the multiset $C(\lambda)$ of contents of the boxes of $\lambda$, $\Lambda_0$ is the zeroth fundamental weight, and $\alpha_i$ is the $i$-th simple root.  In particular, the map
\begin{equation} \label{eq:partions-label-basic-rep-weights}
  \lambda \mapsto \omega_\lambda
\end{equation}
is a bijection between $\cP$ and the set of weights of the basic representation.

\subsection{A Kac--Moody presentation of $\cA$}

Let $\hat U$ denote the image of $U$ in $\End_\Q \Sym$ under the basic representation described in Section~\ref{subsec:basic-rep}.  Then, for $\lambda, \mu \in \cP$, we have
\[
  1_\mu \hat U 1_\lambda = \Hom_\Q(\Q s_\lambda, \Q s_\mu) = \Mor_\cA(\lambda,\mu).
\]
This observation allows us to deduce a Kac--Moody-type presentation of $\cA$.  Define morphisms
\begin{gather*}
  e_i 1_\lambda \in \Mor_\cA (\lambda,\lambda\boxminus i), \quad s_\lambda \mapsto s_{\lambda \boxminus i}, \\
  f_i 1_\lambda\in \Mor_\cA (\lambda,\lambda\boxplus i),\quad s_\lambda \mapsto s_{\lambda \boxplus i},
\end{gather*}
for $i \in \Z$, $\lambda \in \cP$.  Since Young's lattice is connected, these morphisms clearly generate all morphisms in $\cA$.

\begin{prop} \label{prop:dotU-presentation}
  The morphisms in $\cA$ are generated by $e_i 1_\lambda$, $f_i 1_\lambda$, for $i \in \Z$, $\lambda \in \cP$, subject only to the relations
  \begin{gather}
    e_i e_j 1_\lambda=e_je_i1_\lambda, \quad \text{if } |i-j|>1 \label{rel:eiej}, \\
    f_i f_j 1_\lambda =f_jf_i1_\lambda, \quad \text{if } |i-j|>1  \label{rel:fifj}, \\
    e_i f_j 1_\lambda = f_j e_i 1_\lambda, \quad \text{if } i\neq j \label{rel:eifj}, \\
    e_i f_i 1_\lambda = 1_\lambda, \quad \text{if } i\in B^+(\lambda) \label{rel:eifi}, \\
    f_i e_i 1_\lambda = 1_\lambda, \quad \text{if } i\in B^-(\lambda) \label{rel:fiei}.
  \end{gather}
\end{prop}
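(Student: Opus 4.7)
The plan is to introduce the $\Q$-linear category $\tilde{\cA}$ defined by the given generators and relations, build the obvious functor $F\colon \tilde{\cA} \to \cA$ (identity on objects, sending generators to generators), and prove that $F$ is an equivalence. As a first step, I verify each of the five relations holds in $\cA$ via the explicit action \eqref{eq:sl-action-on-Sym}: the three commutation relations hold because non-interacting box operations commute on Schur functions, and the inverse relations $e_i f_i 1_\lambda = 1_\lambda$ and $f_i e_i 1_\lambda = 1_\lambda$ hold because adding then removing (or removing then adding) the same $i$-box fixes $s_\lambda$. Fullness of $F$ is then immediate: each $\Mor_\cA(\lambda,\mu) = \Hom_\Q(\Q s_\lambda, \Q s_\mu)$ is at most one-dimensional, and the connectivity of Young's lattice yields, for any $(\lambda,\mu) \in \cP \times \cP$, a lift of its generator via some word in the generators of $\tilde{\cA}$.

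The substantive task is faithfulness, which reduces to the combinatorial claim that any two paths in Young's lattice from $\lambda$ to $\mu$ determine the same morphism in $\tilde{\cA}$. The essential input is the content-gap property: the contents of any two distinct removable (respectively addable) boxes of a Young diagram differ by at least $2$, because consecutive corners have strictly increasing row and strictly decreasing column. Given a path, at any local maximum $\nu_{i-1} \xrightarrow{f_a} \nu_i \xrightarrow{e_r} \nu_{i+1}$ both $a$ and $r$ lie in $B^-(\nu_i)$, so either $a = r$ (triggering cancellation via \eqref{rel:eifi}) or $|a-r| \geq 2$, licensing the commutation $e_r f_a 1_{\nu_{i-1}} = f_a e_r 1_{\nu_{i-1}}$ via \eqref{rel:eifj} (after verifying $r \in B^-(\nu_{i-1})$, which follows since the $a$-corner and $r$-corner of $\nu_i$ are geometrically non-adjacent by the gap property). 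The dual analysis applies at local minima using the addable-box version of the content-gap fact. These two moves let me reduce any path by bubble-sorting so that every add follows every remove (with a standard inversion count as strictly decreasing invariant for commutations) interleaved with cancellation of same-content pairs at extrema (strictly shortening the path). The procedure terminates at a minimum-length path contained in the interval $[\lambda \cap \mu, \lambda \cup \mu] \subseteq \cP$, and any two such minimal paths differ by commutations of adjacent removes (respectively adjacent adds) whose contents again differ by at least $2$, so they are identified in $\tilde{\cA}$ via \eqref{rel:eiej}--\eqref{rel:fifj}.

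The main obstacle is the careful verification, at each step of the reduction, that the commutation being applied satisfies the hypothesis ($|i-j| > 1$ or $i \neq j$) of the relevant relation, and that the chosen invariants genuinely force termination. All such verifications ultimately trace back to the content-gap-at-least-$2$ property for simultaneously-removable and simultaneously-addable corners of a Young diagram, which matches precisely the commutation hypotheses \eqref{rel:eiej}--\eqref{rel:eifj} of the presentation. This is exactly the reason why these few relations alone suffice to identify all paths between a fixed pair of partitions, forcing $\dim \tilde{\cA}(\lambda, \mu) \leq 1$ and hence faithfulness of $F$.
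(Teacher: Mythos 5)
Your overall strategy is the same as the paper's: present the category by these generators and relations, observe fullness is easy, and reduce faithfulness to showing every word from $\lambda$ to $\mu$ equals a single normal form (removes first, then adds), with the combinatorial engine being exactly the fact that two distinct simultaneously removable (or addable) boxes have contents differing by at least $2$. That identification of the key fact is correct and matches the paper.

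However, there is a genuine gap in your reduction procedure. Sorting to a ``V'' (all removes before all adds) by eliminating local maxima, together with cancelling same-content pairs only when they sit adjacent at an extremum, does \emph{not} terminate at a minimal path, and your claim that it lands in the interval $[\lambda\cap\mu,\lambda\cup\mu]$ fails. Concretely, take $\lambda=\mu=(2,1)$ and the word $f_{-1}f_{1}e_{-1}e_{1}1_{(2,1)}$, i.e.\ the path $(2,1)\to(1,1)\to(1)\to(2)\to(2,1)$ removing boxes of contents $1,-1$ and then adding boxes of contents $1,-1$. This is already a V with no local maximum, and at the unique local minimum the incident contents are $-1$ and $1$, so neither of your moves applies; yet the path is far from minimal (the minimal one is empty). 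To finish you must cancel the non-adjacent matching pairs $e_{1}/f_{1}$ and $e_{-1}/f_{-1}$, which forces you to commute across the bottom of the V --- a move that \emph{increases} your inversion count, so your stated termination invariant no longer applies (and an unrestricted both-directions rewriting could oscillate). This is precisely where the paper inserts its ``innermost pair'' argument: after reaching the form $f_{i_1}\dotsm f_{i_k}e_{j_1}\dotsm e_{j_\ell}1_\lambda$, if some $i_a=j_b$ one chooses an innermost such pair and shows, via persistence of the addable box of that content, that every index strictly between them differs from $i_a$ by at least $2$, so the pair can be slid together by \eqref{rel:eiej}, \eqref{rel:fifj}, \eqref{rel:eifj} and cancelled by \eqref{rel:fiei}; induction on word length then terminates. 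You need this step (or an equivalent one), plus a correspondingly adjusted termination measure, before the final ``two minimal paths differ by far-content transpositions'' argument (which itself is fine, being the linear-extension connectivity for the removal poset of a skew shape, whose incomparable boxes do have contents differing by at least $2$).
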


\begin{proof}
  Let $\mathcal{C}$ be the category with objects $\N[\cP]$ and morphisms given by the presentation in the statement of the proposition.  Since the relations \eqref{rel:eiej}--\eqref{rel:fiei} are immediate in $\cA$, we have a full and essentially surjective functor $\mathcal{C} \to \cA$.  Therefore  it suffices to show that $\dim \Mor_\mathcal{C}(\lambda,\mu) \le 1$ for all $\lambda,\mu \in \cP$.

  In fact, we will prove that, for $\lambda,\mu \in \cP$, $\Mor_\mathcal{C}(\lambda,\mu)$ is spanned by a single morphism of the form
  \begin{equation} \label{eq:dotU0-hom-element}
    f_{i_1} f_{i_2} \dotsm f_{i_k} e_{j_1} e_{j_2} \dotsm e_{j_\ell} 1_\lambda,
  \end{equation}
  where $\{i_1,\dotsc,i_k\} \cap \{j_1,\dotsc,j_\ell\} = \varnothing$ and $\mu = \lambda \boxminus j_\ell \dotsb \boxminus j_2 \boxminus j_1 \boxplus i_k \boxplus \dotsb \boxplus i_2 \boxplus i_1$.  This follows from the following three statements:
  \begin{asparaenum}
    \item \label{lem-claim:U0-span} Morphisms of the form \eqref{eq:dotU0-hom-element} span $\Mor_\mathcal{C}(\lambda,\mu)$.

    \item \label{lem-claim:U0-f-order} Suppose $\lambda \in \cP$ and $j_1,\dotsc,j_\ell,i_1,\dotsc,i_\ell \in \Z$ such that
      \begin{equation} \label{eq:U0-box-add-order}
        \lambda \boxplus i_\ell \boxplus \dotsb \boxplus i_2 \boxplus i_1
        = \lambda \boxplus j_\ell \boxplus \dotsb \boxplus j_2 \boxplus j_1
      \end{equation}
      are nonzero.  Then
      \[
        f_{i_1} f_{i_2} \dotsm f_{i_\ell} 1_\lambda
        = f_{j_1} f_{j_2} \dotsm f_{j_\ell} 1_\lambda.
      \]

    \item \label{lem-claim:U0-e-order} Suppose $\lambda \in \cP$ and $j_1,\dotsc,j_\ell,i_1,\dotsc,i_\ell \in \Z$ such that
      \[
        \lambda \boxminus i_\ell \boxminus \dotsb \boxminus i_2 \boxminus i_1
        = \lambda \boxminus j_\ell \boxminus \dotsb \boxminus j_2 \boxminus j_1
      \]
      are nonzero.  Then
      \[
        e_{i_1} e_{i_2} \dotsm e_{i_\ell} 1_\lambda
        = e_{j_1} e_{j_2} \dotsm e_{j_\ell} 1_\lambda.
      \]
  \end{asparaenum}

  \emph{Proof of \eqref{lem-claim:U0-span}}: Given a morphism in $\mathcal{C}$ that is a composition of $e_i 1_\lambda$ and $f_i 1_\lambda$, it follows from \eqref{rel:eifj} and \eqref{rel:eifi} that this composition is isomorphic to a 1-morphism of the form \eqref{eq:dotU0-hom-element}, possibly not satisfying the condition $\{i_1,\dotsc,i_k\} \cap \{j_1,\dotsc,j_\ell\} = \varnothing$.  To see that we can also satisfy this condition, choose $a \in \{1,\dotsc,k\}$ and $b \in \{1,\dotsc,\ell\}$ such that $i_a = j_b$ and such that we cannot find $a' \in \{a,\dotsc,k\}$ and $b' \in \{1,\dotsc,b\}$ such that $i_{a'} = j_{b'}$ and either $a' > a$ or $b' < b$.  (Intuitively speaking, we pick an ``innermost'' $f_i$, $e_i$ pair.)  We claim that none of the indices $a+1,a+2,\dotsc,k$ or $1,2,\dotsc,b-1$ is equal to $i_a-1$ or $i_a+1$.  It will then follow from \eqref{rel:eiej} and \eqref{rel:fifj} that our morphism is equal to one in which $f_{i_a}$ is immediately to the left of $e_{j_b}$, allowing us to use \eqref{rel:fiei} to cancel this pair.  Then statement~\eqref{lem-claim:U0-span} follows by induction.

  To prove the claim, consider the morphism $1_\mu e_{j_b} e_{j_{b+1}} \dotsm e_{j_\ell} 1_\lambda$.  We then have $\mu = \lambda \boxminus j_\ell \boxminus \dotsb \boxminus j_b$.  In particular, $\mu$ has an addable $j_b$ box.  If we now remove a $j_b + 1$ box or a $j_b - 1$ box, the resulting Young diagram will no longer have an addable $j_b$ box.  Therefore, by our assumption that we have picked the innermost $f_i$, $e_i$ pair, none of the indices $1,2,\dotsc,b-1$ is equal to $j_b+1$ or $j_b-1$.  So $\mu \boxminus j_{b-1} \boxminus \dotsb \boxminus j_1$ has an addable $j_b$ box.  But then it does not have an addable $j_b+1$ box or an addable $j_b-1$ box.  Therefore, none of the indices $i_{a+1},\dotsc,i_k$ is equal to $i_a+1$ or $i_a-1$.  This proves the claim.

  \emph{Proof of \eqref{lem-claim:U0-f-order}}:  Fix $\lambda \in \cP$.  We prove the statement by induction on $\ell$.  It is clear for $\ell=0$ and $\ell=1$.  Suppose $\ell \ge 2$.  The partition $\lambda$ has a $j_\ell$-addable box and an $i_\ell$-addable box.  If $j_\ell = i_\ell$, then the result follows by the inductive hypothesis applied to $\lambda \boxplus j_\ell$.  So we assume $j_\ell \ne i_\ell$.  By assumption, there must exist some $a \in \{1,\dotsc,\ell-1\}$ such that $j_a = i_\ell$.  Choose the maximal $a$ with this property.  By \eqref{eq:U0-box-add-order}, $\lambda$ has an addable $i_\ell$-box.  Thus, by an argument as in the proof of statement \eqref{lem-claim:U0-span}, none of the integers $j_\ell, j_{\ell-1}, \dotsc, j_{a+1}$ can be equal to $j_a \pm 1$.  Then, by \eqref{rel:fifj}, we have
  \[
    f_{j_1} f_{j_2} \dotsm f_{j_\ell} 1_\lambda
    = f_{j_1} f_{j_2} \dotsm f_{j_{a-1}} f_{j_{a+1}} \dotsm f_{j_\ell} f_{j_a} 1_\lambda.
  \]
  Then statement \eqref{lem-claim:U0-f-order} follows by the inductive hypothesis applied to $\lambda \boxplus j_a = \lambda \boxplus i_\ell$.

  \emph{Proof of \eqref{lem-claim:U0-e-order}}: The proof of statement \eqref{lem-claim:U0-e-order} is analogous to that of statement \eqref{lem-claim:U0-f-order}.
\end{proof}

\subsection{Notation and conventions for 2-categories}

We will use calligraphic font for 1-categories ($\cA$, $\cC$, $\cM$, $\cV$, etc.) and script font for 2-categories ($\sA$, $\sC$, $\sU$, $\sH$, etc.).   We use bold lowercase for functors ($\ba$, $\bfr$, etc.) and bold uppercase for 2-functors ($\bF$, $\bS$, etc.).  The notation $\bzero$ will denote a zero object in a 1-category or 2-category.  Other objects will be denoted with italics characters ($x$, $y$, $e$, etc.).  We use sans serif font for 1-morphisms ($\se$, $\mathsf{x}$, $\sQ$, etc.) and Greek letters for 2-morphisms.

If $\sC$ is a 2-category and $x,y$ are objects of $\sC$, we let $\sC(x,y)$ denote the category of morphisms from $x$ to $y$.  We denote the class of objects of $\sC(x,y)$, which are 1-morphisms in $\sC$ by $\1Mor_\sC(x,y)$.  For $\mathsf{P}, \mathsf{Q}$ objects in $\sC(x,y)$, we denote the class of morphisms from $\mathsf{P}$ to $\mathsf{Q}$, which are 2-morphisms in $\sC$, by $\2Mor_\sC(\mathsf{P},\mathsf{Q})$.  For an object $x$ of $\mathcal{C}$, we let $1_x$ denote the identity 1-morphism on $x$ and let $\id_x$ denote the identity 2-morphism on $1_x$.  We denote vertical composition of 2-morphisms by $\circ$ and horizontal composition by juxtaposition. Whenever we speak of a linear category or 2-category, or a linear functor or 2-functor, we mean $\Q$-linear.

If $\sC$ is an additive linear 2-category, we define its Grothendieck group $K(\sC)$ to be the category with the same objects as $\sC$ and whose space of morphisms between objects $x$ and $y$ is $K(\sC(x,y))$, the usual split Grothendieck group, over $\Q$, of the category $\sC(x,y)$.

%
\section{Modules for symmetric groups} \label{sec:Sn-modules}
%

In this section we recall some well-known facts about modules for symmetric groups and prove some combinatorial identities that we will need later on in our constructions.
\subsection{Module categories} \label{subsec:Sn-module-categories}

For an associative algebra $A$, we let $A\md$ denote the category of finite-dimensional left $A$-modules.  For $n \in \N$, we let $A_n = \Q S_n$ denote the group algebra of the symmetric group.  By convention, we set $A_0=A_1 = \Q$.  We index the representations of $A_n$ by partitions of $n$ in the usual way, and for $\lambda \vdash n$, we let $V_\lambda$ be the corresponding irreducible representation of $A_n$.

Let $\cM_\lambda$ denote the full subcategory of $A_n\md$ whose objects are isomorphic to direct sums of $V_\lambda$ (including the empty sum, which is the zero representation).  We then have a decomposition
\begin{equation} \label{eq:Mn-decomp}
  \cM_n := A_n\md = \bigoplus_{\lambda \vdash n} \cM_\lambda.
\end{equation}

We consider $A_n$ to be a subalgebra of $A_{n+1}$ in the natural way, where $S_n$ is the subgroup of $S_{n+1}$ fixing $n+1$.  We use the notation $(n)$ to denote $A_n$ considered as an $(A_n,A_n)$-bimodule in the usual way. We use subscripts to denote restriction of the left and right actions.  Thus, $(n+1)_n$ is $A_{n+1}$ considered as an $(A_{n+1},A_n)$-bimodule, $_n(n+1)$ is $A_{n+1}$ considered as an $(A_n,A_{n+1})$-bimodule, etc.  Then
\[
  (n+1)_n \otimes_{A_n} - \colon A_n\md \to A_{n+1}\md
  \quad \text{and} \quad
  {}_n(n+1) \otimes_{A_{n+1}} - \colon A_{n+1}\md \to A_n\md
\]
are the usual induction and restriction functors.  Tensor products of such bimodules correspond in the same way to composition of induction and restriction functors, and bimodule homomorphisms correspond to natural transformations of the corresponding functors.

We define a 2-category $\sM$ as follows.  The objects of $\sM$ are finite direct sums of $\cM_\lambda$, $\lambda \in \cP$, and a zero object $\bzero$.  We adopt the conventions that $\cM_n = \bzero$ when $n<0$, $\cM_{\lambda \boxplus i} = \bzero$ when $i \not \in B^+(\lambda)$, and $\cM_{\lambda \boxminus i} = \bzero$ when $i \not \in B^-(\lambda)$.  The 1-morphisms are generated, under composition and direct sum, by additive $\Q$-linear direct summands of the functors
\[
  (n+1)_n \otimes_{A_n} - \colon \cM_n \to \cM_{n+1},\quad
  \tensor*[_{n-1}]{(n)}{} \otimes_{A_n} - \colon \cM_n \to \cM_{n-1}.
\]
The 2-morphisms of $\sM$ are natural transformations of functors.

\begin{rem}
  In the above definition, it is important that we allow \emph{direct summands} of the given functors.  In Section~\ref{subsec:JM-elements} we will discuss the direct summands
  \[
    (n+1)_n^i \otimes_{A_n} - \colon \cM_\lambda \to \cM_{\lambda \boxplus i},\quad
    \tensor*[_{n-1}^i]{(n)}{} \otimes_{A_n} - \colon \cM_\lambda \to \cM_{\lambda \boxminus i},\quad
    \text{where } n = |\lambda|.
  \]
  arising from decomposing induction and restriction according to eigenspaces for the action of Jucys--Murphy elements.
\end{rem}

For $\lambda \vdash n$, consider the functors
\begin{equation} \label{eq:MV-V-equivalence}
  \bi_\lambda := \Hom_{A_n}(V_\lambda, -) \colon \cM_\lambda \to \cV
  \quad \text{and} \quad
  \bj_\lambda := V_\lambda \otimes_\Q - \colon \cV \to \cM_\lambda.
\end{equation}
We have $\bi_\lambda \circ \bj_\lambda \cong 1_\cV$ and $\bj_\lambda \circ \bi_\lambda \cong 1_{\cM_\lambda}$, and hence an equivalence of categories $\cM_\lambda \cong \cV$.

\subsection{Decategorification}

Suppose $\lambda, \mu \in \cP$ and consider an additive linear functor $\ba \colon \cM_\lambda \to \cM_\mu$.  Then the functor $\bi_\mu \circ \ba \circ \bj_\lambda$ is naturally isomorphic to a direct sum of some finite number of copies of the identity functor.   In other words, under the equivalences \eqref{eq:MV-V-equivalence}, every object in $\sM(\cM_\lambda,\cM_\mu)$ is isomorphic to $1_\cV^{\oplus n}$ for some $n\geq0$, where $1_\cV \colon \cV \to \cV$ is the identity functor.

\details{
  Let $m = |\mu|$ and $n = \dim \Hom_{A_m}(V_\mu, \ba(V_\lambda))$.  Since $\cM_\lambda$ is an additive linear category, we have the copower functor
  \[
    \bc_\lambda \colon \cM_\lambda \times \cV \to \cM_\lambda,\quad (V,U) \mapsto V \otimes_\Q U.
  \]
  This copower functors commutes, up to natural isomorphism, with additive linear functors such as $\ba$.  More precisely, we have a diagram of functors
  \[
    \xymatrix{
      \cM_\lambda \times \cV \ar[r]^(0.6){\bc_\lambda} \ar[d]_{\ba \times 1_\cV} & \cM_\lambda \ar[d]^{\ba} \\
      \cM_\mu \times \cV \ar[r]_(0.6){\bc_\mu} & \cM_\mu
    }
  \]
  that is commutative up to natural isomorphism of functors.  (See, for example, \href{https://mathoverflow.net/questions/246948/do-copowers-commute-with-k-linear-functors}{here}.)  Now, we have $\bj_\lambda = \bc_\lambda(V_\lambda,-)$.  Therefore,
  \begin{multline*}
    \bi_\mu \circ \ba \circ \bj_\lambda
    = \bi_\mu \circ \ba \circ \bc (V_\lambda, -)
    \cong \bi_\mu \circ \bc \circ (\ba \times 1_\cV) (V_\lambda, -)
    = \bi_\mu \circ \bc (\ba(V_\lambda),-) \\
    = \Hom_{A_m}(V_\mu, \ba(V_\lambda) \otimes_\Q -)
    \cong \Hom_{A_m}(V_\mu, \ba(V_\lambda)) \otimes_\Q -
    \cong 1_\cV^{\oplus n}.
  \end{multline*}
}

It follows that $K(\sM)$ is the category given by
\[
  \Ob K(\sM) = \Ob \sM
  \quad \text{and} \quad
  \Mor_{K(\sM)}(\cM_\lambda,\cM_\mu) = \Q,\ \lambda,\mu \in \cP.
\]
Composition of morphisms is given by multiplication of the corresponding elements of $\Q$.

We have a natural functor $K(\sM) \to \cV$ given by
\begin{equation} \label{eq:KM-to-V-functor}
    \cM_\lambda \mapsto \Q s_\lambda, \quad
    z \mapsto (s_\lambda \mapsto z s_\mu),
\end{equation}
for $\lambda,\mu \in \cP$ and $z \in \Q = \Mor_{K(\sM)}(\cM_\lambda, \cM_\mu)$.  This functor is clearly an equivalence of categories.

\subsection{Biadjunction and the fundamental bimodule decomposition}

\begin{prop} \label{prop:adjunction-maps}
  The maps
  \begin{gather*}
    \varepsilon_\rR \colon (n+1)_n(n+1) \to (n+1),\quad \varepsilon_\rR(a \otimes b) = ab,\quad a \in (n+1)_n,\ b \in \prescript{}{n}(n+1), \\
    \eta_\rR \colon (n) \hookrightarrow {_n}(n+1)_n,\quad \eta_{\rR}(a) = a,\quad a \in (n), \\
    \varepsilon_\rL \colon {_n}(n+1)_n \to (n),\quad
    \varepsilon_\rL(g) =
    \begin{cases}
      g & \text{if } g \in S_n, \\
      0 & \text{if } g \in S_{n+1} \setminus S_n,
    \end{cases}\\
    \eta_\rL \colon (n+1) \to (n+1)_n (n+1),\quad
    \eta_\rL(a) = a \sum_{i \in \{1,\dotsc,n+1\}} s_i \dotsm s_n \otimes s_n \dotsm s_i,\quad a \in (n+1),
  \end{gather*}
  (where we interpret the expression $s_i \dotsm s_n \otimes s_n \dotsm s_i$ as $1 \otimes 1$ when $i=n+1$) are bimodule homomorphisms and satisfy the relations

  \noindent\begin{minipage}{0.5\linewidth}
    \begin{equation} \label{eq:up-right-zigzag}
      (\varepsilon_\rR \otimes \id) \circ (\id \otimes \eta_\rR) = \id,
    \end{equation}
  \end{minipage}%
  \begin{minipage}{0.5\linewidth}
    \begin{equation} \label{eq:down-right-zigzag}
      (\id \otimes \varepsilon_\rR) \circ (\eta_\rR \otimes \id) = \id,
    \end{equation}
  \end{minipage}\par\vspace{\belowdisplayskip}

  \noindent\begin{minipage}{0.5\linewidth}
    \begin{equation} \label{eq:down-left-zigzag}
      (\varepsilon_\rL \otimes \id) \circ (\id \otimes \eta_\rL) = \id,
    \end{equation}
  \end{minipage}%
  \begin{minipage}{0.5\linewidth}
    \begin{equation} \label{eq:up-left-zigzag}
      (\id \otimes \varepsilon_\rL) \circ (\eta_\rL \otimes \id) = \id.
    \end{equation}
  \end{minipage}\par\vspace{\belowdisplayskip}

  \noindent In particular, $(n+1)_n$ is both left and right adjoint to $\prescript{}{n}(n+1)$ in the 2-category of bimodules over rings.
\end{prop}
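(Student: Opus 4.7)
The three maps $\varepsilon_\rR$, $\eta_\rR$, $\varepsilon_\rL$ are visibly bimodule homomorphisms: $\varepsilon_\rR$ is algebra multiplication, $\eta_\rR$ is subalgebra inclusion, and $\varepsilon_\rL$ is $(A_n, A_n)$-bilinear because for $g \in S_{n+1}$ and $a, b \in S_n$ the element $b$ fixes $n+1$, so $(agb)(n+1) = a(g(n+1))$ equals $n+1$ if and only if $g \in S_n$. The substantive step is showing that $\eta_\rL$ is a bimodule map. Setting $t_i := s_i s_{i+1} \dotsm s_n$ and $X := \eta_\rL(1) = \sum_{i=1}^{n+1} t_i \otimes t_i^{-1}$, left $A_{n+1}$-linearity is tautological from the definition, so one must check that $X$ commutes with the right $A_{n+1}$-action on $(n+1)_n \otimes_{A_n} {_n}(n+1)$.

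The cleanest argument is as follows. Since $t_i(n+1) = i$, the coset $t_i S_n$ equals $\{g \in S_{n+1} : g(n+1) = i\}$, so $t_1, \dotsc, t_{n+1}$ is a complete set of coset representatives for $S_{n+1}/S_n$. Writing each $g \in S_{n+1}$ uniquely as $t_i h$ with $h \in S_n$ and using the tensor relation over $A_n$ to pull $h$ across,
\begin{multline*}
  \sum_{g \in S_{n+1}} g \otimes g^{-1} = \sum_{i=1}^{n+1} \sum_{h \in S_n} t_i h \otimes h^{-1} t_i^{-1} \\
  = \sum_{i=1}^{n+1} \sum_{h \in S_n} t_i \otimes t_i^{-1} = n! \, X.
\end{multline*}
On the other hand, $\sum_g g \otimes g^{-1}$ is manifestly $A_{n+1}$-central already in $A_{n+1} \otimes_\Q A_{n+1}$ via the change of variable $g \mapsto g_0 g$, and this centrality descends along the natural surjection $A_{n+1} \otimes_\Q A_{n+1} \onto A_{n+1} \otimes_{A_n} A_{n+1}$. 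Dividing by $n!$ (valid over $\Q$), $X$ itself is central, which gives right $A_{n+1}$-linearity of $\eta_\rL$. This is the main obstacle in the proof.

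With the bimodule homomorphism property secured, the four zigzag identities are direct verifications. The relations \eqref{eq:up-right-zigzag} and \eqref{eq:down-right-zigzag} collapse to $a \mapsto a \otimes 1 \mapsto a$ and $b \mapsto 1 \otimes b \mapsto b$ respectively, immediate from the formulas. For \eqref{eq:down-left-zigzag}, $b \in {_n}(n+1)$ is sent to $\sum_i \varepsilon_\rL(b t_i)\, t_i^{-1}$; since $b t_i \in S_n$ iff $b(i) = n+1$, only the index $i = b^{-1}(n+1)$ contributes, and the lone surviving term equals $b t_i t_i^{-1} = b$. The verification of \eqref{eq:up-left-zigzag} is symmetric, with the surviving index being $i = a(n+1)$. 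Finally, the assertion that $(n+1)_n$ is simultaneously left and right adjoint to ${_n}(n+1)$ in the 2-category of bimodules over rings is immediate from the standard dictionary between adjunctions and pairs of morphisms satisfying the triangle identities.
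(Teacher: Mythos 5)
Your proof is correct, and it supplies exactly the ``straightforward computation'' that the paper declines to write out (the paper's proof consists of the single remark that these relations are a formulation of Frobenius reciprocity). The one genuinely non-tautological point --- right $A_{n+1}$-linearity of $\eta_\rL$ --- is handled cleanly by identifying $n!\,\eta_\rL(1)$ with the manifestly central element $\sum_{g\in S_{n+1}} g\otimes g^{-1}$ via the coset decomposition $S_{n+1}=\bigsqcup_i t_iS_n$, and the zigzag verifications are accurate.
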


\begin{proof}
  The verification of these relations, which are a formulation of the well-known Frobenius reciprocity between induction and restriction for finite groups, is a straightforward computation.
\end{proof}

It is well known (see, for example, \cite[Lem.~7.6.1]{Kle05}) that we have a decomposition
\begin{equation}
  A_{n+1} = A_n \oplus (A_n s_n A_n),
\end{equation}
and an isomorphism of $(A_n,A_n)$-bimodules
\begin{equation}
  (n)_{n-1}(n) \xrightarrow{\cong} A_n s_n A_n \subseteq \tensor*[_n]{(n+1)}{_n},\quad a \otimes b \mapsto a s_n b.
\end{equation}
This yields an isomorphism of $(A_n,A_n)$-bimodules
\begin{equation}
  (n)_{n-1}(n) \oplus (n) \xrightarrow{\cong} \tensor*[_n]{(n+1)}{_n},\quad (a \otimes b,c) \mapsto a s_n b + c.
\end{equation}
More precisely, the maps
\begin{equation} \label{eq:decomp-diamond}
  \vcenter{\vbox{
    \xymatrix{
      (n)_{n-1}(n) \ar@<3pt>[r]^{\rho} & {}_n(n+1)_n \ar@<3pt>[l]^{\tau} \ar@<-3pt>[r]_(.6){\varepsilon_\rL} & (n) \ar@<-3pt>[l]_(.4){\eta_\rR}
    }
  }}
\end{equation}
where
\begin{gather}
  \rho(a \otimes b) = a s_n b,\quad \text{for } a,b \in A_n, \label{eq:rho-def} \\
  \tau(a) = 0,\quad \tau(a s_n b) = a \otimes b,\quad \text{for } a,b \in A_n \subseteq A_{n+1}, \label{eq:tau-def}
\end{gather}
satisfy
\begin{gather}
  \varepsilon_\rL \circ \eta_\rR = \id,\quad
  \tau \circ \rho = \id,\quad
  \varepsilon_\rL \circ \rho = 0,\quad
  \tau \circ \eta_\rR = 0, \label{eq:diamond-components} \\
  \rho \circ \tau + \eta_\rR \circ \varepsilon_\rL = \id. \label{eq:diamond-sum-id}
\end{gather}

Note that
\[
  A_n s_n A_n = \Span_\C (S_{n+1} \setminus S_n) \subseteq A_{n+1}.
\]

\subsection{The Jucys--Murphy elements and their eigenspaces} \label{subsec:JM-elements}

Recall that the \emph{Jucys--Murphy elements} of $A_n$ are given by
\begin{equation}
  J_1 = 0,\quad J_i = \sum_{k=1}^{i-1} (k,i),\quad i = 2,\dotsc,n,
\end{equation}
where $(k,i) \in S_n$ denotes the transposition of $k$ and $i$.  The element $J_i$ commutes with $A_{i-1}$.  Thus, left multiplication by $J_{n+1}$ is an endomorphism of the bimodule $_n(n+1)$.  In fact, this action is semisimple and the set of eigenvalues is $\{-n,-n+1,\dotsc,n-1,n\}$.

We let $\tensor*[^i_n]{(n+1)}{}$, $i \in \Z$, denote the $i$-eigenspace of $\tensor*[_n]{(n+1)}{}$ under left multiplication by $J_{n+1}$. Similarly, we let $(n+1)_n^i$, $i \in \Z$, denote the $i$-eigenspace of $(n+1)_n$ under right multiplication by $J_{n+1}$.  Since these two actions (right and left multiplication by $J_{n+1}$) commute, we can also consider the simultaneous eigenspaces $\tensor*[^i_n]{(n+1)}{^j_n}$, $i,j \in \Z$.  So we have
\begin{equation} \label{eq:ind-res-decomp}
  (n+1)_n = \bigoplus_{i \in \Z} (n+1)_n^i,\quad
  \tensor*[_n]{(n+1)}{} = \bigoplus_{i \in \Z} \tensor*[_n^i]{(n+1)}{},\quad
  \tensor*[_n]{(n+1)}{_n} = \bigoplus_{i,j \in \Z} \tensor*[_n^i]{(n+1)}{_n^j}.
\end{equation}

Similarly, for $i,j \in \Z$, we let $(n+1)_{n-1}^{i,j}$ denote the simultaneous eigenspace of $(n+1)_{n-1}$ under right multiplication by $J_{n+1}$ and $J_n$, with respective eigenvalues $i$ and $j$.  Similarly, we let $\tensor*[_{n-1}^{j,i}]{(n+1)}{}$ denote the simultaneous eigenspace of $\tensor*[_{n-1}]{(n+1)}{}$ under left multiplication by $J_{n+1}$ and $J_n$, with respective eigenvalues $i$ and $j$.

We have
\begin{equation} \label{eq:i-ind-res-irred}
  (n+1)_n^i \otimes_{A_n} V_\lambda \cong V_{\lambda \boxplus i}
  \quad \text{and} \quad
  \tensor*[_n^i]{(n+1)}{} \otimes_{A_{n+1}} V_\mu \cong V_{\mu \boxminus i}
\end{equation}
for $\lambda \vdash n$ and $\mu \vdash n+1$, where we define $V_\bzero$ to be the zero module.

The primitive central idempotents in $\Q S_n$ are
\begin{equation} \label{eq:central-idempotent-explicit}
  e_\lambda = \frac{\dim V_\lambda}{n!} \sum_{w \in S_n} \tr (w^{-1}_\lambda) w,\quad \lambda \vdash n,
\end{equation}
where $w^{-1}_\lambda$ denotes the action of $w^{-1}$ on the representation $V_\lambda$.  (See, for example, \cite[(2.13), p.~23]{FH91}.)  Multiplication by $e_\lambda$ is projection onto the $V_\lambda$-isotypic component.  It follows that
\begin{gather} \label{eq:i-induction-idempotent-decomp}
  (n+1)_n^i
  = \bigoplus_{\mu \vdash n} e_{\mu \boxplus i} (n+1)_n e_\mu
  \\ \label{eq:i-restriction-idempotent-decomp}
  \tensor*[_n^i]{(n+1)}{}
  = \bigoplus_{\mu \vdash n} e_\mu\, {}_n(n+1) e_{\mu \boxplus i}
\end{gather}

\subsection{Combinatorial formulas}

In this subsection we prove some combinatorial identities, used elsewhere in the paper, involving the dimensions $d_\lambda:=\dim(V_\lambda)$ of irreducible representations of symmetric groups.  By convention, $d_{\lambda\boxplus i }=0$ if $\lambda$ has no addable $i$-box, and $d_{\lambda \boxminus i} = 0$ if $\lambda$ has no removable $i$-box.

It follows from \eqref{eq:ind-res-decomp} and \eqref{eq:i-ind-res-irred} that
\begin{equation} \label{rel:hl}
  d_\lambda=\sum_{j\in B^-(\lambda)}d_{\lambda \boxminus j}
\end{equation}
\begin{equation}\label{rel:hl3}
  \sum_{i\in B^+(\lambda)}d_{\lambda \boxplus i}=(|\lambda|+1)d_{\lambda}
\end{equation}

Recall the hook-length formula
\begin{equation} \label{rel:hldef}
  d_{\lambda}=\frac{|\lambda|!}{\prod_{i,j} h_{\lambda}(i,j)},\quad \lambda \in \cP.
\end{equation}
Here $h_{\lambda}(i,j)$ counts the number of boxes in the Young diagram of $\lambda$ in the hook whose upper left corner is in position $(i,j)$ and the product is over the positions $(i,j)$ of all the boxes in the Young diagram of $\lambda$.

\begin{lem}
  For a partition $\lambda$, we have
  \begin{equation} \label{rel:hl1}
    \left(1-\frac{1}{(i-j)^2}\right) \frac{|\lambda|+1}{|\lambda|+2}\, \frac{d_{\lambda}d_{\lambda \boxplus i \boxplus j}}{d_{\lambda \boxplus i}d_{\lambda \boxplus j}}=1\quad \forall\ i,j \in B^+(\lambda),\ |i-j|>1,
  \end{equation}
  \begin{equation}\label{rel:hl5}
    \sum_{i\in B^+(\lambda)}\frac{d_{\lambda \boxplus i}}{(i-j)^2} = \frac{(|\lambda|+1)d_{\lambda}^2}{|\lambda|d_{\lambda \boxminus j}}\quad \forall\ j \in B^-(\lambda),
  \end{equation}
  \begin{equation}\label{rel:hl8}
    \sum_{j\in B^-(\lambda)}\frac{d_{\lambda \boxminus j}}{(i-j)^2} = \left(\frac{(|\lambda|+1)d_\lambda^2}{d_{\lambda \boxplus i}|\lambda|} - \frac{d_{\lambda}}{|\lambda|}\right)\quad \forall\ i\in B^+(\lambda).
  \end{equation}
\end{lem}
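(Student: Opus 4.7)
The plan is to reduce all three identities to properties of the single rational function
\[
  f_\lambda(x) := \frac{\prod_{r \in B^-(\lambda)}(x-r)}{\prod_{a \in B^+(\lambda)}(x-a)}.
\]
The first step is to establish, starting from the hook-length formula \eqref{rel:hldef}, the ``content formulas''
\[
  \frac{d_{\lambda \boxplus i}}{d_\lambda} = \frac{(|\lambda|+1)\prod_{r \in B^-(\lambda)}(i-r)}{\prod_{a \in B^+(\lambda) \setminus \{i\}}(i-a)},
  \qquad
  \frac{d_{\lambda \boxminus j}}{d_\lambda} = \frac{-\prod_{a \in B^+(\lambda)}(j-a)}{|\lambda|\prod_{r \in B^-(\lambda) \setminus \{j\}}(j-r)},
\]
for $i \in B^+(\lambda)$ and $j \in B^-(\lambda)$. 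These are standard identities, derived by tracking which hook lengths change by one when a box of content $i$ (resp.\ $j$) is added to (resp.\ removed from) position $(p,q)$ of $\lambda$: those in row $p$ to the left of column $q$, and those in column $q$ above row $p$. In particular $d_{\lambda \boxplus i}/d_\lambda = (|\lambda|+1)\operatorname*{Res}_{x=i}f_\lambda(x)$.

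The second ingredient is the transformation law
\[
  f_{\lambda \boxplus i}(x) = f_\lambda(x) \cdot \frac{(x-i)^2}{(x-i-1)(x-i+1)},
\]
proved by a local analysis of the boundary of $\lambda$ near content $i$. The content $i$ migrates from $B^+(\lambda)$ to $B^-(\lambda \boxplus i)$, producing the factor $(x-i)^2$. For content $i+1$, the interlacing of addable and removable contents forces $i+1 \notin B^+(\lambda)$, so there are only two cases: either $i+1 \in B^-(\lambda)$ and it leaves $B^-$ in $\lambda \boxplus i$ (removing $(x-i-1)$ from the numerator of $f$), or $i+1$ is neither addable nor removable in $\lambda$, in which case a new addable box of content $i+1$ appears at position $(p,q+1)$ in $\lambda \boxplus i$ (adding $(x-i-1)$ to the denominator of $f$). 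Either way $f_{\lambda \boxplus i}/f_\lambda$ gains a factor $1/(x-i-1)$, and the argument for $i-1$ is entirely symmetric. Identity \eqref{rel:hl1} is an immediate consequence: when $|i-j|>1$, the correction factor is regular and nonzero at $x=j$, so the ratio of residues of $f_{\lambda \boxplus i}$ and $f_\lambda$ at $x=j$ is $(j-i)^2/((j-i-1)(j-i+1))$; combined with a factor $(|\lambda|+2)/(|\lambda|+1)$ from the content formula, this rearranges precisely into \eqref{rel:hl1}.

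For \eqref{rel:hl5} and \eqref{rel:hl8} I will use partial fractions. Since $\deg \prod_r(x-r) < \deg \prod_a(x-a)$, we have $f_\lambda(x) = \sum_{i \in B^+(\lambda)} A_i/(x-i)$ with $A_i = \operatorname*{Res}_{x=i}f_\lambda$; differentiating and evaluating at $x=j$ gives $\sum_i A_i/(i-j)^2 = -f_\lambda'(j)$, so the left-hand side of \eqref{rel:hl5} equals $-(|\lambda|+1)d_\lambda f_\lambda'(j)$. Writing $f_\lambda(x) = (x-j)h(x)$ (possible since $j \in B^-(\lambda)$) gives $f_\lambda'(j) = h(j)$, which by the content formula for $d_{\lambda \boxminus j}/d_\lambda$ equals $-(d_\lambda/d_{\lambda \boxminus j})/|\lambda|$, establishing \eqref{rel:hl5}. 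For \eqref{rel:hl8} the same game is played with $g_\lambda(x) := 1/f_\lambda(x)$: since $\deg g_\lambda = 1$, partial fractions yield $g_\lambda(x) = x - C_\lambda + \sum_{j \in B^-(\lambda)} B_j/(x-j)$ for some constant $C_\lambda$; differentiating at $x=i$ gives $\sum_j B_j/(i-j)^2 = 1 - g_\lambda'(i)$, and factoring $g_\lambda(x) = (x-i)k(x)$ yields $g_\lambda'(i) = (|\lambda|+1)d_\lambda/d_{\lambda \boxplus i}$, producing \eqref{rel:hl8}. The main technical hurdle is the case analysis establishing the transformation law; once that is in hand, all three identities follow by elementary rational-function manipulations.
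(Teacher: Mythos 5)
Your proof is correct, but it takes a genuinely different route from the paper's. The paper proves \eqref{rel:hl1} by a direct hook-length computation, tracking exactly which hooks change under the double addition $\lambda \to \lambda \boxplus i \boxplus j$, and then obtains \eqref{rel:hl5} and \eqref{rel:hl8} purely algebraically: it rewrites \eqref{rel:hl1}, applied to $\lambda \boxminus j$, in a form valid for all $i \ne j$ (using that $d_{\lambda \boxplus i \boxminus j} = 0$ when $|i-j|=1$) and then sums over $i$ or over $j$, invoking only the branching identities \eqref{rel:hl} and \eqref{rel:hl3}. You instead package everything into the rational function $f_\lambda$, which is the Cauchy transform of Kerov's transition measure, prove a local transformation law under box addition, and read off all three identities from residues and partial fractions. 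Your route is more unified and conceptual (all three identities become statements about one function and its reciprocal), and the derivations of \eqref{rel:hl5} and \eqref{rel:hl8} via differentiating the partial-fraction expansions are clean; the interlacing case analysis behind the transformation law and the degree counts needed for the expansions (namely $|B^+(\lambda)| = |B^-(\lambda)| + 1$) check out. The one step you compress is the pair of content formulas: passing from the ratio of changed hook lengths to the product over $B^{\pm}(\lambda)$ is itself a nontrivial, though classical, telescoping argument, not merely a matter of noting which hook lengths change by one. The paper's approach avoids needing these formulas entirely, at the cost of a less illuminating computation, whereas yours front-loads that classical input and then makes the three identities essentially automatic.
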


\begin{proof}
  Relation \eqref{rel:hl1} follows from a direct computation using \eqref{rel:hldef}.  We omit the details.

  \details{
    Suppose $i,j \in B^+(\lambda)$ with $|i-j|>1$.  Let $(i_1,i_2)$ be the addable $i$-box and $(j_1,j_2)$ be the addable $j$-box.  Without loss of generality, we may assume that $i_1 < j_1$.  Then we have
    \begin{gather*}
      d_\lambda = \frac{|\lambda|!}{\prod_{k,\ell} h_{\lambda}(k,\ell)}, \\
      d_{\lambda \boxplus i} = \frac{(|\lambda|+1)!}{\prod_{\substack{k\neq i_1 \\ \ell \neq i_2}} h_\lambda (k,\ell) \prod_\ell (h_\lambda (i_1,\ell)+1) \prod_k (h_\lambda(k,i_2)+1)}, \\
      d_{\lambda \boxplus j} = \frac{(|\lambda|+1)!}{\prod_{\substack{k \neq j_1 \\ \ell \neq j_2}} h_\lambda (k,\ell) \prod_\ell (h_{\lambda}(j_1,\ell)+1) \prod_k (h_{\lambda}(k,j_2)+1)},
    \end{gather*}
    while $d_{\lambda \boxplus i \boxplus j}$ is equal to
    \[
      \frac{(|\lambda|+2)!}{\prod_{\substack{k\neq i_1,j_1\\ \ell \neq i_2,j_2}} h_\lambda (k,\ell) \prod_{\ell \neq j_2} (h_\lambda (i_1,\ell)+1)\prod_\ell (h_\lambda (j_1,\ell)+1) \prod_{k\neq i_1}(h_{\lambda}(k,j_2)+1) \prod_k (h_\lambda (k,i_2)+1) (h_\lambda(i_1,j_2)+2)}.
    \]
    Therefore,
    \[
      \frac{d_{\lambda}d_{\lambda\boxplus i \boxplus j}}{d_{\lambda \boxplus i}d_{\lambda \boxplus j}}=\frac{|\lambda|+2}{|\lambda|+1}\frac{(h_{\lambda(i_1,j_2)}+1)^2}{h_{\lambda}(i_1,j_2 )(h_{\lambda}(i_1,j_2)+2)}.
    \]
    Using the fact that $h_{\lambda}(i_1,j_2)=i-j-1$, we get that
    \[
      \frac{d_{\lambda}d_{\lambda\boxplus i \boxplus j}}{d_{\lambda \boxplus i}d_{\lambda \boxplus j}}
      =\frac{|\lambda|+2}{|\lambda|+1}\, \frac{(j-i)^2}{(j-i)^2-1},
    \]
    and \eqref{rel:hl1} follows.
  }

  To prove \eqref{rel:hl5}, assume $j \in B^-(\lambda)$.  Then, for $|i-j|>1$, \eqref{rel:hl1} for $\lambda \boxminus j$ implies
  \begin{equation} \label{eq:h15-proof-eq}
    \frac{1}{(i-j)^2}\frac{|\lambda|d_{\lambda\boxplus i}d_{\lambda\boxminus j}}{(|\lambda|+1)d_\lambda}=\frac{|\lambda|d_{\lambda\boxplus i}d_{\lambda\boxminus j}}{(|\lambda|+1)d_\lambda}-d_{\lambda\boxplus i \boxminus j}.
  \end{equation}
  When $|i-j|=1$, we have $d_{\lambda \boxplus i \boxminus j} = 0$, and so \eqref{eq:h15-proof-eq} in fact holds for all $i \ne j$.  Therefore,
  \begin{multline*}
    \sum_{i \in B^+(\lambda)} \frac{1}{(i-j)^2} \frac{|\lambda| d_{\lambda \boxplus i} d_{\lambda \boxminus j}}{(|\lambda|+1) d_\lambda}
    = \frac{|\lambda|d_{\lambda\boxminus j }}{(|\lambda|+1)d_\lambda}\sum_{i\in B^+(\lambda)}d_{\lambda\boxplus i }-\sum_{i\in B^+(\lambda)}d_{\lambda\boxplus i \boxminus j}
    \\
    = \frac{|\lambda|d_{\lambda\boxminus j }}{(|\lambda|+1)d_\lambda}\sum_{i\in B^+(\lambda)}d_{\lambda\boxplus i }-\sum_{i \in B^+(\lambda \boxminus j)}d_{\lambda \boxminus j \boxplus i} + d_\lambda
    \stackrel{\eqref{rel:hl3}}{=} |\lambda| d_{\lambda \boxminus j} - |\lambda| d_{\lambda \boxminus j} + d_\lambda
    = d_\lambda,
  \end{multline*}
  where the second equality follows from the fact that $\lambda \boxplus i \boxminus j = \bzero$ whenever $i \notin B^+(\lambda \boxminus j)$.  Relation \eqref{rel:hl5} follows.

  To prove relation \eqref{rel:hl8}, suppose $i \in B^+(\lambda)$.  Then
  \begin{multline*}
    \sum_{j \in B^-(\lambda)} \frac{|\lambda| d_{\lambda \boxplus i} d_{\lambda \boxminus j}}{(i-j)^2 d_\lambda}
    \stackrel{\eqref{eq:h15-proof-eq}}{=} \sum_{j\in B^-(\lambda)} \frac{|\lambda| d_{\lambda \boxplus i} d_{\lambda \boxminus j}}{d_\lambda} - \sum_{j\in B^-(\lambda)} (|\lambda|+1) d_{\lambda \boxplus i \boxminus j} \\
    \stackrel{\eqref{rel:hl}}{=} |\lambda| d_{\lambda \boxplus i} - (|\lambda|+1) \left( \sum_{j\in B^-(\lambda \boxplus i)} d_{\lambda \boxplus i \boxminus j} - d_\lambda \right)
    \stackrel{\eqref{rel:hl}}{=} |\lambda| d_{\lambda \boxplus i} - (|\lambda|+1)(d_{\lambda \boxplus i} - d_\lambda)
    = (|\lambda|+1) d_\lambda - d_{\lambda \boxplus i},
  \end{multline*}
  where, in the second equality, we have used the fact that $\lambda \boxplus i \boxminus j = \bzero$ whenever $j \not \in B^-(\lambda \boxplus i)$.  Relation \eqref{rel:hl8} follows.
\end{proof}

\begin{lem}
  \begin{enumerate}
    \item For $\lambda \vdash n$, we have
      \begin{equation} \label{eq:trace-of-epsilon_lambda}
        \varepsilon_\rL(e_\lambda) = \sum_{i \in B^-(\lambda)} \frac{d_\lambda}{n d_{\lambda \boxminus i}} e_{\lambda \boxminus i}.
      \end{equation}

    \item For $\lambda \vdash n$ and $i \in \Z$, we have
      \begin{equation} \label{eq:JM-idempotent-product}
        J_{n+1} e_{\lambda \boxplus i} e_\lambda = i e_{\lambda \boxplus i} e_\lambda.
      \end{equation}

    \item For $\lambda \vdash n$ and $i \in \Z$, we have
      \begin{equation} \label{eq:idempotent-symmetrize}
        \frac{1}{(n+1)!} \sum_{w \in S_{n+1}} w e_\lambda w^{-1}
        = \sum_{i \in B^+(\lambda)} \frac{d_\lambda}{d_{\lambda \boxplus i}} e_{\lambda \boxplus i}.
      \end{equation}
  \end{enumerate}
\end{lem}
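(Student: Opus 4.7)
The plan is to reduce all three identities to the explicit formula \eqref{eq:central-idempotent-explicit} for $e_\lambda$ combined with the classical branching rule $\Res^{S_n}_{S_{n-1}} V_\lambda \cong \bigoplus_{i\in B^-(\lambda)} V_{\lambda\boxminus i}$.

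For part (a), since $\lambda\vdash n$, I am applying the instance of $\varepsilon_\rL$ in Proposition~\ref{prop:adjunction-maps} with index $n-1$, which kills every permutation in $S_n\setminus S_{n-1}$ and fixes the rest. Applied termwise to \eqref{eq:central-idempotent-explicit}, this yields
\[
  \varepsilon_\rL(e_\lambda) = \frac{d_\lambda}{n!}\sum_{w\in S_{n-1}} \tr(w_\lambda^{-1})\,w.
\]
The branching rule gives $\tr(w_\lambda^{-1})=\sum_{i\in B^-(\lambda)}\tr(w_{\lambda\boxminus i}^{-1})$ for $w\in S_{n-1}$, and a second application of \eqref{eq:central-idempotent-explicit}, this time to $\lambda\boxminus i \vdash n-1$, recognizes the corresponding internal sum as $\frac{(n-1)!}{d_{\lambda\boxminus i}}\,e_{\lambda\boxminus i}$. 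Collecting factorials produces the claimed coefficient $\frac{d_\lambda}{n\,d_{\lambda\boxminus i}}$.

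For part (b), the two ingredients are that $J_{n+1}$ commutes with $A_n$ (since conjugation by any $\sigma\in S_n$ merely permutes the summands in $J_{n+1}=\sum_{k=1}^{n}(k,n+1)$) and that $e_{\lambda\boxplus i}$ is central in $A_{n+1}$. Together these give $J_{n+1}\,e_{\lambda\boxplus i}\,e_\lambda = e_{\lambda\boxplus i}\,e_\lambda\,J_{n+1}$. By \eqref{eq:i-induction-idempotent-decomp}, the element $e_{\lambda\boxplus i}\,e_\lambda$ lies in the summand $e_{\lambda\boxplus i}(n+1)_n e_\lambda$ of $(n+1)_n^i$, on which right multiplication by $J_{n+1}$ acts as $i$ times the identity by definition of the eigenspace.

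For part (c), the left-hand side is manifestly central in $\Q S_{n+1}$, so it equals $\sum_{\mu\vdash n+1} c_\mu\, e_\mu$ for some coefficients $c_\mu$. I would read these off by applying the irreducible character $\tr_\mu$ of $V_\mu$: conjugation-invariance of trace collapses the averaging to give $\tr_\mu\bigl(\tfrac{1}{(n+1)!}\sum_{w\in S_{n+1}} w e_\lambda w^{-1}\bigr)=\tr_\mu(e_\lambda)$, and the branching rule (together with the fact that $e_\lambda$ projects onto the $V_\lambda$-isotypic component of any $S_n$-module) identifies this value as $d_\lambda$ when $\mu=\lambda\boxplus i$ for some $i\in B^+(\lambda)$ and as $0$ otherwise. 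Comparing with $\tr_\mu(e_\nu)=\delta_{\mu,\nu}d_\mu$ pins down $c_{\lambda\boxplus i}=d_\lambda/d_{\lambda\boxplus i}$ and $c_\mu=0$ for all other $\mu$, which is precisely \eqref{eq:idempotent-symmetrize}. None of the three arguments presents a real technical obstacle; the only points requiring care are the bookkeeping of factorials in (a) and the correct shift of index in the $\varepsilon_\rL$ being applied there.
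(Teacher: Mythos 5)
Your proposal is correct, and parts (a) and (c) coincide with the paper's proof essentially line for line: (a) is the branching-rule computation applied termwise to \eqref{eq:central-idempotent-explicit}, and (c) is the centrality-plus-character argument (the paper extracts $c_\mu$ by multiplying by $e_\mu$ and taking the trace on $V_\mu$, which is the same as applying $\tr_\mu$ directly). For part (b) your routing differs slightly: you combine the commutation $J_{n+1}e_{\lambda\boxplus i}e_\lambda = e_{\lambda\boxplus i}e_\lambda J_{n+1}$ with the eigenspace decomposition \eqref{eq:i-induction-idempotent-decomp} of the bimodule $(n+1)_n$, whereas the paper checks the identity by letting both sides act on each irreducible $V_\mu$ and invoking the fact that the $V_\lambda$-isotypic component of $V_{\lambda\boxplus i}$ is exactly the $i$-eigenspace of $J_{n+1}$. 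These are equivalent, since \eqref{eq:i-induction-idempotent-decomp} is itself just a repackaging of that eigenvalue fact; the only caveat is that the paper merely asserts \eqref{eq:i-induction-idempotent-decomp} ("It follows that\dots") rather than proving it, so your argument leans on the same unproved input that the paper's direct verification makes explicit. Either way the logic is sound given the paper's ordering of statements.
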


\begin{proof}
  \begin{asparaenum}
    \item Fix $\lambda \vdash n$.  As $\Q S_{n-1}$-modules, we have a decomposition $V_\lambda \cong \bigoplus_{i \in B^-(\lambda)} V_{\lambda \boxminus i}$.  Then
      \begin{multline*}
        \varepsilon_\rL (e_\lambda)
        \stackrel{\eqref{eq:central-idempotent-explicit}}{=}
        \varepsilon_\rL \left( \frac{d_\lambda}{n!} \sum_{w \in S_n} \tr \left( w^{-1}_\lambda \right) w \right)
        = \frac{d_\lambda}{n!} \sum_{w \in S_{n-1}} \tr \left( w^{-1}_\lambda \right) w
        \\
        = \frac{d_\lambda}{n!} \sum_{i \in B^-(\lambda)} \sum_{w \in S_{n-1}} \tr \left( w^{-1}_{\lambda \boxminus i} \right) w
        \stackrel{\eqref{eq:central-idempotent-explicit}}{=}
        \sum_{i \in B^-(\lambda)} \frac{d_\lambda}{n d_{\lambda \boxminus i}} e_{\lambda \boxminus i}.
      \end{multline*}

    \item Recall that if $\lambda \vdash n$ and $V$ is an $A_n$-module, then $e_\lambda(V)$ is the $\lambda$-isotypic component of $V$.  Now, to show that two elements of $A_{n+1}$ are equal it suffices to show that they act identically on every irreducible representation of $A_{n+1}$.  For $\mu \vdash (n+1)$ and $\lambda \vdash n$, let $V_{\mu,\lambda}$ be the $\lambda$-isotypic component of $V_\mu$ (this is either zero or isomorphic to $V_\lambda$ as an $A_n$-module).  Then
        \[
          e_{\lambda\boxplus i}e_\lambda (V_\mu)=\delta_{\mu,\lambda\boxplus i}V_{\lambda\boxplus i,\lambda}
        \]
        The result then follows from the fact that $V_{\lambda\boxplus i,\lambda}$ is the $i$-eigenspace of $V_{\lambda \boxplus i}$ under multiplication by $J_{n+1}$.

    \item Fix $\lambda \vdash n$.  Then $\frac{1}{(n+1)!} \sum_{w \in S_{n+1}} w e_\lambda w^{-1}$ belongs to the center of $\Q S_{n+1}$, and so is a linear combination of the central idempotents $e_\nu$, $\nu \vdash n+1$.  Thus we have
        \[
          \frac{1}{(n+1)!} \sum_{w \in S_{n+1}} w e_\lambda w^{-1}
          = \sum_{\nu \vdash n+1} c_\nu e_\nu
        \]
        for some $c_\nu \in \Q$.  Therefore, for $\mu \vdash n+1$, we have
        \[
          c_\mu e_\mu
          = e_\mu \sum_{\nu \vdash n+1} c_\nu e_\nu
          = \frac{e_\mu}{(n+1)!} \sum_{w \in S_{n+1}} w e_\lambda w^{-1}
          = \frac{1}{(n+1)!} \sum_{w \in S_{n+1}} w e_\mu e_\lambda w^{-1}.
        \]
        Then, considering the action of $\Q S_{n+1}$ on $V_\mu$, we have
        \[
          c_{\mu} d_{\mu}
          = \tr \left( c_\mu e_\mu \right)
          = \frac{1}{(n+1)!} \sum_{w \in S_{n+1}} \tr \left( e_\mu e_\lambda \right)
          =
          \begin{cases}
            d_\lambda & \text{if } \mu = \lambda \boxplus i \text{ for some } i \in \Z, \\
            0 & \text{otherwise}.
          \end{cases}
        \]
        The result follows. \qedhere
  \end{asparaenum}
\end{proof}

%
\section{The 2-category $\sA$} \label{sec:sA-def}
%

In this section we define an additive linear 2-category $\sA$ and investigate some of its key properties.  We will show in Section~\ref{subsec:truncated-KL} that $\sA$ is equivalent to the degree zero part of a truncation of a categorified quantum group.  Later, in Section~\ref{sec:sHepsilon-A-equivalence}, we will also show that $\sA$ is equivalent to a summand of a truncation of a Heisenberg 2-category.  We will also see, in Proposition~\ref{prop:sU-sM-equivalence}, that $\sA$ is equivalent to the category $\sM$.

\subsection{Definition}

The set of objects of $\sA$ is the free monoid on the set of partitions:
\[
  \Ob \sA = \N[\cP].
\]
We denote the zero object by $\bzero$.  The 1-morphisms of $\sA$ are generated by (i.e.\ direct sums of compositions of)
\begin{gather*}
  \sF_i 1_\lambda = 1_{\lambda \boxplus i} \sF_i = 1_{\lambda \boxplus i} \sF_i 1_\lambda,\quad \text{and} \\
  \sE_i 1_\lambda = 1_{\lambda \boxminus i} \sE_i = 1_{\lambda \boxminus i} \sE_i 1_\lambda,\quad
  i \in \Z,\ \lambda \in \cP.
\end{gather*}
We adopt the convention that if $\lambda$ does not have an addable (resp.\ removable) $i$-box, then $1_{\lambda \boxplus i}= 0$ (resp.\ $1_{\lambda \boxminus i}=0$) and hence $\lambda \boxplus i \cong \bzero$ (resp.\ $\lambda \boxminus i \cong \bzero$) in $\sA$.  In particular,
\begin{equation}
  \sE_i^2 1_\lambda = 0
  \quad \text{and} \quad
  \sF_i^2 1_\lambda = 0
  \quad \text{for all } \lambda \in \cP,
\end{equation}
and similarly
\begin{equation}
  \sE_i\sE_{i\pm1}\sE_i 1_\lambda = 0
  \quad \text{and} \quad
  \sF_i\sF_{i\pm1}\sF_i 1_\lambda = 0
  \quad \text{for all } \lambda \in \cP.
\end{equation}

The space of 2-morphisms between two 1-morphisms is the $\Q$-algebra generated by suitable planar diagrams modulo local relations.  The diagrams consist of oriented compact one-manifolds immersed into the plane strip $\R \times [0,1]$ modulo local relations, with strands labeled by integers and regions of the strip labeled/colored by elements of $\cP \sqcup \{\bzero\}$.  In particular, the identity 2-morphism of $\sF_i 1_\lambda$ will be denoted by an upward strand labeled $i$, where the region to the right of the arrow is labeled $\lambda$, while the identity 2-morphism of $\sE_i 1_\lambda$ is denoted by a downward strand labeled $i$, where the region to the right of the strand is labeled $\lambda$:
\[
  \begin{tikzpicture}[>=stealth,baseline={([yshift=-.5ex]current bounding box.center)}]
    \draw[->] (0,0) node [anchor=north] {\strandlabel{i}} -- (0,0.5);
    \draw (0.25,0.25) node {\regionlabel{\lambda}};
  \end{tikzpicture}
  \qquad \qquad \qquad
  \begin{tikzpicture}[>=stealth,baseline={([yshift=-.5ex]current bounding box.center)}]
    \draw[<-] (0,0) node [anchor=north] {\strandlabel{i}} -- (0,0.5);
    \draw (0.25,0.25) node {\regionlabel{\lambda}};
  \end{tikzpicture}
\]

Strands of distinct color may intersect transversely, but no triple intersections are allowed.  The space of 2-morphisms is the space of such planar diagrams up to isotopy and modulo local relations.  The domain and codomain are given by the orientations of the strands at the bottom and top of the diagram respectively.

The local relations are as follows, where $i$, $j$, and $k$ range over all integers satisfying
\[
  |i-j|,|i-k|,|j-k| > 1.
\]
For relations when the regions are not labeled, we impose the relation for all labelings of the regions.

\noindent\begin{minipage}{0.5\linewidth}
  \begin{equation} \label{rel:up-up-up-braid}
    \begin{tikzpicture}[>=stealth,baseline={([yshift=.5ex]current bounding box.center)}]
      \draw (0,0) node [anchor=north] {\strandlabel{i}} -- (1,1)[->];
      \draw (1,0) node [anchor=north] {\strandlabel{k}} -- (0,1)[->];
      \draw[->] (0.5,0) node [anchor=north] {\strandlabel{j}} .. controls (0,0.5) .. (0.5,1);
    \end{tikzpicture}
    \ =\
    \begin{tikzpicture}[>=stealth,baseline={([yshift=.5ex]current bounding box.center)}]
      \draw (0,0) node [anchor=north] {\strandlabel{i}} -- (1,1)[->];
      \draw (1,0) node [anchor=north] {\strandlabel{k}} -- (0,1)[->];
      \draw[->] (0.5,0) node [anchor=north] {\strandlabel{j}} .. controls (1,0.5) .. (0.5,1);
    \end{tikzpicture}
  \end{equation}
\end{minipage}%
\begin{minipage}{0.5\linewidth}
  \begin{equation} \label{rel:up-up-double-cross}
    \begin{tikzpicture}[>=stealth,baseline={([yshift=.5ex]current bounding box.center)}]
      \draw[->] (0,0) node [anchor=north] {\strandlabel{i}} .. controls (0.5,0.5) .. (0,1);
      \draw[->] (0.5,0) node [anchor=north] {\strandlabel{j}} .. controls (0,0.5) .. (0.5,1);
    \end{tikzpicture}
    \ =\
    \begin{tikzpicture}[>=stealth,baseline={([yshift=.5ex]current bounding box.center)}]
      \draw[->] (0,0) node [anchor=north] {\strandlabel{i}} --(0,1);
      \draw[->] (0.5,0) node [anchor=north] {\strandlabel{j}} -- (0.5,1);
    \end{tikzpicture}
  \end{equation}
\end{minipage}\par\vspace{\belowdisplayskip}

\noindent\begin{minipage}{0.5\linewidth}
  \begin{equation} \label{rel:down-up-double-cross}
    \begin{tikzpicture}[>=stealth,baseline={([yshift=.5ex]current bounding box.center)}]
      \draw[<-] (0,0) node [anchor=north] {\strandlabel{i}} .. controls (0.5,0.5) .. (0,1);
      \draw[->] (0.5,0) node [anchor=north] {\strandlabel{j}} .. controls (0,0.5) .. (0.5,1);
    \end{tikzpicture}
    \ =\
    \begin{tikzpicture}[>=stealth,baseline={([yshift=.5ex]current bounding box.center)}]
      \draw[<-] (0,0) node [anchor=north] {\strandlabel{i}} --(0,1);
      \draw[->] (0.5,0) node [anchor=north] {\strandlabel{j}} -- (0.5,1);
    \end{tikzpicture}
  \end{equation}
\end{minipage}%
\begin{minipage}{0.5\linewidth}
  \begin{equation} \label{rel:up-down-double-cross}
    \begin{tikzpicture}[>=stealth,baseline={([yshift=.5ex]current bounding box.center)}]
      \draw[->] (0,0) node [anchor=north] {\strandlabel{i}} .. controls (0.5,0.5) .. (0,1);
      \draw[<-] (0.5,0) node [anchor=north] {\strandlabel{j}} .. controls (0,0.5) .. (0.5,1);
    \end{tikzpicture}
    \ =\
    \begin{tikzpicture}[>=stealth,baseline={([yshift=.5ex]current bounding box.center)}]
      \draw[->] (0,0) node [anchor=north] {\strandlabel{i}} --(0,1);
      \draw[<-] (0.5,0) node [anchor=north] {\strandlabel{j}} -- (0.5,1);
    \end{tikzpicture}
  \end{equation}
\end{minipage}\par\vspace{\belowdisplayskip}

\noindent\begin{minipage}{0.5\linewidth}
  \begin{equation} \label{rel:down-up-ii}
    \begin{tikzpicture}[>=stealth,baseline={([yshift=.5ex]current bounding box.center)}]
      \draw[<-] (0,0) node [anchor=north] {\strandlabel{i}} --(0,1);
      \draw[->] (0.5,0) node [anchor=north] {\strandlabel{i}} -- (0.5,1);
    \end{tikzpicture}
    \ =\
    \begin{tikzpicture}[anchorbase]
      \draw[->] (0,1) node [anchor=south] {\strandlabel{i}} -- (0,0.9) arc (180:360:.25) -- (0.5,1);
      \draw[<-] (0,0) -- (0,0.1) arc (180:0:.25) -- (0.5,0) node [anchor=north] {\strandlabel{i}};
    \end{tikzpicture}
  \end{equation}
\end{minipage}%
\begin{minipage}{0.5\linewidth}
  \begin{equation} \label{rel:up-down-ii}
    \begin{tikzpicture}[>=stealth,baseline={([yshift=.5ex]current bounding box.center)}]
      \draw[->] (0,0) node [anchor=north] {\strandlabel{i}} --(0,1);
      \draw[<-] (0.5,0) node [anchor=north] {\strandlabel{i}} -- (0.5,1);
    \end{tikzpicture}
    \ =\
    \begin{tikzpicture}[anchorbase]
      \draw[<-] (0,1) -- (0,0.9) arc (180:360:.25) -- (0.5,1) node [anchor=south] {\strandlabel{i}};
      \draw[->] (0,0) node [anchor=north] {\strandlabel{i}} -- (0,0.1) arc (180:0:.25) -- (0.5,0);
    \end{tikzpicture}
  \end{equation}
\end{minipage}\par\vspace{\belowdisplayskip}

\noindent\begin{minipage}{0.5\linewidth}
  \begin{equation} \label{rel:clockwise-i-circle}
    \begin{tikzpicture}[anchorbase]
      \draw[-<] (0,0) arc (180:540:0.3) node [anchor=east] {\strandlabel{i}};
      \draw (0.8,0.1) node {\regionlabel{\lambda}};
    \end{tikzpicture}
    \ =
    \id_\lambda \text{ for } i \in B^-(\lambda)
  \end{equation}
\end{minipage}%
\begin{minipage}{0.5\linewidth}
  \begin{equation} \label{rel:ccc}
    \begin{tikzpicture}[anchorbase]
      \draw[->] (0,0) arc (180:540:0.3) node [anchor=east] {\strandlabel{i}};
      \draw (0.8,0.1) node {\regionlabel{\lambda}};
    \end{tikzpicture}
    \ =
    \id_\lambda \text{ for } i \in B^+(\lambda)
  \end{equation}
\end{minipage}\par\vspace{\belowdisplayskip}

\begin{rem} \label{rem:double-cross-diff-by-one}
  Note that the crossings
  \[
    \begin{tikzpicture}[>=stealth,baseline={([yshift=-.5ex]current bounding box.center)}]
      \draw[->] (0,0) node [anchor=north] {\strandlabel{i}} -- (0.5,0.5);
      \draw[->] (0.5,0) node [anchor=north] {\strandlabel{j}} -- (0,0.5);
      \draw (0.6,0.25) node {\regionlabel{\lambda}};
    \end{tikzpicture}
  \]
  are always zero for $|i-j| \le 1$.  In other words, nonzero diagrams can only have strands crossing if their colors differ by at least two.  This is because $\lambda \boxplus i \boxplus j = \bzero$ when $|i-j| \le 1$.  Therefore, relations \eqref{rel:up-up-double-cross}--\eqref{rel:up-down-double-cross} allow us to resolve all nonzero double crossings.
\end{rem}

\subsection{Truncated categorified quantum groups} \label{subsec:truncated-KL}

In \cite[\S2]{CL15}, Cautis and Lauda associated a graded additive linear 2-category to a Cartan datum and choice of scalars, generalizing the definition of \cite{KL3}.  We let $\sU$ be the 2-category defined in \cite[\S2]{CL15} for the Cartan datum of type $A_\infty$ and the choice of scalars
\begin{equation} \label{eq:CL-scalar-choice}
  t_{ij} = 1,\ s_{ij}^{pq} = 0,\ r_i=1, \quad \text{for all } i,j,p,q,
\end{equation}
except that we enlarge the 2-category by allowing finite formal direct sums of objects.  By \cite[Rem.~(3), p.~210]{CL15}, we in fact lose no generality in making the choices \eqref{eq:CL-scalar-choice}.  The set of objects of $\sU$ is the free monoid generated by the weight lattice of $\fsl_\infty$.  We define $\sU^\trunc$ to be the quotient of $\sU$ by the identity 2-morphisms of the identity 1-morphisms of all objects corresponding to weights that do not appear in the basic representation.  Since the weights of the basic representation are in natural bijection with partitions (see Section~\ref{subsec:basic-rep}), the objects of $\sU^\trunc$ can be identified with elements of the free monoid $\N[\cP]$ on the set of partitions.

\begin{prop}
  All the 2-morphism spaces of $\sU^\trunc$ are nonnegatively graded.  The positive degree 2-morphism spaces are spanned by diagrams with dots.
\end{prop}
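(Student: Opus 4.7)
The plan is to trace the grading through each generating 2-morphism of $\sU$ and use the constraint imposed by the truncation. Recall the degree conventions of~\cite[\S 2]{CL15}: each dot contributes $+2$; an $\sF_i\sF_j$ or $\sE_i\sE_j$ crossing of same-direction strands has degree $-(\alpha_i,\alpha_j)$, which is $-2$ for $i=j$ and is $0$ or $+1$ for $i \ne j$ in type $A_\infty$; mixed crossings $\sF_i\sE_j \to \sE_j\sF_i$ and $\sE_i\sF_j \to \sF_j\sE_i$ with $i \ne j$ have nonnegative degree; and the right and left cups and caps realizing the biadjunction of $\sE_i$ and $\sF_i$ have degrees $1 \pm \lambda_i$, where $\lambda_i = \langle \alpha_i^\vee,\lambda\rangle$ is computed in the relevant region.

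The key observation is that the objects surviving in $\sU^\trunc$ are exactly the weights $\omega_\mu$ of the basic representation, indexed by partitions $\mu \in \cP$ via \eqref{eq:partions-label-basic-rep-weights}. For any such $\omega_\mu$,
\[
  \langle \omega_\mu, \alpha_i^\vee\rangle = |B^+(\mu)\cap\{i\}| - |B^-(\mu)\cap\{i\}| \in \{-1, 0, 1\},
\]
since a Young diagram has at most one addable $i$-box and at most one removable $i$-box, and these cannot coexist. Consequently every cup and cap in $\sU^\trunc$ has degree in $\{0,1,2\}$, while dots and different-color crossings are already nonnegative. The only remaining potentially negative generators are the same-color crossings $\sF_i\sF_i$ and $\sE_i\sE_i$ of degree $-2$; however, these are 2-endomorphisms of $\sF_i^2 1_{\omega_\mu}$ and $\sE_i^2 1_{\omega_\mu}$ respectively, which vanish in $\sU^\trunc$ because no partition has two addable (or two removable) boxes of the same content. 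Thus every generating 2-morphism of $\sU^\trunc$ has nonnegative degree, and so does every composition, establishing the first assertion.

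For the second assertion, the plan is to invoke the normal-form basis theorem for the 2-morphism spaces of $\sU$ (generalizing the KLR-style basis of~\cite{KL3}), which expresses every 2-morphism as a linear combination of diagrams built from an undotted planar skeleton with dots freely placed on strands, the degree being the degree of the skeleton plus $2$ times the number of dots. It then suffices to show that every undotted skeleton surviving in $\sU^\trunc$ has degree $0$. Cups, caps, and non-bubble crossings contribute only controlled amounts because of the restriction $\lambda_i \in \{-1,0,1\}$ and the vanishing of same-color crossings; the delicate case is that of bubbles, which must be analyzed via the infinite Grassmannian relations of~\cite[\S 2]{CL15} in the three region cases $\lambda_i = -1, 0, 1$ to rewrite any positive-degree undotted bubble as a linear combination of dotted diagrams (with degree-$0$ bubbles identified with scalars). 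Completing this bubble bookkeeping is the main technical obstacle; once it is carried out, every strictly positive-degree 2-morphism lies in the span of diagrams containing at least one dot, proving the second assertion.
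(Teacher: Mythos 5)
Your argument for nonnegativity is essentially sound: dots and distinct-color crossings have nonnegative degree, same-color crossings are killed because $\sF_i^2 1_{\omega_\mu}$ and $\sE_i^2 1_{\omega_\mu}$ vanish in the truncation, and the cup/cap degrees $1 \pm (\lambda,\alpha_i)$ lie in $\{0,1,2\}$ since $(\omega_\mu,\alpha_i) \in \{-1,0,1\}$. The genuine gap is in the second assertion, which you explicitly leave unfinished (``the main technical obstacle''), and the reason you are forced into that obstacle is that your degree bounds are not sharp. The observation the paper relies on is stronger: in $\sU^\trunc$ \emph{every nonzero generating 2-morphism other than a dot has degree exactly zero}. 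For crossings, the degree-$(+1)$ case $|i-j|=1$ also dies in the truncation, because the addable (respectively removable) boxes of a partition have contents differing by at least $2$; hence the outer region of such a crossing, labeled by $\lambda$ with two boxes of adjacent contents added or removed, is never a weight of the basic representation, and only the degree-$0$ crossings with $|i-j|>1$ survive. For cups and caps, a cup of degree $1 + (\lambda,\alpha_i)$ is nonzero in $\sU^\trunc$ only when its inner region ($\lambda$ with an $i$-box removed, in the relevant orientation) is a weight of the basic representation, i.e.\ only when $i \in B^-(\lambda)$, which forces $(\lambda,\alpha_i) = -1$ and hence degree exactly $0$; the other cups and caps are handled the same way. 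So your range $\{0,1,2\}$ collapses to $\{0\}$ on the diagrams that are actually nonzero.

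Once every surviving non-dot generator has degree exactly $0$, the second assertion is immediate: any diagram of strictly positive degree must contain a dot, since an undotted diagram is a composite of degree-$0$ generators (or is zero). No normal-form basis theorem and no bubble bookkeeping are needed — a positive-degree undotted bubble has a region label that is not a weight of the basic representation, or else is a composite of degree-$0$ cups, caps and crossings and so itself has degree $0$. As written, your proof of the second assertion is incomplete, and completing it along the route you propose (infinite Grassmannian relations in the three cases $(\lambda,\alpha_i) = -1,0,1$) would be substantially more work than the sharpened degree count above.
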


\begin{proof}
  The degrees of the 2-morphisms in $\sU$ are given by the degrees of the crossings, cups, caps, and dots in \cite[Def.~1.1]{CL15}.  The crossing has degree
  \[
    \deg \left(
    \begin{tikzpicture}[>=stealth,baseline={([yshift=-.5ex]current bounding box.center)}]
      \draw[->] (0,0) node [anchor=north] {\strandlabel{i}} -- (0.5,0.5);
      \draw[->] (0.5,0) node [anchor=north] {\strandlabel{j}} -- (0,0.5);
      \draw (0.6,0.25) node {\regionlabel{\lambda}};
    \end{tikzpicture}
    \right)
    =
    \begin{cases}
      -2, &\text{if } i=j, \\
      1, &\text{if } |i-j|=1,\\
      0, &\text{otherwise}.
    \end{cases}
  \]
  However, the leftmost region above is labeled $\lambda \boxminus j \boxminus i$, which is always isomorphic to $\bzero$ (i.e.\ is not a weight of the basic representation) when $|i -j| \le 1$.  (Note that the upward oriented strands in $\sU$ correspond to subtracting boxes under the bijection \eqref{eq:partions-label-basic-rep-weights}, as opposed to adding boxes, as in $\sA$.)  Therefore, the only crossings which are nonzero in $\sU^\trunc$ have degree zero.  The situation for downwards oriented crossings is analogous.

  Similarly, the right cup
  \[
    \begin{tikzpicture}[anchorbase]
      \draw[->] (0,0) node[anchor=south] {\strandlabel{i}} -- (0,-0.1) arc (180:360:.3) -- (0.6,0);
      \draw (0.8,-0.5) node {\regionlabel{\lambda}};
    \end{tikzpicture}
  \]
  has degree $1 + (\lambda,\alpha_i)$.  However, this cup is zero in $\sU^\trunc$ unless $\lambda$ has a removable $i$-box, in which case $(\lambda,\alpha_i)=-1$.  Thus, the only right cups that are nonzero in $\sU^\trunc$ have degree zero.  The situation for the other cups and caps is analogous.

  The result now follows from the fact that a dot on an $i$-colored strand has degree $(\alpha_i,\alpha_i)=2$.
\end{proof}

Let $\sU_0$ be the additive linear 2-category defined as follows.  The objects of $\sU_0$ are the same as the objects of $\sU^\trunc$.  The 1-morphisms in $\sU_0$ are formal direct sums of compositions of the generating 1-morphisms given in \cite[Def.~1.1]{CL15} \emph{without degree shifts}.  The 2-morphism spaces of $\sU_0$ are the degree zero part of the corresponding 2-morphism spaces of $\sU^\trunc$ (equivalently, the quotient of the corresponding 2-morphism spaces of $\sU^\trunc$ by the ideal consisting of 2-morphisms of strictly positive degree).

\begin{rem}
  It will follow from Theorem~\ref{theo:sA-tsU-equivalence} and Corollary~\ref{cor:KrullSchmidt} that $\sU_0$ is idempotent complete; hence there is no need to pass to the idempotent completion, as is done in \cite{CL15} with the larger category $\sU$.
\end{rem}

\begin{theo} \label{theo:sA-tsU-equivalence}
  The 2-categories $\sA$ and $\sU_0$ are equivalent via a 2-functor that acts on 2-morphisms by reversing the orientation of strands.
\end{theo}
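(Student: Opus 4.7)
The plan is to define a 2-functor $\bF \colon \sA \to \sU_0$ that is the identity on objects $\N[\cP]$ (identifying partitions with basic-representation weights via \eqref{eq:partions-label-basic-rep-weights}) and that acts on 2-morphism diagrams by reversing all strand orientations. On generating 1-morphisms, because upward strands in the Khovanov--Lauda calculus subtract boxes (as noted in the proof of the grading proposition above), $\bF(\sF_i 1_\lambda)$ is the downward $i$-strand in $\sU_0$ with region $\lambda$ at the lower right, and $\bF(\sE_i 1_\lambda)$ is the corresponding upward $i$-strand. These conventions match the box-adding and box-removing actions on both sides. An inverse candidate $\mathbf{G} \colon \sU_0 \to \sA$ is defined analogously on the degree-zero generators of $\sU_0$ (crossings, cups, and caps after orientation reversal).

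To show $\bF$ is well-defined I would verify that each local relation of $\sA$ holds in $\sU_0$. The distant-color relations \eqref{rel:up-up-up-braid}--\eqref{rel:up-down-double-cross} are precisely the corresponding KLR braid and double-crossing relations in $\sU$ for $|i-j| > 1$, which are already of degree zero and unaffected by truncation. The zig-zag relations \eqref{rel:down-up-ii}--\eqref{rel:up-down-ii} are the degree-zero parts of the biadjunction relations between $\sE_i$ and $\sF_i$ in $\sU$; the higher-degree correction terms involve dotted bubbles that vanish in $\sU_0$. The bubble relations \eqref{rel:clockwise-i-circle}--\eqref{rel:ccc} follow from a direct computation with the Khovanov--Lauda bubble normalization and the scalar choice \eqref{eq:CL-scalar-choice}: the unique degree-zero clockwise (respectively counterclockwise) $i$-bubble in region $\lambda$, which exists exactly when $i \in B^-(\lambda)$ (resp.\ $i \in B^+(\lambda)$), normalizes to the identity.

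Dually, for $\mathbf{G}$ to be well-defined I must verify that every defining relation of $\sU^\trunc$ becomes, when restricted to degree zero and with all dots killed, a consequence of the local relations of $\sA$ after orientation reversal. This is the main obstacle, since $\sU$ carries many more relations than $\sA$. The key simplifications are that dots vanish by positivity of degree; many crossings and cups/caps vanish because their corresponding regions are not weights of the basic representation (hence $\bzero$); the $\fsl_2$ direct-sum decompositions of $\sE_i\sF_i 1_\lambda$ versus $\sF_i\sE_i 1_\lambda$ collapse in degree zero into precisely the zig-zag and bubble relations of $\sA$; and the infinite Grassmannian bubble relations reduce to the statement that only a single degree-zero bubble survives per region, yielding \eqref{rel:clockwise-i-circle}--\eqref{rel:ccc}. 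Once both $\bF$ and $\mathbf{G}$ are well-defined, they are mutually inverse by construction on generators, establishing the equivalence.
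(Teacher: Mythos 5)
Your proposal is correct and follows essentially the same route as the paper: since the objects and 1-morphisms coincide and the 2-morphisms on both sides are string diagrams, the whole content is a two-way matching of local relations, which the paper carries out by going through the Cautis--Lauda relations one at a time and observing that each either corresponds to one of \eqref{rel:up-up-up-braid}--\eqref{rel:ccc} or becomes trivial in the truncation (same-color crossings, dots, and cups/caps into non-weights of the basic representation all vanish). The only quibble is terminological: the relations \eqref{rel:down-up-ii}--\eqref{rel:up-down-ii} are the degree-zero parts of the extended $\fsl_2$ (inversion) relations \cite[(2.22), (2.24)]{CL15}, not of the biadjunction zig-zags, which on the $\sA$ side are absorbed into isotopy invariance.
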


\begin{proof}
  The sets of objects and 1-morphisms of $\sA$ and $\sU_0$ are clearly the same.  Furthermore, the spaces of 2-morphisms both consist of string diagrams with strands labeled by integers.  Therefore, it suffices to check that the local relations are the same.
  \details{
    In the language of \cite{CL15}, we have
    \begin{itemize}
      \item $I = \Z$
      \item $(\alpha_i,\alpha_j) = 2\delta_{i,j} - \delta_{i,j-1} - \delta_{i,j+1}$
      \item $d_{i,j} = -(\alpha_i,\alpha_j)$
      \item $d_i = 1$ for all $i \in I$
      \item $q_i = q$ for all $i \in I$
      \item $[n]_i = [n]$ for all $i \in I$
    \end{itemize}
  }
  The relations of \cite[\S2.2]{CL15} correspond in $\sA$ to the fact that we consider diagrams up to isotopy.  Relations \cite[(2.8)]{CL15} and \cite[(2.9)]{CL15} become trivial since they involve strands of the same color crossing or parallel strands of the same color, which yield the zero 2-morphism in the truncation.  Relation \cite[(2.10)]{CL15} corresponds to \eqref{rel:up-up-double-cross}.  Note that the $(\alpha_i,\alpha_j) \ne 0$ case of \cite[(2.10)]{CL15} is trivial in the truncation since strands of color $i$ and $i+1$ cannot cross.  Relation \cite[(2.12)]{CL15} is not relevant since it involves dots.  Relation \cite[(2.13)]{CL15} becomes \eqref{rel:up-up-up-braid}.  Note that \cite[(2.14)]{CL15} becomes trivial in the truncation since $(\alpha_i,\alpha_j) < 0$ implies $|i-j|=1$, in which case we have two strands of the same color crossing.

  The relations \cite[(2.16)]{CL15} correspond to \eqref{rel:down-up-double-cross} and \eqref{rel:up-down-double-cross}.  For $\lambda \in X$ and $i \in \Z$, we have
  \[
    \langle i, \lambda \rangle =  (\alpha_i,\lambda) =
    \begin{cases}
      1 & \text{if } i \in B^+(\lambda),\\
      -1 & \text{if } i \in B^-(\lambda),\\
      0 & \text{otherwise}.
    \end{cases}
  \]
  Therefore, relations \cite[(2.17)]{CL15} become trivial for us since the conditions on $m$ are never satisfied.  On the other hand, the relations at the top of page 211 of \cite{CL15} correspond to \eqref{rel:clockwise-i-circle} and \eqref{rel:ccc}.

  Now consider the extended $\fsl_2$ relations of \cite[\S2.6]{CL15}.  Here one considers relations involving strands of some fixed color $i$.  In the truncation, the region labels in \cite[\S2.6]{CL15} of nonzero diagrams are 0 (corresponding to a region label $\lambda$ with no $i$-addable or $i$-removable boxes), 1 (corresponding to a $\lambda$ with an $i$-removable box but no $i$-addable box), or $-1$ (corresponding to a $\lambda$ with an $i$-addable box, but no $i$-removable box).  The relations \cite[(2.21)]{CL15} become trivial in the truncation since a strand crossing itself is zero.  It follows from \cite[(2.19)]{CL15} with $n=1$ and $j=0$, together with \eqref{rel:clockwise-i-circle} (which tells us that a clockwise circle in outer region 1 is equal to one) that a counterclockwise fake bubble with dot label $-2$ and outer region 1 is equal to 1.  Thus, the first relation of \cite[(2.22)]{CL15} becomes \eqref{rel:down-up-ii}.  The second relation in \cite[(2.22)]{CL15} becomes zero in the truncation, since $n>0$ implies that $n$ has no $i$-addable box.  The relations \cite[(2.23)]{CL15} are trivial in the truncation since they involve a strand crossing itself.  The first relation in \cite[(2.24)]{CL15} is trivial in the truncation, while the second relation in \cite[(2.24)]{CL15} becomes \eqref{rel:up-down-ii}.  Finally, relations \cite[(2.25), (2.26)]{CL15} are trivial in the truncation.
\end{proof}

\subsection{1-morphism spaces} \label{subsec:A-1morph}

\begin{lem} \label{lem:key-1mor-isoms}
  For $\lambda \in \cP$ and $i,j \in \Z$, we have
  \begin{gather}
    \sE_i \sE_j 1_\lambda \cong \sE_j \sE_i 1_\lambda,\quad \text{if } |i-j| > 1, \label{key-isom:EiEj} \\
    \sF_i \sF_j 1_\lambda \cong \sF_j \sF_i 1_\lambda,\quad \text{if } |i-j| > 1, \label{key-isom:FiFj} \\
    \sE_i \sF_j 1_\lambda \cong \sF_j \sE_i 1_\lambda,\quad \text{if } i \ne j, \label{key-isom:EiFj} \\
    \sE_i \sF_i 1_\lambda \cong 1_\lambda,\quad \text{if } i \in B^+(\lambda), \label{key-isom:EiFi} \\
    \sF_i \sE_i 1_\lambda \cong 1_\lambda,\quad \text{if } i \in B^-(\lambda). \label{key-isom:FiEi}
  \end{gather}
\end{lem}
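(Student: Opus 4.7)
My approach is to construct explicit 2-morphisms witnessing each isomorphism and verify they are mutually inverse using the local relations defining $\sA$. For \eqref{key-isom:EiEj}, \eqref{key-isom:FiFj}, and \eqref{key-isom:EiFj} in the subcase $|i-j|>1$, the inverse pair is given by the two oppositely colored crossings, and the relations \eqref{rel:up-up-double-cross}, \eqref{rel:down-up-double-cross}, \eqref{rel:up-down-double-cross} immediately reduce each composite to two parallel strands, i.e.\ to the identity.

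The subtle subcase is \eqref{key-isom:EiFj} when $|i-j|=1$, where the crossings vanish by Remark~\ref{rem:double-cross-diff-by-one} and no diagrammatic 2-morphism between $\sE_i\sF_j 1_\lambda$ and $\sF_j\sE_i 1_\lambda$ is available. Here I plan to show that both $\sE_i\sF_j 1_\lambda$ and $\sF_j\sE_i 1_\lambda$ are already equal to the zero 1-morphism, so the isomorphism is trivial. The key combinatorial observation is that if $\lambda$ has both a $j$-addable box at $(r_2, c_2)$ and an $i$-removable box at $(r_1, c_1)$ with $|i-j|=1$, then uniqueness of addable and removable boxes of a given content, together with the partition-shape inequalities, forces $(r_1, c_1)$ to sit directly above $(r_2, c_2)$ (when $i=j+1$) or directly to the left of it (when $j=i+1$). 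In either adjacent configuration, adding the $j$-box eliminates the $i$-removable box, so $i \notin B^-(\lambda\boxplus j)$; symmetrically, removing the $i$-box eliminates the $j$-addable box. A short check rules out the creation of new removable or addable boxes of the relevant content by these operations, so both compositions vanish.

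For \eqref{key-isom:EiFi} with $i\in B^+(\lambda)$, I take $\alpha \colon \sE_i\sF_i 1_\lambda \to 1_\lambda$ and $\beta \colon 1_\lambda \to \sE_i\sF_i 1_\lambda$ to be the cap and cup 2-morphisms occurring on the right-hand side of \eqref{rel:down-up-ii}. Then $\beta\circ\alpha$ is literally the right-hand side of that relation, hence equals $\id_{\sE_i\sF_i 1_\lambda}$. The composite $\alpha\circ\beta$ forms a counterclockwise $i$-labeled closed loop with outer region $\lambda$, which equals $\id_\lambda$ by \eqref{rel:ccc} since $i\in B^+(\lambda)$. The proof of \eqref{key-isom:FiEi} is entirely parallel, using \eqref{rel:up-down-ii} and \eqref{rel:clockwise-i-circle}. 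I expect the main obstacle to be the combinatorial case analysis in the $|i-j|=1$ subcase of \eqref{key-isom:EiFj}; the rest is a direct reading of the defining relations, though one must carefully match arrow orientations so that the closed loop from $\alpha\circ\beta$ traverses in the sense required by the applicable circle relation.
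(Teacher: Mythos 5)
Your proposal is correct and follows essentially the same route as the paper, whose proof simply cites \eqref{rel:up-up-double-cross}--\eqref{rel:up-down-double-cross} for the first three isomorphisms and \eqref{rel:down-up-ii}, \eqref{rel:up-down-ii} together with the circle relations \eqref{rel:clockwise-i-circle}, \eqref{rel:ccc} for the last two. Your extra combinatorial analysis of the $|i-j|=1$ subcase of \eqref{key-isom:EiFj} (showing both $\sE_i\sF_j 1_\lambda$ and $\sF_j\sE_i 1_\lambda$ vanish because the adjacent addable/removable boxes obstruct each other) is a correct and welcome filling-in of a detail the paper leaves implicit in Remark~\ref{rem:double-cross-diff-by-one} and the zero-object conventions.
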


\begin{proof}
  The isomorphisms \eqref{key-isom:EiEj} and \eqref{key-isom:FiFj} follow immediately from \eqref{rel:up-up-double-cross}, isomorphism \eqref{key-isom:EiFj} follows immediately from \eqref{rel:down-up-double-cross} and \eqref{rel:up-down-double-cross}, isomorphism \eqref{key-isom:EiFi} follows from \eqref{rel:down-up-ii} and \eqref{rel:ccc}, and \eqref{key-isom:FiEi} follows from \eqref{rel:up-down-ii} and \eqref{rel:clockwise-i-circle}.
\end{proof}

\begin{prop} \label{prop:A-1-morph-spaces}
  We have
  \[
    \dim \Mor_{K(\sA)}(\lambda,\mu) = 1 \quad \text{for all } \lambda,\mu \in \cP.
  \]
  In particular, for $\lambda,\mu \in \cP$, $\Mor_{K(\sA)}(\lambda,\mu)$ is spanned by the class of a single 1-morphism in $\sA$ of the form
  \begin{equation} \label{eq:1-morph-normal-form}
    \sF_{i_1} \sF_{i_2} \dotsm \sF_{i_k} \sE_{j_1} \sE_{j_2} \dotsm \sE_{j_\ell} 1_\lambda,
  \end{equation}
  where $\{i_1,\dotsc,i_k\} \cap \{j_1,\dotsc,j_\ell\} = \varnothing$ and $\mu = \lambda \boxminus j_\ell \dotsb \boxminus j_2 \boxminus j_1 \boxplus i_k \boxplus \dotsb \boxplus i_2 \boxplus i_1$.
\end{prop}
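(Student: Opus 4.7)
The plan is to categorify the proof of Proposition~\ref{prop:dotU-presentation}, with the isomorphisms of Lemma~\ref{lem:key-1mor-isoms} replacing the defining relations \eqref{rel:eiej}--\eqref{rel:fiei} at the level of 1-morphisms. This will yield the upper bound $\dim_\Q \Mor_{K(\sA)}(\lambda,\mu) \le 1$; the matching lower bound requires separately showing that a normal form 1-morphism is not isomorphic to the zero object in $\sA(\lambda,\mu)$.

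For the upper bound, I would first show that every 1-morphism in $\sA(\lambda,\mu)$ is isomorphic to a direct sum of copies of a single 1-morphism of the form \eqref{eq:1-morph-normal-form}. Since every 1-morphism is by construction a direct sum of compositions of the generators $\sE_i$, $\sF_i$, it suffices to treat each composition. The three claims from the proof of Proposition~\ref{prop:dotU-presentation} transport directly: claim~(a) by choosing an innermost $\sF_i, \sE_i$ pair, using \eqref{key-isom:EiEj}, \eqref{key-isom:FiFj}, and \eqref{key-isom:EiFj} to slide it together, and then cancelling it via \eqref{key-isom:EiFi} or \eqref{key-isom:FiEi}; claims~(b) and~(c) by the same inductive rearrangement argument. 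The combinatorial obstruction analysis---that no strand of color differing from the pair's color by exactly one blocks the commuting---is pure Young-diagram combinatorics and is indifferent to whether we work with equalities or isomorphisms, so it transfers verbatim. Composing the isomorphisms produced at each step yields a single isomorphism to the normal form, whence $\Mor_{K(\sA)}(\lambda,\mu)$ is spanned over $\Q$ by the class $[\mathsf{P}]$ of a fixed normal form 1-morphism $\mathsf{P}$.

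The remaining issue, which I expect to be the main obstacle, is to show $\mathsf{P} \not\cong \bzero$ in $\sA(\lambda,\mu)$. My preferred route is via Theorem~\ref{theo:sA-tsU-equivalence}: under the equivalence $\sA \simeq \sU_0$, $\mathsf{P}$ corresponds to a composition of elementary generators of $\sU_0$ between two weights both in the support of the basic representation, and non-vanishing of such a composition can be certified by pairing the equivalence with any known faithful 2-representation of $\sU$ (for instance the one on cohomology of flag varieties). An alternative, self-contained route would be to exhibit an additive 2-functor $\sA \to \sM$ sending $\lambda$ to $\cM_\lambda$ and the generators $\sF_i$, $\sE_i$ to the $i$-induction and $i$-restriction functors (whose existence requires checking the local relations of $\sA$ on the corresponding natural transformations, as anticipated by Proposition~\ref{prop:sU-sM-equivalence}) and then evaluating the image of $\mathsf{P}$ on the irreducible $V_\lambda$ to see it is nonzero. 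Either route upgrades the bound $\dim_\Q \Mor_{K(\sA)}(\lambda,\mu) \le 1$ to equality and confirms that $[\mathsf{P}]$ is the asserted basis vector.
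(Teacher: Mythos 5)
Your proposal matches the paper's proof, which is given in one line as ``analogous to the proof of Proposition~\ref{prop:dotU-presentation}'': one transports the three claims of that proof to the level of 1-morphisms, with the isomorphisms of Lemma~\ref{lem:key-1mor-isoms} replacing the relations \eqref{rel:eiej}--\eqref{rel:fiei}, exactly as you describe. Your additional concern about the lower bound---that the normal form is not the zero object, so that its class actually spans a one-dimensional space rather than zero---is well taken; the paper handles this only implicitly, deferring the nonvanishing to the action $\bF_\sA$ on modules for symmetric groups (Proposition~\ref{prop:sU-sM-equivalence} and Corollary~\ref{cor:dotU0-KA-injective}), which is precisely your second, self-contained route.
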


\begin{proof}
  The proof of this statement is analogous to the proof of Proposition~\ref{prop:dotU-presentation}.
\end{proof}

\subsection{2-morphism spaces}

\begin{lem}[Triple point moves] \label{lem:triple-point}
  Suppose $i,j,k \in \Z$.  The relation
  \begin{equation} \label{eq:triple-point}
    \begin{tikzpicture}[>=stealth,baseline={([yshift=.5ex]current bounding box.center)}]
      \draw (0,0) node[anchor=north] {\strandlabel{i}} -- (1,1);
      \draw (1,0) node[anchor=north] {\strandlabel{k}} -- (0,1);
      \draw (0.5,0) node[anchor=north] {\strandlabel{j}} .. controls (0,0.5) .. (0.5,1);
    \end{tikzpicture}
    \ = \
    \begin{tikzpicture}[>=stealth,baseline={([yshift=.5ex]current bounding box.center)}]
      \draw (0,0) node[anchor=north] {\strandlabel{i}} -- (1,1);
      \draw (1,0) node[anchor=north] {\strandlabel{k}} -- (0,1);
      \draw (0.5,0) node[anchor=north] {\strandlabel{j}} .. controls (1,0.5) .. (0.5,1);
    \end{tikzpicture}
  \end{equation}
  holds for all possible orientations of the strands.
\end{lem}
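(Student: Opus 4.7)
The plan is to reduce every orientation of the triple point move to the all-upward case, which is precisely relation \eqref{rel:up-up-up-braid}. The two structural tools at our disposal are (i) the double-crossing relations \eqref{rel:up-up-double-cross}--\eqref{rel:up-down-double-cross}, which express that any crossing of two distinctly colored strands (with $|i-j|>1$) is its own inverse, and thus reversible; and (ii) the zigzag relations \eqref{rel:down-up-ii}--\eqref{rel:up-down-ii}, which together with isotopy invariance furnish cups and caps that allow us to locally bend a strand from upward to downward orientation, or vice versa, without changing the underlying 2-morphism.

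Using (ii), for any strand appearing in a triple point with a downward orientation, we can insert a zigzag consisting of a cap placed above and a cup placed below the triple point so that at the level of the triple point itself the strand is upward-oriented, while outside a small neighborhood of the triple point the orientation remains as before. Applying this procedure to each non-upward strand converts the given triple point diagram into one in which, near the central triple point, all three strands point upward; additional cups and caps now appear outside this central region.

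Because the colors $i$, $j$, $k$ are pairwise at distance greater than one, each intervening crossing between these new cup/cap segments and the other two strands is of the mixed type covered by (i) and hence can be resolved by isotopy, so the auxiliary cups and caps slide freely past one another and past the other strands. With the central configuration now all-upward, the relation \eqref{rel:up-up-up-braid} applies and lets us exchange the routing of the middle strand. Undoing the bendings recovers the triple point in its original orientation, completing the argument. By the evident top-to-bottom symmetry of the diagram, a case analysis on the $2^3=8$ orientation patterns collapses to a handful of cases, each handled in this uniform way.

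The main subtlety is ensuring that the bendings can actually be performed inside the diagram without creating same-color or adjacent-color crossings (which are zero by Remark~\ref{rem:double-cross-diff-by-one}) and without introducing any dotted diagrams, which do not exist in $\sA$. Since the hypothesis $|i-j|,|i-k|,|j-k|>1$ guarantees that every crossing encountered in the process is between strands of colors at distance at least two, and since the only generating 2-morphisms used are cups, caps, and crossings, these issues do not arise; the reduction to \eqref{rel:up-up-up-braid} is clean in every case. Conceptually, this is the standard fact that in a biadjoint 2-category, a relation among upward-oriented diagrams propagates automatically to all orientations via cyclic rotation.
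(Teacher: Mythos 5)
Your proposal is correct in outline but takes a genuinely different route from the paper. The paper keeps all orientations fixed and uses the double-crossing relations \eqref{rel:up-up-double-cross}--\eqref{rel:up-down-double-cross} to detour the middle strand around the crossing of the outer two, deriving each mixed orientation from a previously established one (the displayed computation treats one reversed strand; the remaining cases are built up the same way from cases already done). This is the classical ``Reidemeister II plus one orientation of Reidemeister III gives all orientations'' induction. You instead reverse orientations locally: bend each downward strand through a cup--cap pair so that the central configuration is all-upward, apply \eqref{rel:up-up-up-braid} once, and unbend. Your route is conceptually cleaner in that every case reduces directly to the single relation \eqref{rel:up-up-up-braid}, and it makes visible that the lemma is an instance of cyclicity for biadjoint 1-morphisms; the paper's route never has to discuss how auxiliary cups and caps interact with the remaining strands.

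Two points in your write-up need sharpening, though neither is fatal. First, the extra crossings created by the bent arcs are not ``resolved by isotopy'': isotopy preserves the number of intersection points, so cancelling them genuinely uses \eqref{rel:up-up-double-cross}--\eqref{rel:up-down-double-cross}, and one must route the cup and cap so that the resulting bigons are empty --- in particular so that no hairpin of the bent strand encloses the crossing point of the two outer strands, since emptying such a bigon would itself be a triple-point move and the argument would become circular. Placing the cup below and the cap above the entire configuration, with the detour arcs hugging the central arc, arranges this, but it should be said. Second, cups and caps are just local extrema of strands, and the straightening identities hold by isotopy alone; the relations \eqref{rel:down-up-ii}--\eqref{rel:up-down-ii} concern two strands of the \emph{same} color and play no role in this lemma, where all colors are pairwise at distance greater than one.
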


\begin{proof}
  The case where all strands are pointed up is \eqref{rel:up-up-up-braid}, and then the case where all strands are pointed down follows by isotopy invariance.  We compute
  \[
    \begin{tikzpicture}[scale=0.5,>=stealth,baseline={([yshift=.5ex]current bounding box.center)}]
      \draw[<-] (0,0) node[anchor=north] {\strandlabel{i}} -- (2,2);
      \draw[->] (2,0) node[anchor=north] {\strandlabel{k}} -- (0,2);
      \draw[<-] (1,2) .. controls (0,1) .. (1,0) node[anchor=north] {\strandlabel{j}};
    \end{tikzpicture}
    \ \stackrel{\eqref{rel:up-up-double-cross}}{=} \
    \begin{tikzpicture}[scale=0.5,>=stealth,baseline={([yshift=.5ex]current bounding box.center)}]
      \draw[<-] (0,0) node[anchor=north] {\strandlabel{i}} -- (2,2);
      \draw[->] (2,0) node[anchor=north] {\strandlabel{k}} -- (0,2);
      \draw[<-] (1,2) .. controls (0,1) and (0.5,-.1) .. (1.3,.7) .. controls (2.1,1.5) and (2.3,.8) .. (1,0) node[anchor=north] {\strandlabel{j}};
    \end{tikzpicture}
    \stackrel{\eqref{rel:up-up-up-braid}}{=} \
    \begin{tikzpicture}[scale=0.5,>=stealth,baseline={([yshift=.5ex]current bounding box.center)}]
      \draw[<-] (0,0) node[anchor=north] {\strandlabel{i}} -- (2,2);
      \draw[->] (2,0) node[anchor=north] {\strandlabel{k}} -- (0,2);
      \draw[<-] (1,2) .. controls (0.2,1.5) and (-0.1,.5) .. (0.8,1.2) .. controls (1.7,2.1) and (2,1) .. (1,0) node[anchor=north] {\strandlabel{j}};
    \end{tikzpicture}
    \stackrel{\eqref{rel:up-up-double-cross}}{=} \
    \begin{tikzpicture}[scale=0.5,>=stealth,baseline={([yshift=.5ex]current bounding box.center)}]
      \draw[<-] (0,0) node[anchor=north] {\strandlabel{i}} -- (2,2);
      \draw[->] (2,0) node[anchor=north] {\strandlabel{k}} -- (0,2);
      \draw[<-] (1,2) .. controls (2,1) .. (1,0) node[anchor=north] {\strandlabel{j}};
    \end{tikzpicture}\ .
  \]
  We omit proofs of the other cases, which are analogous.
  \details{
    We have
    \[
      \begin{tikzpicture}[scale=0.5,>=stealth,baseline={([yshift=-.5ex]current bounding box.center)}]
        \draw[->] (0,0) node[anchor=north] {\strandlabel{i}} to (2,2);
        \draw[->] (2,0) node[anchor=north] {$k$} to (0,2);
        \draw[->] (1,2) .. controls (0,1) .. (1,0) node[anchor=north] {\strandlabel{j}};
      \end{tikzpicture}
      \ \stackrel{\eqref{rel:down-up-double-cross}}{=}\
      \begin{tikzpicture}[scale=0.5,>=stealth,baseline={([yshift=-.5ex]current bounding box.center)}]
        \draw[->] (3.5,0) node[anchor=north] {\strandlabel{i}} to (5.5,2);
        \draw[->] (5.5,0) node[anchor=north] {$k$} to (3.5,2);
        \draw[->] (4.5,2) .. controls (3.5,1) and (4,-.1) .. (4.8,.7) ..
        controls (5.6,1.5) and (5.8,.8) .. (4.5,0) node[anchor=north] {\strandlabel{j}};
      \end{tikzpicture}
      = \
      \begin{tikzpicture}[scale=0.5,>=stealth,baseline={([yshift=-.5ex]current bounding box.center)}]
        \draw[->] (3.5,0) node[anchor=north] {\strandlabel{i}} to (5.5,2);
        \draw[->] (5.5,0) node[anchor=north] {$k$} to (3.5,2);
        \draw[->] (4.5,2) .. controls (3.7,1.5) and (3.4,.5) .. (4.3,1.2) ..
        controls (5.2,2.1) and (5.5,1) .. (4.5,0) node[anchor=north] {\strandlabel{j}};
      \end{tikzpicture}
      \stackrel{\eqref{rel:up-down-double-cross}}{=} \
      \begin{tikzpicture}[scale=0.5,>=stealth,baseline={([yshift=-.5ex]current bounding box.center)}]
        \draw[->] (1,0) node[anchor=north] {\strandlabel{i}} to (3,2);
        \draw[->] (3,0) node[anchor=north] {$k$} to (1,2);
        \draw[->] (2,2) .. controls (3,1) .. (2,0) node[anchor=north] {\strandlabel{j}};
      \end{tikzpicture}\ ,
    \]
    where the second equality comes from the above case.  Then
    \[
      \begin{tikzpicture}[scale=0.5,>=stealth,baseline={([yshift=-.5ex]current bounding box.center)}]
        \draw[<-] (0,0) node[anchor=north] {\strandlabel{i}} to (2,2);
        \draw[->] (2,0) node[anchor=north] {$k$} to (0,2);
        \draw[->] (1,2) .. controls (0,1) .. (1,0) node[anchor=north] {\strandlabel{j}};
      \end{tikzpicture}
      \ \stackrel{\eqref{rel:down-up-double-cross}}{=}\
      \begin{tikzpicture}[scale=0.5,>=stealth,baseline={([yshift=-.5ex]current bounding box.center)}]
        \draw[<-] (3.5,0) node[anchor=north] {\strandlabel{i}} to (5.5,2);
        \draw[->] (5.5,0) node[anchor=north] {$k$} to (3.5,2);
        \draw[->] (4.5,2) .. controls (3.5,1) and (4,-.1) .. (4.8,.7) ..
        controls (5.6,1.5) and (5.8,.8) .. (4.5,0) node[anchor=north] {\strandlabel{j}};
      \end{tikzpicture}
      = \
      \begin{tikzpicture}[scale=0.5,>=stealth,baseline={([yshift=-.5ex]current bounding box.center)}]
        \draw[<-] (3.5,0) node[anchor=north] {\strandlabel{i}} to (5.5,2);
        \draw[->] (5.5,0) node[anchor=north] {$k$} to (3.5,2);
        \draw[->] (4.5,2) .. controls (3.7,1.5) and (3.4,.5) .. (4.3,1.2) ..
        controls (5.2,2.1) and (5.5,1) .. (4.5,0) node[anchor=north] {\strandlabel{j}};
      \end{tikzpicture}
      \stackrel{\eqref{rel:up-down-double-cross}}{=} \
      \begin{tikzpicture}[scale=0.5,>=stealth,baseline={([yshift=-.5ex]current bounding box.center)}]
        \draw[<-] (1,0) node[anchor=north] {\strandlabel{i}} to (3,2);
        \draw[->] (3,0) node[anchor=north] {$k$} to (1,2);
        \draw[->] (2,2) .. controls (3,1) .. (2,0) node[anchor=north] {\strandlabel{j}};
      \end{tikzpicture}\ ,
    \]
    where the second equality comes from the previous case.  Together with isotopy invariance, this covers all possible orientations of the strands.
  }
\end{proof}

\begin{lem} \label{lem:no-bubbles}
  For all $\lambda \in \cP$, we have $\2Mor_\sA (1_\lambda,1_\lambda) \cong \Q \id_\lambda$.  In other words, all closed diagrams in a region labeled $\lambda$ are isomorphic to some multiple of the empty diagram.
\end{lem}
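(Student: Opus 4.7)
The plan is to prove by induction on the number of crossings that every closed diagram in region $\lambda$ reduces, via the local relations and isotopy, to a rational multiple of the empty diagram. The first observation is that in any nonzero closed diagram every connected component must be a simple closed curve: a self-crossing involves two strand segments of the same color, and by Remark~\ref{rem:double-cross-diff-by-one} any crossing of colors $i,j$ with $|i-j|\le 1$ (in particular, same-color crossings) is zero; for the same reason, two distinct components can cross only if their colors differ by at least two.

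For the inductive step, view the closed diagram as a four-valent planar graph on the sphere $S^2 = \R^2 \cup \{\infty\}$ with $c$ crossings, $2c$ edges, and $c+2$ faces; since the average face degree is $4c/(c+2) < 4$, some face must be a $1$-gon, bigon, or triangle. A $1$-gon is an innermost simple closed curve of color $i$ bounding a disk labeled by some $\mu$; if $\mu = \bzero$ the entire diagram vanishes, otherwise the appropriate circle relation \eqref{rel:clockwise-i-circle} or \eqref{rel:ccc} removes it. A bigon is bounded by arcs of two distinct curves of colors $i \ne j$ meeting at two crossings; after isotoping into standard position, the local picture matches the left-hand side of one of \eqref{rel:up-up-double-cross}, \eqref{rel:down-up-double-cross}, or \eqref{rel:up-down-double-cross} (the one determined by the strand orientations), and applying that relation removes both crossings at once. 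A triangular face is rearranged via the triple-point move \eqref{eq:triple-point} of Lemma~\ref{lem:triple-point}.

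The main obstacle is arguing that iterated application of the triple-point move, together with bigon and $1$-gon eliminations, eventually removes all crossings, since a triangle move by itself preserves the crossing count. One may handle this either by defining a finer complexity measure that strictly decreases under every move, or by appealing to the classical topological fact that any finite planar arrangement of simple closed curves is planar-isotopic to a crossing-free configuration (each curve being contractible and unlinked from the others). Once all crossings are eliminated and what remains is a disjoint union of non-crossing simple closed curves, iterated application of \eqref{rel:clockwise-i-circle} and \eqref{rel:ccc} reduces the diagram to a rational multiple of the empty diagram, establishing $\2Mor_\sA(1_\lambda,1_\lambda) \cong \Q\id_\lambda$.
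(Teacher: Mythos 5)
Your argument is correct and is essentially the paper's proof in expanded form: the paper simply asserts that the local relations reduce any closed diagram to nested circles, which are then removed by \eqref{rel:clockwise-i-circle} and \eqref{rel:ccc}, and your Euler-characteristic/bigon/triple-point analysis (together with the observation that all crossings in a nonzero diagram involve colors differing by at least two, so the double-crossing relations always apply) is a careful justification of that first step. The termination issue you flag is real but handled adequately by either of your two suggested resolutions, since every curve in a nonzero diagram is embedded and null-homotopic in the plane.
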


\begin{proof}
  Using the local relations, closed diagrams can be written as linear combinations of nested circles.  By \eqref{rel:clockwise-i-circle} and \eqref{rel:ccc}, these are equal to multiples of the empty diagram.
\end{proof}

By definition, the 1-morphisms of $\sA$ are sequences of $\sE_i$'s and $\sF_i$'s, followed by $1_\lambda$ for some $\lambda \in \cP$.  If we think of such a sequence as a row of colored arrows, a down $i$-colored arrow for each $\sE_i$ and an up $i$-colored arrow for each $\sF_i$, then the 2-morphisms between two 1-morphisms are strands connecting the arrows in such a way that the color and orientation of each strand agrees with its two endpoints.  (We use Lemma~\ref{lem:no-bubbles} here to ignore closed diagrams.)  By \eqref{rel:up-up-double-cross}, \eqref{rel:down-up-double-cross}, and \eqref{rel:up-down-double-cross}, we may simplify such a diagram so that it contains no double crossings.  Then, by isotopy invariance and Lemma~\ref{lem:triple-point}, we see that the 2-morphism is uniquely determined by the matching of arrows induced by the strands.  Furthermore, since strands of the same color cannot cross, the 2-morphism is determined by a crossingless matching of $i$-colored arrows for each $i \in \Z$.  We call such a collection of crossingless matchings a \emph{colored matching}.  An example of such a colored matching is the following, where $i$, $j$, $k$, and $\ell$ are pairwise distinct.
\[
  \begin{tikzpicture}[anchorbase]
    \draw[->] (0,0) node[anchor=north] {\strandlabel{\ell}} -- (0,2);
    \draw[<-] (1,0) node[anchor=north] {\strandlabel{i}} .. controls (1,1) and (3,1) .. (3,0);
    \draw[->] (2,0) node[anchor=north] {\strandlabel{j}} .. controls (2,1) and (1,1) .. (1,2);
    \draw[->] (4,0) node[anchor=north] {\strandlabel{k}} -- (4,2);
    \draw[<-] (5,0) node[anchor=north] {\strandlabel{i}} .. controls (5,1) and (3,1) .. (3,2);
    \draw[<-] (6,0) node[anchor=north] {\strandlabel{j}} .. controls (6,1) and (2,0.5) .. (2,2);
    \draw[->] (5,2) node[anchor=south] {\strandlabel{k}} .. controls (5,1) and (6,1) .. (6,2);
  \end{tikzpicture}
\]
Note that, since strands of colors $i$ and $j$ intersect, we must have $|i-j|>1$ in order for the 2-morphism to be nonzero, by Remark~\ref{rem:double-cross-diff-by-one}.  Similarly, we have $|i-k| > 1$ and $|j-k|>1$.  However, it is possible that $|k-\ell|=1$.

\begin{prop} \label{prop:matching-2-morphisms}
  Suppose $\lambda, \mu \in \cP$, and that $\mathsf{P}, \mathsf{Q} \in \1Mor_\sA(\lambda,\mu)$ are two nonzero 1-morphisms that are sequences of $\sE_i$'s and $\sF_i$'s (followed by $1_\lambda$).  Then every nonzero 2-morphism from $\mathsf{P}$ to $\mathsf{Q}$ is an isomorphism.  In particular, $\1Mor_\sA(\mathsf{P},\mathsf{Q})$ is one-dimensional.
\end{prop}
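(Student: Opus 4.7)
The plan is to exploit the colored-matching description of 2-morphisms established in the discussion immediately preceding the proposition: I will reduce both $\mathsf{P}$ and $\mathsf{Q}$ to a common ``normal form'' $1$-morphism $\mathsf{N}$ via explicit $2$-isomorphisms built from the generators of Lemma~\ref{lem:key-1mor-isoms}, and then compute $\End_\sA(\mathsf{N})$ directly using the fact that all colors appearing in $\mathsf{N}$ are distinct.

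First I would show that any nonzero $\mathsf{P} \in \1Mor_\sA(\lambda,\mu)$ of the given form is isomorphic in $\sA$, via a concrete $2$-isomorphism, to a $1$-morphism of the shape
\[
  \mathsf{N} = \sF_{i_1}\sF_{i_2}\dotsm\sF_{i_k}\sE_{j_1}\sE_{j_2}\dotsm\sE_{j_\ell} 1_\lambda,
\]
with $\{i_1,\dotsc,i_k\}\cap\{j_1,\dotsc,j_\ell\}=\varnothing$. This is the categorified analog of claims (a)--(c) in the proof of Proposition~\ref{prop:dotU-presentation}: the commutation relations \eqref{rel:eiej}--\eqref{rel:eifj} get replaced by the braiding $2$-isomorphisms \eqref{key-isom:EiEj}--\eqref{key-isom:EiFj}, and the relations \eqref{rel:eifi}--\eqref{rel:fiei} get replaced by the cup/cap $2$-isomorphisms \eqref{key-isom:EiFi}--\eqref{key-isom:FiEi}. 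The same argument shows in addition that any two such normal forms sharing endpoints $(\lambda,\mu)$ are isomorphic, so there is a common $\mathsf{N}$ together with explicit invertible $2$-morphisms $\alpha \colon \mathsf{P} \to \mathsf{N}$ and $\beta\colon \mathsf{N} \to \mathsf{Q}$.

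Next I compute $\End_\sA(\mathsf{N}) = \Q\,\id_\mathsf{N}$. By the colored-matching description, every $2$-endomorphism of $\mathsf{N}$ is determined by a crossingless matching, for each $i\in\Z$, of its $i$-colored boundary arrows (appearing at both the top and bottom copy of $\mathsf{N}$). Because the $i$'s and $j$'s in the normal form are pairwise distinct and the two index sets are disjoint, each color $i$ contributes at most one arrow at the top and at most one at the bottom, so the matching is forced to be the trivial ``straight up'' one. Hence $\End_\sA(\mathsf{N})$ is spanned by $\id_\mathsf{N}$, which is nonzero. Transporting back via $\alpha$ and $\beta$ (that is, via $\phi \mapsto \beta\circ\phi\circ\alpha$) yields a linear isomorphism $\End_\sA(\mathsf{N}) \cong \2Mor_\sA(\mathsf{P},\mathsf{Q})$, so $\dim\2Mor_\sA(\mathsf{P},\mathsf{Q}) = 1$. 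Since $\beta\circ\alpha$ is an isomorphism lying in this one-dimensional space, every nonzero $2$-morphism $\mathsf{P}\to\mathsf{Q}$ is a nonzero scalar multiple of an isomorphism and therefore itself an isomorphism.

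The main obstacle is the bookkeeping in the first step: one must verify that the ``innermost $\sF_i\sE_i$-pair cancellation'' procedure used in the proof of Proposition~\ref{prop:dotU-presentation}, which there is carried out as an equality of formal compositions in $\cA$, genuinely lifts to a composition of the specific $2$-isomorphisms of Lemma~\ref{lem:key-1mor-isoms}. In particular, the combinatorial no-obstruction claim about $\pm 1$-neighbouring colors (which relied on the nonvanishing of the intermediate partitions through the addable/removable box hypotheses) must be invoked again to guarantee that the required braiding $2$-isomorphisms are actually available at every stage of the reduction. Once this categorical lift of Proposition~\ref{prop:A-1-morph-spaces} is in hand, the endomorphism computation in the second step is an immediate consequence of the distinctness of the colors in the normal form.
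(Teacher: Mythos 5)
Your proof is correct, but it takes a genuinely different route from the paper's. The paper works directly with the two given $1$-morphisms: it observes that (i) at least one colored matching $\mathsf{P}\to\mathsf{Q}$ exists, (ii) by \eqref{rel:down-up-ii} and \eqref{rel:up-down-ii} any two colored matchings between the same pair agree up to scalar (so the $2$-morphism space is at most one-dimensional), and (iii) composing a colored matching $\alpha\colon\mathsf{P}\to\mathsf{Q}$ with one $\beta\colon\mathsf{Q}\to\mathsf{P}$ and simplifying --- resolving double crossings via \eqref{rel:up-up-double-cross}--\eqref{rel:up-down-double-cross} and removing circles via \eqref{rel:clockwise-i-circle} and \eqref{rel:ccc} --- yields a \emph{nonzero} multiple of the identity matching in both orders, whence invertibility. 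You instead conjugate both $\mathsf{P}$ and $\mathsf{Q}$ onto a common normal form $\mathsf{N}$ by lifting the reduction in the proof of Proposition~\ref{prop:dotU-presentation} to the $2$-isomorphisms of Lemma~\ref{lem:key-1mor-isoms}, and then note that $\End_\sA(\mathsf{N})=\Q\,\id_{\mathsf{N}}$ is immediate because the colors in $\mathsf{N}$ are pairwise distinct. What your route buys is that the invertibility step becomes trivial (no composition-and-simplification of matchings is needed); what it costs is that the normal-form reduction must genuinely be carried out at the level of $2$-isomorphisms, including the uniqueness of $\mathsf{N}$ given $(\lambda,\mu)$ --- you correctly flag this, and it does go through since the disjoint content sets $\{i_a\}$, $\{j_b\}$ are determined by $\omega_\mu-\omega_\lambda$ and the reorderings are handled by claims (b) and (c) of that proof. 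This is essentially a categorified restatement of Proposition~\ref{prop:A-1-morph-spaces}, so there is no circularity, but be aware that the paper deliberately avoids relying on it here and keeps the argument internal to the colored-matching calculus.
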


\begin{proof}
  It is easy to see that there is always at least one colored matching.  Repeated use of relations \eqref{rel:down-up-ii} and \eqref{rel:up-down-ii} allows us to see that any two crossingless matchings of $i$-colored arrows as described above are equal, up to scalar multiple.

  Now let $\alpha$ be a nonzero 2-morphism from $\mathsf{P}$ to $\mathsf{Q}$.  By the above argument, $\alpha$ is a multiple of a colored matching from $\mathsf{P}$ to $\mathsf{Q}$.  Let $\beta$ be a multiple of a colored matching from $\mathsf{Q}$ to $\mathsf{P}$ and consider the composition $\alpha \circ \beta$.  We can resolve double crossings by \eqref{rel:up-up-double-cross}, \eqref{rel:down-up-double-cross}, and \eqref{rel:up-down-double-cross}.  Any circles may be slid into open regions (so that they do not intersect any other strands) using \eqref{rel:up-up-double-cross}, \eqref{rel:down-up-double-cross}, and \eqref{rel:up-down-double-cross} and then removed using \eqref{rel:clockwise-i-circle} and \eqref{rel:ccc}.  Thus, $\alpha \circ \beta$ is a nonzero multiple of a colored matching.  As above, it must be a multiple of the identity matching.  Similarly $\beta \circ \alpha$ is a multiple of the identity matching.
\end{proof}

\begin{cor} \label{cor:KrullSchmidt}
  The 2-category $\sA$ is Krull--Schmidt.  More precisely, for any two objects of $\sA$, the morphism category between these two objects in Krull--Schmidt.
\end{cor}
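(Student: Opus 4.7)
The plan is to reduce the statement to a standard formulation of the Krull–Schmidt theorem due to Azumaya: an additive category in which every object is a finite direct sum of objects with local endomorphism rings is Krull–Schmidt. Since $\sA$ is additive by construction, for any $\lambda,\mu \in \N[\cP]$ the hom-category $\sA(\lambda,\mu)$ is additive, and the general case reduces to the case $\lambda,\mu \in \cP$ by splitting objects of $\N[\cP]$ into their $\cP$-components. So my goal is to verify the hypotheses of Azumaya's theorem for each $\sA(\lambda,\mu)$ with $\lambda,\mu\in\cP$.

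First I would observe that, by the very definition of $\sA$, every 1-morphism in $\1Mor_\sA(\lambda,\mu)$ is a finite direct sum of ``word'' 1-morphisms of the form
\[
  \sX_k \sX_{k-1} \dotsm \sX_1 1_\lambda,
\]
where each $\sX_r$ is either $\sE_{n_r}$ or $\sF_{n_r}$ for some $n_r \in \Z$. Those words that evaluate to $\bzero$ contribute nothing, so it suffices to show every nonzero word 1-morphism $\mathsf{P}$ is indecomposable with local endomorphism ring. Setting $\mathsf{P} = \mathsf{Q}$ in Proposition~\ref{prop:matching-2-morphisms} gives $\End_\sA(\mathsf{P})$ one-dimensional over $\Q$, hence isomorphic to $\Q$; as $\Q$ is a field, this ring is local and its only idempotents are $0$ and $\id_\mathsf{P}$, so $\mathsf{P}$ is indecomposable. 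Azumaya's theorem then yields both the existence and essential uniqueness of decompositions into indecomposables, which is Krull--Schmidt.

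I do not foresee a substantive obstacle: all the real work has already been packaged into Proposition~\ref{prop:matching-2-morphisms} (together with Lemma~\ref{lem:no-bubbles}, which pins down $\End_\sA(1_\lambda) \cong \Q$ in the case of the empty word). The only mild care required is bookkeeping: one should confirm that ``words'' include the empty word $1_\lambda$, that zero words are discarded from decompositions without affecting anything, and that additivity of $\sA(\lambda,\mu)$ is inherited from the definition of $\sA$ so that Azumaya's theorem applies directly rather than after passing to some completion.
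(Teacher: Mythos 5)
Your proof is correct and follows essentially the same route as the paper: the paper's own argument also reduces to the fact that every 1-morphism decomposes into normal-form words (Proposition~\ref{prop:A-1-morph-spaces}) whose endomorphism rings are one-dimensional by Proposition~\ref{prop:matching-2-morphisms}, with the Azumaya-type conclusion left implicit. Your version merely makes the appeal to local endomorphism rings explicit, which is a presentational rather than a mathematical difference.
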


\begin{proof}
  It follows from Proposition~\ref{prop:A-1-morph-spaces} that every 1-morphism in $\sA$ is a multiple of a 1-morphism of the form \eqref{eq:1-morph-normal-form}.  Then the result follows from Proposition~\ref{prop:matching-2-morphisms}.
\end{proof}

\subsection{Decategorification}

We now state one of our main results.

\begin{theo} \label{theo:sA-categorifies-cA}
  The functor $\cA \to K(\sA)$ that is the identity on objects and, on 1-morphisms, is uniquely determined by
  \[
    e_i 1_\lambda \mapsto [\sE_i 1_\lambda],\quad
    f_i 1_\lambda \mapsto [\sF_i 1_\lambda],
  \]
  is an isomorphism.  In other words, $\sA$ categorifies $\cA$.
\end{theo}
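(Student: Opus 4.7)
The plan is to leverage the Kac--Moody presentation of $\cA$ from Proposition~\ref{prop:dotU-presentation} together with the dimension count of Proposition~\ref{prop:A-1-morph-spaces}, so that essentially all of the heavy lifting has already been done earlier in the paper.

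First, I would define the candidate functor $\bF \colon \cA \to K(\sA)$ to be the identity on objects (extended additively from $\cP$ to $\N[\cP]$, with $\bzero \mapsto \bzero$) and, on morphisms, to send the generators $e_i 1_\lambda \mapsto [\sE_i 1_\lambda]$ and $f_i 1_\lambda \mapsto [\sF_i 1_\lambda]$. To see that this extends to a well-defined functor, by Proposition~\ref{prop:dotU-presentation} it suffices to verify that the defining relations \eqref{rel:eiej}--\eqref{rel:fiei} hold between the images in $K(\sA)$. But Lemma~\ref{lem:key-1mor-isoms} establishes precisely the analogous isomorphisms \eqref{key-isom:EiEj}--\eqref{key-isom:FiEi} at the level of $1$-morphisms in $\sA$, and isomorphic $1$-morphisms descend to equal classes in the split Grothendieck group. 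Composition is preserved because horizontal composition of $1$-morphisms in $\sA$ is what defines composition in $K(\sA)$.

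Next, I would show $\bF$ is an isomorphism of categories. It is the identity on objects, hence essentially surjective, so it suffices to check $\bF$ induces a bijection $\Mor_\cA(\lambda,\mu) \xrightarrow{\sim} \Mor_{K(\sA)}(\lambda,\mu)$ for every pair $\lambda,\mu \in \cP$. Both spaces are $1$-dimensional over $\Q$: the source by the definition $\Mor_\cA(\lambda,\mu) = \Hom_\Q(\Q s_\lambda,\Q s_\mu)$, the target by Proposition~\ref{prop:A-1-morph-spaces}. So $\bF$ restricts to a linear map between two $1$-dimensional spaces, and we need only produce a single element whose image is nonzero. Proposition~\ref{prop:A-1-morph-spaces} asserts that $\Mor_{K(\sA)}(\lambda,\mu)$ is spanned by a class of the normal form $[\sF_{i_1} \dotsm \sF_{i_k} \sE_{j_1} \dotsm \sE_{j_\ell} 1_\lambda]$, which (being a generator of a $1$-dimensional space) is nonzero; this is exactly $\bF(f_{i_1} \dotsm f_{i_k} e_{j_1} \dotsm e_{j_\ell} 1_\lambda)$, witnessing that $\bF$ is nonzero on each hom-space, hence an isomorphism.

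There is no genuine obstacle here beyond careful bookkeeping: the theorem reduces to a matching of two $1$-dimensional morphism spaces once the normal form on the $K(\sA)$-side is in hand. The one subtlety worth flagging is that the statements of Lemma~\ref{lem:key-1mor-isoms} are only isomorphisms of $1$-morphisms, not equalities, which is why one must pass to $K(\sA)$ before imposing the relations, rather than attempting to lift $\bF$ to a $2$-functor of $2$-categories.
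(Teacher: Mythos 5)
Your treatment of well-definedness (via the presentation of Proposition~\ref{prop:dotU-presentation} and the isomorphisms of Lemma~\ref{lem:key-1mor-isoms}) and of surjectivity matches the paper's proof, and is if anything spelled out more carefully. The divergence is in how you get injectivity, and there is a subtle but genuine gap there.

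You conclude by saying that the class of the normal-form $1$-morphism $[\sF_{i_1}\dotsm\sF_{i_k}\sE_{j_1}\dotsm\sE_{j_\ell}1_\lambda]$ is nonzero ``being a generator of a $1$-dimensional space.'' This leans entirely on the equality $\dim \Mor_{K(\sA)}(\lambda,\mu)=1$ from Proposition~\ref{prop:A-1-morph-spaces}. But the proof of that proposition is ``analogous to the proof of Proposition~\ref{prop:dotU-presentation},'' and the analogous diagrammatic argument only delivers the \emph{upper} bound: it shows the normal forms span, i.e.\ $\dim \Mor_{K(\sA)}(\lambda,\mu)\le 1$. The lower bound --- that these classes are nonzero in the split Grothendieck group, equivalently that the corresponding $1$-morphisms are not isomorphic to the zero object in the diagrammatically presented $2$-category $\sA$ --- is exactly the content of the injectivity you are trying to prove, so your argument is circular at this point. (The same caveat applies to the ``nonzero $2$-morphism'' language in Proposition~\ref{prop:matching-2-morphisms}: in a category defined by generators and local relations, nonvanishing cannot be read off from the relations alone.) The paper avoids this by explicitly deferring injectivity to Corollary~\ref{cor:dotU0-KA-injective}, which is proved only after the $2$-functor $\bF_\sA\colon\sA\to\sM$ to modules over symmetric groups has been constructed: the commutative square \eqref{eq:action-comm-diag} and the faithfulness of $\bfr\colon\cA\to\cV$ then force the composite $\cA\to K(\sA)\to K(\sM)\to\cV$ to be faithful, hence $\cA\to K(\sA)$ is faithful and the hom-spaces of $K(\sA)$ are genuinely nonzero. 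To repair your proof you must either cite that corollary (accepting the forward reference, as the paper does) or exhibit some other nontrivial $2$-representation of $\sA$ that detects the normal-form $1$-morphisms.
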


\begin{proof}
  The fact that the functor is well-defined follows from Lemma~\ref{lem:key-1mor-isoms}.  It is surjective since the images in the Grothendieck group of all the generating 1-morphisms $\sE_i 1_\lambda$ and $\sF_i 1_\lambda$ are in the image of the functor.  Injectivity will be proven in Corollary~\ref{cor:dotU0-KA-injective}.
\end{proof}

%
\section{The 2-category $\sH^\trunc$} \label{sec:sH-def}
%

\subsection{Definition} \label{subsec:sH-def}

We introduce here a 2-category based on the diagrammatic monoidal category introduced by Khovanov in \cite{Kho14}.  We begin by defining an additive linear 2-category ${\sH^\trunc}'$.  The set of objects of ${\sH^\trunc}'$ is the free monoid $\N[\N]$ on $\N$, where $\bzero$ is a zero object.  The set of 1-morphisms of ${\sH^\trunc}'$ is generated by
\[
  \sQ_+ 1_k = 1_{k+1} \sQ_+ = 1_{k+1} \sQ_+ 1_k
  \quad \text{and} \quad
  \sQ_- 1_{k+1} = 1_k \sQ_- = 1_k \sQ_- 1_{k+1},
  \quad k \in \N.
\]
In other words, if we let $\sQ_c := \sQ_{c_1} \otimes \dotsb \otimes \sQ_{c_\ell}$ for a finite sequence $c = c_1 \cdots c_\ell$ of $+$ and $-$ signs, then the 1-morphisms of ${\sH^\trunc}'$ from $n$ to $m$ are finite direct sums of $\sQ_c 1_n$ for $c = c_1 \cdots c_\ell$ satisfying
\[
  m - n = \# \{i \mid c_i = +\} - \# \{i \mid c_i = -\}.
\]
If, for some $1 \le k < \ell$, we have
\[
  n + \# \{1 \le i \le k \mid c_i = +\} - \# \{1 \le i \le k \mid c_i = -\} < 0,
\]
then $\sQ_c 1_n$ is the zero morphism.  In other words, we view negative integers as the zero object $\bzero$.

The space of morphisms between two objects is the $\Q$-algebra generated by suitable planar diagrams.  The diagrams consist of oriented compact one-manifolds immersed into the plane strip $\R \times [0,1]$ modulo certain local relations, with regions of the strip labeled by nonnegative integers.  The 2-morphism that is the identity on $\sQ_+ 1_k$ will be denoted by an upward strand, where the region to the right of the arrow is labeled $k$ (and the region to the left has label $k+1$), while the identity on $\sQ_- 1_k$ will be denoted by a downward strand, where the region to the right of the strand is labeled $k$ (and the region to the left has label $k-1$):
\[
  \begin{tikzpicture}[>=stealth,baseline={([yshift=-.5ex]current bounding box.center)}]
    \draw[->] (0,0) -- (0,1);
    \draw (0.3,0.5) node {\regionlabel{k}};
  \end{tikzpicture}
  \qquad \qquad \qquad
  \begin{tikzpicture}[>=stealth,baseline={([yshift=-.5ex]current bounding box.center)}]
    \draw[<-] (0,0) -- (0,1);
    \draw (0.3,0.5) node {\regionlabel{k}};
  \end{tikzpicture}
\]
The endpoints of the strings are located at $\{1,\dotsc,m\} \times \{0\}$ and $\{1,\dotsc,k\} \times \{1\}$, where $m$ and $k$ are the lengths of the sequences $c$ and $c'$ respectively.
The local relations are as follows (for any compatible labeling of the regions).

\noindent\begin{minipage}{0.5\linewidth}
  \begin{equation} \label{eq:Kho-rel-braid}
    \begin{tikzpicture}[anchorbase]
      \draw (0,0) -- (1,1)[->];
      \draw (1,0) -- (0,1)[->];
      \draw[->] (0.5,0) .. controls (0,0.5) .. (0.5,1);
    \end{tikzpicture}
    \ =\
    \begin{tikzpicture}[anchorbase]
      \draw (0,0) -- (1,1)[->];
      \draw (1,0) -- (0,1)[->];
      \draw[->] (0.5,0) .. controls (1,0.5) .. (0.5,1);
    \end{tikzpicture}
  \end{equation}
\end{minipage}%
\begin{minipage}{0.5\linewidth}
  \begin{equation} \label{eq:Kho-rel-transposition-squared}
    \begin{tikzpicture}[anchorbase]
      \draw[->] (0,0) .. controls (0.5,0.5) .. (0,1);
      \draw[->] (0.5,0) .. controls (0,0.5) .. (0.5,1);
    \end{tikzpicture}
    \ =\
    \begin{tikzpicture}[anchorbase]
      \draw[->] (0,0) --(0,1);
      \draw[->] (0.5,0) -- (0.5,1);
    \end{tikzpicture}
  \end{equation}
\end{minipage}\par\vspace{\belowdisplayskip}

\noindent\begin{minipage}{0.5\linewidth}
  \begin{equation} \label{eq:Kho-rel-down-up-double-cross}
    \begin{tikzpicture}[anchorbase]
      \draw[<-] (0,0) .. controls (0.5,0.5) .. (0,1);
      \draw[->] (0.5,0) .. controls (0,0.5) .. (0.5,1);
    \end{tikzpicture}
    \ =\
    \begin{tikzpicture}[anchorbase]
      \draw[<-] (0,0) --(0,1);
      \draw[->] (0.5,0) -- (0.5,1);
    \end{tikzpicture}
    \ -\
    \begin{tikzpicture}[anchorbase]
      \draw[->] (0,1) -- (0,0.9) arc (180:360:.25) -- (0.5,1);
      \draw[<-] (0,0) -- (0,0.1) arc (180:0:.25) -- (0.5,0);
    \end{tikzpicture}
  \end{equation}
\end{minipage}%
\begin{minipage}{0.5\linewidth}
  \begin{equation} \label{eq:Kho-rel-up-down-double-cross}
    \begin{tikzpicture}[anchorbase]
      \draw[->] (0,0) .. controls (0.5,0.5) .. (0,1);
      \draw[<-] (0.5,0) .. controls (0,0.5) .. (0.5,1);
    \end{tikzpicture}
    \ =\
    \begin{tikzpicture}[anchorbase]
      \draw[->] (0,0) --(0,1);
      \draw[<-] (0.5,0) -- (0.5,1);
    \end{tikzpicture}
  \end{equation}
\end{minipage}\par\vspace{\belowdisplayskip}

\noindent\begin{minipage}{0.5\linewidth}
  \begin{equation}\label{eq:Kho-rel-ccc}
    \begin{tikzpicture}[anchorbase]
      \draw [->](0,0) arc (0:360:0.3);
    \end{tikzpicture}
    \ = 1
  \end{equation}
\end{minipage}%
\begin{minipage}{0.5\linewidth}
  \begin{equation}\label{eq:Kho-rel-left-curl}
    \begin{tikzpicture}[scale=0.5,anchorbase]
      \draw (0,0) .. controls (0,.5) and (.7,.5) .. (.9,0);
      \draw (0,0) .. controls (0,-.5) and (.7,-.5) .. (.9,0);
      \draw (1,-1) .. controls (1,-.5) .. (.9,0);
      \draw[->] (.9,0) .. controls (1,.5) .. (1,1);
    \end{tikzpicture}
    \ =0
  \end{equation}
\end{minipage}\par\vspace{\belowdisplayskip}

\noindent By convention, any string diagram containing a region labeled by a negative integer is the zero 2-morphism.  This is compatible with our convention above for 1-morphisms.

\begin{defin}[Idempotent completion of a 2-category] \label{def:idem-completion}
  For a 2-category $\sC$, we define an idempotent completion $\Kar(\sC)$ as follows.
  \begin{itemize}
    \item The objects of $\Kar(\sC)$ are triples $(x,\se,\epsilon)$, where $x \in \Ob \sC$, $\se$ is an idempotent 1-morphism of $x$ in $\sC$, and $\epsilon$ is an idempotent 2-morphism (under vertical composition) of $\se$ in $\sC$.

    \item The 1-morphisms of $\Kar(\sC)$ between objects $(x,\se,\epsilon)$ and $(x',\se',\epsilon')$ are pairs $(\mathsf{g},\beta)$, where $\mathsf{g} \colon x \to x'$ is a 1-morphism in $\sC$ such that $\se' \mathsf{g} \se = \mathsf{g}$ and $\beta \colon \mathsf{g} \to \mathsf{g}$ is an idempotent 2-morphism in $\sC$ such that $\epsilon' \beta \epsilon = \beta$.

    \item The 2-morphisms between parallel 1-morphisms $(\mathsf{g},\beta),(\mathsf{h},\gamma) \colon (x,\se,\epsilon) \to (x',\se',\epsilon')$ are 2-morphisms $\alpha \colon \mathsf{g} \to \mathsf{h}$ in $\sC$ such that $\gamma \circ \alpha \circ \beta = \alpha$.
  \end{itemize}
  Composition of 1-morphisms is pairwise composition of 1-morphisms as in $\cC$ and composition of 2-morphisms is as in $\sC$.
\end{defin}

If $\sC$ is a 2-category, we have a natural inclusion of $\sC$ into $\Kar(\sC)$ sending the object $x$ to $(x,1_x,\id_x)$ and the 1-morphism $\mathsf{x}$ to $(\mathsf{x},\id_\mathsf{x})$.  The idempotent completion of a 2-category $\sC$ is universal in the sense that any 2-functor $\sC \to \mathscr{D}$ to a 2-category $\mathscr{D}$ in which all idempotent 1-morphisms and idempotent 2-morphisms split factors through a 2-functor $\Kar(\sC) \to \mathscr{D}$.

We then define
\[
  \sH^\trunc = \Kar ({\sH^\trunc}').
\]

\begin{rem} \label{rem:idem-completion}
  \begin{enumerate}
    \item In the 2-category ${\sH^\trunc}'$, the only idempotent 1-morphisms are the identity 1-morphisms.  However, we state Definition~\ref{def:idem-completion} in full generality.

    \item Note that Definition~\ref{def:idem-completion} differs from the definition of the idempotent completion for 2-categories often considered in the categorification literature (e.g.\ in \cite[Def.~3.21]{KL3}), where one takes only the usual Karoubi envelope (in the sense of 1-categories) of the morphism categories.   Even in the case where the only idempotent 1-morphisms are the identity 1-morphisms (as for ${\sH^\trunc}'$), the idempotent completion $\Kar(\sC)$ of Definition~\ref{def:idem-completion} often has more objects than $\sC$, since $\sC$ may have idempotent 2-morphisms of the identity 1-morphisms.  We will see in Section~\ref{subsec:sH-decomp} that this is indeed the case for $\sH^\trunc$.  Note that when the only idempotent 2-morphisms of identity 1-morphisms are the identity 2-morphisms, as is the case in \cite{KL3,Kho14}, the two notions of idempotent completions of 2-categories agree.  To the best of our knowledge, the more general definition of idempotent completions of 2-categories given above has not previously appeared in the categorification literature.
  \end{enumerate}
\end{rem}

\begin{rem} \label{rem:fH-discussion}
  If we repeat the construction of this subsection, but begin with the set $\N[\Z]$ of objects instead of $\N[\N]$ and do not set diagrams with negative region labels to be zero, we obtain a 2-category $\sH$ that is a 2-category version of Khovanov's monoidal category.  (See \cite[Def.~4.8]{LS13}, setting $q=1$.)  Thus, $\sH^\trunc$ can be viewed as a truncation of $\sH$, where we force negative objects to be zero. (More precisely, we quotient by 2-morphisms that are diagrams with negative region labels.)  So we have a
  natural truncation 2-functor
  \begin{equation} \label{eq:truncation-fH-to-sH}
    \sH \to \sH^\trunc
  \end{equation}
  sending negative elements of $\Z$ to zero.   We will see in Section~\ref{subsec:sH-decomp} that the truncation leads to the presence of many more idempotent 2-morphisms in $\sH^\trunc$ than in $\sH$.  See Remark~\ref{rem:truncation-produces-idems}.
\end{rem}

\subsection{A decomposition $\sH^\trunc = \sH_\epsilon \oplus \sH_\delta$} \label{subsec:sH-decomp}

In order to simplify diagrams, we will use a dashed strand to represent one or more solid strands.   The number of solid strands is uniquely determined by the region labels on either side of the dashed strand.  For example
\[
  \begin{tikzpicture}[anchorbase]
    \draw[->,dashed] (0,0) -- (0,1);
    \node at (-0.5,0.5) {\regionlabel{n+k}};
    \node at (0.3,0.5) {\regionlabel{k}};
  \end{tikzpicture}
  \ = \
  \begin{tikzpicture}[anchorbase]
    \draw[->] (0,0) -- (0,1);
    \draw[->] (0.2,0) -- (0.2,1);
    \node at (0.55,0.5) {$\cdots$};
    \draw[->] (0.9,0) -- (0.9,1);
    \node at (1.2,0.5) {\regionlabel{k}};
    \node at (-0.5,0.5) {\regionlabel{n+k}};
  \end{tikzpicture}
\]
where there are $n$ strands in the right-hand diagram.

As a result of the local relations \eqref{eq:Kho-rel-braid} and \eqref{eq:Kho-rel-transposition-squared}, for $k,n \in \N$, we have a natural algebra homomorphism
\[
  \Q S_n \to 2\End_{\sH^\trunc}(\sQ_+^n 1_k),
\]
obtained by labeling the strands $1,2,\dotsc,n$ from right to left and associating a braid-like diagram to a permutation in the natural way.  Rotating diagrams through an angle $\pi$, we obtain a natural homomorphism
\[
  (\Q S_n)^\op \to 2\End_{\sH^\trunc}(\sQ_-^n 1_k),
\]
where $A^\op$ denotes the opposite algebra of an algebra $A$.  For $z \in \Q S_n$, we will denote the corresponding elements of $2\End_{\sH^\trunc}(\sQ_+^n 1_k)$ and $2\End_{\sH^\trunc}(\sQ_-^n 1_k)$ by
\[
  \begin{tikzpicture}[anchorbase]
    \draw (-0.5,-0.25) -- (-0.5,0.25) -- (0.5,0.25) -- (0.5,-0.25) -- (-0.5,-0.25);
    \draw (0,0) node {$z$};
    \draw[->,dashed] (0,0.25) -- (0,0.7);
    \draw[dashed] (0,-0.25) -- (0,-0.7);
  \end{tikzpicture}
  \quad \text{and} \quad
  \begin{tikzpicture}[anchorbase]
    \draw (-0.5,-0.25) -- (-0.5,0.25) -- (0.5,0.25) -- (0.5,-0.25) -- (-0.5,-0.25);
    \draw (0,0) node {$z$};
    \draw[dashed] (0,0.25) -- (0,0.7);
    \draw[->,dashed] (0,-0.25) -- (0,-0.7);
  \end{tikzpicture}
\]
respectively, where there the dashed strand represents $n$ strands (determined by $z$).  It follows that
\[
  \begin{tikzpicture}[anchorbase]
    \draw (-0.5,-0.25) -- (-0.5,0.25) -- (0.5,0.25) -- (0.5,-0.25) -- (-0.5,-0.25);
    \draw (0,0) node {$z$};
    \draw[dashed] (-0.8,1) -- (-0.8,-0.25) .. controls (-0.8,-1) and (0,-1) .. (0,-0.25);
    \draw[<-,dashed] (0.8,-1) -- (0.8,0.25) .. controls (0.8,1) and (0,1) .. (0,0.25);
    \ =\
  \end{tikzpicture}
  \ =\
  \begin{tikzpicture}[anchorbase]
    \draw (-0.5,-0.25) -- (-0.5,0.25) -- (0.5,0.25) -- (0.5,-0.25) -- (-0.5,-0.25);
    \draw (0,0) node {$z$};
    \draw[dashed] (0,0.25) -- (0,1);
    \draw[->,dashed] (0,-0.25) -- (0,-1);
  \end{tikzpicture}
\]
for all $z \in \Q S_n$.

\begin{lem} \label{lem:dashed-sum-identity}
  For all $n>0$, we have
  \begin{equation} \label{eq:dashed-sum-identity}
    \begin{tikzpicture}[anchorbase]
      \draw[<-,dashed] (0,0.7) -- (0,0.5) arc(180:360:0.3) -- (0.6,0.7);
      \draw[->,dashed] (0,-0.7) -- (0,-0.5) arc(180:0:0.3) -- (0.6,-0.7);
      \node at (0.3,0.-0.5) {\regionlabel{0}};
      \node at (0.3,0.5) {\regionlabel{0}};
      \node at (-0.2,0) {\regionlabel{n}};
    \end{tikzpicture}
    = \sum_{z \in S_n}
    \begin{tikzpicture}[anchorbase]
      \draw (-0.5,-0.25) -- (-0.5,0.25) -- (0.5,0.25) -- (0.5,-0.25) -- (-0.5,-0.25);
      \draw (0,0) node {$z$};
      \draw[->,dashed] (0,0.25) -- (0,0.7);
      \draw[dashed] (0,-0.25) -- (0,-0.7);
      \draw (0.7,-0.25) -- (0.7,0.25) -- (1.7,0.25) -- (1.7,-0.25) -- (0.7,-0.25);
      \draw (1.2,0) node {$z^{-1}$};
      \draw[dashed] (1.2,0.25) -- (1.2,0.7);
      \draw[->,dashed] (1.2,-0.25) -- (1.2,-0.7);
      \node at (0.6,0.5) {\regionlabel{0}};
    \end{tikzpicture}
    \ .
  \end{equation}
\end{lem}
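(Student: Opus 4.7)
The plan is to prove \eqref{eq:dashed-sum-identity} by induction on $n$, noting that both sides are 2-endomorphisms of $\sQ_+^n \sQ_-^n 1_n$ in $\sH^\trunc$.

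For the base case $n = 1$, the right-hand side collapses to the single summand corresponding to $z = e$, giving the identity 2-endomorphism on $\sQ_+ \sQ_- 1_1$.  I therefore need to show that the 1-strand bubble on the left-hand side equals this identity.  The strategy is to use relation \eqref{eq:Kho-rel-up-down-double-cross}, which rewrites $\id_{\sQ_+ \sQ_-}$ as an up-down double crossing, together with a careful application of \eqref{eq:Kho-rel-down-up-double-cross} in an auxiliary rotated configuration, to identify the cap-cup configuration with this double crossing.  The truncation is essential here: inside the 1-strand bubble the region label is $0$, and any attempt to introduce a downward strand bordering a region label $-1$ is killed by the convention of Section~\ref{subsec:sH-def}, which eliminates correction terms that would otherwise obstruct this identification in the full Heisenberg 2-category $\sH$.

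For the inductive step, separate the outermost pair of arcs (the outer cap at the bottom and the outer cup at the top) from the $n$-strand bubble.  This leaves an inner $(n-1)$-strand bubble nested inside an outer ``shell'', with the inner bubble having region labels $(n-1, 0)$ (so the inductive hypothesis applies) and the outer shell having region labels $(n, n-1)$.  By induction the inner bubble equals $\sum_{z' \in S_{n-1}} z' \otimes (z')^{-1}$.  The outer shell is not itself the identity but rather a sum over $n$ terms indexed by the $n$ coset representatives of $S_{n-1}$ in $S_n$; combining the two sums using the braid relation \eqref{eq:Kho-rel-braid} and the transposition-squared relation \eqref{eq:Kho-rel-transposition-squared}, one obtains the full sum $\sum_{z \in S_n} z \otimes z^{-1}$ via the coset decomposition $S_n = \bigsqcup_{k=1}^{n} S_{n-1} \cdot \tau_k$, where $\tau_k = s_k s_{k+1} \cdots s_{n-1}$ (with $\tau_n = e$).

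The main obstacle is the base case, which requires careful diagrammatic manipulation exploiting the truncation at region $0$ to identify the 1-strand cap-cup with the identity.  The inductive step, by contrast, is combinatorial bookkeeping: the outer shell's $n$-term expansion provides the $S_n / S_{n-1}$ coset labels, the inner bubble's inductive sum provides the $S_{n-1}$ structure, and the braid moves combine them into the desired sum over $S_n$.
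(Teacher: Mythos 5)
Your overall strategy (induction on the number of strands, with the truncation killing double-crossing correction terms) is in the right spirit, but the inductive step as you describe it does not go through. Read as a vertical composite, the $n$-strand diagram is
\[
  \bigl(\id_{\sQ_+}\otimes \eta^{(n-1)}\otimes \id_{\sQ_-}\bigr)\circ\bigl(\eta^{\mathrm{out}}\circ\varepsilon^{\mathrm{out}}\bigr)\circ\bigl(\id_{\sQ_+}\otimes \varepsilon^{(n-1)}\otimes \id_{\sQ_-}\bigr),
\]
where $\varepsilon^{(n-1)}$, $\eta^{(n-1)}$ are the inner $(n-1)$ nested caps and cups and $\varepsilon^{\mathrm{out}}$, $\eta^{\mathrm{out}}$ are the outermost ones: the inner caps are applied \emph{first} and the inner cups \emph{last}, so the inner $(n-1)$-strand ``bubble'' is not a contiguous subdiagram --- it is split apart by the outer shell. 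The inductive hypothesis is a statement about the composite $\eta^{(n-1)}\circ\varepsilon^{(n-1)}$ and cannot be invoked for the two separated halves. Worse, the claimed expansion of the outer shell into $n$ terms indexed by coset representatives $\tau_k=s_k\cdots s_{n-1}$ is not meaningful at the level where the outer shell lives: there only two strands are present, and the individual elements $s_i\cdots s_{n-1}\otimes s_{n-1}\cdots s_i$ are not bimodule endomorphisms of $(n)_{n-1}(n)$ (only their sum is), so the outer shell does not decompose as a sum of $n$ two-strand diagrams. The coset structure only emerges after the outer expansion has been interleaved with the inner caps and cups --- which is exactly the work your ``combinatorial bookkeeping'' elides. (A small symptom of the same confusion: the decomposition should read $S_n=\bigsqcup_k \tau_k S_{n-1}$, not $\bigsqcup_k S_{n-1}\tau_k$.)

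The base case also needs repair: relation \eqref{eq:Kho-rel-up-down-double-cross} contains no cap--cup term, so rewriting $\id_{\sQ_+\sQ_-}$ as an up--down double crossing cannot by itself produce the left-hand side. What actually works --- and is the content of the paper's one-line proof --- is to use the zigzag relations \eqref{eq:up-right-zigzag}--\eqref{eq:up-left-zigzag} to rewrite the cap--cup as a snake containing $\id_{\sQ_-^{m}\sQ_+^{m}}$ over a region labelled $0$, and then apply \eqref{eq:Kho-rel-down-up-double-cross} repeatedly to that configuration: each application either caps a down--up pair off (the $\eta_\rR\varepsilon_\rL$ term) or pushes an up strand one step further left (the double-crossing term), and any up strand pushed past all the down strands borders a region labelled $-1$ and dies. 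The surviving complete matchings, with their accumulated crossings, are exactly the $n!$ terms $z\otimes z^{-1}$. If you want to keep an inductive organization, you should induct inside this expansion (on the number of strands already absorbed) rather than on nested sub-bubbles of the original diagram.
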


\begin{proof}
  This follows from repeated use of \eqref{eq:Kho-rel-down-up-double-cross}.
\end{proof}

Recall the definition of the central idempotent $e_\lambda$ in \eqref{eq:central-idempotent-explicit}.

\begin{lem} \label{lem:orthoid}
  For $n\in \N$, in $2\End_{\sH^\trunc}(1_n)$ we have the following orthogonal idempotent decomposition of $\id_n$:
  \begin{equation} \label{eq:id1n-decomp}
    \id_n = \sum_{\lambda \vdash n} \epsilon_\lambda + \delta_n,
  \end{equation}
  where, for $n >0$, we define
  \begin{equation}
    \epsilon_\lambda = \frac{1}{n!}\
    \begin{tikzpicture}[anchorbase]
      \draw (-0.5,-0.25) -- (-0.5,0.25) -- (0.5,0.25) -- (0.5,-0.25) -- (-0.5,-0.25);
      \draw (0,0) node {$e_\lambda$};
      \draw[->,dashed] (0,0.25) .. controls (0,1) and (1,1) .. (1,0.25) -- (1,-0.25) .. controls (1,-1) and (0,-1) .. (0,-0.25);
      \node at (0.75,0) {\regionlabel{0}};
    \end{tikzpicture}
    \ ,\quad
    \delta_n = 1 - \epsilon_n,\quad
    \epsilon_n = \sum_{\lambda \vdash n} \epsilon_\lambda = \frac{1}{n!}\
    \begin{tikzpicture}[anchorbase]
      \draw (-0.5,-0.25) -- (-0.5,0.25) -- (0.5,0.25) -- (0.5,-0.25) -- (-0.5,-0.25);
      \draw (0,0) node {$1_{S_n}$};
      \draw[->,dashed] (0,0.25) .. controls (0,1) and (1,1) .. (1,0.25) -- (1,-0.25) .. controls (1,-1) and (0,-1) .. (0,-0.25);
      \node at (0.75,0) {\regionlabel{0}};
    \end{tikzpicture}\ ,
  \end{equation}
  where $1_{S_n}$ denotes the identity element of $S_n$.  By convention, we define $\epsilon_0 = \epsilon_\varnothing = \id_0$ and $\delta_0=0$.
\end{lem}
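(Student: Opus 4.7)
The decomposition $\id_n = \sum_\lambda \epsilon_\lambda + \delta_n$ is immediate from the definitions, so everything reduces to establishing orthogonality and idempotency. My plan is to extract a single key identity, $\epsilon_\lambda \circ \epsilon_\mu = \delta_{\lambda,\mu} \epsilon_\lambda$, from which $\epsilon_n^2 = \epsilon_n$, $\delta_n^2 = \delta_n$, and $\epsilon_\lambda \circ \delta_n = \delta_n \circ \epsilon_\lambda = 0$ all follow by routine algebra (once orthogonality is known, $\epsilon_n \circ \epsilon_\lambda = \sum_\mu \epsilon_\mu \epsilon_\lambda = \epsilon_\lambda$, and then $\delta_n \epsilon_\lambda = \epsilon_\lambda - \epsilon_\lambda = 0$).

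To prove the key identity I will interpret each $\epsilon_\lambda$ as a rescaled right trace. Write $\eta\colon 1_n \to \sQ_+^n \sQ_-^n$ for the nested cups at the bottom of the bubble and $\varepsilon\colon \sQ_+^n \sQ_-^n \to 1_n$ for the nested caps at the top, so that $\epsilon_\lambda = \tfrac{1}{n!}\, \varepsilon \circ (e_\lambda \otimes \id) \circ \eta$. Stacking the two bubbles places the configuration $\eta \circ \varepsilon$ in the middle of the product, and here Lemma~\ref{lem:dashed-sum-identity} intervenes: substituting $\eta \circ \varepsilon = \sum_{z \in S_n} z \otimes z^{-1}$ produces
\[
\epsilon_\lambda \circ \epsilon_\mu = \frac{1}{(n!)^2} \sum_{z \in S_n} \varepsilon \circ \bigl((e_\lambda z e_\mu) \otimes z^{-1}\bigr) \circ \eta.
\]
Centrality of the $e_\nu$'s in $\Q S_n$ collapses $e_\lambda z e_\mu = \delta_{\lambda,\mu}\, e_\lambda z$, pulling out the Kronecker factor up front.

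The remaining task is to evaluate $\varepsilon \circ ((e_\lambda z) \otimes z^{-1}) \circ \eta$. My claim is that it equals $n!\, \epsilon_\lambda$ for every $z \in S_n$; once this is verified, summing over $z$ yields $\epsilon_\lambda \circ \epsilon_\mu = \delta_{\lambda,\mu}\, \epsilon_\lambda$. Justifying this evaluation is the hard part of the argument: it amounts to the cyclic property of the right trace in $\sH^\trunc$, which I will establish by sliding the $z^{-1}$-box from the $\sQ_-^n$ strands around the closure onto the $\sQ_+^n$ strands as a right factor, using the biadjunction data (cups and caps) together with the fact that the 180$^\circ$ rotation of a permutation diagram represents the same permutation (since crossings of same-colored strands are symmetric by \eqref{eq:Kho-rel-transposition-squared}). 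Once the sliding is performed, the central identity $e_\lambda z \cdot z^{-1} = e_\lambda$ delivers the claimed value. The delicate part is making the sliding rigorous within the axiomatics of $\sH^\trunc$; this reduces to a handful of local isotopies and triple-point-style moves of the kind already present in the local relations of the 2-category.
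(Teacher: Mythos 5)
Your proof is correct and follows essentially the same route as the paper's, which simply cites Lemma~\ref{lem:dashed-sum-identity} and centrality of the $e_\lambda$: the reduction to $\epsilon_\lambda \circ \epsilon_\mu = \delta_{\lambda,\mu}\,\epsilon_\lambda$, the insertion of $\sum_z z \otimes z^{-1}$ in the middle, and the final trace evaluation are exactly the intended argument. The ``delicate'' sliding of the $z^{-1}$-box around the closure is not actually an issue: it is precisely the displayed identity the paper records immediately before Lemma~\ref{lem:dashed-sum-identity}, and it holds by isotopy invariance because the $(\Q S_n)^{\op}$-action on $\sQ_-^n$ is \emph{defined} via rotation of diagrams through $\pi$ (so no appeal to \eqref{eq:Kho-rel-transposition-squared} is needed there).
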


\begin{proof}
  It is clear that \eqref{eq:id1n-decomp} is satisfied.  The fact that $\delta_n$, $\epsilon_\lambda$, $\lambda \vdash n$, are orthogonal idempotents follows from Lemma~\ref{lem:dashed-sum-identity} and the fact that the $e_\lambda$ are central.
  \details{
    Since $e_\lambda$ is central, we have
    \[
      \begin{tikzpicture}[anchorbase,scale=.7]
        \draw[dashed] (0.5,0) -- (0.5,0.5);
        \draw (-.2,.5) rectangle (1.2,1.2);
        \node at (.5,.85) {$s$};
        \draw[dashed] (0.5,1.2) -- (0.5,1.5);
        \draw (-.2,1.5) rectangle (1.2,2.2);
        \node at (.5,1.85) {$e_\lambda$};
        \draw[dashed] (0.5,2.2) -- (0.5,2.5);
        \draw (-.2,2.5) rectangle (1.2,3.2);
        \node at (.5,2.85) {$s^{-1}$};
        \draw[->,dashed] (0.5,3.2) -- (0.5,4);
      \end{tikzpicture}
      \quad=\quad
      \begin{tikzpicture}[anchorbase,scale=.7]
        \draw[dashed] (0.5,0) -- (0.5,1.5);
         \draw (-.2,1.5) rectangle (1.2,2.2);
        \node at (.5,1.85) {$e_\lambda$};
        \draw[->,dashed] (0.5,2.2) -- (0.5,4);
      \end{tikzpicture}
      \ .
    \]
  }
\end{proof}

\begin{rem} \label{rem:truncation-produces-idems}
  The fact that the $\epsilon_\lambda$ are idempotents relies on the fact that diagrams with negative regions are equal to zero.  The analogous diagrams in the 2-category version $\sH$ of Khovanov's category, where we allow regions labeled by any integer (see Remark~\ref{rem:fH-discussion}), are not idempotents.  In fact, it follows from \cite[Prop.~3]{Kho14} that the only idempotent 2-morphism of any $1_n$, $n \in \Z$, is the trivial diagram.  So we see that passing to the truncation, where we kill negative regions, introduces a large number of idempotents.
\end{rem}

\begin{lem} \label{lem:concSlide}
  For all $n \in \N$, we have
  \[
    \epsilon_{n+1} \sQ_+ = \sQ_+ \epsilon_n,\quad
    \epsilon_n \sQ_- = \sQ_- \epsilon_{n+1},\quad
    \delta_{n+1} \sQ_+ = \sQ_+ \delta_n,\quad
    \delta_n \sQ_- = \sQ_- \delta_{n+1},
  \]
  in $2\End_{\sH^\trunc} (\sQ_+ 1_n)$.
\end{lem}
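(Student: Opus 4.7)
My plan is to reduce the four identities to the two involving $\epsilon$, and then to prove those two by a diagrammatic sliding argument using the relation \eqref{eq:Kho-rel-down-up-double-cross} together with planar isotopy and the circle relation \eqref{eq:Kho-rel-ccc}.

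\textbf{Step 1 (Reducing $\delta$ to $\epsilon$).} By Lemma~\ref{lem:orthoid}, we have $\id_m = \epsilon_m + \delta_m$ for $m = n, n+1$. Whiskering these decompositions with $\id_{\sQ_\pm}$ gives
\[
  \delta_{n+1}\sQ_+ - \sQ_+\delta_n
  = (\id_{n+1} - \epsilon_{n+1})\sQ_+ - \sQ_+(\id_n - \epsilon_n)
  = -(\epsilon_{n+1}\sQ_+ - \sQ_+\epsilon_n),
\]
and analogously for $\sQ_-$. Thus the two $\delta$ identities follow at once from the two $\epsilon$ identities.

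\textbf{Step 2 (Diagrammatic setup for the first $\epsilon$ identity).} Unpacking the dashed-strand notation, $\epsilon_{n+1}$ is $\frac{1}{(n+1)!}$ times a configuration of $n+1$ nested concentric closed loops with innermost region $0$ and outermost region $n+1$, while $\sQ_+\epsilon_n$ is $\frac{1}{n!}$ times a single vertical up-strand on the left with $n$ analogous nested loops on its right. We therefore have to exhibit an equality
\[
  \frac{1}{(n+1)!}\,[n{+}1\ \text{nested loops},\ \sQ_+]
  \;=\;
  \frac{1}{n!}\,[\sQ_+,\ n\ \text{nested loops}],
\]
i.e.\ an equality of the unnormalized diagrams up to a factor of $n+1$.

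\textbf{Step 3 (Sliding the outermost arc).} The right edge of the outermost arc of $\epsilon_{n+1}$ (which has its left region labeled $n+1$ and its right region labeled $n$ on the relevant side) runs parallel, and opposite in orientation, to the $\sQ_+$ strand immediately to its right: this is precisely a ``down-up parallel pair'' of the type appearing on the left of relation \eqref{eq:Kho-rel-down-up-double-cross}. Rewriting this parallel pair using \eqref{eq:Kho-rel-down-up-double-cross} produces two terms, a crossing term and a cup-cap term. The cup-cap term severs the outermost arc and glues it to the $\sQ_+$ strand, producing a new diagram in which the $\sQ_+$ strand has absorbed the outermost arc, leaving the remaining $n$ arcs inside; this is exactly the $\sQ_+\epsilon_n$ configuration (unnormalized). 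Using isotopy, the triple-point-type relations \eqref{eq:Kho-rel-braid}--\eqref{eq:Kho-rel-up-down-double-cross}, the clockwise-counterclockwise circle relation \eqref{eq:Kho-rel-ccc}, and the fact that regions of label $-1$ vanish in the truncation, one shows that the crossing term either cancels with symmetric contributions coming from the other $n$ arcs of $\epsilon_{n+1}$ or reduces to diagrams killed by \eqref{eq:Kho-rel-left-curl}. Summing over the $n+1$ arcs to which the resolution applies yields exactly the combinatorial factor $n+1$ that balances the ratio $(n+1)!/n!$ of the two normalizations.

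\textbf{Step 4 (Second $\epsilon$ identity and the main obstacle).} The identity $\epsilon_n\sQ_- = \sQ_-\epsilon_{n+1}$ follows by the same argument applied to the mirror-image configuration (with an up-down parallel pair on the other side), using relation \eqref{eq:Kho-rel-up-down-double-cross}; alternatively it can be deduced from the first identity by rotating all diagrams by $\pi$ and exchanging the roles of $\sQ_+$ and $\sQ_-$, which is a symmetry the defining relations respect. The main obstacle is Step 3: carefully tracking the scalar $n+1$ through the iterated resolution of parallel pairs and verifying that the extraneous crossing terms really do cancel against one another, as opposed to contributing nontrivially. This requires a careful symmetrization argument over the $n+1$ arcs of $\epsilon_{n+1}$.
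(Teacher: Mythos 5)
Your Step 1 is correct, and your overall plan for the $\epsilon$ identities --- slide the $\sQ_+$ strand through the nested loops of $\epsilon_{n+1}$ using \eqref{eq:Kho-rel-down-up-double-cross} --- is exactly the paper's approach (its proof reads ``successive applications of \eqref{eq:Kho-rel-down-up-double-cross}, followed by \eqref{eq:Kho-rel-transposition-squared}''; the same computation is carried out in detail in the proof of Lemma~\ref{lem:plSlide}). However, Step 3, which you yourself flag as the main obstacle, is both incomplete and incorrectly sketched. First, the cup--cap term obtained by resolving the outermost arc against the $\sQ_+$ strand is \emph{not} ``exactly the $\sQ_+\epsilon_n$ configuration'': that surgery produces a single strand running from the bottom boundary, winding once around the remaining $n$ nested loops, and exiting at the top; bringing it into the form $[\sQ_+,\ n\ \text{nested loops}]$ requires further applications of \eqref{eq:Kho-rel-transposition-squared} and isotopy and is not free. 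Second, the crossing term is not disposed of by cancellation against ``symmetric contributions from the other arcs'' nor by \eqref{eq:Kho-rel-left-curl}. What actually happens is a telescoping resolution: the crossing term pushes $\sQ_+$ one level deeper, \eqref{eq:Kho-rel-down-up-double-cross} is applied again there, and so on; after $n+1$ iterations the remaining ``all crossings'' term has a segment of the up-strand inside the innermost loop, so the region to its right is labeled $-1$ and the term vanishes \emph{in the truncation}. This vanishing is the crucial mechanism (it is exactly why the lemma holds in $\sH^\trunc$ while the analogous statement fails in $\sH$; cf.\ Remark~\ref{rem:truncation-produces-idems}), and it is what leaves precisely the $n+1$ cup--cap terms, one per depth, each equal to the target diagram, accounting for the factor $(n+1)!/n!$. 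Your sketch mentions the vanishing of negative regions only as one auxiliary tool and attributes the bookkeeping to a symmetrization over arcs that does not correspond to a legitimate diagrammatic manipulation.

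A smaller point: for the identity $\epsilon_n\sQ_-=\sQ_-\epsilon_{n+1}$ the relevant adjacent pair in $\sQ_-\epsilon_{n+1}$ is still a downward strand on the left next to the upward-oriented left edge of the outermost loop on the right, so the same relation \eqref{eq:Kho-rel-down-up-double-cross} is used --- not \eqref{eq:Kho-rel-up-down-double-cross}, which has no correction term and therefore could not produce the factor $n+1$. Your alternative via rotation by $\pi$ would also need justification, since rotation reverses orientations and is not one of the stated relations.
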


\begin{proof}
  This follows from successive applications of \eqref{eq:Kho-rel-down-up-double-cross}, followed by \eqref{eq:Kho-rel-transposition-squared}.
\end{proof}

Recall Definition~\ref{def:idem-completion} of the idempotent completion of a 2-category.  We define $\sH_\epsilon$ to be the full sub-2-category of $\sH^\trunc$ whose objects are direct sums of triples $(n,1_n,\epsilon)$ where $n \in \N$, and $\epsilon$ is an idempotent 2-morphism of $1_n$ such that $\epsilon \epsilon_n = \epsilon$.  Similarly, we define $\sH_\delta$ to be the full sub-2-category of $\sH^\trunc$ whose objects are direct sums of triples $(n,1_n,\epsilon)$ where $n \in \N$, and $\epsilon$ is an idempotent 2-morphism of $1_n$ such that $\epsilon \delta_n = \epsilon$.

Recall that a 2-functor is an equivalence of 2-categories if and only if it is essentially surjective on objects, essentially full on 1-morphisms, and fully faithful on 2-morphisms.
\details{
  See, for example Section 7.5 of \href{https://www.math.uchicago.edu/~may/IMA/Lack.pdf}{A 2-categories companion} by S.~Lack.
}

\begin{prop} \label{prop:sH-decomp}
  We have an equivalence of 2-categories
  \[
    \sH^\trunc \cong \sH_\epsilon \oplus \sH_\delta.
  \]
\end{prop}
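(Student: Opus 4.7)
The plan is to show that the natural 2-functor $\sH_\epsilon \oplus \sH_\delta \to \sH^\trunc$ (sending a pair to its direct sum in $\sH^\trunc$) is an equivalence of 2-categories, by verifying essential surjectivity on objects, essential fullness on 1-morphisms, and full faithfulness on 2-morphisms. Since $\sH_\epsilon$ and $\sH_\delta$ are by definition full sub-2-categories of $\sH^\trunc$, the fullness and faithfulness conditions reduce to showing that every 1-morphism and every 2-morphism of $\sH^\trunc$ between an object of $\sH_\epsilon$ and an object of $\sH_\delta$ is zero.

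The essential technical tool is the Eckmann--Hilton observation: since $1_n$ is an identity 1-morphism, the interchange law forces vertical composition on $\2Mor_{\sH^\trunc}(1_n, 1_n)$ to be commutative, and moreover the whiskering $\zeta \, \id_\mathsf{g}$ of any $\zeta \in \2Mor_{\sH^\trunc}(1_n, 1_n)$ by any 1-morphism $\mathsf{g}$ is central in $\2Mor_{\sH^\trunc}(\mathsf{g}, \mathsf{g})$. For essential surjectivity, Remark~\ref{rem:idem-completion}(1) reduces the problem to objects of the form $(n, 1_n, \epsilon)$ with $\epsilon$ an idempotent 2-endomorphism of $1_n$; commutativity of this algebra combined with the orthogonal decomposition $\id_n = \epsilon_n + \delta_n$ from Lemma~\ref{lem:orthoid} yields a decomposition $\epsilon = \epsilon \circ \epsilon_n + \epsilon \circ \delta_n$ into orthogonal idempotents, producing an isomorphism $(n, 1_n, \epsilon) \cong (n, 1_n, \epsilon \circ \epsilon_n) \oplus (n, 1_n, \epsilon \circ \delta_n)$ in $\sH^\trunc$ with summands in $\sH_\epsilon$ and $\sH_\delta$ respectively.

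For the vanishing of cross-morphisms, take $(m, 1_m, \mu) \in \sH_\epsilon$ with $\mu \circ \epsilon_m = \mu$ and $(n, 1_n, \nu) \in \sH_\delta$ with $\nu \circ \delta_n = \nu$, and a 1-morphism $(\mathsf{g}, \beta)$ between them, so $\beta = (\nu \, \id_\mathsf{g}) \circ \beta \circ (\id_\mathsf{g} \, \mu)$. Using $\mu = \epsilon_m \circ \mu$ together with the slide relation $\id_\mathsf{g} \, \epsilon_m = \epsilon_n \, \id_\mathsf{g}$ from Lemma~\ref{lem:concSlide} and the interchange law, we rewrite $\id_\mathsf{g} \, \mu = (\epsilon_n \, \id_\mathsf{g}) \circ (\id_\mathsf{g} \, \mu)$; then sliding the central $\epsilon_n \, \id_\mathsf{g}$ through $\beta$ and combining it with $\nu \, \id_\mathsf{g}$ via interchange produces the factor $(\nu \circ \epsilon_n) \, \id_\mathsf{g}$. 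Since $\nu \circ \epsilon_n = (\nu \circ \delta_n) \circ \epsilon_n = \nu \circ (\delta_n \circ \epsilon_n) = 0$ by orthogonality, we obtain $\beta = 0$. The identical manipulation applied to a 2-morphism $\alpha$ satisfying $\alpha = \gamma \circ \alpha \circ \beta$ forces $\alpha = 0$. The main obstacle is simply the bookkeeping required to carry out these interchange-law slides carefully; once performed, the orthogonality $\epsilon_n \delta_n = 0$ immediately collapses the cross-morphism to zero, and combining this with the object decomposition above yields the claimed equivalence of 2-categories.
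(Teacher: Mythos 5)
Your proposal is correct and follows essentially the same route as the paper's (much terser) proof: the object decomposition $(n,1_n,\epsilon)\cong(n,1_n,\epsilon\epsilon_n)\oplus(n,1_n,\epsilon\delta_n)$ via Lemma~\ref{lem:orthoid}, and the vanishing of cross-morphisms via Lemma~\ref{lem:concSlide} together with the orthogonality $\epsilon_n\delta_n=0$. The Eckmann--Hilton/interchange bookkeeping you spell out is exactly what the paper leaves implicit.
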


\begin{proof}
  It follows from Lemma~\ref{lem:orthoid} that, for all objects $(n,1_n,\epsilon)$ of $\sH^\trunc$, we have the decomposition
  \[
    (n,1_n,\epsilon) \cong (n,1_n,\epsilon \epsilon_n) \oplus (n,1_n,\epsilon \delta_n).
  \]
  By Lemma~\ref{lem:concSlide}, together with the fact that $\epsilon_n \delta_n = 0$ for all $n \in \N$, we see that any 1-morphism between an object of the form $(n,1_n,\epsilon \epsilon_n)$ and an object of the form $(n',1_{n'},\epsilon' \delta_{n'})$ is isomorphic to zero.
\end{proof}

\begin{prop} \label{prop:sHepsilon-quotient-clockwise-circle}
  The 2-category $\sH_\epsilon$ is isomorphic to the 2-category whose objects and 1-morphisms are the same as those of $\sH^\trunc$, and whose 2-morphisms spaces are quotients of the 2-morphism spaces of $\sH^\trunc$ by the local relation that a clockwise circle in a region labeled $n$ is equal to $n$:
  \begin{equation} \label{eq:clockwise-circle-equal-n}
    \begin{tikzpicture}[>=stealth,baseline={([yshift=-1.5ex]current bounding box.center)}]
      \draw[<-] (0.3,0) arc (0:360:0.3);
      \node at (-0.5,0.5) {\regionlabel{n}};
    \end{tikzpicture}
    \ = n,\quad n \in \N.
  \end{equation}
\end{prop}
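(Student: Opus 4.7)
The plan is to realize the quotient 2-category $\bar{\sH}$ described in the statement (formed from $\sH^\trunc$ by imposing $c_n = n$ for the clockwise circle $c_n$ in region $n$) as the summand $\sH_\epsilon$ of the decomposition $\sH^\trunc \cong \sH_\epsilon \oplus \sH_\delta$ from Proposition~\ref{prop:sH-decomp}. I will construct mutually inverse 2-functors between $\bar{\sH}$ and $\sH_\epsilon$ arising from the projection and inclusion associated with this decomposition.

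The central computation is the identity
\begin{equation*}
  c_n \cdot \epsilon_n = n \cdot \epsilon_n \quad \text{in } 2\End_{\sH^\trunc}(1_n).
\end{equation*}
I would prove this by isotoping $c_n$ adjacent to the big bubble defining $\epsilon_n$ (Lemma~\ref{lem:orthoid}), and then applying the crossing relation \eqref{eq:Kho-rel-down-up-double-cross} at each of the $n$ crossings between a strand of $c_n$ and a strand of the bubble. The resulting $n$ ``absorption'' terms, after invoking the counterclockwise circle relation \eqref{eq:Kho-rel-ccc}, each yield a copy of $\epsilon_n$; the remaining error terms vanish thanks to \eqref{eq:Kho-rel-left-curl} and the symmetry of the central idempotent $1_{S_n}$ inside the bubble. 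This identity implies that the projection 2-functor $\pi \colon \sH^\trunc \to \sH_\epsilon$ kills each $c_n - n \cdot \id_{1_n}$, so it descends to a 2-functor $\bar\pi \colon \bar{\sH} \to \sH_\epsilon$.

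Dually, the inclusion $\iota \colon \sH_\epsilon \hookrightarrow \sH^\trunc$ composed with the quotient map gives $\bar\iota \colon \sH_\epsilon \to \bar\sH$. The composition $\bar\pi \circ \bar\iota = \id_{\sH_\epsilon}$ is immediate, because the restriction of $\pi$ to $\sH_\epsilon$ is the identity. For the other direction, $\bar\iota \circ \bar\pi = \id_{\bar\sH}$, I would show that in $\bar\sH$ every idempotent $\delta_n$ (and hence every object in the $\sH_\delta$-summand) becomes trivial; since $\id_{1_n} = \epsilon_n + \delta_n$, the quotient relation $c_n = n \cdot \id_{1_n}$ combined with the key identity gives $c_n \delta_n = n \delta_n$ in $\bar\sH$, and a companion diagrammatic computation in $\sH^\trunc$ (showing that $c_n \delta_n$ sits in a distinct scalar class or, more generally, that the 2-ideal generated by $\{c_m - m : m \in \N\}$ contains $\delta_n$) forces $\delta_n = 0$. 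The main obstacle throughout is the diagrammatic bookkeeping for the key identity and its companion; the crucial ingredient is the symmetrization furnished by the central idempotent $1_{S_n}$ inside the bubble, which causes all the ``off-diagonal'' contributions from the $n$ crossing resolutions to cancel and leaves precisely $n$ copies of $\epsilon_n$.
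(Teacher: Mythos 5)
Your overall architecture is sound and is essentially the paper's: identifying the quotient $\bar{\sH}$ with the summand $\sH_\epsilon$ of Proposition~\ref{prop:sH-decomp} amounts to showing that the 2-ideal generated by the elements $c_n - n\cdot\id_{1_n}$ coincides with the 2-ideal generated by the $\delta_n$. Your key identity $c_n\epsilon_n = n\,\epsilon_n$ is true and gives one containment; it is the content of the paper's inductive ``forward implication,'' and besides your crossing-resolution argument it can be seen by inserting $\id_{n-1}=\epsilon_{n-1}+\delta_{n-1}$ in the interior of the circle, sliding $\delta_{n-1}$ out to a $\delta_n$ via Lemma~\ref{lem:concSlide}, and noting that the circle enclosing the closure of $1_{S_{n-1}}$ is $\tfrac{1}{(n-1)!}$ times $n$ nested clockwise circles, i.e.\ $n\,\epsilon_n$.

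The gap is in the other containment, namely that $\delta_n=0$ in $\bar{\sH}$. The mechanism you propose first --- that $c_n\delta_n=\mu\,\delta_n$ for some scalar $\mu\ne n$ --- fails for $n\ge 2$: under the equivalence $\partial_\delta\colon \sH_\delta\xrightarrow{\ \sim\ }\sH^\trunc$ of Proposition~\ref{prop:Hdelta-H-equiv}, the 2-morphism $c_n\delta_n$ corresponds to the clockwise circle $c_{n-1}\in 2\End_{\sH^\trunc}(1_{n-1})$, which is not a scalar multiple of the identity (already $c_1=\epsilon_1$ satisfies $c_1\epsilon_1=\epsilon_1$ and $c_1\delta_1=0$ with both $\epsilon_1$ and $\delta_1$ nonzero). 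Your fallback --- ``show that the 2-ideal generated by $\{c_m-m\}$ contains $\delta_n$'' --- is exactly the claim to be proved, so as written nothing is established. The correct argument is much shorter than a ``companion diagrammatic computation'': $\epsilon_n$ is by definition $\tfrac{1}{n!}$ times the closure of $1_{S_n}$, which is $n$ nested clockwise circles lying in regions labeled $1,2,\dotsc,n$ from the inside out; imposing \eqref{eq:clockwise-circle-equal-n} and evaluating from the innermost circle outward gives $\epsilon_n=\tfrac{n!}{n!}=1$, hence $\delta_n=1-\epsilon_n=0$ in $\bar{\sH}$. This is precisely the ``clear'' reverse implication in the paper's proof; somewhat ironically, it is the easy direction, while the direction you work out in detail is the one the paper proves by induction.
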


\begin{proof}
  This follows from the fact that, in any quotient of the 2-morphism spaces,
  \[
    \delta_n = 0 \ \forall\ n \in \N
    \iff \epsilon_n = 1 \ \forall\ n \in \N
    \iff \eqref{eq:clockwise-circle-equal-n} \text{ holds}.
  \]
  Indeed, the reverse implication in the final $\iff$ is clear.  For the forward implication, note that $\epsilon_1 = 1$ gives \eqref{eq:clockwise-circle-equal-n} for $n=1$.  Then, supposing \eqref{eq:clockwise-circle-equal-n} holds for $n \le k$, the equation $\epsilon_{k+1}=1$ gives \eqref{eq:clockwise-circle-equal-n} for $n=k+1$.
\end{proof}

\begin{prop} \label{prop:sHepsilon-indecomposables}
  The objects
  \[
    (n,1_n,\epsilon_\lambda),\quad n \in \N,\ \lambda \vdash n,
  \]
  form a complete list of pairwise-nonisomorphic indecomposable objects of $\sH_\epsilon$.  Furthermore, any object of $\sH_\epsilon$ can be written uniquely (up to permutation) as a direct sum of indecomposable objects.
\end{prop}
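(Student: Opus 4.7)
The plan is to establish three things in sequence: each candidate object $(n, 1_n, \epsilon_\lambda)$ is indecomposable (with local 2-endomorphism ring), every object of $\sH_\epsilon$ admits some decomposition as a direct sum of such candidates, and uniqueness together with pairwise non-isomorphism follow from the standard Krull--Schmidt theorem.

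First I would attack the existence of a decomposition. By the definition of $\sH_\epsilon$, every object is already a finite direct sum of triples $(n, 1_n, \epsilon)$ with $\epsilon \epsilon_n = \epsilon$, so the task reduces to decomposing each such summand. Lemma~\ref{lem:orthoid} supplies the orthogonal idempotent decomposition $\epsilon_n = \sum_{\lambda \vdash n} \epsilon_\lambda$ in $2\End_{{\sH^\trunc}'}(1_n)$, so I can write $\epsilon = \epsilon \epsilon_n = \sum_\lambda \epsilon \epsilon_\lambda$ and carry out a Peirce decomposition with respect to the orthogonal family $\{\epsilon_\lambda\}_{\lambda \vdash n}$. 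This would present $(n, 1_n, \epsilon)$ as a direct sum of summands whose defining idempotents lie inside the corner algebras $\epsilon_\lambda \cdot 2\End_{{\sH^\trunc}'}(1_n) \cdot \epsilon_\lambda$. Assuming the key computation that each of these corner algebras equals $\mathbb{Q} \epsilon_\lambda$, the only nonzero idempotents available are the $\epsilon_\lambda$'s themselves, and the desired decomposition follows. Indecomposability of $(n, 1_n, \epsilon_\lambda)$ is then immediate, since the 2-endomorphism ring of its identity 1-morphism is exactly this corner algebra, hence $\mathbb{Q}$, which has only trivial idempotents.

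The main obstacle is therefore the computation $\epsilon_\lambda \cdot 2\End_{{\sH^\trunc}'}(1_n) \cdot \epsilon_\lambda = \mathbb{Q} \epsilon_\lambda$. My plan here is to work in the quotient description provided by Proposition~\ref{prop:sHepsilon-quotient-clockwise-circle}, where declaring a clockwise circle in region $n$ to equal $n$ dramatically simplifies closed diagrams. Given an arbitrary closed diagram in region $n$ sandwiched between two copies of the $n$-strand trace $\epsilon_\lambda$, I would use \eqref{eq:Kho-rel-transposition-squared}, \eqref{eq:Kho-rel-down-up-double-cross}, and \eqref{eq:Kho-rel-up-down-double-cross} to straighten double crossings in the interior, then \eqref{eq:Kho-rel-ccc} together with the clockwise-circle relation to eliminate freely floating circles, reducing the whole thing to a braid-like element $z$ of $\mathbb{Q} S_n$ threaded through the two idempotent boxes. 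Primitivity and centrality of $e_\lambda$ inside $\mathbb{Q} S_n$ then give $e_\lambda z e_\lambda \in \mathbb{Q} e_\lambda$, and closing the diagram up through the cups and caps collapses everything to a scalar multiple of $\epsilon_\lambda$. Once this computation is in place, the remaining claims (ii)--(iv) follow from Krull--Schmidt applied to $\sH_\epsilon$: distinct $(n, 1_n, \epsilon_\lambda)$'s cannot be isomorphic because the uniqueness of decomposition of an indecomposable object into indecomposables forces any isomorphism to be internal, and uniqueness of the decomposition of a general object is the standard consequence of indecomposables having local endomorphism rings.
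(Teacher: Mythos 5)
Your overall strategy is the same as the paper's: everything reduces to showing that the $2$-endomorphism algebra of $1_n$ in $\sH_\epsilon$ is spanned by the orthogonal idempotents $\epsilon_\lambda$, $\lambda \vdash n$, after which primitivity, indecomposability, and Krull--Schmidt finish the argument. But two steps in your execution do not hold up as written. The claim that ``primitivity and centrality of $e_\lambda$ inside $\Q S_n$ give $e_\lambda z e_\lambda \in \Q e_\lambda$'' is false: $e_\lambda$ is a primitive \emph{central} idempotent, not a primitive one, so $e_\lambda (\Q S_n) e_\lambda = e_\lambda \Q S_n \cong M_{d_\lambda}(\Q)$, which is not one-dimensional whenever $d_\lambda>1$. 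What is actually true is that the \emph{closure} of $e_\lambda z e_\lambda$ is a multiple of $\epsilon_\lambda$, and this requires the symmetrization step that the paper carries out: sliding a box $w$ around the closed dashed strand (via \eqref{eq:Kho-rel-down-up-double-cross}) shows the closure of $z$ equals the closure of $\tfrac{1}{n!}\sum_{w\in S_n} w^{-1}zw \in Z(\Q S_n)$, and then $e_\lambda Z(\Q S_n)=\Q e_\lambda$ does the job. Without this, your corner computation does not close.

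The second gap is the reduction of an arbitrary closed diagram to the closure of an element of $\Q S_n$. Resolving double crossings and deleting free circles does not dispose of right curls (dots): in the untruncated Heisenberg category the dotted bubbles generate an infinite-dimensional algebra of closed diagrams, so something specific to the truncation must be used. The paper invokes \cite[Prop.~4.12]{LRS16} for the statement that $2\End_{\sH^\trunc}(1_n)$ is spanned by closures of elements of $\Q S_k$ for varying $k$, and then forces $k=n$ by observing that $k>n$ produces a negative inner region (hence zero) while $k<n$ can be promoted to $k=n$ by inserting clockwise circles, using Proposition~\ref{prop:sHepsilon-quotient-clockwise-circle}; you would need to either cite this result or prove it. Finally, you never check that $\epsilon_\lambda \neq 0$ (equivalently, that the $\epsilon_\lambda$ are linearly independent), without which your candidate list could contain zero objects; the paper obtains this from Lemma~\ref{lem:epsilon_lambda-action}, where $\epsilon_\lambda$ acts as multiplication by $e_\lambda$ on modules for symmetric groups. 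With these three repairs your argument becomes essentially the paper's proof.
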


\begin{proof}
  Fix $n \in \N$.  By \cite[Prop.~4.12]{LRS16}, the 2-endomorphism space of $1_n$ is spanned by
  \begin{equation} \label{eq:QSk-closure}
    \begin{tikzpicture}[anchorbase]
      \draw (-0.5,-0.25) -- (-0.5,0.25) -- (0.5,0.25) -- (0.5,-0.25) -- (-0.5,-0.25);
      \draw (0,0) node {$z$};
      \draw[->,dashed] (0,0.25) .. controls (0,1) and (1,1) .. (1,0.25) -- (1,-0.25) .. controls (1,-1) and (0,-1) .. (0,-0.25);
      \node at (-0.4,0.8) {\regionlabel{n}};
    \end{tikzpicture}
    \ ,\quad z \in \Q S_k,\ k \in \N.
  \end{equation}
  Now, if $k > n$, then the innermost region of the above diagram is negative, and so the diagram is zero.  On the other hand, if $k < n$, then by Proposition~\ref{prop:sHepsilon-quotient-clockwise-circle}, we can insert additional clockwise circles in the center region, up to a scalar multiple.  Therefore, the 2-endomorphism space of $1_n$ is spanned by the diagrams \eqref{eq:QSk-closure} for $z \in \Q S_n$.  Now
  \[
    \begin{tikzpicture}[anchorbase]
      \draw (-0.5,-0.25) -- (-0.5,0.25) -- (0.5,0.25) -- (0.5,-0.25) -- (-0.5,-0.25);
      \draw (0,0) node {$z$};
      \draw[->,dashed] (0,0.25) .. controls (0,1) and (1,1) .. (1,0.25) -- (1,-0.25) .. controls (1,-1) and (0,-1) .. (0,-0.25);
      \node at (0.75,0) {\regionlabel{0}};
    \end{tikzpicture}
    \ = \frac{1}{n!} \sum_{w \in S_n} \
    \begin{tikzpicture}[anchorbase]
      \draw (-1.3,-0.25) -- (-1.3,0.25) -- (-0.3,0.25) -- (-0.3,-0.25) -- (-1.3,-0.25);
      \node at (-0.8,0) {$z$};
      \draw (0,0.15) -- (0,0.65) -- (0.8,0.65) -- (0.8,0.15) -- (0,0.15);
      \node at (0.4,0.4) {$w$};
      \draw (0,-0.15) -- (0,-0.65) -- (0.8,-0.65) -- (0.8,-0.15) -- (0,-0.15);
      \node at (0.4,-0.4) {$w^{-1}$};
      \draw[->,dashed] (-0.8,0.25) -- (-0.8,0.65) .. controls (-0.8,1) and (0.4,1) .. (0.4,0.65);
      \draw[->,dashed] (0.4,0.15) -- (0.4,-0.15);
      \draw[->,dashed] (0.4,-0.65) .. controls (0.4,-1) and (-0.8,-1) .. (-0.8,-0.65) -- (-0.8,-0.25);
      \node at (-0.3,0.5) {\regionlabel{0}};
    \end{tikzpicture}
    \ =\
    \begin{tikzpicture}[anchorbase]
      \draw (-0.5,-0.25) -- (-0.5,0.25) -- (0.5,0.25) -- (0.5,-0.25) -- (-0.5,-0.25);
      \draw (0,0) node {$z'$};
      \draw[->,dashed] (0,0.25) .. controls (0,1) and (1,1) .. (1,0.25) -- (1,-0.25) .. controls (1,-1) and (0,-1) .. (0,-0.25);
      \node at (0.75,0) {\regionlabel{0}};
    \end{tikzpicture}\ ,
  \]
  where $z' = \frac{1}{n!} \sum_{w \in S_n} w^{-1} z w \in Z(\Q S_n)$.  Therefore, the 2-endomorphism space of $1_n$ is spanned by the diagrams \eqref{eq:QSk-closure} for $z$ in the center $Z(\Q S_n)$ of $\Q S_n$.  But then this 2-endomorphism space is spanned by the diagrams \eqref{eq:QSk-closure} as $z$ ranges over the central idempotents $e_\lambda$, $\lambda \vdash n$.  In other words, this space is spanned by the $\epsilon_\lambda$, $\lambda \vdash n$.  It follows from Lemma~\ref{lem:epsilon_lambda-action} below that these elements are also linearly independent.
\end{proof}

\subsection{Region shifting}

We define a shift 2-functor
\[
  \partial \colon {\sH^\trunc}' \to {\sH^\trunc}',
\]
given by lowering region labels by one.  More precisely, on objects we define
\[
  \partial(0) = \partial(\bzero) = \bzero,\quad
  \partial(n) = n-1,\quad n > 1.
\]
On 1-morphisms, we define
\[
  \partial(\sQ_c 1_n) = \sQ_c 1_{n-1},
\]
for $n \in \N$ and $c$ a (possibly empty) sequence of $+$ and $-$.  On 2-morphisms, $\partial$ is given by lowering the region labels of diagrams by one.  The 2-functor $\partial$ induces a 2-functor
\[
  \partial \colon \sH^\trunc \to \sH^\trunc
\]
on the idempotent completion $\sH^\trunc = \Kar({\sH^\trunc}')$.

\begin{prop} \label{prop:Hdelta-H-equiv}
  The 2-functor $\partial$ sends $\sH_\epsilon$ to zero.  Furthermore, the restriction of $\partial$ to $\sH_\delta$ induces an equivalence of 2-categories $\sH_\delta \cong \sH^\trunc$.
\end{prop}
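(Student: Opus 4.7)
The plan is to handle the first statement directly, then to construct a quasi-inverse to $\partial|_{\sH_\delta}$ using a region-shifting 2-functor.

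For the first statement, an arbitrary object of $\sH_\epsilon$ is a direct sum of triples $(n, 1_n, \epsilon \epsilon_n)$. When $n = 0$ we have $\partial(0) = \bzero$ by definition. When $n \geq 1$, the idempotent $\epsilon_n$ is represented by a diagram whose innermost region is labeled $0$; since $\partial$ lowers every region label by one, the resulting diagram has its innermost region labeled $-1$, which makes $\partial(\epsilon_n) = 0$ in $\sH^\trunc$. Hence the image is a zero object.

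For the second statement, none of the local relations \eqref{eq:Kho-rel-braid}--\eqref{eq:Kho-rel-left-curl} mentions specific region labels, so the operation $\sigma$ of raising every region label by one defines a 2-endofunctor on ${\sH^\trunc}'$ satisfying $\partial \circ \sigma = \id$. I would construct a quasi-inverse $\Delta \colon \sH^\trunc \to \sH_\delta$ by setting
\[
  \Delta(m, 1_m, \iota') = (m+1, 1_{m+1}, \sigma(\iota') \delta_{m+1})
\]
and extending to 1- and 2-morphisms by sandwiching $\sigma$ of the data with the appropriate lifted idempotents. The verification $\partial \circ \Delta \cong \id_{\sH^\trunc}$ follows immediately from $\partial \sigma = \id$ and $\partial(\delta_{m+1}) = \id_m$. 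For $\Delta \circ \partial \cong \id_{\sH_\delta}$, the required 2-isomorphism $(n, 1_n, \iota) \cong (n, 1_n, \sigma \partial(\iota) \delta_n)$ should be built via Murray--von Neumann-style 1-morphism pairs, namely $(1_n, \iota \cdot \sigma \partial(\iota) \delta_n)$ and $(1_n, \sigma\partial(\iota)\delta_n \cdot \iota)$; on 2-morphism spaces one checks that the section $\tilde\alpha \mapsto \beta' \sigma(\tilde\alpha) \beta$ is a two-sided inverse to $\partial$.

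The key technical input is that $2\End_{\sH^\trunc}(1_n)$ is commutative---topologically, any two of its elements are closed diagrams in the same ambient region that can be freely isotoped past each other in the plane strip---so $\epsilon_n$ and $\delta_n$ are central; together with the sliding relation of Lemma~\ref{lem:concSlide} (which lets $\epsilon_n$ pass through horizontal composition with 1-morphisms), this provides the algebraic manipulations underlying the construction. The main obstacle is verifying that $\ker(\partial|_{2\End(1_n)}) = 2\End(1_n) \cdot \epsilon_n$, or equivalently that $\partial$ restricts to an isomorphism $2\End(1_n) \cdot \delta_n \xrightarrow{\sim} 2\End(1_{n-1})$; this ensures that $\beta' \cdot \ker(\partial) \cdot \beta = 0$ for the 2-morphism fully-faithfulness argument and that the Murray--von Neumann identities for $\Delta \partial \cong \id_{\sH_\delta}$ hold in the 2-categorical Karoubi envelope.
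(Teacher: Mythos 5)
Your construction is the same as the paper's: your $\Delta$ is exactly the 2-functor $s$ used there (raise all region labels by one, then sandwich with the idempotents $\delta_{m+1}$), the first statement is handled identically, and $\partial\circ\Delta\cong\id$ follows in both treatments from $\partial(\delta_{m+1})=\id_m$.

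There is, however, a genuine error in your justification of well-definedness. You assert that since the local relations \eqref{eq:Kho-rel-braid}--\eqref{eq:Kho-rel-left-curl} do not mention specific region labels, the label-raising operation $\sigma$ is a 2-endofunctor of ${\sH^\trunc}'$. It is not: the convention that any diagram containing a negatively labeled region equals zero is also a defining relation of ${\sH^\trunc}'$, and $\sigma$ fails to preserve it. Concretely, the counterclockwise circle in $2\End_{{\sH^\trunc}'}(1_0)$ has innermost region $-1$ and is therefore zero, yet raising labels produces the counterclockwise circle in a region labeled $1$, which is $\epsilon_{(1)}\neq 0$ (it acts as the identity under $\bF_\sH$). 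So $\sigma$ sends $0$ to a nonzero 2-morphism and is not a functor, and the identity $\partial\circ\sigma=\id$ only holds on chosen diagrammatic representatives. What is well defined is only the composite operation ``raise labels, then multiply by the $\delta$'s'' --- i.e.\ $\Delta$ itself --- and establishing this requires an argument you omit: a diagram that vanishes because of a $(-1)$-labeled region becomes, after raising, a diagram with a $0$-labeled region, and one must check that the surrounding $\delta$'s annihilate it. This is precisely where the paper invokes Lemma~\ref{lem:concSlide} (to slide the $\delta$'s onto that region) together with $\delta_0=0$; it is the one non-formal point in the construction of the quasi-inverse, and your appeal to the well-definedness of $\sigma$ does not supply it.

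A secondary remark: you explicitly flag, but do not prove, the identification $\ker\bigl(\partial|_{2\End(1_n)}\bigr)=2\End(1_n)\cdot\epsilon_n$ underlying your Murray--von Neumann argument for $\Delta\circ\partial\cong\id_{\sH_\delta}$. The paper is admittedly terse on this direction as well, and your framing is a reasonable way to organize it; the claim can be closed using the spanning set of $2\End_{\sH^\trunc}(1_n)$ by closures of $z\in\Q S_k$, $k\le n$, quoted in the proof of Proposition~\ref{prop:sHepsilon-indecomposables} (the diagrams killed by $\partial$ are exactly those with $k=n$, and these lie in $2\End(1_n)\epsilon_n$ by the merging computation carried out there). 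As written, though, your proof leaves this hole open.
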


\begin{proof}
  Since $\partial$ maps $\epsilon_\lambda$ to zero for all partitions $\lambda$, the first statement follows.  Now let $\partial_\delta \colon \sH_\delta \to \sH^\trunc$ be the restriction of the 2-functor $\partial$.  We define a 2-functor $s' \colon {\sH^\trunc}' \to \sH_\delta$.  On objects, we define
  \[
    s'(\bzero) = \bzero,\quad
    s'(n) = (n+1,1_{n+1},\delta_{n+1}),\quad n \in \N.
  \]
  On 1-morphisms, we define
  \[
    s'(\sQ_c 1_n) = (\sQ_c 1_{n+1},\sQ_c \delta_{n+1}).
  \]
  On 2-morphisms, we define $s'$ by increasing the region labels in diagrams by one.  Note that the definition of $s'$ is compatible with our convention that diagrams with negative region labels are zero since Lemma~\ref{lem:concSlide}, together with the fact that $\delta_0=0$, ensures that $s'$ maps any diagram with a negative region label to zero.  The 2-functor $s'$ induces a 2-functor $s \colon \sH^\trunc \to \sH_\delta$.

  It is clear that $\partial_\delta \circ s$ is isomorphic to the identity 2-functor since $\partial(\delta_{n+1}) = \id_n$ for all $n \in \N$.  Similarly, $s \circ \partial_\delta$ is isomorphic to the identity 2-functor since the object $(0,1_0,\delta_0)$ is already isomorphic to zero in $\sH_\delta$.
\end{proof}

\begin{cor} \label{cor:sH-infinite-decomp}
  We have an equivalence of 2-categories $\sH^\trunc \cong \bigoplus_{m=0}^\infty \sH_\epsilon$.
\end{cor}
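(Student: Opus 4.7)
The plan is to iterate the two equivalences we have just established.  Combining Proposition~\ref{prop:sH-decomp} and Proposition~\ref{prop:Hdelta-H-equiv} gives
\[
  \sH^\trunc \cong \sH_\epsilon \oplus \sH_\delta \cong \sH_\epsilon \oplus \sH^\trunc,
\]
so that $m$-fold iteration produces an equivalence $\sH^\trunc \cong \sH_\epsilon^{\oplus m} \oplus \sH_\delta^{(m)}$, where $\sH_\delta^{(m)}$ denotes the full sub-2-category obtained by applying the shift embedding $s \colon \sH^\trunc \to \sH_\delta \subseteq \sH^\trunc$ of Proposition~\ref{prop:Hdelta-H-equiv} a total of $m$ times.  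The candidate equivalence we want to construct is the 2-functor
\[
  \bigoplus_{m=0}^\infty \sH_\epsilon \longrightarrow \sH^\trunc,
  \qquad
  (X_m)_{m \ge 0} \longmapsto \bigoplus_{m \ge 0} s^m(X_m),
\]
where the source consists of formal finite direct sums $\bigoplus_m X_m$ with $X_m \in \sH_\epsilon$ and only finitely many summands nonzero.

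The key step is to show that every object of $\sH^\trunc$ lies (up to isomorphism) in the essential image of this functor, i.e.\ decomposes as a finite direct sum of objects drawn from the nested copies $s^m(\sH_\epsilon)$.  For this I would argue by induction on the region label.  Any generating object has the form $(n, 1_n, \epsilon)$ with $n \in \N$; applying Lemma~\ref{lem:orthoid} and Proposition~\ref{prop:sH-decomp} yields
\[
  (n,1_n,\epsilon) \cong (n,1_n,\epsilon\epsilon_n) \oplus (n,1_n,\epsilon\delta_n),
\]
where the first summand is in $\sH_\epsilon$ and the second, via the equivalence $\partial \colon \sH_\delta \xrightarrow{\sim} \sH^\trunc$, corresponds to $(n-1,1_{n-1},\partial(\epsilon))$ in $\sH^\trunc$.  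Iterating $n$ times, we arrive at an object with region label $0$; since $\delta_0 = 0$ by convention, the residual $\sH_\delta$-piece vanishes, and the resulting decomposition splits $(n,1_n,\epsilon)$ into at most $n+1$ summands lying respectively in $\sH_\epsilon, s(\sH_\epsilon), \dotsc, s^n(\sH_\epsilon)$.  Every object of $\sH^\trunc$ is a finite direct sum of such generating pieces, so every object lies in a finite sub-sum of $\bigoplus_{m=0}^\infty s^m(\sH_\epsilon)$.

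Once essential surjectivity on objects is in hand, essential fullness on 1-morphisms and full faithfulness on 2-morphisms for the candidate 2-functor reduce to the corresponding properties of the two building-block equivalences: on the diagonal blocks $s^m(\sH_\epsilon) \to s^m(\sH_\epsilon)$ these come from $\sH_\epsilon \hookrightarrow \sH^\trunc$ and iterated application of the equivalence $s$, while the off-diagonal Hom-spaces vanish by the argument already used in the proof of Proposition~\ref{prop:sH-decomp}, namely Lemma~\ref{lem:concSlide} together with $\epsilon_n \delta_n = 0$ (applied after sufficiently many shifts so that both objects sit in a common finite truncation $\sH_\epsilon^{\oplus M} \oplus \sH_\delta^{(M)}$).

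The main conceptual obstacle is making sense of the infinite direct sum of 2-categories; concretely, one has to check that the candidate inverse functor $\sH^\trunc \to \bigoplus_{m=0}^\infty \sH_\epsilon$ is well defined, which requires the finiteness statement above (each object uses only finitely many summands).  Once that finiteness is established, the rest is bookkeeping using Proposition~\ref{prop:sH-decomp} and the equivalence of Proposition~\ref{prop:Hdelta-H-equiv}.
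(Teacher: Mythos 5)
Your proposal is correct and follows essentially the same route as the paper: iterate the decomposition $\sH^\trunc \cong \sH_\epsilon \oplus \sH_\delta \cong \sH_\epsilon \oplus \sH^\trunc$ from Propositions~\ref{prop:sH-decomp} and~\ref{prop:Hdelta-H-equiv}, with the infinite sum made sense of by the finiteness observation that the residual $\sH_\delta$-piece dies after finitely many shifts. The paper phrases that finiteness as ``$\partial^m(\theta)=0$ for $m$ sufficiently large,'' which is exactly your induction on the region label terminating at $\delta_0=0$.
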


\begin{proof}
  It is clear that, for any 2-morphism $\theta$ in $\sH^\trunc$, we have $\partial^m(\theta)=0$ for sufficiently large $m$.  Thus, the result follows from Propositions~\ref{prop:sH-decomp} and~\ref{prop:Hdelta-H-equiv}.
\end{proof}

%
\section{Equivalence of $\sH_\epsilon$ and $\sA$} \label{sec:sHepsilon-A-equivalence}
%

\subsection{A 2-functor from $\sA$ to $\sH_\epsilon$} \label{subsec:sHepsilon-sU-equiv}

\begin{lem} \label{lem:plSlide}
  For $\mu \vdash n+1$, we have the following equality of 2-morphisms in $\sH^\trunc$:
  \begin{equation} \label{eq:plSlide}
    \begin{tikzpicture}[anchorbase]
      \node at (-0.5,0) {\regionlabel{\epsilon_\mu}};
      \draw[->] (0,-1) -- (0,1);
      \node at (0.3,0.4) {\regionlabel{n}};
    \end{tikzpicture}
    \ = \frac{1}{n!}\
    \begin{tikzpicture}[anchorbase]
      \draw (-0.5,-0.25) -- (-0.5,0.25) -- (0.5,0.25) -- (0.5,-0.25) -- (-0.5,-0.25);
      \draw (0,0) node {$e_\mu$};
      \draw[->,dashed] (0.1,0.25) .. controls (0.1,1) and (1,1) .. (1,0.25) -- (1,-0.25) .. controls (1,-1) and (0.1,-1) .. (0.1,-0.25);
      \draw[->] (-0.3,0.25) -- (-0.3,1);
      \draw[<-] (-0.3,-0.25) -- (-0.3,-1);
      \node at (0.75,0) {\regionlabel{0}};
      \node at (-0.05,0.6) {\regionlabel{n}};
    \end{tikzpicture}
  \end{equation}
\end{lem}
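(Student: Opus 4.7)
The plan is to expand $\epsilon_\mu$ via its defining formula in Lemma~\ref{lem:orthoid} and then reduce to the RHS using the diagrammatic relations of $\sH^\trunc$, particularly the double-crossing relation~\eqref{eq:Kho-rel-down-up-double-cross} and the centrality of $e_\mu$ in $\Q S_{n+1}$.

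First I would rewrite the LHS of~\eqref{eq:plSlide} by substituting $\epsilon_\mu = \frac{1}{(n+1)!}$ times the right-trace of $e_\mu$, drawn as an $e_\mu$-box with all $n+1$ of its strands closed off on the right via cups and caps. The LHS is then this closure placed to the left of an open upward $\sQ_+$ strand, while the RHS is the same $e_\mu$-box but with only $n$ of its strands closed off and the remaining one extracted to form the upward $\sQ_+$ strand on the left. In particular the two pictures share the $e_\mu$-box, and the task is to match up the ``closing off all $n+1$ strands and tensoring with $\sQ_+$'' configuration with the ``closing off only $n$ strands and leaving one open'' configuration.

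The key step is to transform one picture into the other by applying~\eqref{eq:Kho-rel-down-up-double-cross} to ``merge'' one of the downward strands on the right side of the closure loop with the open upward $\sQ_+$ strand, via the cup--cap correction term produced by that relation. I would carry this out in analogy with the argument behind Lemma~\ref{lem:concSlide}, which proceeds by ``successive applications of~\eqref{eq:Kho-rel-down-up-double-cross}, followed by~\eqref{eq:Kho-rel-transposition-squared}''. Each of the $n+1$ downward strands of the closure contributes a diagram in which it is bound to the open strand via a cup--cap; by the centrality of $e_\mu$ in $\Q S_{n+1}$, cyclically permuting the box strands does not change $e_\mu$, and so all $n+1$ of these diagrams are equal, yielding a factor of $(n+1)$ that converts the coefficient $\frac{1}{(n+1)!}$ into the desired $\frac{1}{n!}$.

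The main technical obstacle is the bookkeeping of correction terms: each use of~\eqref{eq:Kho-rel-down-up-double-cross} generates both an identity term and a cup--cap term, and one must verify that only the cup--cap contributions survive. I expect the identity-term contributions either to cancel in pairs via~\eqref{eq:Kho-rel-transposition-squared} (just as in the proof of Lemma~\ref{lem:concSlide}) or to be absorbed by the centrality of $e_\mu$. The role of centrality is thus twofold: it enforces the cyclic symmetry of the $n+1$ closure strands (producing the factor $n+1$), and it allows $e_\mu$ to be commuted freely past any permutations that appear during the sliding.
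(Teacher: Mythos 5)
Your overall strategy coincides with the paper's: expand $\epsilon_\mu$ as $\frac{1}{(n+1)!}$ times the closure of $e_\mu$, slide the open $\sQ_+$ strand into the closure using \eqref{eq:Kho-rel-down-up-double-cross} so that it merges with one of the $n+1$ closure strands via the cup--cap term, and then use centrality of $e_\mu$ to identify the $n+1$ resulting diagrams, which converts the coefficient $\frac{1}{(n+1)!}$ into $\frac{1}{n!}$. All of that matches the paper's proof.

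The gap sits exactly at the point you flag as the main technical obstacle, and neither of your two proposed resolutions is the correct one. The residual term in which the upward strand passes entirely through the closure does not cancel in pairs via \eqref{eq:Kho-rel-transposition-squared} (that relation concerns two like-oriented strands and is not in play here), nor can it be absorbed by centrality (centrality only permits permuting the strands attached to the $e_\mu$-box; it cannot make a nonzero term vanish). What actually kills it is the truncation: the innermost region of the closure of $e_\mu$ is labeled $0$, so once the upward strand has crossed all $n+1$ downward closure arcs it sits in that region and creates a region labeled $-1$, and by the defining convention of ${\sH^\trunc}'$ any diagram with a negative region label is zero. This is precisely the ingredient the paper invokes (``repeated use of \eqref{eq:Kho-rel-down-up-double-cross} together with the fact that diagrams with negative region labels are equal to zero''), and it is also why the lemma is special to the truncated category --- in the untruncated 2-category $\sH$ the fully-slid term survives and the identity fails. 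Without this observation your iteration of \eqref{eq:Kho-rel-down-up-double-cross} leaves an unaccounted-for term and the argument does not close.
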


\begin{proof}
  We have
  \[
    \begin{tikzpicture}[anchorbase]
      \node at (-0.5,0) {\regionlabel{\epsilon_\mu}};
      \draw[->] (0,-1) -- (0,1);
      \node at (0.3,0.4) {\regionlabel{n}};
    \end{tikzpicture}
    \ =\
    \frac{1}{(n+1)!}\
    \begin{tikzpicture}[anchorbase]
      \draw (-0.5,-0.25) -- (-0.5,0.25) -- (0.5,0.25) -- (0.5,-0.25) -- (-0.5,-0.25);
      \draw (0,0) node {$e_\mu$};
      \draw[->,dashed] (0,0.25) .. controls (0,1) and (1,1) .. (1,0.25) -- (1,-0.25) .. controls (1,-1) and (0,-1) .. (0,-0.25);
      \node at (0.75,0) {\regionlabel{0}};
      \draw[->] (1.3,-1) -- (1.3,1);
      \node at (1.6,0.4) {\regionlabel{n}};
    \end{tikzpicture}
    \ =\frac{1}{(n+1)!} \sum_{\ell=0}^n\
    \begin{tikzpicture}[anchorbase]
      \draw (-0.7,-0.25) -- (-0.7,0.25) -- (0.7,0.25) -- (0.7,-0.25) -- (-0.7,-0.25);
      \draw (0,0) node {$e_\mu$};
      \draw[->] (-1,-1) .. controls (-1,-0.5) and (0,-0.75) .. (0,-0.25);
      \draw[<-] (-1,1) .. controls (-1,0.5) and (0,0.75) .. (0,0.25);
      \draw[->,dashed] (0.4,0.25) -- (0.4,0.3) arc (180:0:0.3) -- (1,0.3) -- (1,-0.3) arc (360:180:0.3) -- (0.4,-0.25);
      \draw[->,dashed] (-0.4,0.25) -- (-0.4,0.3) .. controls (-0.4,1) and (1.3,1) .. (1.3,0.3) -- (1.3,-0.3) .. controls (1.3,-1) and (-0.4,-1) .. (-0.4,-0.3) -- (-0.4,-0.25);
      \node at (-0.5,0.8) {\regionlabel{n}};
      \node at (0.2,0.5) {\regionlabel{\ell}};
    \end{tikzpicture}
    = \frac{1}{n!}\
    \begin{tikzpicture}[anchorbase]
      \draw (-0.5,-0.25) -- (-0.5,0.25) -- (0.5,0.25) -- (0.5,-0.25) -- (-0.5,-0.25);
      \draw (0,0) node {$e_\mu$};
      \draw[->,dashed] (0.1,0.25) .. controls (0.1,1) and (1,1) .. (1,0.25) -- (1,-0.25) .. controls (1,-1) and (0.1,-1) .. (0.1,-0.25);
      \draw[->] (-0.3,0.25) -- (-0.3,1);
      \draw[<-] (-0.3,-0.25) -- (-0.3,-1);
      \node at (0.75,0) {\regionlabel{0}};
      \node at (-0.05,0.6) {\regionlabel{n}};
    \end{tikzpicture}
    \ ,
  \]
  where the second equality follows from repeated use of \eqref{eq:Kho-rel-down-up-double-cross} together with the fact that diagrams with negative region labels are equal to zero, and the last equality follows from the fact that the idempotent $e_\mu$ is central (so we can slide crossings through the box labeled $e_\mu$).
\end{proof}

\begin{cor} \label{cor:epsilon_lambda-Q+-sandwich}
  For $\lambda\vdash n$ and $\mu \vdash n+1$, we have
  \[
    \epsilon_\mu \sQ_+ \epsilon_\lambda = 0
    \quad \text{unless } \lambda \subseteq \mu.
  \]
\end{cor}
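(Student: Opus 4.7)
The plan is to reduce the claim to the vanishing of an algebra element in $A_{n+1} = \Q S_{n+1}$, and then invoke the branching rule for symmetric groups. Let $\tilde e_\lambda \in A_{n+1}$ denote the image of $e_\lambda \in A_n$ under the standard inclusion $A_n \hookrightarrow A_{n+1}$ (where $S_n$ acts on the first $n$ positions and fixes $n+1$). I claim that $\epsilon_\mu \sQ_+ \epsilon_\lambda$ is a nonzero scalar multiple of the closed diagram obtained by placing $e_\mu \tilde e_\lambda \in A_{n+1}$ inside a single closure with the $\sQ_+$ strand as the unique external strand. Granting this reduction, the conclusion is immediate: since $e_\mu$ annihilates $V_\nu$ for $\nu \ne \mu$, while $\tilde e_\lambda$ acts on $V_\mu$ as the projection onto the $V_\lambda$-isotypic component of the restriction $V_\mu|_{A_n}$, the branching rule $V_\mu|_{A_n} \cong \bigoplus_{i \in B^-(\mu)} V_{\mu \boxminus i}$ forces $e_\mu \tilde e_\lambda = 0$ in $A_{n+1}$ whenever $\lambda$ is not of the form $\mu \boxminus i$, equivalently whenever $\lambda \not\subseteq \mu$.

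To establish the reduction, first apply Lemma~\ref{lem:plSlide} to rewrite $\epsilon_\mu \sQ_+$ as $\frac{1}{n!}$ times the diagram in which the $e_\mu$ box has one strand continuing through as the external $\sQ_+$ and $n$ strands forming a closure on its right. Vertical composition with $\sQ_+ \epsilon_\lambda$ then places $\epsilon_\lambda$, which by definition equals $\frac{1}{n!}$ times the closure of $e_\lambda$, as a separate closed diagram in the region labeled $n$ to the right of the descending $\sQ_+$ strand. The goal is to merge this separate closure into the $e_\mu$ closure. By planar isotopy, slide the $\epsilon_\lambda$ closure up so that it sits in the region $n$ adjacent to the $e_\mu$ closure. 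Then, using repeated applications of \eqref{eq:Kho-rel-down-up-double-cross} in the spirit of the proof of Lemma~\ref{lem:plSlide}, route the $n$ strands of the $e_\mu$ closure through the $e_\lambda$ box. The centrality of $e_\lambda$ in $A_n$ and of $e_\mu$ in $A_{n+1}$ guarantees that any rerouting of internal crossings is absorbed into the boxes without extra terms. The net effect is to consolidate $e_\mu$ and $\tilde e_\lambda$ into a single box $e_\mu \tilde e_\lambda$ in the closure, as required.

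The main obstacle is this merging step: one must verify that the closures of $e_\mu$ and $e_\lambda$ can be combined without picking up unwanted bubble-type corrections. Because every region label encountered in the relevant diagrams is nonnegative and the only local relations that interact with the closure strands are \eqref{eq:Kho-rel-down-up-double-cross}, \eqref{eq:Kho-rel-up-down-double-cross}, and \eqref{eq:Kho-rel-ccc}, the bookkeeping is tractable but technical. A cleaner alternative, which I would pursue if the direct computation becomes unwieldy, is to first establish a mirror of Lemma~\ref{lem:plSlide} for $\sQ_+ \epsilon_\lambda$, expressing it as $\frac{1}{n!}$ times the closure of the $(n+1)$-strand box $\tilde e_\lambda$ with one external $\sQ_+$ strand. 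Composing this mirror form with the form of $\epsilon_\mu \sQ_+$ given by Lemma~\ref{lem:plSlide} immediately produces the product $e_\mu \tilde e_\lambda$ inside a single closure, bypassing the isotopy argument and giving the claim via the branching-rule conclusion described above.
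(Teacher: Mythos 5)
Your proposal is correct and follows essentially the same route as the paper: equation \eqref{eq:idempotent-dot-sandwhich} in the proof of Corollary~\ref{cor:epsilon_lambda-Q+-sandwich} carries out exactly the merging you describe — Lemma~\ref{lem:plSlide} followed by repeated use of \eqref{eq:Kho-rel-down-up-double-cross}, vanishing of negative region labels, and centrality of the idempotents — to land on $\frac{1}{n!}$ times the single closure of $e_\mu e_\lambda$ with one external strand. The conclusion then rests, as in your argument, on the branching-rule fact that $e_\mu e_\lambda = 0$ in $A_{n+1}$ unless $\lambda \subseteq \mu$.
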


\begin{proof}
  We have
  \begin{equation} \label{eq:idempotent-dot-sandwhich}
    \begin{tikzpicture}[anchorbase]
      \draw[->] (0,-1) -- (0,1);
      \node at (-0.4,0) {\regionlabel{\epsilon_\mu}};
      \node at (0.4,0) {\regionlabel{\epsilon_\lambda}};
    \end{tikzpicture}
    \stackrel{\eqref{eq:plSlide}}{=} \frac{1}{n!(n-1)!}\
    \begin{tikzpicture}[anchorbase]
      \draw (-1.3,-0.25) -- (-1.3,0.25) -- (-0.3,0.25) -- (-0.3,-0.25) -- (-1.3,-0.25);
      \node at (-0.8,0) {$e_\mu$};
      \draw (1.3,-0.25) -- (1.3,0.25) -- (0.3,0.25) -- (0.3,-0.25) -- (1.3,-0.25);
      \node at (0.8,0) {$e_\lambda$};
      \draw[->] (-1.1,-1) -- (-1.1,-0.25);
      \draw[->] (-1.1,0.25) -- (-1.1,1);
      \draw[->] (-0.8,0.25) .. controls (-0.8,1.1) and (1,1.1) .. (1,0.25);
      \draw[->] (1,-0.25) .. controls (1,-1.1) and (-0.8,-1.1) .. (-0.8,-0.25);
      \draw[->,dashed] (-0.5,0.25) .. controls (-0.5,0.7) and (-0.1,0.7) .. (-0.1,0.25) -- (-0.1,-0.25) .. controls (-0.1,-0.7) and (-0.5,-0.7) .. (-0.5,-0.25);
      \draw[<-,dashed] (0.6,0.25) .. controls (0.6,0.7) and (0.1,0.7) .. (0.1,0.25) -- (0.1,-0.25) .. controls (0.1,-0.7) and (0.6,-0.7) .. (0.6,-0.25);
      \node at (-0.8,0.8) {\regionlabel{n}};
    \end{tikzpicture}
    \ = \frac{1}{n!}\
    \begin{tikzpicture}[anchorbase]
      \draw (-1.3,-0.25) -- (-1.3,0.25) -- (-0.3,0.25) -- (-0.3,-0.25) -- (-1.3,-0.25);
      \node at (-0.8,0) {$e_\mu$};
      \draw (1,-0.25) -- (1,0.25) -- (0,0.25) -- (0,-0.25) -- (1,-0.25);
      \node at (0.5,0) {$e_\lambda$};
      \draw[->] (-1.1,-1) -- (-1.1,-0.25);
      \draw[->] (-1.1,0.25) -- (-1.1,1);
      \draw[->,dashed] (-0.8,0.25) .. controls (-0.8,1) and (0.5,1) .. (0.5,0.25);
      \draw[->,dashed] (0.5,-0.25) .. controls (0.5,-1) and (-0.8,-1) .. (-0.8,-0.25);
      \node at (-0.8,0.8) {\regionlabel{n}};
    \end{tikzpicture}
    \ = \frac{1}{n!}\
    \begin{tikzpicture}[anchorbase]
      \draw (-1.3,-0.25) -- (-1.3,0.25) -- (-0.3,0.25) -- (-0.3,-0.25) -- (-1.3,-0.25);
      \node at (-0.8,0) {$e_\mu e_\lambda$};
      \draw[->] (-1.1,-1) -- (-1.1,-0.25);
      \draw[->] (-1.1,0.25) -- (-1.1,1);
      \draw[->,dashed] (-0.8,0.25) .. controls (-0.8,0.8) and (0,0.8) .. (0,0.25) -- (0,-0.25) .. controls (0,-0.8) and (-0.8,-0.8) .. (-0.8,-0.25);
      \node at (-0.8,0.8) {\regionlabel{n}};
    \end{tikzpicture}
    \ ,
  \end{equation}
  where the second equality follows from repeated use of \eqref{eq:Kho-rel-down-up-double-cross}, the fact that diagrams with negative region labels are equal to zero, and the fact that the idempotent $e_\lambda$ is central (as in the proof of Lemma~\ref{lem:plSlide}).  The result then follows from the fact that $e_\mu e_\lambda = 0$ unless $\lambda \subseteq \mu$.
\end{proof}

\begin{lem}
  For $\lambda \vdash n$, we have

  \noindent\begin{minipage}{0.5\linewidth}
    \begin{equation} \label{eq:epsilon_lambda-clockwise}
      \begin{tikzpicture}[anchorbase]
        \draw[<-] (0.5,0) arc(0:360:0.5);
        \node at (0,0) {\regionlabel{\epsilon_\lambda}};
      \end{tikzpicture}
      \ = \sum_{i \in \Z} \frac{(n+1) d_\lambda}{d_{\lambda \boxplus i}} \epsilon_{\lambda \boxplus i},
    \end{equation}
  \end{minipage}%
  \begin{minipage}{0.5\linewidth}\
    \begin{equation} \label{eq:epsilon_lambda-counter-clockwise}
      \begin{tikzpicture}[anchorbase]
        \draw[->] (0.5,0) arc(0:360:0.5);
        \node at (0,0) {\regionlabel{\epsilon_\lambda}};
      \end{tikzpicture}
      \ = \sum_{i \in \Z} \frac{d_\lambda}{n d_{\lambda \boxminus i}} \epsilon_{\lambda \boxminus i}.
    \end{equation}
  \end{minipage}\par\vspace{\belowdisplayskip}
\end{lem}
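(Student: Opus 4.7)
The plan is to prove both equations by expanding $\epsilon_\lambda = \frac{1}{n!}\cdot(\text{closure of } e_\lambda)$ and manipulating the resulting diagram: the extra circle (clockwise or counter-clockwise) wrapped around the closure can be absorbed into the existing strand structure to produce a larger or smaller closure involving a central element of the appropriate symmetric group algebra. The scalar factors then emerge from the symmetrization identity~\eqref{eq:idempotent-symmetrize} in the first case and the restriction identity~\eqref{eq:trace-of-epsilon_lambda} in the second.

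For the clockwise case, the outer circle provides one additional strand of each orientation. Via an iterated application of Lemma~\ref{lem:dashed-sum-identity} together with the relation~\eqref{eq:Kho-rel-down-up-double-cross}, I would merge this strand with the $n$ strands of the inner loop: intuitively, the new strand is inserted in all $n+1$ possible positions among the existing strands, and the permutations needed to perform these insertions conjugate the $e_\lambda$-box by elements of $S_{n+1}$. After collecting terms, the diagram becomes $\frac{1}{n!(n+1)!}$ times the closure on $n+1$ strands of $\sum_{w\in S_{n+1}} w e_\lambda w^{-1}$. Equation~\eqref{eq:idempotent-symmetrize} identifies this central element as $(n+1)!\sum_{i} \frac{d_\lambda}{d_{\lambda\boxplus i}} e_{\lambda\boxplus i}$, and recognizing $\epsilon_{\lambda\boxplus i} = \frac{1}{(n+1)!}$ times the closure of $e_{\lambda\boxplus i}$ gives precisely the right-hand side of~\eqref{eq:epsilon_lambda-clockwise}.

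For the counter-clockwise case, the argument is parallel but simpler. The extra counter-clockwise strand is absorbed into the inner loop, and after applying the same relations, the counter-clockwise circle around $\epsilon_\lambda$ becomes $\frac{1}{n!}$ times the closure on $n-1$ strands of $n\cdot\varepsilon_\rL(e_\lambda)$. Equation~\eqref{eq:trace-of-epsilon_lambda} rewrites $\varepsilon_\rL(e_\lambda) = \sum_{i} \frac{d_\lambda}{n d_{\lambda\boxminus i}} e_{\lambda\boxminus i}$, and using $\epsilon_{\lambda\boxminus i} = \frac{1}{(n-1)!}$ times the closure of $e_{\lambda\boxminus i}$ yields the right-hand side of~\eqref{eq:epsilon_lambda-counter-clockwise}.

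The main obstacle is the rigorous implementation of the ``merging'' step: showing that the outer circle's strand combines with the inner loop of the $\epsilon_\lambda$-closure to produce the required sum over strand positions (giving the $S_{n+1}$-symmetrization of $e_\lambda$ in the clockwise case, or the partial trace $\varepsilon_\rL(e_\lambda)$ in the counter-clockwise case). This combinatorial bookkeeping requires iterated applications of~\eqref{eq:Kho-rel-down-up-double-cross} to transform pairs of uncrossed oppositely oriented strands into crossed configurations that can be recognized as a permutation sum, and amounts to a diagrammatic realization of the explicit formulas for $\eta_\rL$ and $\varepsilon_\rL$ given in Proposition~\ref{prop:adjunction-maps}.
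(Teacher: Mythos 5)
Your proposal is correct and follows essentially the same route as the paper: the counter-clockwise case is precisely the paper's argument (Lemma~\ref{lem:plSlide} followed by the left-curl relation \eqref{eq:Kho-rel-left-curl} and \eqref{eq:trace-of-epsilon_lambda}), and the clockwise case rests on the same key input \eqref{eq:idempotent-symmetrize}, differing only in bookkeeping (the paper first inserts the idempotent decomposition of $\id_{n+1}$ in the outer region and reduces to closures of $e_{\lambda\boxplus i}e_\lambda$, whereas you symmetrize $e_\lambda$ directly). The ``merging'' step you flag as the main obstacle is easier than you fear: for the clockwise circle the extra strand is already, by isotopy alone, the outermost strand of the $(n+1)$-strand closure of $e_\lambda\in\Q S_n\subseteq\Q S_{n+1}$, and the $S_{n+1}$-symmetrization then comes from the conjugation-invariance of the closure rather than from iterated applications of \eqref{eq:Kho-rel-down-up-double-cross}.
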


\begin{proof}
  Suppose $\lambda \vdash n$.  To prove \eqref{eq:epsilon_lambda-clockwise}, we compute
  \begin{multline*}
    \begin{tikzpicture}[anchorbase]
      \draw[<-] (0.5,0) arc(0:360:0.5);
      \node at (0,0) {\regionlabel{\epsilon_\lambda}};
    \end{tikzpicture}
    \ = \sum_{i \in \Z}\
    \begin{tikzpicture}[anchorbase]
      \draw[<-] (0.5,0) arc(0:360:0.5);
      \node at (0,0) {\regionlabel{\epsilon_\lambda}};
      \node at (-0.8,0.3) {\regionlabel{\epsilon_{\lambda \boxplus i}}};
    \end{tikzpicture}
    \ \stackrel{\eqref{eq:idempotent-dot-sandwhich}}{=}
    \sum_{i \in \Z} \frac{1}{n!}\
    \begin{tikzpicture}[anchorbase]
      \draw (-1.4,-0.25) -- (-1.4,0.25) -- (-0.2,0.25) -- (-0.2,-0.25) -- (-1.4,-0.25);
      \node at (-0.8,0) {$e_{\lambda \boxplus i} e_\lambda$};
      \draw[->,dashed] (-0.8,0.25) .. controls (-0.8,0.8) and (0,0.8) .. (0,0.25) -- (0,-0.25) .. controls (0,-0.8) and (-0.8,-0.8) .. (-0.8,-0.25);
    \end{tikzpicture}
    \ =\
    \frac{1}{n!(n+1)!} \sum_{w \in S_{n+1}}
    \begin{tikzpicture}[anchorbase]
      \draw (-1.4,-0.25) -- (-1.4,0.25) -- (-0.2,0.25) -- (-0.2,-0.25) -- (-1.4,-0.25);
      \node at (-0.8,0) {$e_{\lambda \boxplus i} e_\lambda$};
      \draw (0,0.15) -- (0,0.65) -- (0.8,0.65) -- (0.8,0.15) -- (0,0.15);
      \node at (0.4,0.4) {$w$};
      \draw (0,-0.15) -- (0,-0.65) -- (0.8,-0.65) -- (0.8,-0.15) -- (0,-0.15);
      \node at (0.4,-0.4) {$w^{-1}$};
      \draw[->,dashed] (-0.8,0.25) -- (-0.8,0.65) .. controls (-0.8,1) and (0.4,1) .. (0.4,0.65);
      \draw[->,dashed] (0.4,0.15) -- (0.4,-0.15);
      \draw[->,dashed] (0.4,-0.65) .. controls (0.4,-1) and (-0.8,-1) .. (-0.8,-0.65) -- (-0.8,-0.25);
    \end{tikzpicture}
    \\
    \stackrel{\eqref{eq:idempotent-symmetrize}}{=}
    \sum_{i \in \Z} \frac{d_\lambda}{n! d_{\lambda \boxplus i}}\
    \begin{tikzpicture}[anchorbase]
      \draw (-1.2,-0.25) -- (-1.2,0.25) -- (-0.4,0.25) -- (-0.4,-0.25) -- (-1.2,-0.25);
      \node at (-0.8,0) {$e_{\lambda \boxplus i}$};
      \draw[->,dashed] (-0.8,0.25) .. controls (-0.8,0.8) and (0,0.8) .. (0,0.25) -- (0,-0.25) .. controls (0,-0.8) and (-0.8,-0.8) .. (-0.8,-0.25);
    \end{tikzpicture}
    =
    \sum_{i \in \Z} \frac{(n+1) d_\lambda}{d_{\lambda \boxplus i}} \epsilon_{\lambda \boxplus i}.
  \end{multline*}

  To prove \eqref{eq:epsilon_lambda-counter-clockwise}, we compute
  \[
    \begin{tikzpicture}[anchorbase]
      \draw[->] (0.5,0) arc(0:360:0.5);
      \node at (0,0) {\regionlabel{\epsilon_\lambda}};
    \end{tikzpicture}
    \ \stackrel{\eqref{eq:plSlide}}{=}\
    \frac{1}{(n-1)!}\
    \begin{tikzpicture}[anchorbase]
      \draw (-0.5,-0.25) -- (-0.5,0.25) -- (0.5,0.25) -- (0.5,-0.25) -- (-0.5,-0.25);
      \draw (0,0) node {$e_\lambda$};
      \draw[->,dashed] (0.1,0.25) .. controls (0.1,1) and (1,1) .. (1,0.25) -- (1,-0.25) .. controls (1,-1) and (0.1,-1) .. (0.1,-0.25);
      \draw[->] (-0.3,0.25) .. controls (-0.3,0.8) and (-0.7,0.8) .. (-0.7,0.25) -- (-0.7,-0.25) .. controls (-0.7,-0.8) and (-0.3,-0.8) .. (-0.3,-0.25);
      \node at (0.75,0) {\regionlabel{0}};
    \end{tikzpicture}
    \ \stackrel{\substack{\eqref{eq:Kho-rel-left-curl} \\ \eqref{eq:trace-of-epsilon_lambda}}}{=}
    \frac{d_\lambda}{n! d_{\lambda \boxminus i}}\
    \begin{tikzpicture}[anchorbase]
      \draw (-0.5,-0.25) -- (-0.5,0.25) -- (0.5,0.25) -- (0.5,-0.25) -- (-0.5,-0.25);
      \draw (0,0) node {$e_{\lambda \boxminus i}$};
      \draw[->,dashed] (0.1,0.25) .. controls (0.1,1) and (1,1) .. (1,0.25) -- (1,-0.25) .. controls (1,-1) and (0.1,-1) .. (0.1,-0.25);
      \node at (0.75,0) {\regionlabel{0}};
    \end{tikzpicture}
    \ =
    \frac{d_\lambda}{n d_{\lambda \boxminus i}} \epsilon_{\lambda \boxminus i}. \qedhere
  \]
\end{proof}

We will use an open circle to denote a right curl in $\sH^\trunc$:
\begin{equation} \label{eq:right-curl}
  \begin{tikzpicture}[anchorbase]
    \draw[->] (0,0) -- (0,1);
    \redcircle{(0,0.5)};
  \end{tikzpicture}
  \ :=\
  \begin{tikzpicture}[anchorbase,scale=0.5]
    \draw (2,1) .. controls (2,1.5) and (1.3,1.5) .. (1.1,1);
    \draw (2,1) .. controls (2,.5) and (1.3,.5) .. (1.1,1);
    \draw (1,0) .. controls (1,.5) .. (1.1,1);
    \draw[->] (1.1,1) .. controls (1,1.5) .. (1,2);
  \end{tikzpicture}
\end{equation}

\begin{lem} \label{lem:right-curl-removal}
  For a partition $\lambda$ and $i \in \Z$, we have
  \begin{equation} \label{eq:right-curl-removal}
    \begin{tikzpicture}[anchorbase]
      \draw[->] (0,0) -- (0,1);
      \node at (0.4,0.7) {\regionlabel{\epsilon_\lambda}};
      \node at (-0.5,0.7) {\regionlabel{\epsilon_{\lambda \boxplus i}}};
      \redcircle{(0,0.5)};
    \end{tikzpicture}
    \ = i \left(
    \begin{tikzpicture}[anchorbase]
      \draw[->] (0,0) -- (0,1);
      \node at (0.4,0.5) {\regionlabel{\epsilon_\lambda}};
      \node at (-0.5,0.5) {\regionlabel{\epsilon_{\lambda \boxplus i}}};
    \end{tikzpicture}
    \right)
  \end{equation}
\end{lem}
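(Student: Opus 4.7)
The strategy is to translate the right curl into an insertion of the Jucys--Murphy element $J_{n+1}$ inside a symmetrizer box, and then invoke the algebraic identity \eqref{eq:JM-idempotent-product}. This is motivated by the bimodule interpretation of $\sH$: a right curl on an upward strand corresponds to left multiplication by $J_{n+1}$ on the induction bimodule $(n+1)_n$, and $J_{n+1}$ acts as the eigenvalue $i$ on the $\lambda$-isotypic summand of $(n+1)_n \otimes_{A_n} V_\lambda$ inside $V_{\lambda\boxplus i}$.

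Concretely, I would first apply \eqref{eq:plSlide} to $\epsilon_{\lambda\boxplus i}$ (sitting on the left of the upward strand above the curl) and then carry out the manipulation used in deriving \eqref{eq:idempotent-dot-sandwhich} to absorb both $\epsilon_{\lambda\boxplus i}$ and $\epsilon_\lambda$ into a single box diagram containing the product $e_{\lambda\boxplus i} e_\lambda$, with the right curl surviving as a local appendage attached to the through-strand of the box. Next, I would expand the curl via its definition \eqref{eq:right-curl}, and resolve the interior down-up crossing using \eqref{eq:Kho-rel-down-up-double-cross}: the identity term reassembles a cap-cup pair that can be slid stepwise through the dashed loop of the box, while each further application of \eqref{eq:Kho-rel-down-up-double-cross} produces a new cap-cup correction that can be identified, inside the $e_{\lambda\boxplus i}$ symmetrizer, with a transposition $(k, n+1) \in S_{n+1}$. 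Summing the resulting contributions reconstitutes the algebraic element $J_{n+1} = \sum_{k=1}^{n}(k, n+1)$ inserted in the box in place of $e_{\lambda\boxplus i}$. Applying \eqref{eq:JM-idempotent-product}, $J_{n+1} e_{\lambda\boxplus i} e_\lambda = i \cdot e_{\lambda\boxplus i} e_\lambda$, so the overall coefficient becomes $i$; reversing the earlier absorptions via \eqref{eq:plSlide} and \eqref{eq:idempotent-dot-sandwhich} then gives the stated identity.

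The main obstacle will be the middle step: rigorously identifying, via repeated applications of \eqref{eq:Kho-rel-down-up-double-cross}, the geometric right curl (once pushed through the symmetrizer loop) with the algebraic sum $\sum_{k=1}^{n}(k, n+1)$. Each slide produces two terms, and careful bookkeeping of which contributions survive---using that diagrams with negative region labels vanish and that $e_{\lambda\boxplus i}$ is central in $\Q S_{n+1}$ (so the nascent transposition terms can be freely commuted into the symmetrizer)---will be essential to reach the clean identification with $J_{n+1}$. Everything else in the argument is a packaging manipulation already present in the derivation of \eqref{eq:idempotent-dot-sandwhich}.
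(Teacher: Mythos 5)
Your proposal follows the paper's own proof essentially step for step: sandwich the curled strand between the idempotents via \eqref{eq:idempotent-dot-sandwhich}, convert the right curl into an insertion of $J_{n+1}$ in the box by repeated use of \eqref{eq:Kho-rel-down-up-double-cross} (using that diagrams with negative region labels vanish and that $e_{\lambda\boxplus i}$ is central), and conclude with \eqref{eq:JM-idempotent-product}. The only quibble is in your motivating sentence: the relevant eigenspace decomposition of $(n+1)_n$ is with respect to \emph{right} multiplication by $J_{n+1}$ (left multiplication is not a bimodule endomorphism of $(n+1)_n$), but this does not affect the argument.
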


\begin{proof}
  For $\lambda \vdash n+1$, we have
  \[
    \begin{tikzpicture}[anchorbase]
      \draw[->] (0,-1) -- (0,1);
      \node at (-0.4,0.5) {\regionlabel{\epsilon_{\lambda \boxplus i}}};
      \node at (0.4,0.5) {\regionlabel{\epsilon_\lambda}};
      \redcircle {(0,0)};
    \end{tikzpicture}
    \stackrel{\eqref{eq:idempotent-dot-sandwhich}}{=} \frac{1}{n!}\
    \begin{tikzpicture}[anchorbase]
      \draw (-0.7,-0.5) -- (-0.7,0) -- (0.7,0) -- (0.7,-0.5) -- (-0.7,-0.5);
      \node at (0,-0.25) {$e_{\lambda \boxplus i} e_\lambda$};
      \draw[->] (-0.3,-1) -- (-0.3,-0.5);
      \draw[->] (-0.3,0) -- (-0.3,1);
      \draw[->,dashed] (0.3,0) .. controls (0.3,0.5) and (0.9,0.5) .. (0.9,0) -- (0.9,-0.5) .. controls (0.9,-1) and (0.3,-1) .. (0.3,-0.5);
      \node at (0.3,0.6) {\regionlabel{n}};
      \redcircle{(-0.3,0.5)};
    \end{tikzpicture}
    \ = \frac{1}{n!}\
    \begin{tikzpicture}[anchorbase]
      \draw (-0.7,-0.75) -- (-0.7,-0.25) -- (0.7,-0.25) -- (0.7,-0.75) -- (-0.7,-0.75);
      \node at (0,-0.5) {$e_{\lambda \boxplus i} e_\lambda$};
      \draw (-0.7,0.75) -- (-0.7,0.25) -- (0.7,0.25) -- (0.7,0.75) -- (-0.7,0.75);
      \node at (0,0.5) {$J_{n+1}$};
      \draw[->,dashed] (0,-0.25) -- (0,0.25);
      \draw[->] (-0.3,-1.4) -- (-0.3,-0.75);
      \draw[->] (-0.3,0.75) -- (-0.3,1.4);
      \draw[->,dashed] (0.3,0.75) .. controls (0.3,1.3) and (1,1.3) .. (1,0.75) -- (1,-0.75) .. controls (1,-1.3) and (0.3,-1.3) .. (0.3,-0.75);
      \node at (0,1) {\regionlabel{n}};
    \end{tikzpicture}
    \ \stackrel{\eqref{eq:JM-idempotent-product}}{=} \frac{i}{n!}\
    \begin{tikzpicture}[anchorbase]
      \draw (-0.7,-0.25) -- (-0.7,0.25) -- (0.7,0.25) -- (0.7,-0.25) -- (-0.7,-0.25);
      \node at (0,0) {$e_{\lambda \boxplus i} e_\lambda$};
      \draw[->] (-0.3,-1) -- (-0.3,-0.25);
      \draw[->] (-0.3,0.25) -- (-0.3,1);
      \draw[->,dashed] (0.3,0.25) .. controls (0.3,0.75) and (0.9,0.75) .. (0.9,0.25) -- (0.9,-0.25) .. controls (0.9,-0.75) and (0.3,-0.75) .. (0.3,-0.25);
    \end{tikzpicture}
    \ \stackrel{\eqref{eq:idempotent-dot-sandwhich}}{=} i \left(
    \begin{tikzpicture}[anchorbase]
      \draw[->] (0,0) -- (0,1);
      \node at (0.4,0.5) {\regionlabel{\epsilon_\lambda}};
      \node at (-0.5,0.5) {\regionlabel{\epsilon_{\lambda \boxplus i}}};
    \end{tikzpicture}
    \right),
  \]
  where the second equality follows from repeated use of \eqref{eq:Kho-rel-down-up-double-cross} together with the fact that diagrams with negative region labels are equal to zero.
\end{proof}

\begin{lem} \label{lem:crossingRemoval}
  For a partition $\lambda$ and $i,j \in \Z$, $i \ne j$, we have
  \begin{equation} \label{eq:crossingRemoval}
    \begin{tikzpicture}[anchorbase]
      \draw[->] (0,0) -- (1.4,1.4);
      \draw[->] (1.4,0) -- (0,1.4);
      \node at (1.2,.7) {\regionlabel{\epsilon_\lambda}};
      \node at (.7,.2) {\regionlabel{\epsilon_{\lambda\boxplus i}}};
      \node at (.7,1.2) {\regionlabel{\epsilon_{\lambda \boxplus i}}};
      \node at (0,.7) {\regionlabel{\epsilon_{\lambda\boxplus i \boxplus j}}};
    \end{tikzpicture}
    \quad = \frac{1}{j-i} \left(
    \begin{tikzpicture}[anchorbase]
      \draw[->] (0,0) -- (0,1.4);
      \draw[->] (1,0) -- (1,1.4);
      \node at (1.4,.7) {\regionlabel{\epsilon_\lambda}};
      \node at (.5,.7) {\regionlabel{\epsilon_{\lambda \boxplus i}}};
      \node at (-.6,.7) {\regionlabel{\epsilon_{\lambda\boxplus i \boxplus j}}};
    \end{tikzpicture}
    \right)
  \end{equation}
\end{lem}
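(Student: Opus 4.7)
The strategy parallels the proof of Lemma~\ref{lem:right-curl-removal}: we shall translate the diagrammatic equality into an algebraic identity in $\Q S_{n+2}$ (where $n = |\lambda|$) and establish the latter from the known interaction between transpositions, Jucys--Murphy elements, and central idempotents.

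Set $\nu = \lambda \boxplus i$ and $\mu = \lambda \boxplus i \boxplus j$. Applying \eqref{eq:plSlide} to each of the three idempotents $\epsilon_\lambda$, $\epsilon_\nu$, $\epsilon_\mu$ in turn (moving each into a region labeled $0$, as in the calculation \eqref{eq:idempotent-dot-sandwhich}), we express the LHS of \eqref{eq:crossingRemoval} as a scalar multiple of a diagram in which the central idempotents $e_\lambda$, $e_\nu$, $e_\mu$ appear in boxed form together with a crossing of the two outermost strands. This crossing corresponds algebraically to the simple transposition $s_{n+1} \in \Q S_{n+2}$. Performing the analogous translation for the RHS (with parallel strands in place of the crossing) reduces the lemma to the algebraic identity
\[
  e_\mu e_\nu e_\lambda \, s_{n+1} \, e_\mu e_\nu e_\lambda = \frac{1}{j-i} \, e_\mu e_\nu e_\lambda
\]
in $\Q S_{n+2}$.

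To verify this identity, we use the following commutations: $e_\lambda \in Z(\Q S_n)$ commutes with $J_{n+1}, J_{n+2}$, and $s_{n+1}$ (since each of these commutes with all of $\Q S_n$); $e_\nu \in Z(\Q S_{n+1})$ commutes with $J_{n+1}$ (being central) and with $J_{n+2}$ (since conjugation by $S_{n+1}$ merely permutes the summands of $J_{n+2} = \sum_{k \le n+1}(k,n+2)$); and $e_\mu$ is central in $\Q S_{n+2}$. Iterating \eqref{eq:JM-idempotent-product} therefore gives
\[
  (J_{n+2}-J_{n+1})\, e_\mu e_\nu e_\lambda = e_\mu e_\nu e_\lambda\,(J_{n+2}-J_{n+1}) = (j-i)\, e_\mu e_\nu e_\lambda.
\]
Combining this with the standard identity $s_{n+1}(J_{n+2}-J_{n+1}) + (J_{n+2}-J_{n+1}) s_{n+1} = 2$ (which follows from $s_k J_{k+1} = J_k s_k + 1$ and $s_k J_k = J_{k+1} s_k - 1$), multiplying on both sides by $e_\mu e_\nu e_\lambda$ yields $2(j-i)\, e_\mu e_\nu e_\lambda s_{n+1} e_\mu e_\nu e_\lambda = 2\, e_\mu e_\nu e_\lambda$, whence the claim.

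The main obstacle lies in the diagrammatic translation itself: three successive applications of \eqref{eq:plSlide} in the presence of a crossing require careful bookkeeping of region labels, closed loops, and the interaction of the three central idempotent boxes with the crossing of the two outermost strands. This is an extension of the computation in \eqref{eq:idempotent-dot-sandwhich} (used to prove Corollary~\ref{cor:epsilon_lambda-Q+-sandwich}) from a single strand to two crossing strands, using the centrality of the $e_\bullet$ to slide them past the crossing, but is otherwise a mechanical consequence of the defining relations of $\sH^\trunc$.
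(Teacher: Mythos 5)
Your proof is correct, but it takes a genuinely different route from the paper's. The paper's argument is two lines long: it writes $(j-i)$ times the crossing, via Lemma~\ref{lem:right-curl-removal}, as the difference of the crossing decorated with a right curl just below and just above the crossing point (the two curls sit in regions contributing eigenvalues $j$ and $i$ respectively), and then invokes the local relation of \cite[\S2.1]{Kho14} for sliding a right curl through a crossing to identify that difference with the resolved (parallel-strand) diagram. You instead push the whole picture into $\Q S_{n+2}$ and prove $E s_{n+1} E = \tfrac{1}{j-i}E$ for $E = e_\mu e_\nu e_\lambda$ directly; your algebra is correct --- the commutations you list all hold, $(J_{n+2}-J_{n+1})E = E(J_{n+2}-J_{n+1}) = (j-i)E$ follows from \eqref{eq:JM-idempotent-product} exactly as you say, and the anticommutator identity $s_{n+1}(J_{n+2}-J_{n+1}) + (J_{n+2}-J_{n+1})s_{n+1} = 2$ is verified by the two relations you quote. (It is worth noting that this anticommutator identity is precisely the image in $\Q S_{n+2}$ of the curl-slide relation the paper cites, since the right curl acts as a Jucys--Murphy element; so the two proofs are close cousins, yours working in the group algebra and the paper's working with the already-established local relation.) The trade-off is the step you defer: reducing the left-hand diagram --- three idempotents at three different levels, one of which occurs both above and below the crossing --- to a single closure diagram with box $E s_{n+1} E$ is a two-strand generalization of \eqref{eq:idempotent-dot-sandwhich} requiring a merge of the top and bottom closures; it does go through by the same moves (a two-strand version of \eqref{eq:plSlide}, repeated use of \eqref{eq:Kho-rel-down-up-double-cross}, vanishing of negative regions, and centrality of the $e$'s), but it is more bookkeeping than anything the paper performs in a single step, and a complete write-up would need to carry it out. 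What your route buys is self-containedness: you never need to import the curl-through-crossing relation from \cite{Kho14}, and you bypass Lemma~\ref{lem:right-curl-removal} entirely.
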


\begin{proof}
  We have
  \[
    (j-i) \left(
    \begin{tikzpicture}[anchorbase]
      \draw[->] (0,0) -- (1.4,1.4);
      \draw[->] (1.4,0) -- (0,1.4);
      \node at (1.2,.7) {\regionlabel{\epsilon_\lambda}};
      \node at (.7,.2) {\regionlabel{\epsilon_{\lambda\boxplus i}}};
      \node at (.7,1.2) {\regionlabel{\epsilon_{\lambda \boxplus i}}};
      \node at (0,.7) {\regionlabel{\epsilon_{\lambda\boxplus i \boxplus j}}};
    \end{tikzpicture}
    \right)
    \stackrel{\eqref{eq:right-curl-removal}}{=} \
    \begin{tikzpicture}[anchorbase]
      \draw[->] (0,0) -- (1.4,1.4);
      \draw[->] (1.4,0) -- (0,1.4);
      \node at (1.2,.7) {\regionlabel{\epsilon_\lambda}};
      \node at (.7,.2) {\regionlabel{\epsilon_{\lambda\boxplus i}}};
      \node at (.7,1.2) {\regionlabel{\epsilon_{\lambda \boxplus i}}};
      \node at (0,.7) {\regionlabel{\epsilon_{\lambda\boxplus i \boxplus j}}};
      \redcircle{(0.5,0.5)};
    \end{tikzpicture}
    \ -\
    \begin{tikzpicture}[anchorbase]
      \draw[->] (0,0) -- (1.4,1.4);
      \draw[->] (1.4,0) -- (0,1.4);
      \node at (1.2,.7) {\regionlabel{\epsilon_\lambda}};
      \node at (.7,.2) {\regionlabel{\epsilon_{\lambda\boxplus i}}};
      \node at (.7,1.2) {\regionlabel{\epsilon_{\lambda \boxplus i}}};
      \node at (0,.7) {\regionlabel{\epsilon_{\lambda\boxplus i \boxplus j}}};
      \redcircle{(0.9,0.9)};
    \end{tikzpicture}
    \ =\
    \begin{tikzpicture}[anchorbase]
      \draw[->] (0,0) -- (0,1.4);
      \draw[->] (1,0) -- (1,1.4);
      \node at (1.4,.7) {\regionlabel{\epsilon_\lambda}};
      \node at (.5,.7) {\regionlabel{\epsilon_{\lambda \boxplus i}}};
      \node at (-.6,.7) {\regionlabel{\epsilon_{\lambda\boxplus i \boxplus j}}};
    \end{tikzpicture}
  \]
  where the last equality follows from the relation in \cite[\S2.1]{Kho14} concerning sliding right curls through crossings.
\end{proof}

We now define an additive linear 2-functor
\[
  \bS \colon \sA \to \sH_\epsilon
\]
as follows.  On objects, we define
\[
  \bS(\bzero) = \bzero,\quad
  \bS(\lambda) = (n,1_n,\epsilon_\lambda)\quad \text{for } \lambda \vdash n.
\]
On 1-morphisms, $\bS$ is determined by
\[
  \bS(\sE_i 1_\lambda) = (\sQ_- 1_n, \epsilon_{\lambda \boxminus i} \sQ_- \epsilon_\lambda),\quad
  \bS(\sF_i 1_\lambda) = (\sQ_+ 1_n, \epsilon_{\lambda \boxplus i} \sQ_+ \epsilon_\lambda),\quad
  \text{for } \lambda \vdash n,
\]
where, by convention, we set $\epsilon_\bzero = 0$.  On 2-morphisms, $\bS$ is determined as follows.  Suppose $\theta$ is a diagram representing a 2-morphism in $\sA$.  Then $\bS(\theta)$ is the diagram obtained from $\theta$ by placing a $\epsilon_\lambda$ in each region labeled $\lambda$ and then acting as follows on crossings, cups, and caps:

\noindent\begin{minipage}{0.5\linewidth}
  \begin{equation}
    \bS \left(
    \begin{tikzpicture}[anchorbase]
      \draw[->] (0,0) node [anchor=north] {\strandlabel{i}} -- (0.5,0.5);
      \draw[->] (0.5,0) node [anchor=north] {\strandlabel{j}} -- (0,0.5);
      \node at (0.6,0.25) {\regionlabel{\lambda}};
    \end{tikzpicture}
    \right)
    \ =\
    \xi_{i,j} \
    \begin{tikzpicture}[anchorbase]
      \draw [->](0,0) -- (1,1);
      \draw [->](1,0) -- (0,1);
      \node at (1,0.5) {\regionlabel{\epsilon_\lambda}};
      \node at (0.5,1) {\regionlabel{\epsilon_{\lambda \boxplus i}}};
      \node at (0.5,0) {\regionlabel{\epsilon_{\lambda \boxplus j}}};
      \node at (-0.2,0.5) {\regionlabel{\epsilon_{\lambda \boxplus i \boxplus j}}};
    \end{tikzpicture}
    \ ,
  \end{equation}
\end{minipage}%
\begin{minipage}{0.5\linewidth}\
    \begin{equation}
    \bS \left(
    \begin{tikzpicture}[anchorbase]
      \draw[<-] (0,0) node [anchor=north] {\strandlabel{i}} -- (0.5,0.5);
      \draw[<-] (0.5,0) node [anchor=north] {\strandlabel{j}} -- (0,0.5);
      \node at (0.6,0.25) {\regionlabel{\lambda}};
    \end{tikzpicture}
    \right)
    \ =\
    \xi_{i,j} \
    \begin{tikzpicture}[anchorbase]
      \draw [<-](0,0) -- (1,1);
      \draw [<-](1,0) -- (0,1);
      \node at (1,0.5) {\regionlabel{\epsilon_\lambda}};
      \node at (0.5,1) {\regionlabel{\epsilon_{\lambda \boxminus i}}};
      \node at (0.5,0) {\regionlabel{\epsilon_{\lambda \boxminus j}}};
      \node at (-0.2,0.5) {\regionlabel{\epsilon_{\lambda \boxminus i \boxminus j}}};
    \end{tikzpicture}
    \ ,
  \end{equation}
\end{minipage}\par\vspace{\belowdisplayskip}

\begin{equation}
  \bS \left(
  \begin{tikzpicture}[anchorbase]
    \draw[->] (0,0) node[anchor=east] {\strandlabel{j}} -- (0.5,0.5);
    \draw[<-] (0.5,0) -- (0,0.5) node[anchor=east] {\strandlabel{i}};
    \draw (0.7,0.25) node {\regionlabel{\lambda}};
  \end{tikzpicture}
  \right)
  = \xi_{i,j} \frac{(|\lambda| + 1) d_\lambda d_{\lambda \boxminus i \boxplus j}}{|\lambda| d_{\lambda \boxminus i} d_{\lambda \boxplus j}}\
  \begin{tikzpicture}[anchorbase]
    \draw [->](0,0) -- (1,1);
    \draw [<-](1,0) -- (0,1);
    \node at (1,0.5) {\regionlabel{\epsilon_\lambda}};
    \node at (0.5,1) {\regionlabel{\epsilon_{\lambda \boxplus j}}};
    \node at (0.5,0) {\regionlabel{\epsilon_{\lambda \boxminus i}}};
    \node at (-0.2,0.5) {\regionlabel{\epsilon_{\lambda \boxminus i \boxplus j}}};
  \end{tikzpicture}
  \ ,
\end{equation}

\begin{equation}
  \bS \left(
  \begin{tikzpicture}[>=stealth,baseline={([yshift=-.5ex]current bounding box.center)}]
    \draw[<-] (0,0) node[anchor=east] {\strandlabel{j}} -- (0.5,0.5);
    \draw[->] (0.5,0) -- (0,0.5) node[anchor=east] {\strandlabel{i}};
    \node at (0.6,0.25) {\regionlabel{\lambda}};
  \end{tikzpicture}
  \right)
  = \xi_{i,j}\
  \begin{tikzpicture}[anchorbase]
    \draw [<-](0,0) -- (1,1);
    \draw [->](1,0) -- (0,1);
    \node at (1,0.5) {\regionlabel{\epsilon_\lambda}};
    \node at (0.5,1) {\regionlabel{\epsilon_{\lambda \boxminus j}}};
    \node at (0.5,0) {\regionlabel{\epsilon_{\lambda \boxplus i}}};
    \node at (-0.2,0.5) {\regionlabel{\epsilon_{\lambda \boxplus i \boxminus j}}};
  \end{tikzpicture}
  \ ,
\end{equation}

\noindent\begin{minipage}{0.5\linewidth}
  \begin{equation}
    \bS \left(
    \begin{tikzpicture}[>=stealth,baseline={([yshift=-.5ex]current bounding box.center)}]
      \draw[->] (0,0) node[anchor=north] {\strandlabel{i}} -- (0,0.1) arc (180:0:.3) -- (0.6,0);
      \node at (.9,0.3) {\regionlabel{\lambda}};
    \end{tikzpicture}
    \right)
    = \frac{d_\lambda}{|\lambda| d_{\lambda \boxminus i}}\
    \begin{tikzpicture}[anchorbase]
      \draw[->] (0,0) -- (0,0.1) arc (180:0:.5) -- (1,0);
      \node at (1.2,0.5) {\regionlabel{\epsilon_\lambda}};
      \node at (0.5,0.2) {\regionlabel{\epsilon_{\lambda \boxminus i}}};
    \end{tikzpicture}\ ,
  \end{equation}
\end{minipage}%
\begin{minipage}{0.5\linewidth}
  \begin{equation}
    \bS \left(
    \begin{tikzpicture}[>=stealth,baseline={([yshift=-.5ex]current bounding box.center)}]
      \draw[->] (0,0) node[anchor=south] {\strandlabel{i}} -- (0,-0.1) arc (180:360:.3) -- (0.6,0);
      \draw (.9,-0.3) node {\regionlabel{\lambda}};
    \end{tikzpicture}
    \right) = \frac{(|\lambda|+1)d_\lambda}{d_{\lambda \boxplus i}} \
    \begin{tikzpicture}[anchorbase]
      \draw[->] (0,0) -- (0,-0.1) arc (180:360:.5) -- (1,0);
      \node at (0.5,-0.2) {\regionlabel{\epsilon_{\lambda \boxplus i}}};
      \node at (1.2,-0.5) {\regionlabel{\epsilon_{\lambda}}};
    \end{tikzpicture}\ ,
  \end{equation}
\end{minipage}\par\vspace{\belowdisplayskip}

\noindent\begin{minipage}{0.5\linewidth}
  \begin{equation}
    \bS \left(
    \begin{tikzpicture}[>=stealth,baseline={([yshift=-.5ex]current bounding box.center)}]
      \draw[<-] (0,0) -- (0,0.1) arc (180:0:.3) -- (0.6,0) node[anchor=north] {\strandlabel{i}};
      \node at (0.9,0.3) {\regionlabel{\lambda}};
    \end{tikzpicture}
    \right) =\
    \begin{tikzpicture}[anchorbase]
      \draw[<-] (0,0) -- (0,0.1) arc (180:0:.5) -- (1,0);
      \node at (1.2,0.5) {\regionlabel{\epsilon_\lambda}};
      \node at (0.5,0.2) {\regionlabel{\epsilon_{\lambda \boxplus i}}};
    \end{tikzpicture}\ ,
  \end{equation}
\end{minipage}%
\begin{minipage}{0.5\linewidth}
  \begin{equation}
    \bS \left(
    \begin{tikzpicture}[>=stealth,baseline={([yshift=-.5ex]current bounding box.center)}]
      \draw[<-] (0,0) -- (0,-0.1) arc (180:360:.3) -- (0.6,0) node[anchor=south] {\strandlabel{i}};
      \draw (.9,-0.3) node {\regionlabel{\lambda}};
    \end{tikzpicture}
    \right) =\
    \begin{tikzpicture}[anchorbase]
      \draw[<-] (0,0) -- (0,-0.1) arc (180:360:.5) -- (1,0);
      \node at (0.5,-0.2) {\regionlabel{\epsilon_{\lambda \boxminus i}}};
      \node at (1.2,-0.5) {\regionlabel{\epsilon_{\lambda}}};
    \end{tikzpicture}\ ,
  \end{equation}
\end{minipage}\par\vspace{\belowdisplayskip}
\noindent where
\begin{equation}
  \xi_{i,j} = \frac{i-j}{i-j-1}.
\end{equation}
Note that
\begin{equation} \label{eq:xi-i-j-identity}
  \frac{1}{\xi_{i,j} \xi_{j,i}} = 1 - \frac{1}{(i-j)^2}.
\end{equation}

\begin{prop}
  The map $\bS$ described above is a well-defined 2-functor.
\end{prop}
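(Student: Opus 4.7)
Well-definedness of $\bS$ amounts to checking that the images under $\bS$ of both sides of each defining local relation of $\sA$ agree in $\sH_\epsilon$. The relations to verify are the triple-point \eqref{rel:up-up-up-braid}, the three double-crossing relations \eqref{rel:up-up-double-cross}--\eqref{rel:up-down-double-cross}, the two same-color cup-cap relations \eqref{rel:down-up-ii}--\eqref{rel:up-down-ii}, the two circle relations \eqref{rel:clockwise-i-circle}--\eqref{rel:ccc}, and isotopy invariance of diagrams. The general strategy is: on each side, reduce the $\sH_\epsilon$ diagram to a crossingless, circleless normal form using the Heisenberg relations of Section~\ref{subsec:sH-def} together with the local lemmas already established in this section (crossing removal \eqref{eq:crossingRemoval}, right curl removal \eqref{eq:right-curl-removal}, and the circle evaluations \eqref{eq:epsilon_lambda-clockwise}--\eqref{eq:epsilon_lambda-counter-clockwise}), and then match scalar prefactors using the identity \eqref{eq:xi-i-j-identity} together with the hook-length identities \eqref{rel:hl1}, \eqref{rel:hl5}, \eqref{rel:hl8} of Section~\ref{sec:Sn-modules}.

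The easy cases come first. Isotopy invariance of the cup-cap zigzags reduces to the adjunction zigzags \eqref{eq:up-right-zigzag}--\eqref{eq:up-left-zigzag} in $\sH^\trunc$ once one checks that the prefactors $\frac{d_\lambda}{|\lambda|d_{\lambda\boxminus i}}$ and $\frac{(|\lambda|+1)d_\lambda}{d_{\lambda\boxplus i}}$ assigned to oppositely oriented cups/caps are reciprocal (after the appropriate dimension shift). The triple-point \eqref{rel:up-up-up-braid} reduces to the Heisenberg braid relation \eqref{eq:Kho-rel-braid} since both sides, when expanded, acquire exactly the same product $\xi_{i,j}\xi_{i,k}\xi_{j,k}$ of scalars (each pair of strand colors occurs once on each side). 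The circle relations \eqref{rel:clockwise-i-circle}--\eqref{rel:ccc} follow directly from \eqref{eq:epsilon_lambda-clockwise} and \eqref{eq:epsilon_lambda-counter-clockwise}: applying $\bS$ to a clockwise or counterclockwise $i$-circle in region $\lambda$ yields a circle around $\epsilon_\lambda$ whose expansion via the circle evaluation, multiplied by the cup-cap prefactor, reduces to precisely $\epsilon_\lambda = \bS(\id_\lambda)$ when $i$ is removable/addable as required.

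The main obstacle is the double-crossing relations \eqref{rel:up-up-double-cross}--\eqref{rel:up-down-double-cross} and the same-color cup-cap relations \eqref{rel:down-up-ii}--\eqref{rel:up-down-ii}. For each double-crossing relation, my plan is to apply Lemma~\ref{lem:crossingRemoval} (and its variants for mixed orientations, proved by the same technique using \eqref{eq:Kho-rel-down-up-double-cross}, \eqref{eq:Kho-rel-up-down-double-cross} together with the sliding of right curls through crossings from \cite[\S2.1]{Kho14}) to each constituent crossing, after first inserting a decomposition $\id = \sum_\mu \epsilon_\mu$ in the middle region if needed so that the hypothesis of Lemma~\ref{lem:crossingRemoval} is met. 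Both sides then become crossingless diagrams sandwiched between idempotents, and matching the resulting scalars uses \eqref{eq:xi-i-j-identity} together with \eqref{rel:hl1} applied to the region-labeling partition. For the mixed-orientation relations \eqref{rel:down-up-ii} and \eqref{rel:up-down-ii}, applying \eqref{eq:Kho-rel-down-up-double-cross} produces an extra bubble term alongside the through-strands; expanding this bubble via \eqref{eq:epsilon_lambda-clockwise} or \eqref{eq:epsilon_lambda-counter-clockwise} produces a sum over addable (respectively removable) boxes of the region-labeling partition, and the hook-length identities \eqref{rel:hl5} and \eqref{rel:hl8} are exactly what is needed to show these sums collapse to give the identity. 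The combinatorial identities of Section~\ref{sec:Sn-modules} were established precisely to make these coefficient matchings succeed.
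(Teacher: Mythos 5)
Your plan is correct and follows essentially the same route as the paper's proof: check each local relation of $\sA$, reduce both sides to crossingless normal form using the Heisenberg relations together with Lemmas~\ref{lem:right-curl-removal} and~\ref{lem:crossingRemoval} and the circle evaluations \eqref{eq:epsilon_lambda-clockwise}--\eqref{eq:epsilon_lambda-counter-clockwise}, and match scalars via \eqref{eq:xi-i-j-identity} and the hook-length identities, with \eqref{rel:hl1} handling the double crossings and \eqref{rel:hl5}, \eqref{rel:hl8} collapsing the bubble sums in \eqref{rel:up-down-ii} and \eqref{rel:down-up-ii} exactly as in the paper. The only cosmetic difference is that for the mixed-orientation double crossings the paper applies \eqref{eq:Kho-rel-down-up-double-cross} directly and kills the extra cup-cap term by orthogonality of the inserted idempotents, rather than invoking a mixed-orientation variant of the crossing-removal lemma, but this does not change the substance of the argument.
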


\begin{proof}
  We first verify that $\bS$ respects isotopy invariance.  It is straightforward to verify that
  \[
    \bS \left(
    \begin{tikzpicture}[anchorbase]
      \draw (0,0) -- (0,0.5) .. controls (0,0.8) and (0.3,0.8) .. (0.3,0.5) .. controls (0.3,0.2) and (0.6,0.2) .. (0.6,0.5) -- (0.6,1);
    \end{tikzpicture}
    \right)
    = \bS \left(\
    \begin{tikzpicture}[anchorbase]
      \draw (0,0) -- (0,1);
    \end{tikzpicture}\
    \right)
    \qquad \text{and} \qquad
    \bS \left(
    \begin{tikzpicture}[anchorbase]
      \draw (0,0) -- (0,0.5) .. controls (0,0.8) and (-0.3,0.8) .. (-0.3,0.5) .. controls (-0.3,0.2) and (-0.6,0.2) .. (-0.6,0.5) -- (-0.6,1);
    \end{tikzpicture}
    \right)
    = \bS \left(\
    \begin{tikzpicture}[anchorbase]
      \draw (0,0) -- (0,1);
    \end{tikzpicture}\
    \right)
  \]
  with all possible orientations of the strands and all possible labelings of the regions and strands.  We also have
  \[
    \bS \left(
    \begin{tikzpicture}[scale=0.5,>=stealth,baseline={([yshift=-.5ex]current bounding box.center)}]
      \draw[->] (-1.5,1.5) node[anchor=south] {\strandlabel{i}} .. controls (-1.5,0.5) and (-1,-1) .. (0,0) .. controls (1,1) and (1.5,-0.5) .. (1.5,-1.5);
      \draw[->] (-2,1.5) node[anchor=south] {\strandlabel{j}} .. controls (-2,-2) and (1.5,-1.5) .. (0,0) .. controls (-1.5,1.5) and (2,2) .. (2,-1.5);
      \draw (2.3,0.5) node {\regionlabel{\lambda}};
    \end{tikzpicture}
    \right)
    =
    \xi_{i,j} \frac{d_\lambda}{n d_{\lambda \boxminus j}} \frac{d_{\lambda \boxminus j}}{(n-1){d_{\lambda \boxminus j \boxminus i}}} \frac{nd_{\lambda \boxminus i}}{d_\lambda} \frac{(n-1)d_{\lambda \boxminus i \boxminus j}}{d_{\lambda \boxminus i}}
    \begin{tikzpicture}[scale=0.5,>=stealth,baseline={([yshift=-.5ex]current bounding box.center)}]
      \draw[->] (-1.5,1.5) .. controls (-1.5,0.5) and (-1,-1) .. (0,0) .. controls (1,1) and (1.5,-0.5) .. (1.5,-1.5);
      \draw[->] (-2,1.5) .. controls (-2,-2) and (1.5,-1.5) .. (0,0) .. controls (-1.5,1.5) and (2,2) .. (2,-1.5);
      \node at (2.3,0.5) {\regionlabel{\epsilon_\lambda}};
      \node at (0.45,0.6) {\regionlabel{\epsilon_{\lambda \boxminus j}}};
      \node at (-0.45,-0.6) {\regionlabel{\epsilon_{\lambda \boxminus i}}};
      \node at (-1.2,-1.4) {\regionlabel{\epsilon_{\lambda \boxminus i \boxminus j}}};
    \end{tikzpicture}
    \ =
    \bS \left(
    \begin{tikzpicture}[anchorbase]
      \draw[<-] (0,0) node [anchor=north] {\strandlabel{i}} -- (0.5,0.5);
      \draw[<-] (0.5,0) node [anchor=north] {\strandlabel{j}} -- (0,0.5);
      \node at (0.7,0.25) {\regionlabel{\lambda}};
    \end{tikzpicture}
    \right).
  \]
  Similarly, one easily verifies that
  \begin{gather*}
    \bS \left(
    \begin{tikzpicture}[scale=0.5,>=stealth,baseline={([yshift=-.5ex]current bounding box.center)}]
      \draw[->] (1.5,1.5) node[anchor=south] {\strandlabel{j}} .. controls (1.5,0.5) and (1,-1) .. (0,0) .. controls (-1,1) and (-1.5,-0.5) .. (-1.5,-1.5);
      \draw[->] (2,1.5) node[anchor=south] {\strandlabel{i}} .. controls (2,-2) and (-1.5,-1.5) .. (0,0) .. controls (1.5,1.5) and (-2,2) .. (-2,-1.5);
      \draw (2.3,-0.6) node {\regionlabel{\lambda}};
    \end{tikzpicture}
    \right)
    =
    \bS \left(
    \begin{tikzpicture}[anchorbase]
      \draw[<-] (0,0) node [anchor=north] {\strandlabel{i}} -- (0.5,0.5);
      \draw[<-] (0.5,0) node [anchor=north] {\strandlabel{j}} -- (0,0.5);
      \node at (0.7,0.25) {\regionlabel{\lambda}};
    \end{tikzpicture}
    \right),
    \\
    \bS \left(\
    \begin{tikzpicture}[>=stealth,baseline={([yshift=-.5ex]current bounding box.center)}]
      \draw[->] (1,0) node[anchor=north] {\strandlabel{j}} .. controls (1,0.5) and (0,0.5) .. (0,1);
      \draw[->] (-0.5,1) .. controls (-0.5,0) and (0,0) .. (0.5,0.5) .. controls (1,1) and (1.5,1) .. (1.5,0) node[anchor=north] {\strandlabel{i}};
      \draw (1.7,0.5) node {\regionlabel{\lambda}};
    \end{tikzpicture}
    \right)
    =
    \bS \left(
    \begin{tikzpicture}[>=stealth,baseline={([yshift=-.5ex]current bounding box.center)}]
      \draw[<-] (-1,0) node[anchor=north] {\strandlabel{i}} .. controls (-1,0.5) and (0,0.5) .. (0,1);
      \draw[<-] (0.5,1) .. controls (0.5,0) and (0,0) .. (-0.5,0.5) .. controls (-1,1) and (-1.5,1) .. (-1.5,0) node[anchor=north] {\strandlabel{j}};
      \draw (0.7,0.5) node {\regionlabel{\lambda}};
    \end{tikzpicture}
    \right)
    =
    \bS \left(
    \begin{tikzpicture}[>=stealth,baseline={([yshift=-.5ex]current bounding box.center)}]
      \draw[->] (0,0) node[anchor=east] {\strandlabel{j}} -- (0.5,0.5);
      \draw[<-] (0.5,0) -- (0,0.5) node[anchor=east] {\strandlabel{i}};
      \draw (0.7,0.25) node {\regionlabel{\lambda}};
    \end{tikzpicture}
    \right),
    \\
    \bS \left(\
    \begin{tikzpicture}[>=stealth,baseline={([yshift=-.5ex]current bounding box.center)}]
      \draw[<-] (1,0) node[anchor=north] {\strandlabel{j}} .. controls (1,0.5) and (0,0.5) .. (0,1);
      \draw[<-] (-0.5,1) .. controls (-0.5,0) and (0,0) .. (0.5,0.5) .. controls (1,1) and (1.5,1) .. (1.5,0) node[anchor=north] {\strandlabel{i}};
      \draw (1.7,0.5) node {\regionlabel{\lambda}};
    \end{tikzpicture}
    \right)
    = \bS \left(
    \begin{tikzpicture}[>=stealth,baseline={([yshift=-.5ex]current bounding box.center)}]
      \draw[->] (-1,0) node[anchor=north] {\strandlabel{i}} .. controls (-1,0.5) and (0,0.5) .. (0,1);
      \draw[->] (0.5,1) .. controls (0.5,0) and (0,0) .. (-0.5,0.5) .. controls (-1,1) and (-1.5,1) .. (-1.5,0) node[anchor=north] {\strandlabel{j}};
      \draw (0.7,0.5) node {\regionlabel{\lambda}};
    \end{tikzpicture}
    \right)
    = \bS \left(
    \begin{tikzpicture}[>=stealth,baseline={([yshift=-.5ex]current bounding box.center)}]
      \draw[<-] (0,0) node[anchor=east] {\strandlabel{j}} -- (0.5,0.5);
      \draw[->] (0.5,0) -- (0,0.5) node[anchor=east] {\strandlabel{i}};
      \draw (0.8,0.25) node {\regionlabel{\lambda}};
    \end{tikzpicture}
    \right).
  \end{gather*}
  It follows that $\bS$ respects isotopy invariance.  It remains to check that $\bS$ respects the local relations \eqref{rel:up-up-up-braid}--\eqref{rel:ccc}.

  \smallskip

  \paragraph{\emph{Relation~\eqref{rel:up-up-up-braid}}:} We have
  \begin{multline*}
    \frac{1}{\xi_{i,j} \xi_{j,k} \xi_{i,k}} \bS \left(
    \begin{tikzpicture}[>=stealth,baseline={([yshift=.5ex]current bounding box.center)}]
      \draw (0,0) node [anchor=north] {\strandlabel{i}} -- (2,2)[->];
      \draw (2,0) node [anchor=north] {\strandlabel{k}} -- (0,2)[->];
      \draw[->] (1,0) node [anchor=north] {\strandlabel{j}} .. controls (0,1) .. (1,2);
      \node at (1.7,1) {\regionlabel{\lambda}};
    \end{tikzpicture}
    \right)
    =
    \begin{tikzpicture}[>=stealth,baseline={([yshift=.5ex]current bounding box.center)}]
      \draw (0,0) -- (3,3)[->];
      \draw (3,0) -- (0,3)[->];
      \draw[->] (1.5,0) .. controls (0,1.5) .. (1.5,3);
      \node at (2.55,1.5) {\regionlabel{\epsilon_\lambda}};
      \node at (1.73,0.6) {\regionlabel{\epsilon_{\lambda \boxplus k}}};
      \node at (1.73,2.4) {\regionlabel{\epsilon_{\lambda \boxplus i}}};
      \node at (-0.2,2.2) {\regionlabel{\epsilon_{\lambda \boxplus i \boxplus j \boxplus k}}};
      \node at (0.75,0) {\regionlabel{\epsilon_{\lambda \boxplus k \boxplus j}}};
      \node at (0.75,3) {\regionlabel{\epsilon_{\lambda \boxplus i \boxplus j}}};
      \node at (0.9,1.5) {\regionlabel{\epsilon_{\lambda \boxplus k \boxplus i}}};
    \end{tikzpicture}
    \ =\
    \begin{tikzpicture}[>=stealth,baseline={([yshift=.5ex]current bounding box.center)}]
      \draw (0,0) -- (3,3)[->];
      \draw (3,0) -- (0,3)[->];
      \draw[->] (1.5,0) .. controls (0,1.5) .. (1.5,3);
      \node at (2.55,1.5) {\regionlabel{\epsilon_\lambda}};
      \node at (1.73,0.6) {\regionlabel{\epsilon_{\lambda \boxplus k}}};
      \node at (1.73,2.4) {\regionlabel{\epsilon_{\lambda \boxplus i}}};
      \node at (-0.2,2.2) {\regionlabel{\epsilon_{\lambda \boxplus i \boxplus j \boxplus k}}};
      \node at (0.75,0) {\regionlabel{\epsilon_{\lambda \boxplus k \boxplus j}}};
      \node at (0.75,3) {\regionlabel{\epsilon_{\lambda \boxplus i \boxplus j}}};
    \end{tikzpicture}
    \\
    \stackrel{\eqref{eq:Kho-rel-braid}}{=}\
    \begin{tikzpicture}[>=stealth,baseline={([yshift=.5ex]current bounding box.center)}]
      \draw (0,0) -- (3,3)[->];
      \draw (3,0) -- (0,3)[->];
      \draw[->] (1.5,0) .. controls (3,1.5) .. (1.5,3);
      \node at (2.8,2.25) {\regionlabel{\epsilon_\lambda}};
      \node at (2.25,0.2) {\regionlabel{\epsilon_{\lambda \boxplus k}}};
      \node at (2.25,2.8) {\regionlabel{\epsilon_{\lambda \boxplus i}}};
      \node at (0.5,1.5) {\regionlabel{\epsilon_{\lambda \boxplus i \boxplus j \boxplus k}}};
      \node at (1.5,0.75) {\regionlabel{\epsilon_{\lambda \boxplus k \boxplus j}}};
      \node at (1.5,2.25) {\regionlabel{\epsilon_{\lambda \boxplus i \boxplus j}}};
    \end{tikzpicture}
    \ =\
    \begin{tikzpicture}[>=stealth,baseline={([yshift=.5ex]current bounding box.center)}]
      \draw (0,0) -- (3,3)[->];
      \draw (3,0) -- (0,3)[->];
      \draw[->] (1.5,0) .. controls (3,1.5) .. (1.5,3);
      \node at (2.8,2.25) {\regionlabel{\epsilon_\lambda}};
      \node at (2.25,0.2) {\regionlabel{\epsilon_{\lambda \boxplus k}}};
      \node at (2.25,2.8) {\regionlabel{\epsilon_{\lambda \boxplus i}}};
      \node at (0.5,1.5) {\regionlabel{\epsilon_{\lambda \boxplus i \boxplus j \boxplus k}}};
      \node at (1.5,0.75) {\regionlabel{\epsilon_{\lambda \boxplus k \boxplus j}}};
      \node at (1.5,2.25) {\regionlabel{\epsilon_{\lambda \boxplus i \boxplus j}}};
      \node at (2.25,1.5) {\regionlabel{\epsilon_{\lambda \boxplus j}}};
    \end{tikzpicture}
    = \frac{1}{\xi_{i,j} \xi_{j,k} \xi_{i,k}} \bS \left(
    \begin{tikzpicture}[>=stealth,baseline={([yshift=.5ex]current bounding box.center)}]
      \draw (0,0) node [anchor=north] {\strandlabel{i}} -- (3,3)[->];
      \draw (3,0) node [anchor=north] {\strandlabel{k}} -- (0,3)[->];
      \draw[->] (1.5,0) node [anchor=north] {\strandlabel{j}} .. controls (3,1.5) .. (1.5,3);
      \node at (2.8,2.25) {\regionlabel{\lambda}};
    \end{tikzpicture}
    \right),
  \end{multline*}
  where the second and fourth equalities follow from \eqref{eq:id1n-decomp} and Corollary~\ref{cor:epsilon_lambda-Q+-sandwich}.

  \smallskip

  \paragraph{\emph{Relation~\emph{\eqref{rel:up-up-double-cross}}:}}  For $\lambda \vdash n$, we have
  \begin{multline*}
    \frac{1}{\xi_{i,j} \xi_{j,i}} \bS \left(
    \begin{tikzpicture}[>=stealth,baseline={([yshift=.5ex]current bounding box.center)}]
      \draw[->] (0,0) node [anchor=north] {\strandlabel{i}} .. controls (1,1) .. (0,2);
      \draw[->] (1,0) node [anchor=north] {\strandlabel{j}} .. controls (0,1) .. (1,2);
      \node at (1.1,1.5) {\regionlabel{\lambda}};
    \end{tikzpicture}
    \right)
    =\
    \begin{tikzpicture}[anchorbase]
      \draw[->] (0,0) .. controls (1.3,1) .. (0,2);
      \draw[->] (1,0) .. controls (-0.3,1) .. (1,2);
      \node at (1.1,1.5) {\regionlabel{\epsilon_\lambda}};
      \node at (0.5,1) {\regionlabel{\epsilon_{\lambda \boxplus i}}};
      \node at (0.5,2) {\regionlabel{\epsilon_{\lambda \boxplus j}}};
      \node at (0.5,0) {\regionlabel{\epsilon_{\lambda \boxplus j}}};
      \node at (-0.3,1.6) {\regionlabel{\epsilon_{\lambda \boxplus i \boxplus j}}};
    \end{tikzpicture}
    \ =\
    \begin{tikzpicture}[anchorbase]
      \draw[->] (0,0) .. controls (1.3,1) .. (0,2);
      \draw[->] (1,0) .. controls (-0.3,1) .. (1,2);
      \node at (1.1,1.5) {\regionlabel{\epsilon_\lambda}};
      \node at (0.5,2) {\regionlabel{\epsilon_{\lambda \boxplus j}}};
      \node at (0.5,0) {\regionlabel{\epsilon_{\lambda \boxplus j}}};
      \node at (-0.3,1.6) {\regionlabel{\epsilon_{\lambda \boxplus i \boxplus j}}};
    \end{tikzpicture}
    \ -\
    \begin{tikzpicture}[anchorbase]
      \draw[->] (0,0) .. controls (1.3,1) .. (0,2);
      \draw[->] (1,0) .. controls (-0.3,1) .. (1,2);
      \node at (1.1,1.5) {\regionlabel{\epsilon_\lambda}};
      \node at (0.5,1) {\regionlabel{\epsilon_{\lambda \boxplus j}}};
      \node at (0.5,2) {\regionlabel{\epsilon_{\lambda \boxplus j}}};
      \node at (0.5,0) {\regionlabel{\epsilon_{\lambda \boxplus j}}};
      \node at (-0.3,1.6) {\regionlabel{\epsilon_{\lambda \boxplus i \boxplus j}}};
    \end{tikzpicture}
    \\
    \stackrel{\substack{\eqref{eq:Kho-rel-transposition-squared} \\ \eqref{eq:crossingRemoval}}}{=}
    \left( 1 - \frac{1}{(i-j)^2} \right)
    \begin{tikzpicture}[anchorbase]
      \draw[->] (0,0) -- (0,2);
      \draw[->] (1,0) -- (1,2);
      \node at (1.3,1) {\regionlabel{\epsilon_\lambda}};
      \node at (0.5,1) {\regionlabel{\epsilon_{\lambda \boxplus j}}};
      \node at (-0.5,1) {\regionlabel{\epsilon_{\lambda \boxplus i \boxplus j}}};
    \end{tikzpicture}
    \ \stackrel{\eqref{eq:xi-i-j-identity}}{=}
    \frac{1}{\xi_{i,j} \xi_{j,i}}
    \bS \left(
    \begin{tikzpicture}[>=stealth,baseline={([yshift=.5ex]current bounding box.center)}]
      \draw[->] (0,0) node [anchor=north] {\strandlabel{i}} -- (0,2);
      \draw[->] (0.6,0) node [anchor=north] {\strandlabel{j}} -- (0.6,2);
      \node at (1,1) {\regionlabel{\lambda}};
    \end{tikzpicture}
    \right),
  \end{multline*}
  where the second equality follows from \eqref{eq:id1n-decomp} and Corollary~\ref{cor:epsilon_lambda-Q+-sandwich}.

  \smallskip

  \paragraph{\emph{Relation~\eqref{rel:down-up-double-cross}}:} Let $\lambda \vdash n$ and $i,j \in \Z$, $i \ne j$.  By \eqref{rel:hl1} and \eqref{eq:xi-i-j-identity}, we have
  \[
    \frac{1}{\xi_{i,j} \xi_{j,i}}\,
    \frac{n d_{\lambda \boxminus i} d_{\lambda \boxplus j}}{(n+1) d_\lambda d_{\lambda \boxminus i \boxplus j}} = 1.
  \]
  Thus
  \begin{multline*}
    \bS \left(
    \begin{tikzpicture}[>=stealth,baseline={([yshift=.5ex]current bounding box.center)}]
      \draw[<-] (0,0) node [anchor=north] {\strandlabel{i}} .. controls (1,1) .. (0,2);
      \draw[->] (1,0) node [anchor=north] {\strandlabel{j}} .. controls (0,1) .. (1,2);
      \node at (1.1,1.5) {\regionlabel{\lambda}};
    \end{tikzpicture}
    \right)
    =\
    \begin{tikzpicture}[anchorbase]
      \draw[<-] (0,0) .. controls (1.3,1) .. (0,2);
      \draw[->] (1,0) .. controls (-0.3,1) .. (1,2);
      \node at (1.1,1.5) {\regionlabel{\epsilon_\lambda}};
      \node at (0.5,1) {\regionlabel{\epsilon_{\lambda \boxminus i}}};
      \node at (0.5,2) {\regionlabel{\epsilon_{\lambda \boxplus j}}};
      \node at (0.5,0) {\regionlabel{\epsilon_{\lambda \boxplus j}}};
      \node at (-0.3,1.6) {\regionlabel{\epsilon_{\lambda \boxminus i \boxplus j}}};
    \end{tikzpicture}
    \ =\
    \begin{tikzpicture}[anchorbase]
      \draw[<-] (0,0) .. controls (1.3,1) .. (0,2);
      \draw[->] (1,0) .. controls (-0.3,1) .. (1,2);
      \node at (1.1,1.5) {\regionlabel{\epsilon_\lambda}};
      \node at (0.5,2) {\regionlabel{\epsilon_{\lambda \boxplus j}}};
      \node at (0.5,0) {\regionlabel{\epsilon_{\lambda \boxplus j}}};
      \node at (-0.3,1.6) {\regionlabel{\epsilon_{\lambda \boxminus i \boxplus j}}};
    \end{tikzpicture}
    \\
    \stackrel{\eqref{eq:Kho-rel-down-up-double-cross}}{=}
    \begin{tikzpicture}[anchorbase]
      \draw[<-] (0,0) -- (0,2);
      \draw[->] (1,0) -- (1,2);
      \node at (1.3,1) {\regionlabel{\epsilon_\lambda}};
      \node at (0.5,1) {\regionlabel{\epsilon_{\lambda \boxplus j}}};
      \node at (-0.5,1) {\regionlabel{\epsilon_{\lambda \boxminus i \boxplus j}}};
    \end{tikzpicture}
    \ -\
    \begin{tikzpicture}[anchorbase]
      \draw[<-] (0,0) -- (0,0.2) arc(180:0:0.5) -- (1,0);
      \draw[->] (0,2) -- (0,1.8) arc(180:360:0.5) -- (1,2);
      \node at (0.5,1.75) {\regionlabel{\epsilon_{\lambda \boxplus j}}};
      \node at (0.5,0.25) {\regionlabel{\epsilon_{\lambda \boxplus j}}};
      \node at (0.5,1) {\regionlabel{\epsilon_{\lambda \boxminus i \boxplus j}\ \epsilon_\lambda}};
    \end{tikzpicture}
    \ =\
    \begin{tikzpicture}[anchorbase]
      \draw[<-] (0,0) -- (0,2);
      \draw[->] (1,0) -- (1,2);
      \node at (1.3,1) {\regionlabel{\epsilon_\lambda}};
      \node at (0.5,1) {\regionlabel{\epsilon_{\lambda \boxplus j}}};
      \node at (-0.5,1) {\regionlabel{\epsilon_{\lambda \boxminus i \boxplus j}}};
    \end{tikzpicture}
    \ =
    \bS \left(
    \begin{tikzpicture}[>=stealth,baseline={([yshift=.5ex]current bounding box.center)}]
      \draw[<-] (0,0) node [anchor=north] {\strandlabel{i}} -- (0,2);
      \draw[->] (0.6,0) node [anchor=north] {\strandlabel{j}} -- (0.6,2);
      \node at (1,1) {\regionlabel{\lambda}};
    \end{tikzpicture}
    \right),
  \end{multline*}
  where the second equality follows from \eqref{eq:id1n-decomp} and Corollary~\ref{cor:epsilon_lambda-Q+-sandwich}, since the only partition $\mu$ of size $n-1$ satisfying $\mu \subseteq \lambda \boxminus i \boxplus j$ and $\mu \subseteq \lambda$ is $\mu = \lambda \boxminus i$.

  \smallskip

  \paragraph{\emph{Relation~\eqref{rel:up-down-double-cross}}:}  Let $\lambda \vdash n$ and $i,j \in \Z$, $i \ne j$.  Again, using \eqref{rel:hl1} and \eqref{eq:xi-i-j-identity}, we have
  \[
    \bS \left(
    \begin{tikzpicture}[>=stealth,baseline={([yshift=.5ex]current bounding box.center)}]
      \draw[->] (0,0) node [anchor=north] {\strandlabel{i}} .. controls (1,1) .. (0,2);
      \draw[<-] (1,0) node [anchor=north] {\strandlabel{j}} .. controls (0,1) .. (1,2);
      \node at (1.1,1.5) {\regionlabel{\lambda}};
    \end{tikzpicture}
    \right)
    =\
    \begin{tikzpicture}[anchorbase]
      \draw[->] (0,0) .. controls (1.3,1) .. (0,2);
      \draw[<-] (1,0) .. controls (-0.3,1) .. (1,2);
      \node at (1.1,1.5) {\regionlabel{\epsilon_\lambda}};
      \node at (0.5,1) {\regionlabel{\epsilon_{\lambda \boxplus i}}};
      \node at (0.5,2) {\regionlabel{\epsilon_{\lambda \boxminus j}}};
      \node at (0.5,0) {\regionlabel{\epsilon_{\lambda \boxminus j}}};
      \node at (-0.3,1.6) {\regionlabel{\epsilon_{\lambda \boxplus i \boxminus j}}};
    \end{tikzpicture}
    \ =\
    \begin{tikzpicture}[anchorbase]
      \draw[->] (0,0) .. controls (1.3,1) .. (0,2);
      \draw[<-] (1,0) .. controls (-0.3,1) .. (1,2);
      \node at (1.1,1.5) {\regionlabel{\epsilon_\lambda}};
      \node at (0.5,2) {\regionlabel{\epsilon_{\lambda \boxminus j}}};
      \node at (0.5,0) {\regionlabel{\epsilon_{\lambda \boxminus j}}};
      \node at (-0.3,1.6) {\regionlabel{\epsilon_{\lambda \boxplus i \boxminus j}}};
    \end{tikzpicture}
    \stackrel{\eqref{eq:Kho-rel-up-down-double-cross}}{=}
    \begin{tikzpicture}[anchorbase]
      \draw[->] (0,0) -- (0,2);
      \draw[<-] (1,0) -- (1,2);
      \node at (1.3,1) {\regionlabel{\epsilon_\lambda}};
      \node at (0.5,1) {\regionlabel{\epsilon_{\lambda \boxminus j}}};
      \node at (-0.5,1) {\regionlabel{\epsilon_{\lambda \boxplus i \boxminus j}}};
    \end{tikzpicture}
    \ =
    \bS \left(
    \begin{tikzpicture}[>=stealth,baseline={([yshift=.5ex]current bounding box.center)}]
      \draw[->] (0,0) node [anchor=north] {\strandlabel{i}} -- (0,2);
      \draw[<-] (0.6,0) node [anchor=north] {\strandlabel{j}} -- (0.6,2);
      \node at (1,1) {\regionlabel{\lambda}};
    \end{tikzpicture}
    \right),
  \]
  where the second equality follows from \eqref{eq:id1n-decomp} and Corollary~\ref{cor:epsilon_lambda-Q+-sandwich}, since the only partition $\mu$ of size $n+1$ satisfying $\lambda \boxplus i \boxminus j \subseteq \mu$ and $\lambda \subseteq \mu$ is $\mu = \lambda \boxplus i$.

  \smallskip

  \paragraph{\emph{Relation~\eqref{rel:down-up-ii}}:} For $\lambda \vdash n$ and $i \in \Z$, we have
  \begin{multline*}
    \bS \left(
    \begin{tikzpicture}[>=stealth,baseline={([yshift=.5ex]current bounding box.center)}]
      \draw[<-] (0,0) node [anchor=north] {\strandlabel{i}} --(0,1.5);
      \draw[->] (0.5,0) node [anchor=north] {\strandlabel{i}} -- (0.5,1.5);
      \node at (0.8,0.75) {\regionlabel{\lambda}};
    \end{tikzpicture}
    \right)
    =\
    \begin{tikzpicture}[anchorbase]
      \draw[<-] (0,0) --(0,2);
      \draw[->] (0.8,0) -- (0.8,2);
      \node at (1.1,1) {\regionlabel{\epsilon_\lambda}};
      \node at (-0.3,1) {\regionlabel{\epsilon_\lambda}};
      \node at (0.4,1) {\regionlabel{\epsilon_{\lambda \boxplus i}}};
    \end{tikzpicture}
    \stackrel{\eqref{eq:Kho-rel-down-up-double-cross}}{=}\
    \begin{tikzpicture}[anchorbase]
      \draw[->] (0,2) -- (0,1.8) arc(180:360:0.5) -- (1,2);
      \draw[<-] (0,0) -- (0,0.2) arc(180:0:0.5) -- (1,0);
      \node at (0.5,1.7) {\regionlabel{\epsilon_{\lambda \boxplus i}}};
      \node at (0.5,0.3) {\regionlabel{\epsilon_{\lambda \boxplus i}}};
      \node at (0.5,1) {\regionlabel{\epsilon_\lambda}};
    \end{tikzpicture}
    +\
    \begin{tikzpicture}[anchorbase]
      \draw[<-] (0,0) .. controls (1.3,1) .. (0,2);
      \draw[->] (1,0) .. controls (-0.3,1) .. (1,2);
      \node at (1.1,1.6) {\regionlabel{\epsilon_\lambda}};
      \node at (0.5,2) {\regionlabel{\epsilon_{\lambda \boxplus i}}};
      \node at (0.5,0) {\regionlabel{\epsilon_{\lambda \boxplus i}}};
      \node at (-0.1,1.6) {\regionlabel{\epsilon_\lambda}};
    \end{tikzpicture}
    \ =\
    \begin{tikzpicture}[anchorbase]
      \draw[->] (0,2) -- (0,1.8) arc(180:360:0.5) -- (1,2);
      \draw[<-] (0,0) -- (0,0.2) arc(180:0:0.5) -- (1,0);
      \node at (0.5,1.7) {\regionlabel{\epsilon_{\lambda \boxplus i}}};
      \node at (0.5,0.3) {\regionlabel{\epsilon_{\lambda \boxplus i}}};
      \node at (0.5,1) {\regionlabel{\epsilon_\lambda}};
    \end{tikzpicture}
    + \sum_{j \in \Z}
    \begin{tikzpicture}[anchorbase]
      \draw[<-] (0,0) .. controls (1.3,1) .. (0,2);
      \draw[->] (1,0) .. controls (-0.3,1) .. (1,2);
      \node at (1.1,1.6) {\regionlabel{\epsilon_\lambda}};
      \node at (0.5,2) {\regionlabel{\epsilon_{\lambda \boxplus i}}};
      \node at (0.5,0) {\regionlabel{\epsilon_{\lambda \boxplus i}}};
      \node at (-0.1,1.6) {\regionlabel{\epsilon_\lambda}};
      \node at (0.5,1) {\regionlabel{\epsilon_{\lambda \boxminus j}}};
    \end{tikzpicture}
    \\
    \stackrel{\eqref{eq:crossingRemoval}}{=}
    \begin{tikzpicture}[anchorbase]
      \draw[->] (0,2) -- (0,1.8) arc(180:360:0.5) -- (1,2);
      \draw[<-] (0,0) -- (0,0.2) arc(180:0:0.5) -- (1,0);
      \node at (0.5,1.7) {\regionlabel{\epsilon_{\lambda \boxplus i}}};
      \node at (0.5,0.3) {\regionlabel{\epsilon_{\lambda \boxplus i}}};
      \node at (0.5,1) {\regionlabel{\epsilon_\lambda}};
    \end{tikzpicture}
    \ \left(
    1 + \sum_{j \in \Z} \frac{1}{(i-j)^2}\
    \begin{tikzpicture}[anchorbase]
      \draw[<-] (0.5,0) arc(0:360:0.5);
      \node at (0,0) {\regionlabel{\epsilon_{\lambda \boxminus j}}};
    \end{tikzpicture}
    \right)
    \stackrel{\eqref{eq:epsilon_lambda-clockwise}}{=}
    \left( 1 + \sum_{j \in \Z} \frac{n d_{\lambda \boxminus j}}{d_\lambda (i-j)^2} \right)
    \begin{tikzpicture}[anchorbase]
      \draw[->] (0,2) -- (0,1.8) arc(180:360:0.5) -- (1,2);
      \draw[<-] (0,0) -- (0,0.2) arc(180:0:0.5) -- (1,0);
      \node at (0.5,1.7) {\regionlabel{\epsilon_{\lambda \boxplus i}}};
      \node at (0.5,0.3) {\regionlabel{\epsilon_{\lambda \boxplus i}}};
      \node at (0.5,1) {\regionlabel{\epsilon_\lambda}};
    \end{tikzpicture}
    \\
    \stackrel{\eqref{rel:hl8}}{=} \frac{(n+1)d_\lambda}{d_{\lambda \boxplus i}}\
    \begin{tikzpicture}[anchorbase]
      \draw[->] (0,2) -- (0,1.8) arc(180:360:0.5) -- (1,2);
      \draw[<-] (0,0) -- (0,0.2) arc(180:0:0.5) -- (1,0);
      \node at (0.5,1.7) {\regionlabel{\epsilon_{\lambda \boxplus i}}};
      \node at (0.5,0.3) {\regionlabel{\epsilon_{\lambda \boxplus i}}};
      \node at (0.5,1) {\regionlabel{\epsilon_\lambda}};
    \end{tikzpicture}
    \ =\
    \bS \left(
    \begin{tikzpicture}[anchorbase]
      \draw[->] (0,1) node [anchor=south] {\strandlabel{i}} -- (0,0.9) arc (180:360:.25) -- (0.5,1);
      \draw[<-] (0,0) -- (0,0.1) arc (180:0:.25) -- (0.5,0) node [anchor=north] {\strandlabel{i}};
      \node at (0.7,0.5) {\regionlabel{\lambda}};
    \end{tikzpicture}
    \right)\ .
  \end{multline*}

  \smallskip

  \paragraph{\emph{Relation~\eqref{rel:up-down-ii}:}} For $\lambda \vdash n$ and $i \in \Z$, we have
  \begin{multline*}
    \bS \left(
    \begin{tikzpicture}[>=stealth,baseline={([yshift=.5ex]current bounding box.center)}]
      \draw[->] (0,0) node [anchor=north] {\strandlabel{i}} --(0,1.5);
      \draw[<-] (0.5,0) node [anchor=north] {\strandlabel{i}} -- (0.5,1.5);
      \node at (0.8,0.75) {\regionlabel{\lambda}};
    \end{tikzpicture}
    \right)
    =\
    \begin{tikzpicture}[anchorbase]
      \draw[->] (0,0) --(0,2);
      \draw[<-] (0.8,0) -- (0.8,2);
      \node at (1.1,1) {\regionlabel{\epsilon_\lambda}};
      \node at (-0.3,1) {\regionlabel{\epsilon_\lambda}};
      \node at (0.4,1) {\regionlabel{\epsilon_{\lambda \boxminus i}}};
    \end{tikzpicture}
    \stackrel{\eqref{eq:Kho-rel-up-down-double-cross}}{=}\
    \begin{tikzpicture}[anchorbase]
      \draw[->] (0,0) .. controls (1.3,1) .. (0,2);
      \draw[<-] (1,0) .. controls (-0.3,1) .. (1,2);
      \node at (1.1,1.6) {\regionlabel{\epsilon_\lambda}};
      \node at (0.5,2) {\regionlabel{\epsilon_{\lambda \boxminus i}}};
      \node at (0.5,0) {\regionlabel{\epsilon_{\lambda \boxminus i}}};
      \node at (-0.1,1.6) {\regionlabel{\epsilon_\lambda}};
    \end{tikzpicture}
    \ =\
    \sum_{j \in \Z}
    \begin{tikzpicture}[anchorbase]
      \draw[->] (0,0) .. controls (1.3,1) .. (0,2);
      \draw[<-] (1,0) .. controls (-0.3,1) .. (1,2);
      \node at (1.1,1.6) {\regionlabel{\epsilon_\lambda}};
      \node at (0.5,2) {\regionlabel{\epsilon_{\lambda \boxminus i}}};
      \node at (0.5,0) {\regionlabel{\epsilon_{\lambda \boxminus i}}};
      \node at (-0.1,1.6) {\regionlabel{\epsilon_\lambda}};
      \node at (0.5,1) {\regionlabel{\epsilon_{\lambda \boxplus j}}};
    \end{tikzpicture}
    \stackrel{\eqref{eq:crossingRemoval}}{=}
    \sum_{j \in \Z} \frac{1}{(j-i)^2}
    \begin{tikzpicture}[anchorbase]
      \draw[<-] (0,2) -- (0,1.8) arc(180:360:0.5) -- (1,2);
      \draw[->] (0,0) -- (0,0.2) arc(180:0:0.5) -- (1,0);
      \node at (0.5,1.7) {\regionlabel{\epsilon_{\lambda \boxminus i}}};
      \node at (0.5,0.3) {\regionlabel{\epsilon_{\lambda \boxminus i}}};
      \node at (0.5,1) {\regionlabel{\epsilon_\lambda}};
    \end{tikzpicture}
    \begin{tikzpicture}[anchorbase]
      \draw[->] (0.5,0) arc(0:360:0.5);
      \node at (0,0) {\regionlabel{\epsilon_{\lambda \boxplus j}}};
    \end{tikzpicture}
    \\
    \stackrel{\eqref{eq:epsilon_lambda-counter-clockwise}}{=}\
    \sum_{j \in \Z} \frac{d_{\lambda \boxplus j}}{(n+1)d_\lambda(j-i)^2}
    \begin{tikzpicture}[anchorbase]
      \draw[<-] (0,2) -- (0,1.8) arc(180:360:0.5) -- (1,2);
      \draw[->] (0,0) -- (0,0.2) arc(180:0:0.5) -- (1,0);
      \node at (0.5,1.7) {\regionlabel{\epsilon_{\lambda \boxminus i}}};
      \node at (0.5,0.3) {\regionlabel{\epsilon_{\lambda \boxminus i}}};
      \node at (0.5,1) {\regionlabel{\epsilon_\lambda}};
    \end{tikzpicture}
    \ \stackrel{\eqref{rel:hl5}}{=}\
    \frac{d_\lambda}{n d_{\lambda \boxminus i}}\
    \begin{tikzpicture}[anchorbase]
      \draw[<-] (0,2) -- (0,1.8) arc(180:360:0.5) -- (1,2);
      \draw[->] (0,0) -- (0,0.2) arc(180:0:0.5) -- (1,0);
      \node at (0.5,1.7) {\regionlabel{\epsilon_{\lambda \boxminus i}}};
      \node at (0.5,0.3) {\regionlabel{\epsilon_{\lambda \boxminus i}}};
      \node at (0.5,1) {\regionlabel{\epsilon_\lambda}};
    \end{tikzpicture}
    = \bS \left(
    \begin{tikzpicture}[anchorbase]
      \draw[<-] (0,1) node [anchor=south] {\strandlabel{i}} -- (0,0.9) arc (180:360:.25) -- (0.5,1);
      \draw[->] (0,0) -- (0,0.1) arc (180:0:.25) -- (0.5,0) node [anchor=north] {\strandlabel{i}};
      \node at (0.7,0.5) {\regionlabel{\lambda}};
    \end{tikzpicture}
    \right)\ .
  \end{multline*}

  \smallskip

  \paragraph{\emph{Relation~\eqref{rel:clockwise-i-circle}}:}  Suppose $\lambda \vdash n$.  If $i \not \in B^-(\lambda)$, then it clear that $\bS$ maps the left-hand side of \eqref{rel:clockwise-i-circle} to zero since $\epsilon_{\lambda \boxminus i} = \epsilon_\bzero = 0$.  If $i \in B^-(\lambda)$, we have
  \[
    \bS \left(
    \begin{tikzpicture}[anchorbase]
      \draw[-<] (0,0) arc (180:540:0.3) node [anchor=east] {\strandlabel{i}};
      \draw (0.8,0.1) node {\regionlabel{\lambda}};
    \end{tikzpicture}
    \right)
    = \frac{d_\lambda}{n d_{\lambda \boxminus i}}\
    \begin{tikzpicture}[anchorbase]
      \draw[<-] (-0.5,0) arc(180:540:0.5);
      \node at (0,0) {\regionlabel{\epsilon_{\lambda \boxminus i}}};
      \node at (0.7,0.4) {\regionlabel{\epsilon_\lambda}};
    \end{tikzpicture}
    \stackrel{\eqref{eq:epsilon_lambda-clockwise}}{=}
    \epsilon_\lambda.
  \]

  \smallskip

  \paragraph{\emph{Relation~\eqref{rel:ccc}}:}  Suppose $\lambda \vdash n$.  If $i \not \in B^+(\lambda)$, then it clear that $\bS$ maps the left-hand side of \eqref{rel:ccc} to zero since $\epsilon_{\lambda \boxplus i} = \epsilon_\bzero = 0$.  If $i \in B^+(\lambda)$, we have
  \[
    \bS \left(
    \begin{tikzpicture}[anchorbase]
      \draw[->] (0,0) arc (180:540:0.3) node [anchor=east] {\strandlabel{i}};
      \draw (0.8,0.1) node {\regionlabel{\lambda}};
    \end{tikzpicture}
    \right)
    = \frac{(n+1) d_\lambda}{d_{\lambda \boxplus i}}\
    \begin{tikzpicture}[anchorbase]
      \draw[->] (-0.5,0) arc(180:540:0.5);
      \node at (0,0) {\regionlabel{\epsilon_{\lambda \boxplus i}}};
      \node at (0.7,0.4) {\regionlabel{\epsilon_\lambda}};
    \end{tikzpicture}
    \stackrel{\eqref{eq:epsilon_lambda-counter-clockwise}}{=}
    \epsilon_\lambda. \qedhere
  \]
\end{proof}

The following theorem is one of the main results of the current paper.

\begin{theo} \label{theo:sU-sHepsilon-equivalence}
  The 2-functor $\bS \colon \sA \to \sH_\epsilon$ is an equivalence of 2-categories.
\end{theo}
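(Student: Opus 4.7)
The plan is to establish the three conditions for a 2-equivalence noted immediately before Proposition~\ref{prop:sH-decomp}: essential surjectivity on objects, essential fullness on 1-morphisms, and fully faithfulness on 2-morphisms. Essential surjectivity on objects is nearly immediate from Proposition~\ref{prop:sHepsilon-indecomposables}: the indecomposable objects of $\sH_\epsilon$ are precisely the $(n,1_n,\epsilon_\lambda)$ for $\lambda \vdash n$, every object of $\sH_\epsilon$ is uniquely a direct sum of indecomposables, and by construction $\bS(\lambda) = (|\lambda|, 1_{|\lambda|}, \epsilon_\lambda)$. Since $\bS$ extends additively from $\cP$ to the free monoid $\N[\cP]$, it hits every object up to isomorphism.

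For essential fullness on 1-morphisms, fix $\lambda,\mu \in \cP$ and consider a 1-morphism $(\sQ_c 1_{|\lambda|},\beta)$ in $\sH_\epsilon$ from $\bS(\lambda)$ to $\bS(\mu)$. Corollary~\ref{cor:epsilon_lambda-Q+-sandwich} implies that $\epsilon_\mu \sQ_+ \epsilon_\nu$ vanishes unless $\mu = \nu \boxplus i$ for some $i$, yielding in $\sH_\epsilon$ an orthogonal decomposition
\[
  \sQ_+ \epsilon_\nu \cong \bigoplus_{i \in B^+(\nu)} \epsilon_{\nu \boxplus i} \sQ_+ \epsilon_\nu = \bigoplus_{i \in B^+(\nu)} \bS(\sF_i 1_\nu),
\]
and an analogous identity for $\sQ_-$. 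Inserting such decompositions between each consecutive pair of $\sQ_\pm$ factors in $\sQ_c 1_{|\lambda|}$, and recalling that $\bS$ is a 2-functor compatible with direct sums, expresses every 1-morphism of $\sH_\epsilon$ as a direct summand of some $\bS(\mathsf{P})$; hence essential fullness.

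For fully faithfulness on 2-morphisms, I first handle faithfulness: by Proposition~\ref{prop:matching-2-morphisms} every nonzero 2-morphism between nonzero 1-morphisms in $\sA$ is an isomorphism, and 2-functors preserve isomorphisms, so $\bS(\alpha) \ne 0$ whenever $\alpha \ne 0$. For fullness, it suffices to show that, for any two nonzero 1-morphisms $\mathsf{P}, \mathsf{Q}$ in $\sA$ of the normal form \eqref{eq:1-morph-normal-form}, the space $\2Mor_{\sH_\epsilon}(\bS(\mathsf{P}), \bS(\mathsf{Q}))$ is at most one-dimensional, since then it must be spanned by $\bS$ of the canonical colored matching. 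Any 2-morphism $\alpha$ of $\sH_\epsilon$ between $\bS(\mathsf{P})$ and $\bS(\mathsf{Q})$ is represented, via Proposition~\ref{prop:sHepsilon-quotient-clockwise-circle}, by a diagram in $\sH^\trunc$ modulo the relation that clockwise circles in a region labeled $n$ equal $n$. Using the Khovanov relations \eqref{eq:Kho-rel-braid}--\eqref{eq:Kho-rel-left-curl} together with this circle-removal relation, one reduces $\alpha$ (sandwiched between the appropriate $\epsilon_\lambda$'s) to a linear combination of colored matchings, exactly as in the discussion preceding Proposition~\ref{prop:matching-2-morphisms}, with the novel dots and curls produced by \eqref{eq:Kho-rel-down-up-double-cross} and right curls being absorbed into scalars by Lemma~\ref{lem:right-curl-removal} and Lemma~\ref{lem:crossingRemoval}. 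The idempotents $\epsilon_\lambda$ and $\epsilon_\mu$ then force any such matching to be determined, up to scalar, by the source and target.

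The main obstacle lies in the fullness step: controlling 2-endomorphism rings of $\bS(\mathsf{P})$ after one sandwiches by $\epsilon_\lambda$'s. Unlike $\sA$, the ambient category $\sH^\trunc$ has a much richer 2-morphism calculus involving dots, curls, and closed bubbles; showing that the idempotents $\epsilon_\lambda$ collapse all of these to the combinatorics of colored matchings (with the scalars predicted by the formulas defining $\bS$) is where the substantive work, leaning on relations \eqref{eq:epsilon_lambda-clockwise}--\eqref{eq:crossingRemoval} and the combinatorial identities of Section~\ref{sec:Sn-modules}, must be done. Once this reduction is in place, comparing dimensions against Proposition~\ref{prop:matching-2-morphisms} yields full faithfulness and completes the proof.
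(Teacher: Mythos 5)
Your treatment of objects and 1-morphisms matches the paper's: essential surjectivity on objects via Proposition~\ref{prop:sHepsilon-indecomposables}, and essential fullness on 1-morphisms via the decomposition coming from Corollary~\ref{cor:epsilon_lambda-Q+-sandwich}. Your sketch of fullness on 2-morphisms (reducing sandwiched diagrams of $\sH_\epsilon$ to colored matchings using the circle, curl, and crossing-removal lemmas) is also the intended route. The genuine gap is in your faithfulness argument. You claim that since every nonzero 2-morphism $\alpha$ between nonzero 1-morphisms of $\sA$ is an isomorphism (Proposition~\ref{prop:matching-2-morphisms}) and 2-functors preserve isomorphisms, $\bS(\alpha) \ne 0$. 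This inference is invalid: an isomorphism $\mathsf{P} \to \mathsf{Q}$ is sent to the zero 2-morphism whenever $\bS(\mathsf{P}) \cong 0$, and nothing in your argument rules out that a composite idempotent such as $\epsilon_{\lambda \boxplus i}\sQ_+\epsilon_\lambda$ vanishes in $\sH^\trunc$. This is a non-degeneracy statement about a diagrammatically presented 2-category, and the local relations can only ever produce spanning sets and upper bounds on dimensions (indeed, the cited result of [LRS16] used in Proposition~\ref{prop:sHepsilon-indecomposables} is only a spanning statement); they cannot certify that a given diagram is nonzero. The same issue silently undercuts your fullness step: knowing the target space is \emph{at most} one-dimensional and spanned by reductions to $\bS$ of a matching does not tell you it is \emph{exactly} one-dimensional.

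The paper closes this gap with representation-theoretic input that your proposal omits entirely: it constructs the 2-functor $\bF_\sH \colon \sH^\trunc \to \sM$ on module categories of symmetric groups, proves that $\bF_\sA = \bF_\sH \circ \bS$ is an equivalence (Proposition~\ref{prop:sU-sM-equivalence}, using the explicit bimodule maps and the fact that $\bF_\sA$ sends a normal-form 1-morphism to $1_\cV$), and only then deduces faithfulness of $\bS$ on 2-morphisms (Corollary~\ref{cor:Psi-faithful-on-2-morphisms}). This is precisely why the paper's proof of the theorem defers faithfulness to Section~\ref{sec:action}. To repair your argument you would need either to import this action or to supply some other nontrivial representation of $\sH^\trunc$ detecting the idempotents $\epsilon_\lambda$ and the matchings; a purely diagrammatic proof along the lines you propose cannot succeed.
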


\begin{proof}
  The 2-functor $\bS$ is essentially surjective on objects by Proposition~\ref{prop:sHepsilon-indecomposables}.  By Corollary~\ref{cor:epsilon_lambda-Q+-sandwich}, it is also essentially full on 1-morphisms by and full on 2-morphisms.  We will show in Corollary~\ref{cor:Psi-faithful-on-2-morphisms} that it is also faithful on 2-morphisms.
\end{proof}

\begin{cor} \label{cor:K(Htrunc)}
  We have an equivalence of categories $K(\sH^\trunc) \cong \bigoplus_{m = 0}^\infty \cA$.
\end{cor}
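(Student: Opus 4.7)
The plan is to assemble \Cref{cor:K(Htrunc)} by chaining together three results that have already been established earlier in the paper, together with the formal observation that the Grothendieck group construction on 2-categories (as defined at the end of Section~\ref{sec:prelims}) commutes with direct sums. Concretely, I would prove the equivalence
\[
  K(\sH^\trunc) \;\cong\; K\!\left( \bigoplus_{m=0}^\infty \sH_\epsilon \right) \;\cong\; \bigoplus_{m=0}^\infty K(\sH_\epsilon) \;\cong\; \bigoplus_{m=0}^\infty K(\sA) \;\cong\; \bigoplus_{m=0}^\infty \cA.
\]

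First, I would invoke Corollary~\ref{cor:sH-infinite-decomp}, which gives the equivalence of 2-categories $\sH^\trunc \cong \bigoplus_{m=0}^\infty \sH_\epsilon$; equivalent 2-categories have isomorphic Grothendieck groups (since the 1-morphism categories on both sides are equivalent), so this step transports the problem into a sum of copies of $\sH_\epsilon$. Next, I would note that the Grothendieck group functor $K$ of \textsection~\ref{sec:prelims} is defined object-by-object on hom-categories and takes split Grothendieck groups of the latter; a direct sum decomposition of a 2-category at the level of objects and hom-categories therefore induces a corresponding direct sum decomposition of its Grothendieck category, yielding $K\bigl(\bigoplus_{m=0}^\infty \sH_\epsilon\bigr) \cong \bigoplus_{m=0}^\infty K(\sH_\epsilon)$.

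The remaining two identifications are immediate from our main structural theorems: Theorem~\ref{theo:sU-sHepsilon-equivalence} provides the equivalence of 2-categories $\sA \simeq \sH_\epsilon$, so $K(\sH_\epsilon) \cong K(\sA)$, and Theorem~\ref{theo:sA-categorifies-cA} identifies $K(\sA) \cong \cA$. Composing these four steps yields the desired isomorphism.

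There is no substantive obstacle; the entire content lies in the three theorems being cited. The one point that deserves a sentence of care is the compatibility of $K$ with infinite direct sums of 2-categories: since morphism categories in $\bigoplus_{m} \sH_\epsilon$ between objects in distinct summands are zero, the split Grothendieck group of the hom-category in the direct sum is literally the direct sum of the split Grothendieck groups of the summand hom-categories, so the claim reduces to the definition. After addressing that remark, the corollary follows by concatenating the displayed chain of isomorphisms.
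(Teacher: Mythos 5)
Your proposal is correct and follows exactly the paper's argument, which likewise combines Corollary~\ref{cor:sH-infinite-decomp}, Theorem~\ref{theo:sU-sHepsilon-equivalence}, and Theorem~\ref{theo:sA-categorifies-cA}; your explicit remark that $K$ commutes with the infinite direct sum (because cross-summand hom-categories vanish) is a reasonable elaboration of a step the paper leaves implicit. No gaps.
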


\begin{proof}
  This follows by combining Theorem~\ref{theo:sU-sHepsilon-equivalence} with Corollary \ref{cor:sH-infinite-decomp} and Theorem \ref{theo:sA-categorifies-cA}.
\end{proof}

\begin{rem} \label{rem:Kho-conjecture}
  Corollary~\ref{cor:K(Htrunc)} can be viewed as an analogue of Khovanov's Heisenberg categorification conjecture \cite[Conj.~1]{Kho14}.  In the framework of 2-categories, Khovanov's conjecture is that Grothendieck group of $\sH$ is isomorphic to $\bigoplus_{m \in \Z} H$.
\end{rem}

\subsection{A $2$-functor from ${\sH^\trunc}$ to $\sA$} \label{subsec:principal-2-functor}

It follows from Theorem~\ref{theo:sU-sHepsilon-equivalence} that we have an equivalence $\bT \colon \sH_\epsilon \to \sA$ of 2-categories, obtained by inverting $\bS$.  The domain of this 2-functor can be extended by zero to $\sH^\trunc \cong \sH_\epsilon \oplus \sH_\delta$.  Since $\sA$ is idempotent complete, it follows from the universal property of the idempotent completion that this 2-functor is uniquely determined by its restriction $\bT \colon {\sH^\trunc}' \to \sA$ (which we continue to denote by $\bT$).  For future reference, we describe this 2-functor explicitly.

The additive linear $2$-functor $\bT \colon {\sH^\trunc}' \to \sA$ is determined on objects by
\[
  \bzero \mapsto \bzero,\quad n \mapsto \bigoplus_{\lambda \vdash n} \lambda,\quad n \in \N,
\]
and on 1-morphisms by
\[
  \sQ_+ 1_n \mapsto \sum_{\lambda \vdash n} \sum_{i \in \Z} \sF_i 1_\lambda = \sum_{\lambda \vdash n} \sum_{i \in B^+(\lambda)} \sF_i 1_\lambda,\quad
  \sQ_- 1_n \mapsto \sum_{\lambda \vdash n} \sum_{i \in \Z} \sE_i 1_\lambda = \sum_{\lambda \vdash n} \sum_{i \in B^-(\lambda)} \sE_i 1_\lambda.
\]
On 2-morphisms, $\bT$ is given as follows:
\begin{equation} \label{eq:upcross-principal}
  \bT \left(
  \begin{tikzpicture}[>=stealth,baseline={([yshift=-.5ex]current bounding box.center)}]
    \draw [->](0,0) -- (0.5,0.5);
    \draw [->](0.5,0) -- (0,0.5);
    \draw (0.8,0.25) node {\regionlabel{n}};
  \end{tikzpicture}
  \right) = \sum_{\substack{\lambda \vdash n \\ i,j \in \Z}}
  \left( \frac{i-j-1}{i-j}\
  \begin{tikzpicture}[>=stealth,baseline={([yshift=.5ex]current bounding box.center)}]
    \draw [->](0,0) node[anchor=north] {\strandlabel{i}} -- (0.5,0.5);
    \draw [->](0.5,0) node[anchor=north] {\strandlabel{j}} -- (0,0.5);
    \draw (0.8,0.25) node {\regionlabel{\lambda}};
  \end{tikzpicture}
  + \frac{1}{i-j}\
  \begin{tikzpicture}[>=stealth,baseline={([yshift=.5ex]current bounding box.center)}]
    \draw [->](0,0) node[anchor=north] {\strandlabel{i}} -- (0,0.5);
    \draw [->](0.5,0) node[anchor=north] {\strandlabel{j}} -- (0.5,0.5);
    \draw (0.8,0.25) node {\regionlabel{\lambda}};
  \end{tikzpicture}
  \right)
\end{equation}

\details{
  Since $\bT$ maps $\delta_n$ to zero, we have
  \begin{multline*}
    \bT \left(
    \begin{tikzpicture}[>=stealth,baseline={([yshift=-.5ex]current bounding box.center)}]
      \draw [->](0,0) -- (0.5,0.5);
      \draw [->](0.5,0) -- (0,0.5);
      \draw (0.8,0.25) node {\regionlabel{n}};
    \end{tikzpicture}
    \right)
    =
    \sum_{\substack{\lambda \vdash n \\ i,j \in \Z}} \bT \left(
    \begin{tikzpicture}[>=stealth,baseline={([yshift=-.5ex]current bounding box.center)}]
      \draw [->](0,0) -- (1,1);
      \draw [->](1,0) -- (0,1);
      \node at (1,0.5) {\regionlabel{\epsilon_\lambda}};
      \node at (0.5,1) {\regionlabel{\epsilon_{\lambda \boxplus i}}};
      \node at (0.5,0) {\regionlabel{\epsilon_{\lambda \boxplus j}}};
      \node at (-0.2,0.5) {\regionlabel{\epsilon_{\lambda \boxplus i \boxplus j}}};
    \end{tikzpicture}
    \ +\
    \begin{tikzpicture}[>=stealth,baseline={([yshift=-.5ex]current bounding box.center)}]
      \draw [->](0,0) -- (1,1);
      \draw [->](1,0) -- (0,1);
      \node at (1,0.5) {\regionlabel{\epsilon_\lambda}};
      \node at (0.5,1) {\regionlabel{\epsilon_{\lambda \boxplus i}}};
      \node at (0.5,0) {\regionlabel{\epsilon_{\lambda \boxplus i}}};
      \node at (-0.2,0.5) {\regionlabel{\epsilon_{\lambda \boxplus i \boxplus j}}};
    \end{tikzpicture}
    \right)
    \\
    \stackrel{\eqref{eq:crossingRemoval}}{=}
    \sum_{\substack{\lambda \vdash n \\ i,j \in \Z}} \bT \left(
    \begin{tikzpicture}[>=stealth,baseline={([yshift=-.5ex]current bounding box.center)}]
      \draw [->](0,0) -- (1,1);
      \draw [->](1,0) -- (0,1);
      \node at (1,0.5) {\regionlabel{\epsilon_\lambda}};
      \node at (0.5,1) {\regionlabel{\epsilon_{\lambda \boxplus i}}};
      \node at (0.5,0) {\regionlabel{\epsilon_{\lambda \boxplus j}}};
      \node at (-0.2,0.5) {\regionlabel{\epsilon_{\lambda \boxplus i \boxplus j}}};
    \end{tikzpicture}
    \ + \frac{1}{j-i} \left(
    \begin{tikzpicture}[anchorbase]
      \draw[->] (0,0) -- (0,1.4);
      \draw[->] (1,0) -- (1,1.4);
      \node at (1.4,.7) {\regionlabel{\epsilon_\lambda}};
      \node at (.5,.7) {\regionlabel{\epsilon_{\lambda \boxplus i}}};
      \node at (-.6,.7) {\regionlabel{\epsilon_{\lambda\boxplus i \boxplus j}}};
    \end{tikzpicture}
    \right) \right)
    \\
    = \sum_{\substack{\lambda \vdash n \\ i,j \in \Z}}
    \left( \frac{i-j-1}{i-j}\
    \begin{tikzpicture}[>=stealth,baseline={([yshift=.5ex]current bounding box.center)}]
      \draw [->](0,0) node[anchor=north] {\strandlabel{i}} -- (0.5,0.5);
      \draw [->](0.5,0) node[anchor=north] {\strandlabel{j}} -- (0,0.5);
      \draw (0.8,0.25) node {\regionlabel{\lambda}};
    \end{tikzpicture}
    + \frac{1}{i-j}\
    \begin{tikzpicture}[>=stealth,baseline={([yshift=.5ex]current bounding box.center)}]
      \draw [->](0,0) node[anchor=north] {\strandlabel{i}} -- (0,0.5);
      \draw [->](0.5,0) node[anchor=north] {\strandlabel{j}} -- (0.5,0.5);
      \draw (0.8,0.25) node {\regionlabel{\lambda}};
    \end{tikzpicture}
    \right)
  \end{multline*}
  The proofs of the other equations below are similar.
}

\begin{equation} \label{eq:downcross-principal}
  \bT \left(
  \begin{tikzpicture}[>=stealth,baseline={([yshift=-.5ex]current bounding box.center)}]
    \draw [<-](0,0) -- (0.5,0.5);
    \draw [<-](0.5,0) -- (0,0.5);
    \draw (0.8,0.25) node {\regionlabel{n}};
  \end{tikzpicture}
  \right) = \sum_{\substack{\lambda \vdash n \\ i,j \in \Z}}
  \left( \frac{i-j-1}{i-j}\
  \begin{tikzpicture}[>=stealth,baseline={([yshift=.5ex]current bounding box.center)}]
    \draw [<-](0,0) node[anchor=north] {\strandlabel{i}} -- (0.5,0.5);
    \draw [<-](0.5,0) node[anchor=north] {\strandlabel{j}} -- (0,0.5);
    \draw (0.8,0.25) node {\regionlabel{\lambda}};
  \end{tikzpicture}
  + \frac{1}{i-j}\
  \begin{tikzpicture}[>=stealth,baseline={([yshift=.5ex]current bounding box.center)}]
    \draw [<-](0,0) node[anchor=north] {\strandlabel{i}} -- (0,0.5);
    \draw [<-](0.5,0) node[anchor=north] {\strandlabel{j}} -- (0.5,0.5);
    \draw (0.8,0.25) node {\regionlabel{\lambda}};
  \end{tikzpicture}
  \right)
\end{equation}

\begin{equation} \label{eq:rightcross-principal}
  \bT \left(
  \begin{tikzpicture}[>=stealth,baseline={([yshift=-.5ex]current bounding box.center)}]
    \draw[->] (0,0) -- (0.5,0.5);
    \draw[<-] (0.5,0) -- (0,0.5);
    \draw (0.7,0.25) node {\regionlabel{n}};
  \end{tikzpicture}
  \right) = \sum_{\substack{\lambda \vdash n \\ i,j \in \Z}}
  \left(
  \frac{n d_{\lambda \boxminus i} d_{\lambda \boxplus j}}{(n+1) d_\lambda d_{\lambda \boxplus j \boxminus i}}
  \frac{i-j-1}{i-j}\
  \begin{tikzpicture}[>=stealth,baseline={([yshift=-.5ex]current bounding box.center)}]
    \draw[->] (0,0) node[anchor=east] {\strandlabel{j}} -- (0.5,0.5);
    \draw[<-] (0.5,0) -- (0,0.5) node[anchor=east] {\strandlabel{i}};
    \draw (0.7,0.25) node {\regionlabel{\lambda}};
  \end{tikzpicture}
  + \frac{n d_{\lambda \boxminus j} d_{\lambda \boxplus i}}{(n+1) d_\lambda^2} \frac{1}{i-j}\
  \begin{tikzpicture}[>=stealth,baseline={([yshift=-.5ex]current bounding box.center)}]
    \draw[->] (0,1) node [anchor=south] {\strandlabel{i}} -- (0,0.9) arc (180:360:.25) -- (0.5,1);
    \draw[->] (0,0) node [anchor=north] {\strandlabel{j}} -- (0,0.1) arc (180:0:.25) -- (0.5,0);
    \node at (0.7,0.5) {\regionlabel{\lambda}};
  \end{tikzpicture}
  \right)
\end{equation}

\begin{equation} \label{eq:leftcross-principal}
  \bT \left(
  \begin{tikzpicture}[>=stealth,baseline={([yshift=-.5ex]current bounding box.center)}]
    \draw[<-] (0,0) -- (0.5,0.5);
    \draw[->] (0.5,0) -- (0,0.5);
    \draw (0.7,0.25) node {\regionlabel{n}};
  \end{tikzpicture}
  \right) = \sum_{\substack{\lambda \vdash n \\ i,j \in \Z}}
  \left( \frac{i-j-1}{i-j}\
  \begin{tikzpicture}[>=stealth,baseline={([yshift=-.5ex]current bounding box.center)}]
    \draw[<-] (0,0) node[anchor=east] {\strandlabel{j}} -- (0.5,0.5);
    \draw[->] (0.5,0) -- (0,0.5) node[anchor=east] {\strandlabel{i}};
    \draw (0.7,0.25) node {\regionlabel{\lambda}};
  \end{tikzpicture}
  + \frac{1}{i-j}\
  \begin{tikzpicture}[>=stealth,baseline={([yshift=-.5ex]current bounding box.center)}]
    \draw[<-] (0,1) -- (0,0.9) arc (180:360:.25) -- (0.5,1) node [anchor=south] {\strandlabel{j}};
    \draw[<-] (0,0) -- (0,0.1) arc (180:0:.25) -- (0.5,0) node [anchor=north] {\strandlabel{i}};
    \node at (0.7,0.5) {\regionlabel{\lambda}};
  \end{tikzpicture}
  \right)
\end{equation}

\noindent\begin{minipage}{0.5\linewidth}
  \begin{equation}
    \bT \left(
    \begin{tikzpicture}[>=stealth,baseline={([yshift=-.5ex]current bounding box.center)}]
      \draw[->] (0,0) -- (0,0.1) arc (180:0:.3) -- (0.6,0);
      \draw (.9,0.3) node {\regionlabel{n}};
    \end{tikzpicture}
    \right) = \sum_{\substack{\lambda \vdash n \\ i \in \Z}} \frac{n d_{\lambda \boxminus i}}{d_\lambda} \
    \begin{tikzpicture}[>=stealth,baseline={([yshift=.5ex]current bounding box.center)}]
      \draw[->] (0,0) node[anchor=north] {\strandlabel{i}} -- (0,0.1) arc (180:0:.3) -- (0.6,0);
      \draw (.9,0.3) node {\regionlabel{\lambda}};
    \end{tikzpicture}
  \end{equation}
\end{minipage}%
\begin{minipage}{0.5\linewidth}
  \begin{equation}
    \bT \left(
    \begin{tikzpicture}[>=stealth,baseline={([yshift=-.5ex]current bounding box.center)}]
      \draw[->] (0,0) -- (0,-0.1) arc (180:360:.3) -- (0.6,0);
      \draw (.9,-0.3) node {\regionlabel{n}};
    \end{tikzpicture}
    \right) = \sum_{\substack{\lambda \vdash n \\ i \in \Z}} \frac{d_{\lambda \boxplus i}}{(n+1)d_\lambda}\
    \begin{tikzpicture}[>=stealth,baseline={([yshift=-1.3ex]current bounding box.center)}]
      \draw[->] (0,0) node[anchor=south] {\strandlabel{i}} -- (0,-0.1) arc (180:360:.3) -- (0.6,0);
      \draw (.9,-0.3) node {\regionlabel{\lambda}};
    \end{tikzpicture}
  \end{equation}
\end{minipage}\par\vspace{\belowdisplayskip}

\noindent\begin{minipage}{0.5\linewidth}
  \begin{equation}
    \bT \left(
    \begin{tikzpicture}[>=stealth,baseline={([yshift=-.5ex]current bounding box.center)}]
      \draw[<-] (0,0) -- (0,0.1) arc (180:0:.3) -- (0.6,0);
      \draw (.9,0.3) node {\regionlabel{n}};
    \end{tikzpicture}
    \right) = \sum_{\substack{\lambda \vdash n \\ i \in \Z}} \
    \begin{tikzpicture}[>=stealth,baseline={([yshift=.5ex]current bounding box.center)}]
      \draw[<-] (0,0) -- (0,0.1) arc (180:0:.3) -- (0.6,0) node[anchor=north] {\strandlabel{i}};
      \draw (.9,0.3) node {\regionlabel{\lambda}};
    \end{tikzpicture}
  \end{equation}
\end{minipage}%
\begin{minipage}{0.5\linewidth}
  \begin{equation}
    \bT \left(
    \begin{tikzpicture}[>=stealth,baseline={([yshift=-.5ex]current bounding box.center)}]
      \draw[<-] (0,0) -- (0,-0.1) arc (180:360:.3) -- (0.6,0);
      \draw (0.9,-0.3) node {\regionlabel{n}};
    \end{tikzpicture}
    \right) = \sum_{\substack{\lambda \vdash n \\ i \in \Z}} \
    \begin{tikzpicture}[>=stealth,baseline={([yshift=-1.3ex]current bounding box.center)}]
      \draw[<-] (0,0) -- (0,-0.1) arc (180:360:.3) -- (0.6,0) node[anchor=south] {\strandlabel{i}};
      \draw (0.9,-0.3) node {\regionlabel{\lambda}};
    \end{tikzpicture}
  \end{equation}
\end{minipage}\par\vspace{\belowdisplayskip}
Note that, in \eqref{eq:upcross-principal}--\eqref{eq:leftcross-principal}, anytime a denominator is zero, the diagram it multiplies is also zero, and so we ignore such terms.  Note that the non-crossing terms in \eqref{eq:upcross-principal}--\eqref{eq:leftcross-principal} are a result of Lemma~\ref{lem:crossingRemoval}.  We compute the image of a diagram under $\bT$ by applying the above maps to each crossing, cup, and cap, where we interpret the composition of local diagrams where the strand or region labels do not match to be zero.  In this way, the image under $\bT$ of any diagram is a finite linear combination of diagrams in $\sA$.

%
\section{Actions on modules for symmetric groups} \label{sec:action}
%

\subsection{Induced actions and the principal realization} \label{subsec:induced-actions}

It follows from the results of Sections~\ref{sec:sA-def} and~\ref{sec:sHepsilon-A-equivalence} that categorified quantum group actions induce categorical Heisenberg actions and vice versa.  We describe this procedure here.  Recall the categorified quantum group $\sU$ (see Section~\ref{subsec:truncated-KL}) and the 2-category version $\sH$ of Khovanov's monoidal category (see Remark~\ref{rem:fH-discussion}).

Suppose we have an action of $\sU$ in an additive linear 2-category $\sC$, i.e. we have an additive linear 2-functor $\bA \colon \sU \to \sC$.  Recall that the set of objects of $\sU$ is the weight lattice of $\fsl_\infty$.  Suppose that $\bA$ categorifies the basic representation.  It follows that $\bA$ maps to zero all objects of $\sU$ that are weights not corresponding to weights of the basic representation.  Then $\bA$ factors through the truncation $\sU^\trunc$.  By Theorem~\ref{theo:sA-tsU-equivalence}, $\sA$ is equivalent to a sub-2-category of $\sU^\trunc$.  Therefore, we have the following commutative diagram of 2-functors:
\begin{equation}
  \vcenter{
    \xymatrix{
      & & & \sU \ar[dr]^\bA \ar[d] \\
      \sH \ar[r]^{\eqref{eq:truncation-fH-to-sH}} & \sH^\trunc \ar[r]^\bT & \sA \ar[r] & \sU^\trunc \ar[r] & \sC
    }
  }
\end{equation}
Thus, we obtain a 2-functor $\sH \to \sC$.  That is, we have an action of the 2-category version of Khovanov's monoidal category in $\sC$.  This is a categorical restriction of the basic representation to the principal Heisenberg subalgebra.

Conversely, suppose $\bB \colon \sH \to \sC$ is an additive linear 2-functor for some additive linear 2-category $\sC$ mapping all objects $n \in \Z$, $n < 0$, to zero.  Then $\bB$ factors through $\sH^\trunc$.  Since $\sA$ is equivalent to a quotient of $\sU^\trunc$, we have the following commutative diagram of 2-functors:
\begin{equation}
  \vcenter{
    \xymatrix{
      & & & \sH \ar[dr]^\bB \ar[d] \\
      \sU \ar[r] & \sU^\trunc \ar[r] & \sA \ar[r]^\bS & \sH^\trunc \ar[r] & \sC
    }
  }
\end{equation}
where we identify the 2-functor $\bS \colon \sA \to \sH_\epsilon$ with its composition with the inclusion $\sH_\epsilon \to \sH^\tr$.  Therefore we obtain a representation $\sU \to\sC$, which can be viewed as a categorical induced action.  An example of this will be developed in  detail in Section~\ref{subsec:H-action}.

Passing to Grothendieck groups, the 2-functor $\bB$ induces a representation of the Heisenberg algebra $H$ on $K(\sC)$.  If this representation is irreducible then, by the Stone--von Neumann Theorem, it is isomorphic to the Fock space representation.  Then the induced action of $\sU$ on $\sC$ categorifies the basic representation.  It follows from the results of Section~\ref{subsec:A-1morph} that the categories $\sA(\lambda,\mu)$ are semisimple (with at most one simple object) for all $\lambda,\mu \in \cP$.  Therefore, equalities of 1-morphisms in the Grothendieck group $\cA$ imply isomorphisms in $\sA$ of the corresponding 1-morphisms. It thus follows from Theorems~\ref{theo:sA-tsU-equivalence}, \ref{theo:sA-categorifies-cA}, and~\ref{theo:sU-sHepsilon-equivalence} that we have isomorphisms of 1-morphisms in $\sA$ corresponding to the expressions of generators of $\fsl_\infty$ in terms of generators of $H$ appearing in the principal realization (see Section~\ref{subsec:basic-rep}).  Therefore, the results of the current paper yield a categorification of the principal realization.

We note that these techniques are very different from those used in the categorification of the homogeneous realization of the basic representation in affine types ADE described in \cite{CL11}.  The main tools of \cite{CL11} are categorical vertex operators, which are certain complexes in Heisenberg 2-representations.  By contrast, our construction does not involve complexes and thus does not require passing to the homotopy category.

\subsection{Action of $\sH^\trunc$} \label{subsec:H-action}

We now describe an action of the 2-category $\sH^\trunc$ that arises from the action of Khovanov's Heisenberg category on modules for symmetric groups described in \cite[\S3.3]{Kho14}.

Recall the 2-category $\sM$ of Section~\ref{subsec:Sn-module-categories}.  We define an additive linear 2-functor $\bF_\sH \colon {\sH^\trunc}' \to \sM$ as follows.  On objects,
\begin{equation}
  \bF_\sH(\bzero) = \bzero,\quad
  \bF_\sH(n) = \cM_n,\quad n \in \N.
\end{equation}
On 1-morphisms, for $n \in \N$, we define,
\begin{gather}
  \bF_\sH(\sQ_+ 1_n) = (n+1)_n \otimes_{A_n} - \colon \cM_n \to \cM_{n+1}, \\
  \bF_\sH(\sQ_- 1_{n+1}) = \tensor*[_n]{(n+1)}{} \otimes_{A_{n+1}} - \colon \cM_{n+1} \to \cM_n.
\end{gather}

We now define $\bF_\sH$ on 2-morphisms.  The 2-functor $\bF_\sH$ will map 2-morphisms of ${\sH^\trunc}'$ to natural transformations of functors given by tensoring with bimodules.  These natural transformations are given by homomorphisms of the corresponding bimodules.  We define

\noindent\begin{minipage}{0.5\linewidth}
  \begin{equation}
    \bF_\sH \left(\,
    \begin{tikzpicture}[anchorbase]
      \draw [->](0,0) -- (0.5,0.5);
      \draw [->](0.5,0) -- (0,0.5);
      \draw (0.8,0.25) node {\regionlabel{n-1}};
    \end{tikzpicture}
    \right) = R_n,
  \end{equation}
\end{minipage}%
\begin{minipage}{0.5\linewidth}
  \begin{equation}
    \bF_\sH \left(
    \begin{tikzpicture}[anchorbase]
      \draw [<-](0,0) -- (0.5,0.5);
      \draw [<-](0.5,0) -- (0,0.5);
      \draw (-0.3,0.25) node {\regionlabel{n-1}};
    \end{tikzpicture}
    \, \right) = L_n,
  \end{equation}
\end{minipage}\par\vspace{\belowdisplayskip}

\noindent\begin{minipage}{0.5\linewidth}
  \begin{equation}
    \bF_\sH \left(\,
    \begin{tikzpicture}[anchorbase]
      \draw [->](0,0) -- (0.5,0.5);
      \draw [<-](0.5,0) -- (0,0.5);
      \draw (0.6,0.25) node {\regionlabel{n}};
    \end{tikzpicture}
    \right) = \rho,
  \end{equation}
\end{minipage}%
\begin{minipage}{0.5\linewidth}
  \begin{equation}
    \bF_\sH \left(\,
    \begin{tikzpicture}[anchorbase]
      \draw [<-](0,0) -- (0.5,0.5);
      \draw [->](0.5,0) -- (0,0.5);
      \draw (0.6,0.25) node {\regionlabel{n}};
    \end{tikzpicture}
    \right) = \tau,
  \end{equation}
\end{minipage}\par\vspace{\belowdisplayskip}

\noindent\begin{minipage}{0.5\linewidth}
  \begin{equation}
    \bF_\sH \left(\,
    \begin{tikzpicture}[anchorbase]
      \draw[<-] (0,-.6) -- (0,-.5) arc (0:180:.3) -- (-.6,-.6);
      \draw (0.4,-0.3) node {\regionlabel{n+1}};
    \end{tikzpicture}
    \right) = \varepsilon_\rR,
    \end{equation}
\end{minipage}%
\begin{minipage}{0.5\linewidth}
  \begin{equation}
    \bF_\sH \left(\,
    \begin{tikzpicture}[anchorbase]
      \draw[->] (0,.1) -- (0,0) arc (180:360:.3) -- (0.6,.1);
      \draw (.85,-.1) node {\regionlabel{n}};
    \end{tikzpicture}
    \right) = \eta_\rR,
  \end{equation}
\end{minipage}\par\vspace{\belowdisplayskip}

\noindent\begin{minipage}{0.5\linewidth}
  \begin{equation}
    \bF_\sH \left(\,
    \begin{tikzpicture}[anchorbase]
      \draw[->] (0,-.6) -- (0,-.5) arc (0:180:.3) -- (-.6,-.6);
      \draw (0.2,-0.3) node {\regionlabel{n}};
    \end{tikzpicture}
    \right) = \varepsilon_\rL,
    \end{equation}
\end{minipage}%
\begin{minipage}{0.5\linewidth}
  \begin{equation}
    \bF_\sH \left(\,
    \begin{tikzpicture}[anchorbase]
      \draw[<-] (0,.1) -- (0,0) arc (180:360:.3) -- (0.6,.1);
      \draw (1,-.1) node {\regionlabel{n+1}};
    \end{tikzpicture}
    \right) = \eta_\rL,
  \end{equation}
\end{minipage}\par\vspace{\belowdisplayskip}
\noindent where
\begin{gather}
  R_n \colon (n+1)_{n-1} \to (n+1)_{n-1},\quad a \mapsto as_n, \label{eq:Rn-def} \\
  L_n \colon \tensor*[_{n-1}]{(n+1)}{} \to \tensor*[_{n-1}]{(n+1)}{},\quad a \mapsto s_na, \label{eq:Ln-def}
\end{gather}
$\rho$ and $\tau$ are defined in \eqref{eq:rho-def} and \eqref{eq:tau-def}, respectively, and $\varepsilon_\rR$, $\eta_\rR$, $\varepsilon_\rL$, and $\eta_\rL$ are the adjunction maps defined in Proposition~\ref{prop:adjunction-maps}.  It follows from the results of \cite[\S3.3]{Kho14} that $\bF_\sH$ respects the local relations and topological invariance in the definition of $\sH^\trunc$.

Since the only idempotent 1-morphisms in $\sM$ are the identity 1-morphisms and all idempotent 2-morphisms in $\sM$ split, the 2-functor $\bF_\sH$ induces a 2-functor (which we denote by the same symbol)
\[
  \bF_\sH \colon \sH^\trunc \to \sM.
\]

\begin{lem} \label{lem:epsilon_lambda-action}
  The 2-functor $\bF_\sH$ maps the 2-morphism $\epsilon_\lambda$ to the bimodule map $(n) \to (n)$ given by multiplication by the central idempotent $e_\lambda$.
\end{lem}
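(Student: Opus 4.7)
Proof plan:

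The plan is to compute $\bF_\sH(\epsilon_\lambda)$ directly. By Lemma~\ref{lem:orthoid}, we may write $\epsilon_\lambda = \frac{1}{n!}\, L_n(e_\lambda)$, where $L_n(z)$ is the 2-endomorphism of $1_n$ consisting of a clockwise loop of $n$ parallel upward strands carrying $z \in \Q S_n$, with the enclosed region labelled $0$. Since $e_\lambda$ lies in the center of $\Q S_n$, it suffices to show that $\bF_\sH(L_n(z))$ equals multiplication by $n!\,z$ on $(n)$ for every central $z$; the lemma then follows upon dividing by $n!$.

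Reading $L_n(z)$ from bottom to top and applying $\bF_\sH$ via Proposition~\ref{prop:adjunction-maps} presents the resulting bimodule endomorphism of $(n)$ as the composition
\[
(n) \xrightarrow{\mathrm{cups}} (n) \otimes_\Q (n) \xrightarrow{\phi_z \otimes \id} (n) \otimes_\Q (n) \xrightarrow{\mathrm{caps}} (n),
\]
where $\mathrm{cups}$ is the nested composition of $\eta_\rL^{(n-1)}, \eta_\rL^{(n-2)}, \ldots, \eta_\rL^{(0)}$; $\mathrm{caps}$ is the nested composition of $\varepsilon_\rR^{(0)}, \varepsilon_\rR^{(1)}, \ldots, \varepsilon_\rR^{(n-1)}$; and $\phi_z$ is the endomorphism $a \mapsto az$ of the bimodule $(n)_0$ of $\sQ_+^n 1_0$ induced by the box carrying $z$ on the upward strands (compare \eqref{eq:Rn-def}). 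The cap side is routine: each inner $\varepsilon_\rR^{(k)}$ with $k < n-1$ is the natural quotient $(n) \otimes_{A_k}(n) \twoheadrightarrow (n) \otimes_{A_{k+1}}(n)$ arising from $A_k \hookrightarrow A_{k+1}$, while the outermost performs multiplication, so $\mathrm{caps}(a \otimes b) = ab$.

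The key computation is
\[
\mathrm{cups}(1) = \sum_{w \in S_n} w^{-1} \otimes w \in (n) \otimes_\Q (n),
\]
which we establish by induction on $n$. The base $n=1$ is immediate from $\eta_\rL^{(0)}(1) = 1 \otimes 1$. For the inductive step, the outermost cup contributes
\[
\eta_\rL^{(n-1)}(1) = \sum_{i=1}^{n} \sigma_i \otimes \sigma_i^{-1}, \qquad \sigma_i := s_i s_{i+1} \dotsm s_{n-1} \text{ (and } \sigma_n := 1\text{)},
\]
by Proposition~\ref{prop:adjunction-maps}. Inserting $\mathrm{cups}_{n-1}(1) = \sum_{u \in S_{n-1}} u^{-1} \otimes u$ into the middle $A_{n-1}$-factor of each summand, via the induction hypothesis, produces $\sum_{i,u} \sigma_i u^{-1} \otimes u \sigma_i^{-1}$. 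Since $\sigma_i(n) = i$ for all $i$, the set $\{\sigma_i\}_{i=1}^n$ is a complete set of left coset representatives for $S_{n-1}$ in $S_n$, so the assignment $(u,i) \mapsto w = u \sigma_i^{-1}$ is a bijection $S_{n-1} \times \{1, \ldots, n\} \to S_n$; substituting $w^{-1} = \sigma_i u^{-1}$ then identifies the sum with $\sum_{w \in S_n} w^{-1} \otimes w$.

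Combining everything, for central $z$ we obtain
\[
\bF_\sH(L_n(z))(1) = \mathrm{caps}\Bigl(\sum_{w \in S_n} w^{-1} z \otimes w\Bigr) = \sum_{w \in S_n} w^{-1} z w = n!\, z,
\]
where the last equality uses centrality of $z$. Since both $\bF_\sH(L_n(z))$ and multiplication by $n!\,z$ are $(A_n, A_n)$-bimodule endomorphisms of $(n)$, they are determined by their value on $1$ and hence coincide. Setting $z = e_\lambda$ and dividing by $n!$ yields the lemma. The main obstacle is the inductive identification of $\mathrm{cups}(1)$: one must carefully track the iterated tensor products over the chain $A_0 \subseteq A_1 \subseteq \cdots \subseteq A_{n-1}$ and correctly match the parabolic coset decomposition of $S_n$ to the nested $\eta_\rL$ expansions.
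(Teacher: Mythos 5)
Your proof is correct and takes the same route as the paper: the paper's proof is a one-line assertion that $\bF_\sH(\epsilon_\lambda)$ sends $a \mapsto a\,\frac{1}{n!}\sum_{w\in S_n} w e_\lambda w^{-1} = a e_\lambda$, which is exactly the computation you carry out in full (unwinding the nested $\eta_\rL$'s and $\varepsilon_\rR$'s and using the coset decomposition to produce $\sum_{w} w^{-1}\otimes w$). Your write-up simply supplies the details the paper omits.
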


\begin{proof}
  We compute that $\bF_\sH(\epsilon_\lambda)$ is the bimodule map $(n) \to (n)$ given by
  \[
    a
    \mapsto a \frac{1}{n!} \sum_{w \in S_n} w e_\lambda w^{-1}
    = a e_\lambda. \qedhere
  \]
\end{proof}

\begin{cor}
  The 2-functor $\bF_\sH$ maps the object $(n,1_n,\epsilon_\lambda)$ of $\sH^\trunc$ to $\cM_\lambda$ and maps $\sH_\delta$ to zero.
\end{cor}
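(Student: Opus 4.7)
The plan is to deduce both assertions directly from Lemma~\ref{lem:epsilon_lambda-action} together with the explicit definition of the idempotent completion. First I would unpack what the 2-functor $\bF_\sH$ does on objects of the form $(n,1_n,\epsilon)$. By the universal property of the idempotent completion (see the paragraph following Definition~\ref{def:idem-completion}), and because $\sM$ has all idempotent 2-morphisms of identity 1-morphisms splitting (since $\cM_n = A_n\md$ is abelian and so every idempotent endofunctor of the identity splits), the image $\bF_\sH(n,1_n,\epsilon)$ is the subcategory of $\cM_n$ cut out by the idempotent natural transformation $\bF_\sH(\epsilon)$ of the identity functor on $\cM_n$.

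For the first assertion, Lemma~\ref{lem:epsilon_lambda-action} identifies $\bF_\sH(\epsilon_\lambda)$ with the natural transformation of the identity functor on $\cM_n = A_n\md$ given by multiplication by the central idempotent $e_\lambda \in Z(\Q S_n)$. Since $e_\lambda$ is the projection onto the $V_\lambda$-isotypic component (as reviewed around equation~\eqref{eq:central-idempotent-explicit}), its image is exactly $\cM_\lambda$ in the decomposition \eqref{eq:Mn-decomp}. Hence $\bF_\sH(n,1_n,\epsilon_\lambda) = \cM_\lambda$, as required.

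For the second assertion, it suffices by definition of $\sH_\delta$ (and additivity of $\bF_\sH$) to show that $\bF_\sH(\delta_n) = 0$ for every $n \in \N$, since any object of $\sH_\delta$ has the form $(n,1_n,\epsilon)$ with $\epsilon = \epsilon \delta_n$. Recall from Lemma~\ref{lem:orthoid} that $\delta_n = \id_n - \epsilon_n = \id_n - \sum_{\lambda \vdash n} \epsilon_\lambda$. Applying $\bF_\sH$ and using Lemma~\ref{lem:epsilon_lambda-action}, this becomes the bimodule map $(n) \to (n)$ given by multiplication by $1 - \sum_{\lambda \vdash n} e_\lambda$. But the central idempotents $\{e_\lambda : \lambda \vdash n\}$ form a complete set of orthogonal idempotents in $\Q S_n$, so $\sum_{\lambda \vdash n} e_\lambda = 1$, giving $\bF_\sH(\delta_n) = 0$. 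Consequently $\bF_\sH(n,1_n,\epsilon\delta_n)$ is zero as a summand of zero.

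There is no serious obstacle here; the proof is essentially bookkeeping once Lemma~\ref{lem:epsilon_lambda-action} is in hand. The only subtle point worth stating carefully is that passing from the action of $\bF_\sH$ on ${\sH^\trunc}'$ to its action on $\sH^\trunc = \Kar({\sH^\trunc}')$ is legitimate precisely because $\sM$ is idempotent complete in the strong sense of Definition~\ref{def:idem-completion} (this is already noted in the paragraph preceding Lemma~\ref{lem:epsilon_lambda-action}), so the identification of the image $(n,1_n,\epsilon_\lambda)$ with the splitting of the idempotent $\bF_\sH(\epsilon_\lambda)$ is unambiguous.
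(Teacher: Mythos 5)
Your proof is correct and follows exactly the route the paper intends: the corollary is stated there without proof as an immediate consequence of Lemma~\ref{lem:epsilon_lambda-action}, and your argument simply fills in the evident details (the splitting of the idempotent $e_\lambda$ gives $\cM_\lambda$, and $\sum_{\lambda \vdash n} e_\lambda = 1$ forces $\bF_\sH(\delta_n)=0$). Nothing is missing.
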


\subsection{Action of $\sA$} \label{subsec:A-action}

The composition
\[
  \bF_\sA := \bF_\sH \circ \bS
\]
is an additive linear 2-functor that defines an action of $\sA$ on modules for symmetric groups.  For future reference, we describe this 2-functor explicitly here.

On objects, we have
\begin{equation}
  \bF_\sA(\bzero) = \bzero,\quad
  \bF_\sA(\lambda) = \cM_\lambda,\quad \lambda \in \cP.
\end{equation}

On 1-morphisms, for $\lambda \in \cP$ with an addable $i$-box, it follows from \eqref{eq:i-induction-idempotent-decomp} that
\begin{equation} \label{eq:FU-action-F_i}
  \bF_\sA(\sF_i 1_\lambda) = (n+1)_n^i \otimes_{A_n} - \colon \cM_\lambda \to \cM_{\lambda \boxplus i}, \quad \text{where } n = |\lambda|.
\end{equation}
Similarly, for $\lambda \in \cP$ with a removable $i$-box, it follows from \eqref{eq:i-restriction-idempotent-decomp} that
\begin{equation} \label{eq:FU-action-E_i}
  \bF_\sA(\sE_i 1_\lambda) = \tensor*[_{n-1}^i]{(n)}{} \otimes_{A_n} - \colon \cM_\lambda \to \cM_{\lambda \boxminus i},\quad \text{where } n = |\lambda|.
\end{equation}

We now describe $\bF_\sA$ on 2-morphisms.  The 2-functor $\bF_\sA$ maps 2-morphisms of $\sA$ to natural transformations of functors given by tensoring with bimodules.  These natural transformations are given by homomorphisms of the corresponding bimodules.  For  $i,j \in \Z$ with $i \ne j$, and $\lambda \vdash n$, we have

\noindent\begin{minipage}{0.5\linewidth}
  \begin{equation} \label{eq:upcross-action}
    \bF_\sA \left(
    \begin{tikzpicture}[>=stealth,baseline={([yshift=0]current bounding box.center)}]
      \draw [->](0,0) node[anchor=north] {\strandlabel{i}} -- (0.5,0.5);
      \draw [->](0.5,0) node[anchor=north] {\strandlabel{j}} -- (0,0.5);
      \draw (0.7,0.25) node {\regionlabel{\lambda}};
    \end{tikzpicture}
    \right)
    = R_{n+1}|_{i,j}^{j,i},
  \end{equation}
\end{minipage}%
\begin{minipage}{0.5\linewidth}
  \begin{equation} \label{eq:downcross-action}
    \bF_\sA \left(
    \begin{tikzpicture}[>=stealth,baseline={([yshift=0]current bounding box.center)}]
      \draw [<-](0,0) node[anchor=north] {\strandlabel{i}} -- (0.5,0.5);
      \draw [<-](0.5,0) node[anchor=north] {\strandlabel{j}} -- (0,0.5);
      \draw (0.7,0.25) node {\regionlabel{\lambda}};
    \end{tikzpicture}
    \right)
    = L_{n-1}|_{i,j}^{j,i},
  \end{equation}
\end{minipage}\par\vspace{\belowdisplayskip}

\begin{equation} \label{eq:F-rightcross}
  \bF_\sA \left(
  \begin{tikzpicture}[>=stealth,baseline={([yshift=-.5ex]current bounding box.center)}]
    \draw[->] (0,0) node[anchor=east] {\strandlabel{j}} -- (0.5,0.5);
    \draw[<-] (0.5,0) -- (0,0.5) node[anchor=east] {\strandlabel{i}};
    \draw (0.7,0.25) node {\regionlabel{\lambda}};
  \end{tikzpicture}
  \right) = \frac{i-j}{i-j-1} \frac{(n+1)d_\lambda d_{\lambda \boxminus i \boxplus j}}{|\lambda| d_{\lambda \boxminus i} d_{\lambda \boxplus j}} \rho|_{j,i}^{i,j},
\end{equation}
\begin{equation} \label{eq:F-leftcross}
  \bF_\sA \left(
  \begin{tikzpicture}[>=stealth,baseline={([yshift=-.5ex]current bounding box.center)}]
    \draw[<-] (0,0) node[anchor=east] {\strandlabel{j}} -- (0.5,0.5);
    \draw[->] (0.5,0) -- (0,0.5) node[anchor=east] {\strandlabel{i}};
    \draw (0.7,0.25) node {\regionlabel{\lambda}};
  \end{tikzpicture}
  \right) = \frac{i-j}{i-j-1} \tau|_{j,i}^{i,j},
\end{equation}
\noindent\begin{minipage}{0.5\linewidth}
  \begin{equation} \label{eq:FA-right-cap}
    \bF_\sA \left(
    \begin{tikzpicture}[>=stealth,baseline={([yshift=-.5ex]current bounding box.center)}]
      \draw[->] (0,0) node[anchor=north] {\strandlabel{i}} -- (0,0.1) arc (180:0:.3) -- (0.6,0);
      \draw (.9,0.3) node {\regionlabel{\lambda}};
    \end{tikzpicture}
    \right) = \frac{d_\lambda}{n d_{\lambda \boxminus i}} \varepsilon_\rR^{i,i},
    \end{equation}
\end{minipage}%
\begin{minipage}{0.5\linewidth}
  \begin{equation} \label{eq:FU-right-cup}
    \bF_\sA \left(
    \begin{tikzpicture}[>=stealth,baseline={([yshift=-.5ex]current bounding box.center)}]
      \draw[->] (0,0) node[anchor=south] {\strandlabel{i}} -- (0,-0.1) arc (180:360:.3) -- (0.6,0);
      \draw (.9,-0.3) node {\regionlabel{\lambda}};
    \end{tikzpicture}
    \right) = \frac{(n+1)d_\lambda}{d_{\lambda \boxplus i}} \eta_\rR^{i,i},
  \end{equation}
\end{minipage}\par\vspace{\belowdisplayskip}

\noindent\begin{minipage}{0.5\linewidth}
  \begin{equation}
    \bF_\sA \left(
    \begin{tikzpicture}[>=stealth,baseline={([yshift=-.5ex]current bounding box.center)}]
      \draw[<-] (0,0) -- (0,0.1) arc (180:0:.3) -- (0.6,0) node[anchor=north] {\strandlabel{i}};
      \draw (.9,0.3) node {\regionlabel{\lambda}};
    \end{tikzpicture}
    \right) = \varepsilon_\rL^{i,i},
  \end{equation}
\end{minipage}%
\begin{minipage}{0.5\linewidth}
  \begin{equation} \label{eq:FA-left-cup}
    \bF_\sA \left(
    \begin{tikzpicture}[>=stealth,baseline={([yshift=-.5ex]current bounding box.center)}]
      \draw[<-] (0,0) -- (0,-0.1) arc (180:360:.3) -- (0.6,0) node[anchor=south] {\strandlabel{i}};
      \draw (0.9,-0.3) node {\regionlabel{\lambda}};
    \end{tikzpicture}
    \right) = \eta_\rL^{i,i}.
  \end{equation}
\end{minipage}\par\vspace{\belowdisplayskip}
\noindent where, for $i,j,k,\ell \in \Z$, we define the components
\begin{gather}
  R_{n+1}|_{i,j}^{k,\ell} \colon (n+2)_n^{i,j} \hookrightarrow (n+2)_n \xrightarrow{R_{n+1}} (n+2)_n \twoheadrightarrow (n+2)_n^{k,\ell}, \\
  L_{n-1}|_{i,j}^{k,\ell} \colon \tensor*[_{n-2}^{i,j}]{(n)}{} \hookrightarrow {}_{n-2}(n) \xrightarrow{L_{n-1}} {}_{n-2}(n) \twoheadrightarrow \tensor*[_{n-2}^{k,\ell}]{(n)}{}, \\
  \rho|_{j,i}^{k,\ell} \colon (n)_{n-1}^j {}^i(n) \hookrightarrow (n)_{n-1}(n) \xrightarrow{\rho} {}_n(n+1)_n \twoheadrightarrow {}_n^k (n+1)_n^\ell, \\
  \tau|_{j,i}^{k,\ell} \colon {}_n^j(n+1)_n^i \hookrightarrow {}_n(n+1)_n \xrightarrow{\tau} (n)_{n-1}(n) \twoheadrightarrow (n)_{n-1}^k{}^\ell(n), \\
  \varepsilon_\rR^{i,j} \colon (n+1)_n^i\tensor*[^j]{(n+1)}{} \hookrightarrow (n+1)_n(n+1) \xrightarrow{\varepsilon_\rR} (n+1), \label{eq:varepsilonRij-def} \\
  \eta_\rR^{i,j} \colon (n) \xrightarrow{\eta_\rR} \tensor*[_n]{(n+1)}{_n} \twoheadrightarrow \tensor*[_n^i]{(n+1)}{_n^j}, \label{eq:etaRij-def} \\
  \varepsilon_\rL^{i,j} \colon \tensor*[_n^i]{(n+1)}{_n^j} \hookrightarrow \tensor*[_n]{(n+1)}{_n} \xrightarrow{\varepsilon_\rL} (n), \label{eq:varepsilonLij-def} \\
  \eta_\rL^{i,j} \colon (n+1) \xrightarrow{\eta_\rL} (n+1)_n(n+1) \twoheadrightarrow (n+1)_n^i\tensor*[^j]{(n+1)}{}. \label{eq:etaLij-def}
\end{gather}
Note that any time the denominator in a coefficient in \eqref{eq:F-rightcross}--\eqref{eq:FU-right-cup} is zero, the corresponding diagram is zero and so we can ignore such expressions.  For instance, in \eqref{eq:F-rightcross}, when $i = j+1$, the crossing of strands colored $i$ and $j$ is zero (see Remark~\ref{rem:double-cross-diff-by-one}).

It is possible to write the equations \eqref{eq:upcross-action}--\eqref{eq:F-leftcross} in a manner that avoids one of the eigenspace projections.  For example, we have
\begin{multline}
  \bF_\sA \left(
  \begin{tikzpicture}[>=stealth,baseline={([yshift=0]current bounding box.center)}]
    \draw [->](0,0) node[anchor=north] {\strandlabel{i}} -- (0.5,0.5);
    \draw [->](0.5,0) node[anchor=north] {\strandlabel{j}} -- (0,0.5);
    \draw (0.7,0.25) node {\regionlabel{\lambda}};
  \end{tikzpicture}
  \right)=
  \bF_\sH \left(
  \begin{tikzpicture}[anchorbase]
    \draw[->] (0,0) -- (1.4,1.4);
    \draw[->] (1.4,0) -- (0,1.4);
    \node at (1.2,.7) {\regionlabel{\epsilon_\lambda}};
    \node at (.7,.2) {\regionlabel{\epsilon_{\lambda\boxplus j}}};
    \node at (.7,1.2) {\regionlabel{\epsilon_{\lambda \boxplus i}}};
    \node at (0,.7) {\regionlabel{\epsilon_{\lambda\boxplus i \boxplus j}}};
  \end{tikzpicture}
  \right) = \bF_\sH \left(
  \begin{tikzpicture}[anchorbase]
    \draw[->] (0,0) -- (1.4,1.4);
    \draw[->] (1.4,0) -- (0,1.4);
    \node at (1.2,.7) {\regionlabel{\epsilon_\lambda}};
    \node at (.7,.2) {\regionlabel{\epsilon_{\lambda\boxplus j}}};
    \node at (0,.7) {\regionlabel{\epsilon_{\lambda\boxplus i \boxplus j}}};
  \end{tikzpicture}
  -
  \begin{tikzpicture}[anchorbase]
    \draw[->] (0,0) -- (1.4,1.4);
    \draw[->] (1.4,0) -- (0,1.4);
    \node at (1.2,.7) {\regionlabel{\epsilon_\lambda}};
    \node at (.7,.2) {\regionlabel{\epsilon_{\lambda\boxplus j}}};
    \node at (.7,1.2) {\regionlabel{\epsilon_{\lambda \boxplus j}}};
    \node at (0,.7) {\regionlabel{\epsilon_{\lambda\boxplus i \boxplus j}}};
  \end{tikzpicture}
  \right)
  \\
  = \frac{i-j}{i-j-1} R_{n+1} {}^r(e_{\lambda \boxplus i \boxplus j} e_{\lambda \boxplus j} e_\lambda) - \frac{1}{i-j-1},
\end{multline}
where ${}^r z$ denotes right multiplication by an element $z$, so that ${}^r(e_{\lambda \boxplus i \boxplus j} e_{\lambda \boxplus j} e_\lambda)$ is projection onto $(n+2)_n^{i,j} \subseteq (n+2)_n$.

\begin{prop} \label{prop:sU-sM-equivalence}
  The 2-functor $\bF_\sA = \bF_\sH \circ \bS \colon \sA \to \sM$ is an equivalence of $2$-categories.
\end{prop}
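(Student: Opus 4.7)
The strategy is to verify the three defining conditions of an equivalence of 2-categories for $\bF_\sA$: essential surjectivity on objects, essential fullness on 1-morphisms, and full faithfulness on 2-morphisms.

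Essential surjectivity on objects is immediate: every object of $\sM$ is by definition a finite direct sum of the $\cM_\lambda$, and $\bF_\sA(\lambda)=\cM_\lambda$ by construction.

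For essential fullness on 1-morphisms, the plan is to use the Jucys--Murphy eigenspace decomposition \eqref{eq:ind-res-decomp}, which decomposes the induction and restriction bimodules into the JM summands $(n+1)_n^i$ and $\tensor*[_{n-1}^i]{(n)}{}$. By \eqref{eq:i-ind-res-irred}, the summand $(n+1)_n^i \otimes_{A_n} -$ sends $V_\lambda$ to $V_{\lambda \boxplus i}$, so under the equivalences \eqref{eq:MV-V-equivalence} it is (a single copy of) the identity functor $\cV \to \cV$, hence indecomposable. Similarly for the restriction summands. Since the 1-morphisms of $\sM$ are by definition direct sums of compositions of direct summands of induction and restriction, and these summands are themselves direct sums of the JM pieces $\bF_\sA(\sF_i 1_\lambda)$ and $\bF_\sA(\sE_i 1_\lambda)$ (using \eqref{eq:FU-action-F_i} and \eqref{eq:FU-action-E_i}), essential fullness follows.

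Full faithfulness on 2-morphisms is the heart of the argument, and I would carry it out by dimension matching plus an invertibility observation. On the $\sA$-side, Proposition~\ref{prop:matching-2-morphisms} shows that for parallel nonzero 1-morphisms $\mathsf{P}, \mathsf{Q}$ that are sequences of $\sE_i$'s and $\sF_i$'s, the space $\2Mor_\sA(\mathsf{P},\mathsf{Q})$ is one-dimensional and every nonzero 2-morphism there is invertible (a colored matching, up to scalar). On the $\sM$-side, the decategorification discussion following \eqref{eq:KM-to-V-functor} combined with the equivalences \eqref{eq:MV-V-equivalence} shows that every 1-morphism $\cM_\lambda \to \cM_\mu$ is isomorphic to $1_\cV^{\oplus n}$ for some $n \ge 0$, so the 2-morphism space between any two 1-morphisms isomorphic to a single copy of $1_\cV$ is one-dimensional. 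Thus both 2-morphism spaces have the same dimension, and to deduce faithfulness it suffices to check that $\bF_\sA$ sends a nonzero 2-morphism to a nonzero one; but since every nonzero 2-morphism in $\sA$ of the relevant type is invertible, its image under the 2-functor $\bF_\sA$ is an invertible natural transformation and hence nonzero.

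The main obstacle is the dimension/parallelism bookkeeping: one must identify the correct pairs $(\mathsf{P},\mathsf{Q})$ of parallel 1-morphisms in $\sA$ with the corresponding pairs in $\sM$, and confirm that the image under $\bF_\sA$ of $\mathsf{P}$ and $\mathsf{Q}$ really are parallel in $\sM$ (both landing in the same $\Hom_\sM(\cM_\lambda,\cM_\mu)$ slot up to isomorphism, with a single-copy-of-$1_\cV$ presentation). Once this is in place, both fullness and faithfulness reduce to one-line invertibility statements and the proof concludes.
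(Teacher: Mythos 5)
Your proposal is correct and follows essentially the same route as the paper's proof: essential surjectivity is immediate from the definition, essential fullness on 1-morphisms comes down to the equivalences $\cM_\lambda \cong \cV$ together with the fact that the Jucys--Murphy summands (the images of $\sE_i 1_\lambda$ and $\sF_i 1_\lambda$) are single copies of $1_\cV$, and full faithfulness on 2-morphisms follows from Proposition~\ref{prop:matching-2-morphisms} (one-dimensionality and invertibility of nonzero 2-morphisms on the $\sA$ side) matched against the one-dimensional space $\2Mor(1_\cV,1_\cV)$ on the $\sM$ side. The ``bookkeeping'' you flag at the end is exactly what the paper handles by reducing every 1-morphism of $\sA$ to the normal form of Proposition~\ref{prop:A-1-morph-spaces} and then concluding by linearity.
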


\begin{proof}
  By definition, $\bF_\sA$ is essentially surjective on objects.  Consider $\lambda,\mu\in\cP$.  By Proposition~\ref{prop:A-1-morph-spaces} any $1$-morphism in $\1Mor_{\sA}(\lambda,\mu)$ is isomorphic to a multiple of one of the form $P=\sF_{i_1} \sF_{i_2} \dotsm \sF_{i_k} \sE_{j_1} \sE_{j_2} \dotsm \sE_{j_\ell} 1_\lambda$.

  Recall that we have canonical equivalences $\sM_\lambda\cong \cV$ and  $\sM_\mu\cong \cV$ (see Section \ref{subsec:Sn-module-categories}).  Under these equivalences, $\1Mor_{\sM}(\cM_\lambda,\cM_\mu)=\{1_\cV^{\oplus n}:n\geq0\}$, and $\bF_\sA(P) = 1_\cV$.  It follows that $\bF_\sA$ is essentially full on $1$-morphisms.

  Given $\mathsf{P},\mathsf{Q} \in \1Mor_{\sA}(\lambda,\mu)$ two nonzero $1$-morphisms as above, by Proposition~\ref{prop:matching-2-morphisms} we have that $\2Mor_{\sA}(\mathsf{P},\mathsf{Q})$ is one-dimensional.  Since $\2Mor(1_\cV, 1_\cV)$ is also one-dimensional and $\bF_\sA$ preserves 2-isomorphisms, it follows that $\bF_\sA$ induces an isomorphism
  \[
    \2Mor_\sA(\mathsf{P},\mathsf{Q}) \cong \2Mor_{\sM}(\bF_\sA(\mathsf{P}),\bF_\sA(\mathsf{Q})).
  \]
  By linearity we conclude that $\bF_\sA$ is fully faithful on 2-morphisms.  Thus $\bF_\sA$ is an equivalence.
\end{proof}

We can now complete the proofs of Theorems~\ref{theo:sU-sHepsilon-equivalence} and~\ref{theo:sA-categorifies-cA}.

\begin{cor} \label{cor:Psi-faithful-on-2-morphisms}
  The 2-functor $\bS$ is faithful on 2-morphisms.
\end{cor}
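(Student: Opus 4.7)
The plan is to deduce faithfulness of $\bS$ on 2-morphisms from the fact that the composition $\bF_\sA = \bF_\sH \circ \bS \colon \sA \to \sM$ is an equivalence of 2-categories, which is the content of Proposition~\ref{prop:sU-sM-equivalence}. Since any equivalence of 2-categories is in particular faithful on 2-morphisms, and $\bS$ is the first factor in this composition, faithfulness passes to $\bS$: if $\alpha, \beta$ are parallel 2-morphisms in $\sA$ with $\bS(\alpha) = \bS(\beta)$, then applying $\bF_\sH$ gives $\bF_\sA(\alpha) = \bF_\sA(\beta)$, and faithfulness of $\bF_\sA$ forces $\alpha = \beta$.

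The only subtlety to note is that Proposition~\ref{prop:sU-sM-equivalence} is proved independently of the present corollary; its proof relies on Propositions~\ref{prop:A-1-morph-spaces} and~\ref{prop:matching-2-morphisms} to describe $1$- and $2$-morphism spaces in $\sA$, together with the canonical equivalences $\cM_\lambda \cong \cV$, so no circular reasoning is introduced. In particular, faithfulness of $\bF_\sA$ on 2-morphisms is established in that proof by comparing one-dimensional Hom-spaces on both sides, not by first knowing $\bS$ to be faithful. Consequently, once Proposition~\ref{prop:sU-sM-equivalence} is in hand, the corollary is a one-line formal consequence.

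There is no real obstacle here; the step worth emphasizing is simply that 2-functors faithful on 2-morphisms have the cancellation property $\bG \circ \bS$ faithful $\Rightarrow$ $\bS$ faithful (applied with $\bG = \bF_\sH$), which is immediate from the definition of faithfulness applied pointwise to each $\Hom$ space $\2Mor_\sA(\mathsf{P}, \mathsf{Q})$.
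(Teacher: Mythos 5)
Your proposal is correct and matches the paper's intended argument: the corollary is stated immediately after Proposition~\ref{prop:sU-sM-equivalence} precisely so that faithfulness of $\bS$ follows from faithfulness of the composite $\bF_\sA = \bF_\sH \circ \bS$ by the standard cancellation property. Your remark on non-circularity is also accurate, since the proof of Proposition~\ref{prop:sU-sM-equivalence} rests only on Propositions~\ref{prop:A-1-morph-spaces} and~\ref{prop:matching-2-morphisms}.
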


\begin{cor} \label{cor:dotU0-KA-injective}
  The functor $\cA \to K(\sA)$ of Theorem~\ref{theo:sA-categorifies-cA} is faithful.
\end{cor}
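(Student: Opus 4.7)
The plan is to exploit Proposition~\ref{prop:sU-sM-equivalence}, which provides an equivalence $\bF_\sA \colon \sA \to \sM$ of 2-categories. Passing to Grothendieck groups, this induces an equivalence $K(\bF_\sA) \colon K(\sA) \to K(\sM)$, and composing with the equivalence $K(\sM) \to \cV$ of \eqref{eq:KM-to-V-functor} yields an equivalence of categories $K(\sA) \xrightarrow{\sim} \cV$.

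I would then check that the composition of the functor $\cA \to K(\sA)$ with this equivalence $K(\sA) \to \cV$ coincides with the equivalence $\bfr \colon \cA \to \cV$ of \eqref{eq:rhoA-def}. This is a verification on generators: on objects, $\lambda \in \cP$ is sent to $\Q s_\lambda$ along both paths; on the generating morphisms $f_i 1_\lambda$ and $e_i 1_\lambda$, the check reduces to combining the explicit formulas \eqref{eq:FU-action-F_i} and \eqref{eq:FU-action-E_i} with the isomorphisms \eqref{eq:i-ind-res-irred}, measured against the definition \eqref{eq:sl-action-on-Sym} of the action on $\Sym$.

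With that commutativity established, faithfulness of $\cA \to K(\sA)$ is formal: since $\bfr$ is an equivalence and hence faithful, and since it factors as $\cA \to K(\sA) \xrightarrow{\sim} \cV$ with the second arrow an equivalence, the first arrow must be faithful. Combined with the surjectivity already recorded in the proof of Theorem~\ref{theo:sA-categorifies-cA}, this upgrades $\cA \to K(\sA)$ to an isomorphism of categories.

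The only non-formal step is the identification of the composed functor with $\bfr$, and the potential wrinkle is that $K(\bF_\sA)$ is defined via the split Grothendieck group on each morphism category; but the first isomorphism in \eqref{eq:i-ind-res-irred} identifies $\bF_\sA(\sF_i 1_\lambda)$ with the canonical generator of $\sM(\cM_\lambda, \cM_{\lambda \boxplus i})$ singled out by \eqref{eq:MV-V-equivalence}, and similarly for $\sE_i 1_\lambda$, so the verification becomes mechanical once this bookkeeping is made explicit.
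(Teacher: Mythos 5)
Your argument is essentially the paper's own proof: the authors establish the commutative square $\cA \to K(\sA) \xrightarrow{[\bF_\sA]} K(\sM) \to \cV$ whose composite is $\bfr$, and conclude faithfulness of $\cA \to K(\sA)$ from faithfulness of $\bfr$. Your additional remarks on verifying commutativity on the generators $e_i 1_\lambda$, $f_i 1_\lambda$ via \eqref{eq:i-ind-res-irred} are correct bookkeeping that the paper leaves implicit.
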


\begin{proof}
  We have commutative diagram
  \begin{equation} \label{eq:action-comm-diag}
    \vcenter{
      \xymatrix{
        \cA \ar[r]^{\bfr} \ar[d] & \cV \\
        K(\sA) \ar[r]^{[\bF_\sA]} & K(\sM) \ar[u]
      }
    }
  \end{equation}
  Thus, the corollary follows from the fact that the functor $\bfr$ of \eqref{eq:rhoA-def} is faithful.
\end{proof}

\subsection{Action of categorified quantum groups} \label{sec:action-cat-quantum-group}

From the results of Sections~\ref{subsec:truncated-KL} and~\ref{subsec:A-action}, we immediately obtain an explicit action of the 2-category $\sU$ of \cite{CL15} (the categorified quantum group of type $A_\infty$) on modules for symmetric groups. This categorifies the fundamental representation $L(\Lambda_0)$ of $\fsl_\infty$.

For ease of reference, we describe this action here, which is an additive linear 2-functor
\[
  \bF_\sU \colon \sU \to \sM.
\]
Recall that the set of objects of $\sU$ is the free monoid on the weight lattice of $\fsl_\infty$ and recall the definition of $\omega_\lambda$ in \eqref{eq:omega_lambda-def}.  On objects, we have
\[
  \bF_\sU(x)=
  \begin{cases}
    \cM_\lambda & \text{if } x = \omega_\lambda,\ \lambda \in \cP, \\
    \bzero & \text{if } x \text{ is not of the form } \omega_\lambda \text{ for any } \lambda \in \cP.
  \end{cases}
\]
On 1-morphisms, $\bF_\sU$ acts just as $\bF_\sA$ does in \eqref{eq:FU-action-F_i} and \eqref{eq:FU-action-E_i}.  On 2-morphisms, $\bF_\sU$ maps any diagram with dots to zero.  On diagrams without dots, $\bF_\sU$ acts just as $\bF_\sA$ does in \eqref{eq:upcross-action}--\eqref{eq:FA-left-cup}, but with orientations of strands reversed (see Theorem~\ref{theo:sA-tsU-equivalence}).

\begin{rem}
  In \cite{BK09}, Brundan and Kleshchev constructed an explicit isomorphism between blocks of cyclotomic Hecke algebras and sign-modified cyclotomic Khovanov--Lauda algebras in type $A$.  They then used this isomorphism to describe actions on categories of modules for cyclotomic Hecke algebras in \cite{BK09b}.  This is related to the action described above, using \eqref{eq:upcross-action}--\eqref{eq:FA-left-cup}, since level one cyclotomic Hecke algebras are isomorphic to group algebras of symmetric groups.
\end{rem}

%
\section{Applications and further directions} \label{sec:applications}
%

\subsection{Diagrammatic computation} \label{subsec:diagrammatic-computation}

As an application of the constructions of the current paper, we give some examples of how one can prove combinatorial identities related to the dimensions of modules for symmetric groups using the diagrammatics of the categories introduced above.

\begin{prop} \label{prop:application-identities}
  If $\lambda$ is a partition, then
  \begin{equation}\label{eq:application-identity1}
    \sum_{j\in B^+(\lambda)} \frac{d_{\lambda \boxplus j}}{j-i}=0 \quad \forall\ i\in B^{-}(\lambda),\quad \text{and}
  \end{equation}
  \begin{equation}\label{eq:application-identity2}
    \sum_{j\in B^-(\lambda)}\frac{d_{\lambda \boxminus j}}{i-j}=i\frac{d_{\lambda}}{|\lambda|}\quad \forall\ i\in B^+(\lambda).
  \end{equation}
\end{prop}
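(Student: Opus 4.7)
The approach is to exploit the 2-functorial equivalences $\sA \cong \sH_\epsilon$ and $\sA \cong \sU_0$, together with the explicit formulas for $\bT\colon \sH^\trunc\to\sA$ in Section~\ref{subsec:principal-2-functor} and the action $\bF_\sH\colon\sH^\trunc\to\sM$ in Section~\ref{subsec:H-action}. The motivating observation is that the denominators $j-i$ appearing in both identities match precisely the coefficients in the non-crossing terms of $\bT$ on crossings (see \eqref{eq:upcross-principal}--\eqref{eq:leftcross-principal}). The plan is therefore to evaluate a carefully chosen $2$-morphism of $\sH^\trunc$ in two different ways --- directly via $\bF_\sH$, and through the composition $\bF_\sA\circ\bT$ --- and to extract the numerical identities from the resulting equalities of bimodule maps after restricting to appropriate isotypic summands, using that $\bF_\sH$ factors through $\sH^\trunc \cong \sH_\epsilon \oplus \sH_\delta$ with $\sH_\delta$ in the kernel.

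For identity 1, I would start from a $\sH^\trunc$-relation whose right-hand side is diagrammatically trivial, so that $\bF_\sH$ of that side is transparent. A natural candidate is the double-crossing relation \eqref{eq:Kho-rel-transposition-squared}, or a version of the down-up double-crossing \eqref{eq:Kho-rel-down-up-double-cross} sandwiched between the idempotents $\epsilon_\lambda$ on the right and $\epsilon_{\lambda \boxminus i}$ on the left with $i \in B^-(\lambda)$. Applying $\bT$ term by term to the left-hand side, the non-crossing corrections \eqref{eq:upcross-principal}--\eqref{eq:leftcross-principal} produce linear combinations of $\sA$-diagrams weighted by factors $\frac{1}{j-i}$ and dimension ratios; comparing the coefficient of a fixed generating $1$-morphism (of the form $\sE_i\sF_j 1_\lambda$) against the simple value computed by $\bF_\sH$ directly yields a relation whose only surviving piece is $\sum_{j \in B^+(\lambda)}\frac{d_{\lambda\boxplus j}}{j-i}$. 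Since the reference side of the relation contributes zero on that summand, identity 1 follows.

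For identity 2, the natural input is Lemma~\ref{lem:right-curl-removal}: a right curl in $\sH_\epsilon$ sandwiched between $\epsilon_{\lambda\boxplus i}$ on the left and $\epsilon_\lambda$ on the right acts as multiplication by $i$ on the underlying strand. Since the right curl is, by \eqref{eq:right-curl}, a composition of a cup, a strand, and a cap, applying $\bT$ and then $\bF_\sA$ decomposes this composition as a sum indexed by $j \in B^-(\lambda)$ of diagrams weighted by $\frac{1}{i-j}$ and the dimension ratios $\frac{d_{\lambda\boxminus j}}{d_\lambda}$ coming from \eqref{eq:FA-right-cap}--\eqref{eq:FA-left-cup}. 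Evaluating the curl directly under $\bF_\sH$ gives the scalar $i\cdot d_\lambda/|\lambda|$ (after accounting for the $\epsilon_\mu$-sandwich via Lemma~\ref{lem:epsilon_lambda-action}), and equating the two expressions yields identity 2. The combinatorial identities \eqref{rel:hl}--\eqref{rel:hl8} from Section~\ref{sec:Sn-modules} should be needed to simplify the dimension prefactors into the clean form stated.

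The main obstacle will be the bookkeeping: tracking how the $\xi_{i,j}$-factors, the dimension ratios, and the isotypic-component projections $e_\mu$ combine when $\bT$ and $\bF_\sA$ are composed, and choosing the idempotent sandwich $\epsilon_\mu (\,\cdot\,) \epsilon_{\mu'}$ sharply enough that exactly one non-trivial summand survives on each side. Once the correct diagrammatic setup is in place, the verification reduces to a routine coefficient computation, well within the combinatorial toolkit already assembled in Section~\ref{sec:Sn-modules}.
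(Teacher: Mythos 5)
Your treatment of identity \eqref{eq:application-identity2} is essentially the paper's argument: the key input is indeed Lemma~\ref{lem:right-curl-removal} (the right curl sandwiched between $\epsilon_{\lambda\boxplus i}$ and $\epsilon_\lambda$ is multiplication by $i$), and expanding the curl by inserting $\sum_{j\in B^-(\lambda)}\epsilon_{\lambda\boxminus j}$ in its inner region, removing the single crossing via Lemma~\ref{lem:crossingRemoval} to produce the factors $\frac{1}{i-j}$, and evaluating the resulting clockwise circle gives exactly $\sum_j \frac{|\lambda| d_{\lambda\boxminus j}}{d_\lambda(i-j)}$ times a nonzero strand. (The paper does this entirely inside $\sH_\epsilon$ and only invokes $\bF_\sH$ at the end to certify that the residual strand diagram is nonzero; your detour through $\bT$ and $\bF_\sA$ is an equivalent but heavier way to package the same computation.)

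For identity \eqref{eq:application-identity1}, however, your choice of starting relation is a genuine gap. You propose to feed a double-crossing relation --- \eqref{eq:Kho-rel-transposition-squared} or \eqref{eq:Kho-rel-down-up-double-cross} --- through the idempotent decomposition. But each crossing resolution contributes one factor of $\frac{1}{j-i}$ (Lemma~\ref{lem:crossingRemoval}), so a double crossing produces denominators $(j-i)^2$, and the scalar identities you extract this way are precisely \eqref{rel:hl5} and \eqref{rel:hl8} --- which are already proved by elementary means in Section~\ref{sec:Sn-modules} and are consumed in the construction of $\bS$, so no new information can come out of them. The first-power denominators in \eqref{eq:application-identity1} require a relation containing a \emph{single} crossing closed off by a cup and a cap, and the correct input is the left-curl relation \eqref{eq:Kho-rel-left-curl}: sandwiching the (zero) left curl between $\epsilon_{\lambda\boxminus i}$ and $\epsilon_\lambda$ for $i\in B^-(\lambda)$, inserting $\sum_{j\in B^+(\lambda)}\epsilon_{\lambda\boxplus j}$ in the loop, removing the crossing via \eqref{eq:crossingRemoval}, and evaluating the counterclockwise circle via \eqref{eq:epsilon_lambda-counter-clockwise} yields $\frac{1}{(|\lambda|+1)d_\lambda}\sum_{j\in B^+(\lambda)}\frac{d_{\lambda\boxplus j}}{j-i}$ times a nonzero strand, whence the vanishing. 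Without identifying the left curl as the source, your route for the first identity does not go through.
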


\begin{proof}
  For $i \in B^-(\lambda)$, we have
  \begin{multline*}
    0 \stackrel{\eqref{eq:Kho-rel-left-curl}}{=} \
    \begin{tikzpicture}[anchorbase]
      \draw (0,0) .. controls (0,.5) and (.7,.5) .. (.9,0);
      \draw (0,0) .. controls (0,-.5) and (.7,-.5) .. (.9,0);
      \draw (1,-1) .. controls (1,-.5) .. (.9,0);
      \draw[->] (.9,0) .. controls (1,.5) .. (1,1);
      \node at (1.4,0) {\regionlabel{\epsilon_{\lambda \boxminus i}}};
      \node at (0.7,-0.6) {\regionlabel{\epsilon_\lambda}};
    \end{tikzpicture}
    = \sum_{j \in B^+(\lambda)}\
    \begin{tikzpicture}[anchorbase]
     \draw (0,0) .. controls (0,.5) and (.7,.5) .. (.9,0);
     \draw (0,0) .. controls (0,-.5) and (.7,-.5) .. (.9,0);
     \draw (1,-1) .. controls (1,-.5) .. (.9,0);
     \draw[->] (.9,0) .. controls (1,.5) .. (1,1);
     \node at (1.4,0) {\regionlabel{\epsilon_{\lambda \boxminus i}}};
     \node at (0.7,-0.6) {\regionlabel{\epsilon_\lambda}};
     \node at (0.5,0) {\regionlabel{\epsilon_{\lambda \boxplus j}}};
    \end{tikzpicture}
    \stackrel{\eqref{eq:crossingRemoval}}{=} \sum_{j \in B^+(\lambda)} \frac{1}{j-i}\
    \begin{tikzpicture}[anchorbase]
      \draw[->] (0.4,0) arc(0:360:0.4);
      \draw[->] (0.7,-1) -- (0.7,1);
      \node at (0,0) {\regionlabel{\epsilon_{\lambda \boxplus j}}};
      \node at (1.2,0) {\regionlabel{\epsilon_{\lambda \boxminus i}}};
      \node at (0.4,-0.7) {\regionlabel{\epsilon_\lambda}};
    \end{tikzpicture}
    \\
    \stackrel{\eqref{eq:epsilon_lambda-counter-clockwise}}{=}
    \frac{1}{(n+1)d_\lambda} \sum_{j \in B^+(\lambda)} \frac{d_{\lambda \boxplus j}}{j-i}\
    \begin{tikzpicture}[anchorbase]
      \draw[->] (0,-1) -- (0,1);
      \node at (0.4,0) {\regionlabel{\epsilon_{\lambda \boxminus i}}};
      \node at (-0.3,0) {\regionlabel{\epsilon_\lambda}};
    \end{tikzpicture}\ .
  \end{multline*}
  Since the final diagram above is nonzero (it is sent to $i$-induction from $\cM_{\lambda \boxminus i}$ to $\cM_\lambda$ under the 2-functor $\bF_\sH$), relation \eqref{eq:application-identity1} follows.

  Now suppose $i \in B^+(\lambda)$.  Then
  \begin{multline*}
    i \left(
    \begin{tikzpicture}[anchorbase]
      \draw[->] (0,-1) -- (0,1);
      \node at (0.3,0) {\regionlabel{\epsilon_\lambda}};
      \node at (-0.4,0) {\regionlabel{\epsilon_{\lambda \boxplus i}}};
    \end{tikzpicture}
    \right)
    \stackrel{\eqref{eq:right-curl-removal}}{=}\
    \begin{tikzpicture}[anchorbase]
      \draw (0,0) .. controls (0,.5) and (-.7,.5) .. (-.9,0);
      \draw (0,0) .. controls (0,-.5) and (-.7,-.5) .. (-.9,0);
      \draw (-1,-1) .. controls (-1,-.5) .. (-.9,0);
      \draw[->] (-.9,0) .. controls (-1,.5) .. (-1,1);
      \node at (-1.4,0) {\regionlabel{\epsilon_{\lambda \boxplus i}}};
      \node at (-0.7,-0.6) {\regionlabel{\epsilon_\lambda}};
    \end{tikzpicture}
    = \sum_{j \in B^-(\lambda)}
    \begin{tikzpicture}[anchorbase]
      \draw (0,0) .. controls (0,.5) and (-.7,.5) .. (-.9,0);
      \draw (0,0) .. controls (0,-.5) and (-.7,-.5) .. (-.9,0);
      \draw (-1,-1) .. controls (-1,-.5) .. (-.9,0);
      \draw[->] (-.9,0) .. controls (-1,.5) .. (-1,1);
      \node at (-1.4,0) {\regionlabel{\epsilon_{\lambda \boxplus i}}};
      \node at (-0.7,-0.6) {\regionlabel{\epsilon_\lambda}};
      \node at (-0.45,0) {\regionlabel{\epsilon_{\lambda \boxminus j}}};
    \end{tikzpicture}
    \stackrel{\eqref{eq:crossingRemoval}}{=}
    \sum_{j \in B^-(\lambda)} \frac{1}{i-j}\
    \begin{tikzpicture}[anchorbase]
      \draw[<-] (0.4,0) arc(0:360:0.4);
      \draw[->] (-0.7,-1) -- (-0.7,1);
      \node at (0,0) {\regionlabel{\epsilon_{\lambda \boxminus j}}};
      \node at (-0.3,-0.7) {\regionlabel{\epsilon_\lambda}};
      \node at (-1.1,0) {\regionlabel{\epsilon_{\lambda \boxplus i}}};
    \end{tikzpicture}
    \\
    \stackrel{\eqref{eq:epsilon_lambda-clockwise}}{=}
    \sum_{j \in B^-(\lambda)} \frac{|\lambda| d_{\lambda \boxminus j}}{d_\lambda (i-j)}\
    \begin{tikzpicture}[anchorbase]
      \draw[->] (0,-1) -- (0,1);
      \node at (0.3,0) {\regionlabel{\epsilon_\lambda}};
      \node at (-0.4,0) {\regionlabel{\epsilon_{\lambda \boxplus i}}};
    \end{tikzpicture}\ .
  \end{multline*}
  Since the final diagram above is nonzero (as above), relation \eqref{eq:application-identity2} follows. \qedhere
\end{proof}

It is possible to prove the identities \eqref{eq:application-identity1} and \eqref{eq:application-identity2} algebraically, using a careful analysis of the representation theory of the symmetric group.  However, such a proof is considerably longer than the above diagrammatic one.  To the best of the authors' knowledge, these identities have not appeared previously in the literature.  It would be interesting to find purely combinatorial proofs.

\subsection{Further directions}

The results of the current paper suggest a number of future research directions.  We briefly describe of few of these here.

\subsubsection{Symmetric groups in positive characteristic}

In light of Proposition~\ref{prop:sU-sM-equivalence}, the 2-category $\sA$ can be viewed as a graphical calculus describing the functors of $i$-induction and $i$-restriction, together with the natural transformations between them.  Throughout this paper, we have worked over the field $\Q$.  It would be natural to instead consider the representation theory of the symmetric group in characteristic $p>0$.  We believe that most of the results presented here have positive characteristic analogues that would yield a relationship between categorified quantum $\widehat{\fsl}_p$ (instead of $\fsl_\infty$) and Heisenberg categories that categorifies the principal embedding.  We refer the reader to the survey \cite{Kle14} for an overview of the modular representation theory of the symmetric group in the context of categorification.

\subsubsection{Cyclotomic Hecke algebras}

Group algebras of symmetric groups are isomorphic to level one degenerate cyclotomic Hecke algebras.  It is natural to expect that the results of the current paper can be extended to higher level degenerate cyclotomic Hecke algebras.   On the Heisenberg side, this corresponds to the higher level Heisenberg categories defined in \cite{MS17}, which involve planar diagrams decorated by dots corresponding to the polynomial generators of the degenerate cyclotomic Hecke algebras.  On the categorified quantum group side, this should correspond to modifying the definition of $\sU_0$ (see Section~\ref{subsec:truncated-KL}) so as not to kill all dots.  This would be related to the results of \cite{BK09,BK09b}.

\subsubsection{More general Heisenberg categories}

The Heisenberg category considered here is a special case of a much more general construction, described in \cite{RS15}, that associates a Heisenberg category (or 2-category) to any graded Frobenius superalgebra.  (The Heisenberg 2-category considered in the current paper corresponds to the case where this Frobenius algebra is simply the base field.)  It would be interesting to generalize the results of the current paper to these more general Heisenberg categories.  Representation theoretically, this amounts to replacing the group algebra of the symmetric group by wreath product algebras associated to the Frobenius algebra in question.  Of special interest would be the case where the Frobenius algebra is the zigzag algebra associated to a finite-type Dynkin diagram, in which case the corresponding Heisenberg categories are the ones considered in \cite{CL12}.

A $q$-deformation of Khovanov's category was also defined in \cite{LS13}.  This deformation corresponds to replacing group algebras of symmetric groups by Hecke algebras of type $A$.  One could form a truncated $q$-deformed Heisenberg 2-category and attempt to relate such a truncation to $q$-deformations of categorified quantum groups.

\subsubsection{Trace decategorification}

In contrast to passing to the Grothendieck group, there is another natural method of decategorification: taking the trace or zeroth Hochschild homology.  The trace of Khovanov's Heisenberg category has been related to W-algebras in \cite{CLLS}. On the other hand, traces of categorified quantum groups have been related to current algebras in \cite{BHLW14,SVV14}.  It would be interesting to investigate the relationship between these two trace decategorifications implied by the results of the current paper and their generalizations mentioned above.

\subsubsection{Geometry}

Heisenberg categories are closely related to the geometry of the Hilbert scheme (see \cite{CL12}).  Similarly, the geometry of quiver varieties \cite{Nak98} can be used to build categorifications of quantum group representations (see for example \cite{VV11,Zhe14,CKL13,Web12}).  It is thus natural to expect that the results of the current paper are related to geometric constructions relating these spaces, such as \cite{Sav06,Nag09,LS10,Lem16}.


\bibliographystyle{alpha}
\bibliography{Queffelec-Savage-Yacobi}

\end{document}